\newtheorem{mthm}{Theorem}
\renewcommand\themthm{\Alph{mthm}}
\newtheorem{mcor}[mthm]{Corollary}
\renewcommand\thefurstenberg
\newtheorem{thm}{Theorem}[section]
\newtheorem{prop}[thm]{Proposition}
\newtheorem{lem}[thm]{Lemma}
\newtheorem{cor}[thm]{Corollary}
\theoremstyle{definition}
\newtheorem{definition}[thm]{Definition}
\newtheorem{example}[thm]{Example}
\theoremstyle{remark}
\newtheorem*{remark}{Remark}
\numberwithin{equation}{section}
\DeclareMathAlphabet{\mathpzc}{OT1}{pzc}{m}{it}
\newcommand{\cf}{{cf$.$\,}}
\newcommand{\CC}{\mathds{C}}
\newcommand{\RR}{\mathds{R}}
\newcommand{\ZZ}{\mathds{Z}} 
\newcommand{\kk}{\mathds{k}}
\newcommand{\PP}{\mathds{P}}
\newcommand{\ad}{\mathrm{ad}}
\newcommand{\Ad}{\mathrm{Ad}}
\let\Re\undefined
\DeclareMathOperator{\Re}{\mathrm{Re}}
\let\Im\undefined
\DeclareMathOperator{\Im}{\mathrm{Im}}
\newcommand{\Iso}{\group{Iso}}
\newcommand{\GL}{\group{GL}}
\newcommand{\PGL}{\group{PGL}}
\renewcommand{\SS}{\group{S}}
\newcommand{\TT}{\group{T}}
\newcommand{\SO}{\group{SO}}
\newcommand{\OO}{\group{O}}
\newcommand{\Aut}{\group{Aut}}
\newcommand{\Euc}{\group{E}}
\newcommand{\Zen}{\mathrm{Z}}
\newcommand{\Inn}{\mathrm{Inn}}
\newcommand{\scl}{\text{\textsc{l}}}
\newcommand{\scr}{\text{\textsc{r}}}
\newcommand{\group}{\mathrm} 
\newcommand{\ra}[1]{\text{\textsf{\itshape #1}}} % real algebraic group
\newcommand{\ac}[1]{\overline{#1}^{\rm z}}
\renewcommand{\hat}{\widehat}
\renewcommand{\rho}{\varrho}
\renewcommand{\tilde}{\widetilde}
\renewcommand{\bar}{\overline}
\DeclareMathOperator{\codim}{\mathrm{codim}}
\renewcommand{\epsilon}{\varepsilon}
\newcommand{\zsp}{\mathbf{0}} 
\newcommand{\met}{\langle\cdot,\cdot\rangle}
\newcommand{\one}{\{e\}}
\newcommand{\centre}[1]{\frz({#1})}
\DeclareMathOperator{\im}{\mathrm{im}}
\DeclareMathOperator{\Span}{\mathrm{span}}
\DeclareMathOperator{\supp}{\mathrm{supp}}
\newcommand{\frg}{\mathfrak{g}}
\newcommand{\frh}{\mathfrak{h}}
\newcommand{\fra}{\mathfrak{a}}
\newcommand{\frb}{\mathfrak{b}}
\newcommand{\frc}{\mathfrak{c}}
\newcommand{\frn}{\mathfrak{n}}
\newcommand{\frz}{\mathfrak{z}}
\newcommand{\frm}{\mathfrak{m}}
\newcommand{\frw}{\mathfrak{w}}
\newcommand{\frk}{\mathfrak{k}}
\newcommand{\frj}{\mathfrak{j}}
\newcommand{\fri}{\mathfrak{i}}
\newcommand{\frf}{\mathfrak{f}}
\newcommand{\frd}{\mathfrak{d}}
\newcommand{\frs}{\mathfrak{s}}
\newcommand{\frl}{\mathfrak{l}}
\newcommand{\frq}{\mathfrak{q}}
\newcommand{\frr}{\mathfrak{r}}
\newcommand{\zen}{\mathfrak{z}}
\newcommand{\nor}{\mathfrak{n}}
\newcommand{\heis}{\mathfrak{h}}
\newcommand{\osc}{\mathfrak{osc}}
\newcommand{\gl}{\mathfrak{gl}}
\renewcommand{\sl}{\mathfrak{sl}}
\newcommand{\su}{\mathfrak{su}}
\newcommand{\so}{\mathfrak{so}}
\newcommand{\euc}{\mathfrak{e}}
\newcommand{\inv}{\mathrm{inv}}
\newcommand{\ab}{\mathfrak{ab}}
\newcommand{\p}{\mathsf{p}}
\newcommand{\g}{\mathrm{g}}
\newcommand{\I}{\mathrm{I}}
\newcommand{\gs}{\frg_{\rm s}}
\begin{document}

%\linenumbers

\title[Isometry Lie algebras of homogeneous spaces]{Isometry Lie algebras of indefinite homogeneous spaces of finite volume}

\author[Baues]{Oliver Baues}
\address{Oliver Baues, Department of Mathematics, Chemin du Mus\'ee 23, University of Fribourg, CH-1700 Fribourg,  Switzerland} 
\email{oliver.baues@unifr.ch}

%\address{Oliver Baues, Mathematisches Institut,
%Georg-August-Universit\"at G\"ottingen,
%Bunsenstr.~3-5, 
%37073 G\"ottingen, Germany}
%\email{obaues@uni-math.gwdg.de}

\author[Globke]{Wolfgang Globke}
\address{Wolfgang Globke, Faculty of Mathematics, University of Vienna, Oskar-Morgenstern-Platz 1, 1090 Vienna, Austria}
\email{wolfgang.globke@univie.ac.at}

\author[Zeghib]{Abdelghani Zeghib}
\address{Abdelghani Zeghib, \'Ecole Normale Sup\'erieure de Lyon, 
Unit\'e de Math\'ematiques Pures et Appliqu\'ees,
46 All\'ee d'Italie,
69364 Lyon,
France}
\email{abdelghani.zeghib@ens-lyon.fr}

%\thanks{Wolfgang Globke is supported by the Australian Research Council grant DE150101647.}

\subjclass[2010]{Primary 53C50; Secondary 53C30, 22E25, 57S20, 53C24}

\begin{abstract}
Let $\frg$ be a real finite-dimensional Lie algebra equipped with
a symmetric bilinear form $\met$.
We assume that $\met$ is
nil-invariant. This  means that every nilpotent operator 
in the smallest algebraic Lie subalgebra of endomomorphims containing 
the adjoint representation of $\frg$ is an
infinitesimal isometry for $\met$.
Among these Lie algebras are the isometry Lie algebras of 
pseudo-Riemannian manifolds of finite volume.
We prove a strong invariance property for nil-invariant symmetric
bilinear forms, which states that the adjoint representations of
the solvable radical and all simple subalgebras of non-compact type
of $\frg$ act by infinitesimal isometries for $\met$.
Moreover, we study properties of the kernel
of $\met$ 
and the totally isotropic ideals in $\frg$ in relation to the index of $\met$.
Based on this, we derive a structure theorem and a classification for 
the isometry algebras of indefinite homogeneous spaces of finite
volume with metric index at most two.
Examples show that the theory becomes significantly more
complicated for index greater than two.
We apply our results to study
simply connected pseudo-Riemannian homogeneous spaces of
finite volume.
\end{abstract}

\date{\today}  % Version v1, March 28, 2018
\maketitle

%
% Version MAIN of \textcolor{red}{\bf \today}.
%

\tableofcontents

% !TEX root = homogeneousSpacesLieAlgebras.tex

%%%%%%%%%%%%%%%%%%%%%%%
\section{Introduction and main results}
\label{sec:intro}
%%%%%%%%%%%%%%%%%%%%%%%

Let $\frg$ be a finite-dimensional Lie algebra equipped 
with a symmetric bilinear form $\met$. The pair is called a metric Lie algebra. 
Traditionally, the bilinear form $\met$  is called \emph{invariant} if 
the adjoint representation of $\frg$ acts by skew linear maps.  
We will call $\met$  \emph{nil-invariant}, if 
every nilpotent operator in the smallest algebraic Lie subalgebra of endomomorphims 
containing the adjoint representation of $\frg$ is a skew linear map.
This nil-invariance condition appears to be significantly weaker than the 
requirement that $\met$ is \emph{invariant}. 

Recall that the dimension of a 
maximal totally isotropic subspace is called the \emph{index} of a symmetric bilinear form, and that 
% $\met$ 
the form is called \emph{definite} if its index is zero. Since definite bilinear forms do not admit nilpotent 
skew maps, the condition of nil-invariance is less restrictive and therefore more interesting 
for metric Lie algebras with bilinear forms of higher index. 

In this paper, we mainly study  finite-dimensional real 
Lie algebras $\frg$ with a nil-invariant symmetric bilinear form. 
We will discuss the general properties of  these metric Lie algebras,  
compare them with Lie algebras with invariant 
symmetric bilinear form, and derive elements of a classification theory, 
which give a complete description for  low index, in particular,
in the situation of index less than three.
 \medskip
%The nil-invariance condition requires 
%that every nilpotent operator in the smallest algebraic Lie subalgebra of endomomorphims 
%containing the adjoint representation of $\frg$ is an
%infinitesimal isometry for $\met$.

%One way to express the nil-invariance condition is to require that  
%$\met$ is preserved by all 
%unipotent operators of the Zariski
%closure of the adjoint group of $\frg$, taken in the group 
%$\Aut(\frg)$ of all automorphisms of $\frg$. 

% \subsection*{Isometry Lie algebras of geometric manifolds} 
% Lie algebras with nil-invariant bilinear form play an important role  

\paragraph{\emph{Nil-invariant bilinear forms and  
isometry Lie algebras}}
%%  of pseudo-Riemannian manifolds}} 
% Our motivation 
The motivation for this article mainly stems from 
the theory of geometric transformation groups and
automorphism groups of geometric structures. 
%of Pseudo-Riemannian manifolds of finite volume. Indeed,  metric Lie algebras $(\frg, \met)$, 
%with the rather peculiar nil-invariance property 
%arise naturally in certain important geometric situations. 
% Namely, 

Namely, consider a Lie group $G$ 
acting by isometries on a pseudo-Riemannian manifold $(M,\g)$ 
of finite volume. Then at each point $p\in M$, the scalar product $\g_p$
naturally induces a symmetric bilinear form $\met_p$
on the Lie algebra $\frg$ of $G$. As we  show in Section \ref{sec:isometry_algebras} 
of this paper, the bilinear form $\met_p$ is nil-invariant on $\frg$. Note that, in general, 
 $\met_p$ will be degenerate, since the
subalgebra $\frh$ of $\frg$ tangent to the stabilizer $G_{p}$ of $p$ 
is contained  in its kernel.

Isometry groups of  {Lorentzian metrics} (where the scalar products $\g_{p}$ are of index one)  
have been studied intensely.  Results obtained by Adams and Stuck \cite{AS1} in the compact situation, 
and by Zeghib \cite{zeghib0} amount to a classification of the isometry Lie
algebras of Lorentzian manifolds of finite volume.
% Note that in the  works  \cite{AS1,zeghib0,zeghib} 

In these works it is used prominently that, for  Lorentzian finite volume manifolds, 
the scalar products $\met_p$  are  
invariant by the elements of the nilpotent radical of $\frg$, 
\cf  \cite[\S 4]{AS1}. The latter condition is closely related to
nil-invariance,  but it is also significantly less restrictive. The role played by 
the stronger nil-invariance condition seems to have gone unnoticed so far.
%Preceding their results is the fundamental work of
%Gromov \cite{gromov} and Zimmer \cite{zimmer4} on Lorentzian
%manifolds of finite volume.

Aside from Lorentzian manifolds, the classification problem for isometry Lie algebras 
of finite volume geometric manifolds with metric $\g$ of  arbitrary  index  %for the metric $\g$ % metric signatures 
appears to be much more difficult. \smallskip 

Some more specific results have been obtained in the context of 
\emph{homogeneous} pseudo-Riemannian manifolds. % of finite volume. 
Here, $M$ can 
be described   
%where $M$ admits a transitive group of isometries and 
%can be written 
as a coset space $G/H$,   and 
any associated metric Lie algebra $(\frg, \met_{p})$  locally
determines $G$ and $H$, as well as the geometry of $M$.
These  pseudo-Riemannian manifolds  % of finite volume
% are special cases of this geometric situation.
are model spaces of  particular interest.

Based on \cite{zeghib0}, a structure theory for Lorentzian homogeneous spaces of finite volume
is given by Zeghib \cite{zeghib}. 
%It is then proved in  \cite{BG} that, 
%%Its significance for isometry Lie algebras,  
%%was employed in \cite{BG} to show that 
%% and we may further assume that $G$
%%acts almost effectively, that is, $H$ contains no
%%non-trivial connected normal subgroup of $G$. 
%% Namely,  as is proved in \cite{BG}, i
%if $G$ is a  solvable Lie group acting effectively and 
%transitively by isometries on a compact pseudo-Riemannian manifold $M$, then 
%% it implies that 

Pseudo-Riemannian homogeneous spaces of arbitrary index
were studied by Baues and Globke \cite{BG}  for solvable Lie groups $G$.  They  found 
% It is proved there 
that, for solvable $G$,  the  finite volume condition implies that the stabilizer $H$ is a lattice in $G$ 
and that the metric on $M$ is induced by a bi-invariant metric on 
$G$. Also it was  observed in \cite{BG}  that  the nil-invariance condition holds for the isometry 
Lie algebras of finite volume homogeneous spaces, where it appears as   
% as defined above, 
%of finite volume 
%homogeneous pseudo-Riemannian manifolds  
a direct consequence of the Borel density theorem. The main result  in \cite{BG} 
amounts to showing the surprising fact that  any nil-invariant symmetric bilinear form 
on a solvable Lie algebra  $\frg$ is, in fact, an invariant form
(a concise proof is also provided in Appendix \ref{sec:newproof}).
\medskip 

%Of course, results in the homogeneous case are also significant for 
%non-transitive actions. For example, in the case of
%Lorentzian metrics it occurs that all possible Lie algebras of 
%isometry groups are subalgebras of the Lie algebras of transitive isometry groups.
% From this point of view, 

By studying metric Lie algebras with nil-invariant 
symmetric bilinear form, the present work aims to further understand the isometry Lie algebras of
pseudo-Riemannian manifolds of finite volume. We will 
derive a structure theory which allows to completely describe such 
algebras in index less  than three. In particular, this classification contains 
%
%Our classification for such algebras in index less than 
%three contains in particular  
all local models for  pseudo-Riemannian homogeneous spaces of finite volume
of  index less than three. 

\subsection{Main results and structure of the paper}
In Section \ref{sec:isometry_algebras}, we prove that the orbit maps of isometric
actions of Lie groups on pseudo-Riemannian manifolds of finite
volume give rise to nil-invariant scalar products on their
tangent Lie algebras.

Some basic definitions and properties of metric Lie algebras are
reviewed in Section \ref{sec:scalar}.

In favourable cases, nil-invariance of $\met$ already implies invariance.
For solvable Lie algebras $\frg$, this is always the case,
% nil-invariance of $\met$ implies invariance%  regardless of the index
as was first shown % by Baues and Globke 
in \cite{BG}.
These results are briefly summarized in Section \ref{sec:solvable}.
In this section, we will also review the classification of solvable Lie algebras with
invariant scalar products of index one and two. 
Their properties will  be needed further on.  

\subsection*{Strong invariance properties} 
In Section \ref{sec:biinvariant} we begin our investigation of
nil-invariant symmetric bilinear forms $\met$ on arbitrary Lie algebras. %  of arbitrary index. 
For any Lie algebra $\frg$, we let 
 $$ \frg=(\frk\times\frs)\ltimes\frr$$ 
 denote a Levi decomposition of
$\frg$, 
where $\frk$ is semisimple of compact type,
$\frs$ is semisimple of non-compact type and $\frr$ is the
solvable radical of $\frg$. 
For this, recall that $\frk$ is called of compact 
type if the Killing form of $\frk$ is definite and that $\frs$ is of
non-compact type if it has no ideal of compact type.  
We also write $$ \gs=\frs\ltimes\frr . $$
% \smallskip
% \vspace{0.2ex}

Our first main result is a \emph{strong invariance property}  for
nil-invariant symmetric bilinear forms:

\begin{mthm}\label{mthm:invariance}
Let $\frg$ be a real finite-dimensional Lie algebra,
let $\met$ be a  nil-invariant symmetric bilinear form
on $\frg$ and  $\met_{\gs}$ the restriction of $\met$ to $\gs$.
Then: 
\begin{enumerate}
\item
$\met_{\gs\! }$
is invariant by  the adjoint action of $\frg$ on $\gs$.
\item
$\met$ is invariant by $\gs$. 
\end{enumerate}
\end{mthm}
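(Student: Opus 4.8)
The plan is to prove first that the nilpotent part of the structure of $\frg$ acts by skew maps on all of $\frg$, then bootstrap from nil-invariance to genuine invariance of $\met$ under $\gs = \frs \ltimes \frr$. The key observation is that $\gs$ is, up to the compact Levi factor $\frk$, the part of $\frg$ whose adjoint action consists — after passing to the algebraic hull — predominantly of nilpotent operators or operators whose semisimple parts come from $\frs$ itself. More precisely, let $\Alg(\ad\frg)$ denote the smallest algebraic Lie subalgebra of $\mathfrak{gl}(\frg)$ containing $\ad\frg$; by hypothesis every nilpotent element of $\Alg(\ad\frg)$ is skew for $\met$. I would decompose $\Alg(\ad\frg)$ using its own Levi and Jordan structure: its reductive Levi part splits as a compact piece, the image of $\ad\frs$ (which is already algebraic and semisimple of noncompact type), and a central toral part coming from semisimple parts of elements of $\ad\frr$; its nilradical consists of nilpotent operators and so acts skew-symmetrically by hypothesis.

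The first main step is statement (2) restricted to $\frr$: the solvable radical acts by infinitesimal isometries. For $X \in \frr$, write $\ad X = S + N$ for its additive Jordan decomposition inside $\Alg(\ad\frg)$; both $S$ and $N$ lie in $\Alg(\ad\frg)$, $N$ is nilpotent hence skew, so it remains to handle the semisimple part $S$. The toral algebra generated by these semisimple parts is abelian and reductive; I would argue that $S$, being a real semisimple operator commuting with a large algebraic structure, can be written as a sum of a compact (elliptic) part — which is automatically skew because its orbits are bounded, combined with the finiteness/boundedness coming ultimately from the algebraic hull being generated by $\ad\frg$ — and a hyperbolic (real-eigenvalue) part that must vanish or be skew by a trace/boundedness argument. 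This is essentially the mechanism already exploited for solvable $\frg$ in \cite{BG}, now applied to the radical alone; the point is that $\ad X|_{\gs}$ still lives inside an algebraic hull small enough that Borel-density-type rigidity applies.

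The second step is to upgrade to the action of $\frs$: since $\frs$ is semisimple, $\ad\frs$ is already algebraic, so for $X \in \frs$ the operator $\ad X$ decomposes within $\ad\frs$ itself. A semisimple Lie algebra of noncompact type is generated by its nilpotent elements (every element is a sum of nilpotent ones, e.g. via $\mathfrak{sl}_2$-triples filling out root spaces), and each such nilpotent $\ad Y$, $Y \in \frs$ nilpotent, lies in $\Alg(\ad\frg)$ and is nilpotent there, hence skew for $\met$. Therefore every element of $\ad\frs$ is skew, giving invariance of $\met$ under $\frs$. Combined with the first step this yields (2): $\met$ is invariant by $\gs = \frs \ltimes \frr$. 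For (1), I would then check that $\met_{\gs}$ is preserved by $\ad\frk$ as well: the compact factor $\frk$ normalizes $\gs$, and $\ad\frk|_{\gs}$ sits inside a compact group, so averaging (or the fact that a compact group of operators preserving the isotropic/degenerate flags must preserve a compatible form) forces invariance of the restricted form $\met_{\gs}$ under all of $\ad\frg$.

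The main obstacle will be the semisimple-part argument for $\frr$ in the first step: controlling the semisimple parts of $\ad X$ for $X$ in the radical and showing their hyperbolic components act skew-symmetrically (equivalently, contribute nothing) on $\gs$. This is where one genuinely needs that $\met$ is only assumed nil-invariant yet the ambient algebra is built from $\ad\frg$ — so that any toral operator arising has bounded orbit on the relevant invariant subspaces — and where the interplay with the isotropy structure of $\met$ (degeneracy, totally isotropic ideals) enters. I expect the proof to route through a careful analysis of the $\ad\frr$-weight space decomposition of $\gs$ with respect to a maximal torus of $\Alg(\ad\frg)$, showing that $\met$ pairs weight spaces with opposite-weight spaces only, which is precisely skew-symmetry of the toral part.
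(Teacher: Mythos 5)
Your second step (the action of $\frs$) is sound and is exactly the paper's route: a semisimple algebra without compact factors is generated by its $\ad$-nilpotent elements, these are skew by nil-invariance, and skew elements form a subalgebra, so $\frs\subseteq\inv(\frg,\met)$; the same reasoning puts the nilradical $\frn$ in $\inv(\frg,\met)$. The gaps are in your first step and in your passage to part (1). For $X\in\frr$ you propose that the elliptic part of $\ad(X)$ is ``automatically skew because its orbits are bounded'': this inference is false. A semisimple operator generating a relatively compact one-parameter group need not be skew for a prescribed symmetric form (a rotation generator on $\RR^2$ is not skew for $\mathrm{diag}(1,2)$), and indeed the compact Levi factor $\frk$ consists of elliptic elements while $\met_{\frk}$ may be an arbitrary scalar product --- which is precisely why the theorem asserts invariance only by $\gs$. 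So a ``boundedness'' mechanism cannot distinguish the elliptic parts coming from $\frr$ (which do act skew) from those coming from $\frk$ (which do not), and the trace/weight-space sketch for the hyperbolic parts is not yet an argument. Moreover, even if you run the solvable-case machinery of \cite{BG} ``on the radical alone,'' you only obtain skewness of $\ad_{\frr}(X)$ on $(\frr,\met_{\frr})$; the actual content of the theorem lies in the cross terms, i.e.\ in controlling $\langle[X,\cdot\,],\cdot\rangle$ when one argument sits in $\frk$ or in $\frg^{\frs}$, and nothing in your plan addresses these.

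Your deduction of (1) from (2) by averaging over the compact group generated by $\ad(\frk)$ fails for the same reason: averaging produces \emph{some} invariant form, not $\met_{\gs}$, and a compact group of operators need not preserve a given bilinear form. In the paper the logic even runs the other way: part (1) is proved first and then used for (2). Concretely, the paper's two ingredients that your proposal lacks are (i) the solvable theorem of \cite{BG} applied not to $\frr$ but to the solvable subalgebras $\RR X+\frr$ with $X\in\frg^{\frs}$ (solvable because $\frr$ is an ideal), which yields skewness of such $X$ on the radical part of $\gs$; and (ii) orthogonality relations derived from nil-invariance via the Jordan decomposition of $\ad(X)$, namely $\frg^{\frh}\perp[\frh,\frg]$ for any subalgebra $\frh\subseteq\inv(\frg,\met)$ and $[\frg^{\frf},\gs]\perp\frf$ (in particular $[\frg^{\frk},\frg]\perp\frk$), which kill exactly the cross terms with $\frk$ and $\frg^{\frs}$. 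With these, invariance of $\met_{\gs}$ under $\frg=\gs+\frg^{\frs}$ and then invariance of $\met$ under $\gs=\frs+\frr^{\frk}+\frn$ follow; without them, neither your step 1 nor your part (1) goes through.
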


Note that \emph{any} scalar product on a semisimple Lie algebra $\frk$
of compact type is already nil-invariant, without any further
invariance property required.
Therefore, Theorem \ref{mthm:invariance} is as strong as one can hope
for.
%Note that Theorem \ref{mthm:invariance} does not require  any
%assumption on % effectivity or
% the index of $\met$.
  
\begin{remark}
 We would like to point out that the proof of Theorem \ref{mthm:invariance} works for Lie algebras over any field of
 characteristic zero if the notion of subalgebra of compact type $\frk$
 is replaced by the appropriate notion of maximal anisotropic semisimple 
 subalgebra of $\frg$.  The latter condition is equivalent to the requirement that the Cartan 
 subalgebras of $\frk$ do not contain any elements split over the ground field.
\end{remark}

 We obtain the following striking corollary to Theorem \ref{mthm:invariance}, 
 or rather to its proof: 
 
 \begin{mcor}\label{mcor:invariance_complex}
Let $\frg$ be a finite-dimensional Lie algebra over the field of complex numbers and 
$\met$ a  nil-invariant symmetric bilinear form on $\frg$.
Then $\met$ is invariant. 
\end{mcor}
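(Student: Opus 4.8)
The plan is to deduce the complex case from Theorem \ref{mthm:invariance} together with the observation, already flagged in the remark, that the obstruction to invariance lives entirely on the compact-type factor $\frk$, and that over $\CC$ there is no compact-type factor to speak of. Concretely, given a complex finite-dimensional Lie algebra $\frg$ with nil-invariant symmetric bilinear form $\met$, I would first regard $\frg$ as a real Lie algebra $\frg^{\RR}$ (restriction of scalars) and check that nil-invariance is unaffected: the smallest real algebraic subalgebra of $\End_{\RR}(\frg^{\RR})$ containing $\ad(\frg^{\RR})$ contains the same nilpotent operators as the complex algebraic hull, because a complex-linear endomorphism is nilpotent as a real operator iff it is nilpotent as a complex one, and the real and complex Zariski closures of $\ad(\frg)$ have the same unipotent/nilpotent elements. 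Hence $\met$, viewed as a real symmetric bilinear form (say, by taking its real part, or simply noting the argument applies to real and imaginary parts separately), is nil-invariant on $\frg^{\RR}$.

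Next I would invoke the Levi decomposition $\frg^{\RR}=(\frk\times\frs)\ltimes\frr$ of the \emph{underlying real} Lie algebra. The key point is that a complex semisimple Lie algebra, viewed as a real Lie algebra, has \emph{no simple ideal of compact type}: a compact real form $\frak{u}$ satisfies $\frak{u}\cap\i\frak{u}=0$, so a complex semisimple algebra is never isomorphic as a real Lie algebra to a compact one — equivalently, its Killing form (real part) is never definite on any ideal. Therefore in the real Levi decomposition of $\frg^{\RR}$ the compact factor $\frk$ is zero, and $\gs=\frs\ltimes\frr=\frg^{\RR}$. Theorem \ref{mthm:invariance}(2) then says precisely that $\met$ is invariant under $\gs=\frg^{\RR}$, i.e.\ $\ad(X)$ is skew for every $X\in\frg$, which is invariance of $\met$ as a form on the complex Lie algebra $\frg$.

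The main obstacle — really the only delicate point — is verifying that the hypothesis of nil-invariance passes correctly between $\frg$ and its realification $\frg^{\RR}$, i.e.\ that the real algebraic hull of $\ad(\frg^{\RR})$ does not contain nilpotent operators that fail to lie in the complex algebraic hull of $\ad(\frg)$. This is where one must be a little careful: one should argue that the complex algebraic group $\overline{\Ad(\frg)}$ (Zariski closure in $\GL(\frg)$) and the real algebraic group obtained as the Zariski closure of $\Ad(\frg^{\RR})$ in $\GL_{\RR}(\frg^{\RR})$ have the same unipotent radical up to the identification $\GL(\frg)\hookrightarrow\GL_{\RR}(\frg^{\RR})$; a clean way is to note that the real Zariski closure is contained in the real points of the complex group, so its unipotent elements are unipotent complex-linear operators, hence lie in the complex algebraic hull and are skew by hypothesis. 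Alternatively, and perhaps most efficiently, one simply observes that the entire proof of Theorem \ref{mthm:invariance} is insensitive to the ground field of characteristic zero once "compact type'' is replaced by "anisotropic'', as noted in the remark, and over $\CC$ every semisimple Lie algebra is split, hence has no anisotropic part — so $\gs=\frg$ and the conclusion is immediate. Either route closes the argument.
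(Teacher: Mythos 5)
Your proposal is correct, and your ``most efficient'' fallback route is exactly the paper's: Corollary \ref{mcor:invariance_complex} is presented as a corollary to the \emph{proof} of Theorem \ref{mthm:invariance}, via the preceding remark that the argument works over any field of characteristic zero once $\frk$ is taken to be the maximal anisotropic semisimple subalgebra; over $\CC$ every semisimple algebra is split, so that subalgebra is trivial, $\gs=\frg$, and part (2) gives invariance. Your primary route, by restriction of scalars, is genuinely different and also works: it uses Theorem \ref{mthm:invariance} as a black box for \emph{real} Lie algebras and transfers the hypothesis rather than the proof. The two points you isolate are indeed the only delicate ones, and your treatment of them is sound: (i) the real Zariski closure of $\Ad(\frg^{\RR})=\Ad(\frg)$ is contained in the complex algebraic hull (complex polynomial conditions are real polynomial conditions), so every nilpotent element of its Lie algebra is a nilpotent complex-linear operator, hence skew for $\met$ by hypothesis, which makes $\Re\met$ and $\Im\met$ nil-invariant on $\frg^{\RR}$; (ii) the realification of a complex semisimple Lie algebra has no ideal of compact type, since its Killing form is $2\Re\kappa$ and $\Re\kappa(\i X,\i X)=-\Re\kappa(X,X)$, so $\frk=\zsp$, $\gs=\frg^{\RR}$, and Theorem \ref{mthm:invariance}(2) applied to $\Re\met$ and $\Im\met$ yields invariance of the complex form. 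Two small blemishes, neither fatal: the claim that the real and complex Zariski hulls of $\ad(\frg)$ have the \emph{same} nilpotent elements is neither needed nor obvious --- the one-sided containment you invoke in the ``clean way'' is all that is used --- and the sentence appealing to $\fru\cap\i\fru=\zsp$ does not by itself exclude compact type; the Killing-form argument is the one to keep. What the realification route buys is that one never re-inspects the proof of Theorem \ref{mthm:invariance} over other ground fields; what the paper's route buys is brevity, since the remark before the corollary already records the field-general statement.
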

\smallskip 

For any nil-invariant symmetric bilinear form $\met$, it is important to consider its kernel  $$  \frg^{\perp}= \{ X \in \frg \mid  X \perp \frg \} \; , $$   also called the \emph{metric radical} of $\frg$.  If $\met$ is invariant, then $\frg^{\perp}$ is an ideal of $\frg$. If $\met$ is nil-invariant, then,  in general, $ \frg^{\perp}$ is not even a subalgebra of $\frg$. 
Nevertheless, a considerable simplification of the exposition 
may be obtained by restricting results to 
metric Lie algebras whose radical $\frg^\perp$ does 
not contain any non-trivial ideals of $\frg$. Such metric Lie algebras will be called \emph{effective}. 
This condition is, of course,  natural from the geometric motivation. Moreover, it is 
not a genuine restriction, since by dividing out the maximal ideal of $\frg$ contained in $\frg^\perp$,
one may pass from any metric Lie algebra to a quotient metric 
Lie algebra that is effective. \medskip 

Theorem  \ref{mthm:invariance} determines the properties of 
$ \frg^{\perp}$ significantly as is shown in the following: 

\begin{mcor}\label{mcor:invariance_radical}
Let $\frg$ be a finite-dimensional real Lie algebra with 
 a nil-in\-var\-iant symmetric bilinear form $\met$.
Assume that  the metric radical $\frg^{\perp}$ does not contain any non-trivial ideal of $\frg$. Let
$\centre{\gs}$ denote the center of $\gs$. 
Then 
$$    \frg^\perp \subseteq \;  \frk \ltimes \centre{\gs}  \;  \text{ and } \;  [\frg^\perp, \gs] \subseteq \,  \centre{\gs} \cap \frg^\perp. $$
\end{mcor}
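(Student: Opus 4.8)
The plan is to deduce both inclusions directly from Theorem~\ref{mthm:invariance}, using the semisimplicity of $\gs/\frr$ relative information encoded in the strong invariance property, together with a density/algebraicity argument to handle the compact factor $\frk$.

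\textbf{Step 1: locating $\frg^\perp$ inside $\frk\ltimes\centre{\gs}$.}
First I would project $X\in\frg^\perp$ onto the Levi factors and the radical, writing $X=X_\frk+X_\frs+X_\frr$ with respect to the decomposition $\frg=(\frk\times\frs)\ltimes\frr$. The goal is to show $X_\frs=0$ and that $X_\frr$ lies in the center of $\gs$. By Theorem~\ref{mthm:invariance}(2), $\met$ is invariant under $\ad(Y)$ for all $Y\in\gs$; hence for $Y,Z\in\gs$ we have $\langle\ad(Y)X,Z\rangle=-\langle X,\ad(Y)Z\rangle=-\langle X,[Y,Z]\rangle$. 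Since $X\perp\frg\supseteq\gs$, the right side vanishes, so $\ad(Y)X\perp Z$ for all $Z\in\gs$; applying invariance again we get that $\ad(Y)X$ is itself orthogonal to all of $\gs$. More importantly, I want to show $[\gs,X]\subseteq\gs$ projects trivially: the component of $[\gs,X]$ in $\frs$ equals $[\frs,X_\frs]$ (since $[\frs,\frk]=0$, $[\frs,\frr]\subseteq\frr$), and an invariance computation on the semisimple Lie algebra $\frs$ — where the restriction of $\met$ must be a multiple of the Killing form on each simple ideal, or zero — will force $X_\frs$ to be $\ad(\frs)$-invariant, hence central in $\frs$, hence $X_\frs=0$. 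This is where part (1) of Theorem~\ref{mthm:invariance} enters: the $\ad(\frg)$-invariance of $\met_{\gs}$ promotes the bilinear form on $\gs$ to a genuine invariant form, and invariant symmetric bilinear forms on semisimple Lie algebras are governed by the Killing form, which is nondegenerate; so an element orthogonal to everything and with invariant adjoint action can have no nonzero semisimple part. A parallel argument with $\frr$ in place of $\frs$, using that $\met_{\gs}$ is invariant, shows the $\frr$-component $X_\frr$ must satisfy $[\gs,X_\frr]\subseteq\frg^\perp\cap\gs=\frg^\perp\cap\gs$, and combined with effectivity (no nontrivial ideal of $\frg$ inside $\frg^\perp$) this pins $X_\frr$ into $\centre{\gs}$. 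For the $\frk$-part there is nothing to prove since we only claim $\frg^\perp\subseteq\frk\ltimes\centre{\gs}$, i.e.\ the $\frk$-component is unconstrained.

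\textbf{Step 2: the bracket relation $[\frg^\perp,\gs]\subseteq\centre{\gs}\cap\frg^\perp$.}
Take $X\in\frg^\perp$ and $Y\in\gs$; I must show $[X,Y]\in\centre{\gs}\cap\frg^\perp$. By Step~1 write $X=X_\frk+X_0$ with $X_0\in\centre{\gs}$. Then $[X,Y]=[X_\frk,Y]+[X_0,Y]=[X_\frk,Y]$ since $X_0$ is central in $\gs$ and $Y\in\gs$; and $[X_\frk,Y]\in\gs$ because $[\frk,\gs]=[\frk,\frr]\subseteq\frr\subseteq\gs$ (using $[\frk,\frs]=0$). So $[X,Y]\in\gs$ already. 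To see it lies in $\frg^\perp$: for any $Z\in\frg$, invariance of $\met$ under $\ad(Y)$ (Theorem~\ref{mthm:invariance}(2)) gives $\langle[X,Y],Z\rangle=\langle\ad(Y)X,Z\rangle\cdot(-1)$... more carefully, $\langle[Y,X],Z\rangle=-\langle X,[Y,Z]\rangle=0$ since $X\perp\frg$; hence $[X,Y]\in\frg^\perp$. Finally $[X,Y]\in\gs\cap\frg^\perp$, and one shows this intersection is central in $\gs$: if $W\in\gs\cap\frg^\perp$ then for all $V\in\gs$, $\langle[W,V],\cdot\rangle$-type computations together with the invariant form $\met_{\gs}$ being nondegenerate on $\gs/(\text{its radical})$ — plus effectivity — force $[W,\gs]$ to generate an ideal of $\frg$ sitting inside $\frg^\perp$, which must be zero; thus $W\in\centre{\gs}$. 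Chaining these, $[\frg^\perp,\gs]\subseteq\gs\cap\frg^\perp\subseteq\centre{\gs}\cap\frg^\perp$.

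\textbf{Main obstacle.}
The delicate point is not the formal manipulation but justifying that $\gs\cap\frg^\perp$ — equivalently the radical of $\met_{\gs}$ — generates no nontrivial ideal of the full algebra $\frg$ other than $0$, so that effectivity can be invoked. Since $\met_{\gs}$ is $\ad(\frg)$-invariant by Theorem~\ref{mthm:invariance}(1), its radical $\gs\cap\frg^\perp$ \emph{is} an $\ad(\frg)$-invariant subspace of $\gs$, i.e.\ an ideal of $\frg$ contained in $\frg^\perp$; effectivity then makes it zero only if we are careful that the radical of $\met|_{\gs}$ really coincides with $\gs\cap\frg^\perp$ and not something larger — this requires checking that $X\in\frg^\perp$ with $\frk$-component possibly nonzero still restricts to a radical vector of $\met|_{\gs}$, which follows because $\frk\perp\gs$ is \emph{not} automatic and must be extracted from the invariance. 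I expect that reconciling the $\frk$-part (where $\met$ need only be nil-invariant, hence arbitrary) with the $\gs$-part will be the technical crux, handled by an algebraic-hull argument as in the proof of Theorem~\ref{mthm:invariance}, showing $[\frk,\gs]$ acts by skew maps and hence $\langle\frk,\gs\rangle$-type cross terms are controlled.
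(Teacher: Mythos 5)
Your Step 2 and the orthogonality computations opening Step 1 do essentially reproduce the paper's route to the \emph{second} inclusion: $\gs$-invariance (Theorem \ref{mthm:invariance}(2)) gives $[\gs,\frg^\perp]\subseteq\gs\cap\frg^\perp$, and since $\gs\cap\gs^\perp$ is an ideal of $\frg$ by Theorem \ref{mthm:invariance}(1), the bracket $[\gs,\gs\cap\gs^\perp]$ is an ideal of $\frg$ contained in $\frg^\perp$, hence zero by effectivity, so $\gs\cap\gs^\perp\subseteq\frz(\gs)$ (this is Lemma \ref{lem:SRGperp} and Corollary \ref{cor:GSGperp}; note the argument must run through the ideal $\gs\cap\gs^\perp$, not through a single element $W\in\gs\cap\frg^\perp$, whose bracket with $\gs$ is not an ideal of $\frg$). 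Projecting $[X,\frs]\subseteq\frz(\gs)\subseteq\frr$ onto $\frs$ also gives $X_\frs=0$ correctly; your Killing-form justification is not the right mechanism, since $\met_\frs$ may be degenerate or zero (metric cotangent algebras, Example \ref{ex:sl2sl2}).

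The genuine gap is in the first inclusion, at the sentence claiming ``a parallel argument'' yields $[\gs,X_\frr]\subseteq\frg^\perp\cap\gs$ and that effectivity then ``pins $X_\frr$ into $\centre{\gs}$''. The invariance computations only control $[\gs,X]$ for the whole element $X=X_\frk+X_\frr$; since $X_\frk$ is in general not in $\frg^\perp$ and $\met$ is not $\frk$-invariant, nothing you have written separates the contributions of $X_\frk$ and $X_\frr$. This separation is exactly where the paper's proof does its real work: Lemma \ref{lem:rep_kernel} (the centralizer of a $\frg$-module contained in $\frk+\frr$ splits as $(\frc\cap\frk)+(\frc\cap\frr)$, proved via a composition series and semisimplicity of the $\frk$-action), applied to $W=\gs/(\gs\cap\gs^\perp)$, then the refinement in Proposition \ref{prop:gsperp}(2) using the $\ad(X_\frk)$-semisimple decomposition of $\gs$, and finally Lemma \ref{lem:trans_ideal} together with Theorem \ref{mthm:gperp}: one shows $\frj=\frn_{\frn}(\gs,\frz(\gs)\cap\frg^\perp)$ is an ideal of $\frg$, so $[\frj,\gs]$ is an ideal inside $\frg^\perp$, hence zero, which is what finally places the radical component of $\frg^\perp$ inside $\frz(\gs)$. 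Your proposal contains no substitute for any of this; moreover, effectivity by itself cannot conclude, because neither $[X_\frr,\gs]$ for a single element nor $\frg^\perp\cap\gs$ is known to be an ideal of $\frg$ at that stage. Relatedly, the claim in your ``Main obstacle'' paragraph that effectivity forces the radical of $\met_{\gs}$ (or $\frg^\perp\cap\gs$) to vanish is false: Example \ref{ex:so3so4} is effective with $\frg^\perp\cap\gs\neq\zsp$ and $\gs^\perp\cap\gs=\frr\neq\zsp$; what is true, and all the corollary needs, is that these spaces are central in $\gs$.
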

 
% For the proof, see Theorem \ref{mthm:gperp} in Section \ref{sec:classificationLie}. \medskip 

The proof of  Corollary \ref{mcor:invariance_radical} can be found in Section \ref{sec:classificationLie}, which is at the technical heart of our paper.
In Subsection \ref{sec:transporters} % Section \ref{sec:classificationLie}
we start out by studying the
totally isotropic ideals in $\frg$, and in particular properties of the
metric radical $\frg^\perp$. The main part of the proof of  Corollary \ref{mcor:invariance_radical} 
is given in 
Subsection \ref{sec:metric_radical}. We then also prove that if in addition $\met$ is $\frg^{\perp}$-invariant,
%then there exists an ideal $\frj$ of $\frg$ such that
%$\frj \subseteq \gs \cap \frg^{\perp}$ and 
%$\frj$ contains $[\frg^{\perp}, \gs]$.
then
$
[\frg^{\perp}, \gs] = \zsp$. 

%\medskip

As the form $\met$ may be degenerate, it is useful to introduce 
its \emph{relative index}. By definition, this  is the index
of the induced scalar product on the vector space  $\frg/\frg^\perp$. The  relative index % therefore
mostly determines the geometric and algebraic type of the bilinear form $\met$.
%\medskip 
% We will denote it  by $\ell$.

For effective metric Lie algebras with relative index  
$\ell \leq 2$, we  further strengthen Corollary
 \ref{mcor:invariance_radical} by showing that,  with this additional requirement, 
 $\frg^\perp$ does not  intersect $\gs$. This is formulated in Corollary \ref{cor:gperp_l2}.
%The stabilizer subalgebra $\frh$ contains the metric radical
%$\frg^\perp$ of $\met$, and in the homogeneous case they even
%coincide. 
%
%We then continue with
 
%
\subsection*{Classifications for small index}

Section \ref{sec:classificationLie} culminates in Subsection \ref{sec:fsubmodules},  where we give an
analysis of the action of semisimple subalgebras on the solvable 
radical of $\frg$. This imposes strong restrictions on the structure of $\frg$ 
for small relative index.
%\medskip 

The combined results are summarized in Section \ref{sec:classificationLie_sum},  leading  to the 
following general structure theorem for the case $\ell\leq 2$:

\begin{mthm}\label{mthm:smallindex}
Let $\frg$ be a real finite-dimensional Lie algebra with
nil-invariant symmetric bilinear form $\met$ of relative index
$\ell\leq 2$,
and assume that $\frg^\perp$ does not contain a non-trivial ideal of $\frg$.
Then:
\begin{enumerate}
\item
The Levi decomposition \eqref{eq:levi} of\, $\frg$ is a direct sum of ideals:   $\frg=\frk \times \frs \times \frr$.
\item 
$\frg^\perp$ is contained in $\frk \times \frz(\frr)$ 
and  $\frg^\perp \cap \frr  = \zsp$. 
\item
$\frs\perp(\frk\times\frr)$ and $\frk\perp[\frr,\frr]$.
\end{enumerate}
\end{mthm}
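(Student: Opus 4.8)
This theorem assembles results established in Section~\ref{sec:classificationLie}, and the plan is to organise them as follows.

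\emph{Statement~(2) is essentially a restatement of the earlier corollaries.} By Corollary~\ref{cor:gperp_l2} — the strengthening of Corollary~\ref{mcor:invariance_radical} valid in relative index $\ell\le 2$ — we have $\frg^\perp\cap\gs=\zsp$, which is the last assertion of~(2) (since $\frr\subseteq\gs$). Substituting this back into Corollary~\ref{mcor:invariance_radical} yields $[\frg^\perp,\gs]\subseteq\centre{\gs}\cap\frg^\perp\subseteq\gs\cap\frg^\perp=\zsp$, so that $\frg^\perp$ centralises $\gs$, and moreover $\frg^\perp\subseteq\frk\ltimes\centre{\gs}$. As $\frs$ is semisimple, the centre $\centre{\gs}$ of $\gs=\frs\ltimes\frr$ is contained in $\frr$, hence in $\frz(\frr)$, so $\frg^\perp\subseteq\frk\oplus\frz(\frr)$ as a subspace of $\frg$. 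Granting~(1), $\frz(\frr)$ becomes central in $\frg$ and this becomes a direct sum of ideals, as claimed.

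\emph{The heart of the matter is~(1), i.e.\ that the Levi factor $\frk\times\frs$ centralises $\frr$}; this is the content of the module analysis in Subsection~\ref{sec:fsubmodules} and is where I expect the main difficulty. Decomposing $\frr$ as a module over $\frk\times\frs$ into its trivial and non-trivial parts, it suffices to rule out a non-trivial irreducible submodule $W\subseteq\frr$ on which a simple ideal $\frl_i$ of $\frk\times\frs$ acts non-trivially. If $\frl_i$ is of non-compact type, Theorem~\ref{mthm:invariance} guarantees that $\met$ is invariant under $\gs\supseteq\frl_i$, so $\ad_\frg(X)$ is skew for every $X\in\frl_i$; choosing a non-zero nilpotent $e\in\frl_i$ acting non-trivially on $W$ and completing it to an $\sl_2$-triple $(e,h,f)$, the skewness of $\ad_\frg(h)$ makes the sum of the positive $h$-weight spaces of $W$ totally isotropic, and it injects into $\frg/\frg^\perp$ because $W\subseteq\frr\subseteq\gs$ and $\frg^\perp\cap\gs=\zsp$. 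Combined with the observation that $\met$ restricted to $\frl_i$ is a non-zero (hence, by simplicity, non-degenerate and indefinite) multiple of the Killing form — it cannot vanish, else $\frl_i$ would be a totally isotropic subspace of dimension $\ge 3$ injecting into $\frg/\frg^\perp$ — and a Schur-type analysis of the possible invariant pairings among $W$, $\frl_i$ and $[W,W]$, one produces a totally isotropic subspace of $\frg/\frg^\perp$ of dimension at least $3$, contradicting $\ell\le 2$. If $\frl_i$ is of compact type, Theorem~\ref{mthm:invariance} says nothing about $\frl_i$, but $[\frl_i,\frr]\subseteq[\frg,\frr]$ lies in the nilradical of $\frg$, so $\ad_\frg(w)$ is a skew nilpotent operator for every $w\in[\frl_i,\frr]$; now $W$ either carries the zero form — impossible, since the smallest non-trivial real irreducible module of a compact simple Lie algebra has dimension $\ge 3$ and $W$ would again inject as a totally isotropic subspace into $\frg/\frg^\perp$ — or a non-zero invariant form, and feeding the real representation theory of compact Lie algebras together with the identity $\langle[\xi,w],x\rangle=\langle\xi,[w,x]\rangle$ (for $\xi\in\frl_i$, $w\in W$, $x\in\frg$, valid because $\ad_\frg(w)$ is skew) and $\ell\le 2$ once more forces a contradiction. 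Hence $[\frk\times\frs,\frr]=\zsp$.

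\emph{Given~(1), statement~(3) and the remaining part of~(2) are short.} Since $\frk$, $\frs$ and $\frr$ now commute pairwise, $\frg=\frk\times\frs\times\frr$ is a direct sum of ideals, which is~(1); in particular $\frk\oplus\frz(\frr)$ in~(2) is a direct sum of ideals. For~(3), view $\frg=\frk\oplus\frs\oplus\frr$ as an $\frs$-module: $\frk\oplus\frr$ is a trivial submodule whereas the adjoint module $\frs$ has no trivial submodule, so invariance of $\met$ under $\frs$ (Theorem~\ref{mthm:invariance}) forces $\frs\perp(\frk\oplus\frr)$. Finally, for $R_1,R_2\in\frr$ and $K\in\frk$, the $\frr$-invariance of $\met$ (Theorem~\ref{mthm:invariance}) together with $[\frk,\frr]=\zsp$ gives
\[
\langle[R_1,R_2],K\rangle=-\langle R_2,[R_1,K]\rangle=0 ,
\]
since $[R_1,K]\in[\frr,\frk]=\zsp$; thus $\frk\perp[\frr,\frr]$, which completes~(3).
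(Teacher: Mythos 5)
Your treatment of parts (2) and (3) is correct and essentially coincides with the paper's route: (2) is Corollary \ref{cor:gperp_l2} (plus $\frz(\gs)\subseteq\frz(\frr)$), and your direct arguments for (3) amount to Lemma \ref{lem:sandr}. The problem is part (1), which you rightly identify as the heart of the theorem: there the decisive steps are announced rather than proved. For a non-compact simple factor, the sentence ``a Schur-type analysis of the possible invariant pairings among $W$, $\frl_i$ and $[W,W]$ \ldots{} produces a totally isotropic subspace of dimension at least $3$'' is precisely the hard point, and no argument is offered; for a compact factor, the case of a non-zero invariant form on $W$ is settled by ``forces a contradiction'' with no argument either. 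That case is genuinely delicate: for $\frl_i=\so_3$ acting on $W\cong\so_3^*$ dually paired with $\frl_i$ one obtains the metric cotangent algebra $\euc_3$ (Example \ref{ex:sl2sl2}), and Example \ref{ex:so3so4} shows that such configurations do occur at relative index $3$, so any proof must use $\ell\le2$ quantitatively; in the paper this happens through the classification of skew pairings (Propositions \ref{prop:sl2_skew_module} and \ref{prop:so3_skew_module}).

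The concrete missing ingredient is control of $[W,W]$ inside the solvable radical. In the paper, the $3$-dimensional totally isotropic subspace you aim for is produced (proof of Lemma \ref{lem:combounded_f_submodules}) only after two facts are in place: $[\frf,W]$ is totally isotropic, and $\frs\perp W$, $[\frk,W]=\zsp$ (Lemmas \ref{lem:com_q_submodules} and \ref{lem:fsubmodules_perp}). The first rests on $\frf\perp[W,W]$, proved by noting that $\frf\cap[W,W]^\perp$ is an ideal of codimension at most $\dim[W,W]$ in the semisimple $\frf$, hence all of $\frf$ \emph{provided} $\dim[W,W]\le2$; the second rests on Proposition \ref{prop:sl2_skew_module}. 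For an irreducible Levi-submodule $W\subseteq\frr$ there is no a priori bound on $\dim[W,W]$; the paper obtains one from the Kath--Olbrich classification of solvable metric Lie algebras of index $\le2$ (Proposition \ref{prop:solv_ind2}), which yields a characteristic ideal $\frq\subseteq\frr$ with $\dim[\frq,\frq]\le2$ and $\codim_\frr\frq\le2$ (Proposition \ref{prop:index2algebras_q}); note that $\mu(\frr)\le2$ is available exactly because your part (2) gives $\frg^\perp\cap\gs=\zsp$. Your weight-space construction alone does not suffice even in the simplest test case $\frl_i=\sl_2(\RR)$ with $W$ the standard $2$-dimensional module: the span of $e$ and the positive weight spaces of $W$ is only $2$-dimensional, and to reach a contradiction one needs $W\perp W$ and $\frs\perp W$ (the skew-pairing classification) before an isotropic vector of $\sl_2(\RR)$ can be adjoined. (A further small inaccuracy: for a real simple $\frl_i$ that is not absolutely simple, an invariant symmetric form need not be proportional to the Killing form.) To close the gap you must either reproduce the chain Proposition \ref{prop:index2algebras_q} $\Rightarrow$ Lemmas \ref{lem:fsubmodules_perp}, \ref{lem:combounded_f_submodules} $\Rightarrow$ Proposition \ref{prop:commuting_ideals}, or supply a complete substitute argument for the pairings involving $[W,W]$.
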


Examples in Section \ref{sec:index3} illustrate that
the statements in Theorem \ref{mthm:smallindex} may fail 
for relative index $\ell\geq 3$.
\\
%in general do not
%hold if we assume that the relative index is $\ell\geq 3$.\\

We specialize Theorem \ref{mthm:smallindex} to obtain
classifications of the Lie algebras $\frg$ in the cases $\ell=1$
and $\ell=2$.
As follows from the discussion at the beginning, these theorems
also describe the structure of isometry Lie algebras of  pseudo-Riemannian homogeneous spaces of
finite volume with index one or two (real signatures of type $(n-1,1)$ or $(n-2,2)$, respectively). \medskip

Our first result concerns the Lorentzian case: 
\begin{mthm}\label{mthm:lorentzian}
Let $\frg$ be a Lie algebra with nil-in\-variant symmetric bilinear
form $\met$ of relative index $\ell=1$,
and assume that $\frg^\perp$ does not contain a non-trivial ideal
of $\frg$.
Then one of the following cases occurs: 
\begin{enumerate}
\item[(I)]
$\frg=\fra\times\frk$, where $\fra$ is abelian and either
semidefinite or Lorentzian.
\item[(II)] $\frg=\frr\times\frk$, where $\frr$ is  Lorentzian of oscillator type.  
\item[(III)] $\frg=\fra \times\frk\times\sl_2(\RR)$, where 
$\fra$ is abelian and definite,  $\sl_2(\RR)$ is Lorentzian. 
%  and  $(\fra \times \frk) \perp \sl_2(\RR)$. 
\end{enumerate}
\end{mthm}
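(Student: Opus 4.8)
The plan is to take as input the structural conclusions of Theorem~\ref{mthm:smallindex}. Applied with $\ell=1$ it yields a decomposition of $\frg$ into a direct product of ideals $\frg=\frk\times\frs\times\frr$ with $\frg^\perp\subseteq\frk\times\frz(\frr)$, $\frg^\perp\cap\frr=\zsp$, $\frs\perp(\frk\times\frr)$ and $\frk\perp[\frr,\frr]$; and by Theorem~\ref{mthm:invariance} the form $\met$ is invariant under $\ad(\gs)$, so in particular $\met|_{\frr}$ is an invariant symmetric bilinear form on the solvable Lie algebra $\frr$. It then remains to identify $\frs$ and $\frr$. The compact Levi factor $\frk$ requires no separate discussion: \emph{every} symmetric bilinear form on a semisimple Lie algebra of compact type is nil-invariant, so nil-invariance places no constraint on $\met|_{\frk}$ beyond the bound on the total relative index.

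First I would dispose of the non-compact semisimple part. Since $\frg^\perp\cap\frs\subseteq(\frk\times\frz(\frr))\cap\frs=\zsp$ and $\frs\perp(\frk\times\frr)$, the restriction $\met|_{\frs}$ is non-degenerate; being invariant on the semisimple Lie algebra $\frs$ of non-compact type, it is a non-zero multiple of the Killing form on each simple ideal of $\frs$. The Killing form of a non-compact simple real Lie algebra is negative definite on the compact part and positive definite on the complement of a Cartan decomposition, so its index is the smaller of the two dimensions, which is at least $1$, with equality only for $\sl_2(\RR)$. Therefore $\ell=1$ forces $\frs=\zsp$ or $\frs\cong\sl_2(\RR)$. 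In the latter case, after replacing $\met$ by $-\met$ if necessary (which leaves $\ell$ unchanged), $\met|_{\frs}$ is a positive multiple of the Killing form and already realises the full relative index, so $\met$ is positive semidefinite on the complementary ideal $\frk\times\frr$; as $\frr\cap\frg^\perp=\zsp$, this forces $\met|_{\frr}$ to be positive definite. A solvable Lie algebra carrying a definite invariant symmetric bilinear form is abelian, so $\frr=\fra$ is abelian and we are in case~(III).

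There remains the case $\frs=\zsp$, so $\frg=\frk\times\frr$. Here $\met|_{\frr}$ is invariant of index at most $1$ (it embeds isometrically into $\frg/\frg^\perp$ via $\frr\cap\frg^\perp=\zsp$), but it may be degenerate, and the crux is to control its kernel $\frr_0=\{v\in\frr\mid v\perp\frr\}$, an ideal of $\frr$ by invariance. Since $\frr_0\cap[\frr,\frr]$ lies both in $[\frr,\frr]\perp\frk$ and in $\frr_0\perp\frr$, it is orthogonal to $\frg=\frk\oplus\frr$, hence is an ideal of $\frg$ contained in $\frg^\perp$ and therefore trivial by effectiveness; invariance then gives $[\frr,\frr_0]\subseteq\frr_0\cap[\frr,\frr]=\zsp$, so $\frr_0$ is central in $\frr$ and meets $[\frr,\frr]$ trivially. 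The form induced on $\frr/\frr_0$ is non-degenerate, invariant and of index at most $1$, and I would now invoke the classification of solvable metric Lie algebras of index $\le 1$ recalled in Section~\ref{sec:solvable}. If $\frr/\frr_0$ is abelian, then $[\frr,\frr]\subseteq\frr_0$ together with $\frr_0\cap[\frr,\frr]=\zsp$ gives $[\frr,\frr]=\zsp$, so $\frr=\fra$ is abelian with $\met|_{\fra}$ of index at most $1$, i.e. semidefinite or Lorentzian: this is case~(I). If $\frr/\frr_0$ is Lorentzian of oscillator type, then $\frr_0=\zsp$, for otherwise the at most one-dimensional space $\frr_0$ together with a lift of an isotropic line of the oscillator quotient would span a two-dimensional totally isotropic subspace of $\frg/\frg^\perp$, contradicting $\ell=1$; hence $\frr$ itself is Lorentzian of oscillator type, which is case~(II).

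I expect the degeneracy analysis of the previous paragraph to be the main obstacle. The point is that the compact factor $\frk$ contributes to $\met$ in a way that nil-invariance cannot see, so a priori the kernel of $\met|_{\frr}$ may pair non-trivially with $\frk$ and need not lie in $\frg^\perp$. What rescues the argument is the bracket-orthogonality $\frk\perp[\frr,\frr]$ furnished by Theorem~\ref{mthm:smallindex}: it confines the dangerous part of the degeneracy to $[\frr,\frr]$, where effectiveness eliminates it, while the harmless remainder lives in an abelian direct summand and is precisely what the term \emph{semidefinite} in case~(I) absorbs. With the kernel of $\met|_{\frr}$ thus controlled, the index count for $\frs$ and the low-index classification of solvable metric Lie algebras close the proof.
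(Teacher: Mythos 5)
Your proposal is correct and follows essentially the same route as the paper's proof: it starts from Theorem~\ref{mthm:smallindex} and the invariance supplied by Theorem~\ref{mthm:invariance}, forces $\frs=\zsp$ or $\frs=\sl_2(\RR)$ by the index of invariant forms on non-compact semisimple algebras, makes $\frr$ definite (hence abelian) in the $\sl_2(\RR)$ case by an isotropic-dimension count, and settles the case $\frs=\zsp$ via the classification of solvable Lie algebras with invariant Lorentzian scalar products. Your only departures are cosmetic: you count totally isotropic subspaces directly in $\frg/\frg^\perp$ where the paper cites Lemma~\ref{lem:sandr}\,(3), and you control the kernel of $\met_\frr$ by hand (using $\frk\perp[\frr,\frr]$ and effectiveness) where the paper splits into $\frr$ semidefinite versus Lorentzian type and invokes Proposition~\ref{prop:riemann}.
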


This classification of isometry Lie algebras for finite volume homogeneous Loren\-tzian manifolds 
is contained in Zeghib's
 \cite[Th\'eor\`eme alg\'ebrique 1.11]{zeghib}, which  uses a somewhat different
approach in its proof. 
%which contains a classification of Lie algebras $\frg$ furnished
%with a certain degenerate invariant bilinear form
%(in general different from $\met$)
%derived from the condition that $\frg$ is an isometry algebra of a
%Lorentz manifold of finite volume. I
%
Moreover, the list in \cite{zeghib} contains two additional cases of metric Lie algebras (Heisenberg algebra and tangent algebra of the affine group, compare Example \ref{ex:heisenberg} of the present paper) that cannot appear as Lie algebras of transitive Lorentzian
isometry groups, since they do not satisfy the effectivity condition. According to \cite{zeghib}, models of all three types (I)-(III) actually occur as isometry Lie algebras of homogeneous spaces $G/H$, in which case $\frh = \frg^{\perp}$ is a subalgebra tangent to a closed subgroup $H$ of $G$. 
% give an indepent proof.
 \medskip 

The algebraic methods developed here  also lead to a complete understanding in the case of 
signature $(n-2,2)$: 

\begin{mthm}\label{mthm:index2}
Let $\frg$ be a Lie algebra with nil-invariant symmetric bilinear 
form $\met$ of relative index $\ell=2$,
and assume that $\frg^\perp$ does not contain a non-trivial ideal
of $\frg$.
Then one of the following cases occurs:
\begin{enumerate}
\item[(I)]
$\frg=\frr\times\frk$, where $\frr$ is one of the following:
\begin{enumerate}
\item
$\frr$ is abelian.
\item
$\frr$ is Lorentzian of oscillator type.
\item
$\frr$ is solvable but non-abelian with invariant scalar product
of index $2$.
\end{enumerate}
\item[(II)]
$\frg=\fra\times\frk\times\frs$. Here, $\fra$ is abelian, 
$\frs = \sl_2(\RR) \times \sl_2(\RR)$ with a non-degenerate
invariant scalar product of index $2$.
Moreover, $\fra$ is definite. %  and $(\fra \times \frk) \perp \frs$. 
\item[(III)]
$\frg =\frr\times\frk\times\sl_2(\RR)$,
where $\sl_2(\RR)$ is Lorentzian,
% $(\frr\times\frk)\perp\sl_2(\RR)$,
and $\frr$ is one of the following:
\begin{enumerate}
\item
$\frr$ is abelian and either semidefinite or Lorentzian.
\item
$\frr$ is Lorentzian of oscillator type.
\end{enumerate}
\end{enumerate}
\end{mthm}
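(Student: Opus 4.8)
The plan is to start from the general structure theorem (Theorem \ref{mthm:smallindex}) applied with $\ell=2$, which already gives us $\frg=\frk\times\frs\times\frr$ as a direct sum of ideals, with $\frs\perp(\frk\times\frr)$, $\frk\perp[\frr,\frr]$, $\frg^\perp\subseteq\frk\times\frz(\frr)$ and $\frg^\perp\cap\frr=\zsp$. Since $\frk$ is of compact type, any scalar product on it is automatically nil-invariant and plays no structural role beyond contributing a factor; the real work is to understand the pair $(\frs\times\frr,\met)$ and to bound the ``size'' of the non-compact semisimple part $\frs$. The decomposition $\frg^\perp\cap\frr=\zsp$ means that $\met$ restricted to $\frr$ is non-degenerate, hence $\frr$ carries an invariant scalar product (by Theorem \ref{mthm:invariance}(2), $\gs=\frs\ltimes\frr=\frs\times\frr$ acts on $\met$ by isometries, so in particular $\met|_\frr$ is $\frr$-invariant); likewise $\met|_\frs$ is $\frs$-invariant and, because $\frs$ has trivial center, $\frg^\perp\cap\frs=\zsp$, so $\met|_\frs$ is non-degenerate as well. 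Thus $\frs\times\frr$ splits orthogonally into two metric Lie algebras with invariant non-degenerate scalar products, and the index of $\met|_{\frs\times\frr}$ equals the sum of the two indices, which is at most $2$.

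Next I would run the case distinction on the index $j$ of the non-degenerate invariant scalar product on $\frs$, which can be $0$, $1$, or $2$. If $j=0$, then $\met|_\frs$ is definite; but a simple Lie algebra of non-compact type admits no definite invariant form (the Killing form, and any invariant form, is a real multiple of the Killing form on each simple factor, which is indefinite for non-compact type), so $\frs=\zsp$, and we are left with $\frg=\frk\times\frr$ with $\met|_\frr$ invariant of index at most $2$; invoking the classification of solvable metric Lie algebras with invariant scalar product of index $\le 2$ recalled in Section \ref{sec:solvable} yields case (I), subcases (a)--(c). If $j=1$, then some simple factor of $\frs$ carries an invariant Lorentzian form; the only simple real Lie algebra admitting such a form is $\sl_2(\RR)$ with (a multiple of) its Killing form, and it exhausts the index, so $\frs=\sl_2(\RR)$ and $\met|_\frr$ has index $\le 1$ and is invariant, giving case (III) with $\frr$ abelian (semidefinite or Lorentzian) or Lorentzian of oscillator type by the index-$\le 1$ solvable classification. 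If $j=2$, then either one simple factor of $\frs$ carries an index-$2$ invariant form or two simple factors each carry an index-$1$ form; the first possibility is ruled out since no simple real Lie algebra has an invariant form of index exactly $2$ with the remaining signature forcing $\frr$ definite — more precisely, one checks that among simple real Lie algebras the minimal index of a non-degenerate invariant form is achieved by $\sl_2(\RR)$ at index $1$, and the next configuration realizing total index $2$ on a semisimple non-compact algebra is $\sl_2(\RR)\times\sl_2(\RR)$ with the sum of the two Killing forms — so $\frs=\sl_2(\RR)\times\sl_2(\RR)$ and $\met|_\frr$ is definite, i.e. $\frr$ is abelian and definite, giving case (II).

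The main obstacle I anticipate is the $j=2$ bookkeeping: I must rule out a single simple factor carrying an invariant form of index exactly $2$ (e.g. one has to check signatures of Killing forms of all simple real Lie algebras of small rank and see that index $2$ with a complementary definite part is impossible, or rather that it never arises as the full $\met|_\frs$ with the residual constraint), and I must also rule out configurations like three or more simple factors or a factor of compact type hiding inside $\frs$ — but the latter is excluded by definition of $\frs$ being of non-compact type, and the former by the additivity of index over orthogonal factors together with each non-compact simple factor contributing index $\ge 1$. A secondary point requiring care is confirming that the abelian radical in cases (II) and (III)(a) genuinely decouples as an orthogonal direct factor (no twisting by $\frs$), which follows from $\frg=\frk\times\frs\times\frr$ being a direct sum of \emph{ideals} in Theorem \ref{mthm:smallindex}(1) together with the orthogonality relations in part (3); and in case (I)(c) one must cite the explicit list of non-abelian solvable metric Lie algebras of index $2$ from Section \ref{sec:solvable} rather than reprove it.
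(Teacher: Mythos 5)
Your overall strategy coincides with the paper's: apply Theorem \ref{mthm:smallindex}, split according to the index of $\met$ on $\frs$, use that an invariant definite form forces a semisimple factor to be of compact type, and quote the solvable classification of Section \ref{sec:solvable_invariant}. However, there is a genuine gap at the very start of your reduction: you claim that $\frg^\perp\cap\frr=\zsp$ implies that $\met$ restricted to $\frr$ is non-degenerate. This is a non sequitur, and it is in fact false: the radical of $\met_\frr$ is $\frr\cap\frr^\perp$, which can be strictly larger than $\frg^\perp\cap\frr$, because an element of $\frr$ orthogonal to all of $\frr$ (and to $\frs$, by Theorem \ref{mthm:smallindex}(3)) may still pair non-trivially with $\frk$ and hence lie outside $\frg^\perp$. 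The theorem you are proving acknowledges exactly this phenomenon: in case (III-a) the radical $\frr$ is allowed to be \emph{semidefinite}, i.e.\ degenerate, and the paper's proof of case (I) likewise treats the possibilities $\ell(\frr)=1$ with $\frr$ degenerate and $\ell(\frr)=0$ with $\frr$ semidefinite. Your argument, taken literally, would ``prove'' a strictly stronger and incorrect classification in which $\met_\frr$ is always a scalar product, and your appeal to the classification of solvable Lie algebras with \emph{invariant scalar products} of index $\le 2$ (Proposition \ref{prop:solv_ind2}) is only legitimate once non-degeneracy of $\met_\frr$ has been established, which in general it cannot be. (The same faulty inference appears for $\frs$: ``$\frg^\perp\cap\frs=\zsp$, so $\met_\frs$ is non-degenerate'' does not follow for the same reason; here the conclusion happens to be true, but the correct argument is that of Lemma \ref{lem:sandr}(1), namely that $\frs\cap\frs^\perp$ is an ideal of the semisimple algebra $\frs$ of dimension at most $\mu(\gs)\le 2$, hence zero.)

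The correct bookkeeping, as in the paper, works with the indices $\mu(\frs)=s$ and $\mu(\frr)=r$ of the possibly degenerate restrictions: Corollary \ref{cor:gperp_l2} gives $\frg^\perp\cap\gs=\zsp$, so every totally isotropic subspace of $\gs$ injects into $\frg/\frg^\perp$ and $\mu(\gs)\le\ell=2$, and Lemma \ref{lem:sandr} then yields $s+r\le 2$ together with non-degeneracy of $\frs$ and the orthogonality relations. In the case split one must then allow $\met_\frr$ to be degenerate: if $\ell(\frr)=0$ one uses Proposition \ref{prop:riemann} (as in the proof of Theorem \ref{mthm:lorentzian}) to conclude $[\frr,\frr]\subseteq\frr^\perp\cap\frr$ and hence, via $\frg^\perp\cap\frr=\zsp$ and Lemma \ref{lem:sandr}(2), that $\frr$ is abelian; if $\ell(\frr)=1$ with $\frr$ degenerate one again gets abelian or oscillator type; only when $\met_\frr$ is non-degenerate of index $2$ does Proposition \ref{prop:solv_ind2} enter. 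With this correction your case analysis on $s\in\{0,1,2\}$ (including the check that no non-compact simple real Lie algebra carries an invariant form of index exactly $2$, so that $s=2$ forces $\frs=\sl_2(\RR)\times\sl_2(\RR)$) matches the paper's proof.
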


For the definition of an oscillator algebra, see Example \ref{ex:oscillator1}.
The possibilities for $\frr$ in case (I-c) of
Theorem \ref{mthm:index2}
above are discussed in Section \ref{sec:solvable_invariant}. 
Note further that the orthogonality relations of Theorem \ref{mthm:smallindex} 
part (3) are always satisfied.

\begin{remark}
Theorem \ref{mthm:index2} contains no information which of the  possible  algebraic models
actually do occur as isometry Lie algebras of homogeneous spaces of index two. This question needs to be considered on another occasion. 
\end{remark} 

% \wolf{BEGIN: introduction from small paper}
We apply our results to study the isometry groups of simply
connected homogeneous pseudo-Riemannian manifolds of finite volume.
D'Ambra \cite[Theorem 1.1]{dambra} showed that a simply connected
compact analytic Lorentzian manifold (not necessarily homogeneous)
has compact isometry group, and she also gave an example of a simply
connected compact analytic manifold of metric signature $(7,2)$ that
has a non-compact isometry group.

Here, we study homogeneous spaces for arbitrary metric signature.
The main result is the following theorem:

\begin{mthm}\label{mthm:geometric}
Let $M$ be a connected and simply connected pseudo-Riemannian
homogeneous space of finite volume, $G=\Iso(M)^\circ$,
and let $H$ be the stabilizer subgroup in $G$ of a point in $M$.
Let $G=KR$ be a Levi decomposition, where $R$ is the solvable radical
of $G$.
Then:
\begin{enumerate}
\item
$M$ is compact.
\item
$K$ is compact and acts transitively on $M$.
\item
$R$ is abelian.
Let $A$ be the maximal compact subgroup of $R$. Then $A=\Zen(G)^\circ$.
More explicitely, $R=A\times V$ where $V\cong\RR^n$ and $V^{K}=\zsp$.
\item
$H$ is connected.
If $\dim R>0$, then $H=(H\cap K) E$, where $E$ and $H\cap K$ are
normal subgroups in $H$, $(H\cap K)\cap E$ is finite,
and $E$ is the graph of a non-trivial homomorphism
$\varphi:R\to K$, where the restriction $\varphi|_A$ is injective.
\end{enumerate}
\end{mthm}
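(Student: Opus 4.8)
The plan is to combine the algebraic structure theory developed for nil-invariant forms (Theorems A and B and their corollaries) with classical results on isometric actions and the geometry of homogeneous spaces of finite volume. First I would set up the metric Lie algebra: the isometric $G$-action on $M=G/H$ of finite volume induces, at a base point $p$, a nil-invariant symmetric bilinear form $\met_p$ on $\frg=\mathrm{Lie}(G)$ (by the result of Section \ref{sec:isometry_algebras}), whose kernel $\frg^\perp$ contains $\frh=\mathrm{Lie}(H)$; in fact, since the action is effective and $M$ is homogeneous, $\frg^\perp=\frh$ and $\frg^\perp$ contains no nontrivial ideal of $\frg$. The relative index of $\met_p$ equals the metric index of $M$, which may be arbitrary, so Theorem \ref{mthm:smallindex} does not apply directly; instead I would use Theorem \ref{mthm:invariance} and Corollary \ref{mcor:invariance_radical}, which hold in all indices, together with the additional input that $M$ is \emph{simply connected}.

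The key steps, in order: (1) Use Corollary \ref{mcor:invariance_radical} to get $\frg^\perp=\frh\subseteq\frk\ltimes\centre{\gs}$ and $[\frh,\gs]\subseteq\centre{\gs}\cap\frh$. Since $\frh$ contains no nontrivial ideal of $\frg$, and $\centre{\gs}$ is "small", this forces $\frs$ to be orthogonal to everything and, more importantly, to act trivially in the relevant sense; I expect one deduces $\frs=\zsp$, i.e. $G$ has no simple factors of non-compact type — otherwise one produces a proper action giving an unbounded orbit contradicting finiteness of volume (this is where the nil-invariance machinery does its work, replacing the ad hoc arguments of Adams--Stuck/Zeghib). (2) With $\frg=\frk\ltimes\frr$, $\frk$ of compact type, analyze $\frr$: the projection of $\frh$ into $\frr$ must be trivial or central, and using that $H$ is the full stabilizer of a simply connected homogeneous space, show $\frr$ is abelian. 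Here I would invoke the Baues--Globke result \cite{BG} on solvable groups: the restriction of everything to the solvable radical is an \emph{invariant} form, and finite volume forces strong rigidity; combined with simple connectedness of $M$ this rules out any nonabelian nilpotent (e.g. oscillator/Heisenberg) part, because a nonabelian nilradical would force $\frh$ to meet $[\frr,\frr]$ or produce a non-simply-connected quotient. (3) Once $R$ is abelian: $K$ compact (semisimple of compact type, so $K$ is compact with finite center; one checks the center issue via simple connectedness of $M$ again). $K$ acts on $M$; show the $K$-orbit through $p$ is open and closed, hence all of $M$, giving transitivity of $K$ — this uses that $R$ abelian and the decomposition of $\frg$ give $\frg=\frk+\frh$. (4) Compactness of $M$ then follows since $M=K/(K\cap H)$ with $K$ compact. (5) For the structure of $R=A\times V$: decompose the abelian $R$ into its maximal compact torus $A$ and a vector part $V$; the $K$-action on $\frr$ is by a representation, $V^K=\zsp$ comes from effectiveness (a $K$-fixed vector in $V$ would generate a central $\RR$-factor acting trivially, contradicting that $H$ contains no ideal — or rather giving a nontrivial $\RR$ in $\frg^\perp$ which is an ideal), and $A=\Zen(G)^\circ$ because $A$ is central (the compact part of an abelian normal subgroup on which $K$ acts, with the $K$-action trivial on it since $\mathrm{Aut}(A)$ is discrete) and conversely $\Zen(G)^\circ\subseteq R$ is compact. (6) For (4): $H$ connected because $M=G/H$ is simply connected and $G$ is connected, so $\pi_0(H)$ injects into $\pi_1(M)=1$. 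Then $\frh\subseteq\frk\ltimes\centre{\frr}$ with $\frh\cap\frr=\zsp$ (the vector part argument), so $\frh$ projects injectively to $\frk$ and is the graph of a map $\frr\supseteq\,?\to\frk$; identify $E$ as the subgroup corresponding to the graph of $\varphi:R\to K$, with $\varphi|_A$ injective because $A\cap H$ must be trivial (as $A$ is central, $A\cap H$ would be central in $G$ hence an ideal inside $\frg^\perp$).

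The main obstacle I anticipate is Step (1)--(2): ruling out non-compact simple factors and nonabelian solvable radical \emph{in arbitrary index}, where Theorem \ref{mthm:smallindex} is unavailable. The structural corollaries \ref{mcor:invariance} and \ref{mcor:invariance_radical} pin down $\frg^\perp$ but do not by themselves kill $\frs$ or the nilpotent part of $\frr$; one needs an extra geometric ingredient — presumably that a nonabelian nilpotent or a non-compact semisimple piece, acting isometrically with the constraints on $\frg^\perp=\frh$, would either violate finiteness of volume (via a Borel density / recurrence argument as in \cite{AS1,zeghib}) or violate simple connectedness (the graph subgroup $H$ would have nontrivial $\pi_0$ or $M$ would fiber over a non-simply-connected nilmanifold). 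Making this dichotomy precise, and in particular showing that simple connectedness of $M$ is exactly what forces $R$ abelian, is the technical crux; everything after that (compactness, transitivity of $K$, the $A\times V$ splitting, connectedness and the graph description of $H$) is comparatively routine bookkeeping with compact group actions and covering space theory.
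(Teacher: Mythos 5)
Your setup (the nil-invariant form $\met_p$ with $\frg^\perp=\frh$, Corollary \ref{mcor:invariance_radical} pinning $\frh\subseteq\frk\ltimes\frz(\gs)$, and the endgame for parts (3)--(4): graph of $\varphi$, injectivity of $\varphi|_A$ via effectiveness, connectedness of $H$ from $\pi_1(M)=1$) matches the paper. But the two steps you yourself flag as the crux are genuinely missing, and your proposed order of deduction cannot be repaired as stated. To kill $\frs$ the paper does not argue via ``unbounded orbits'': it combines $H\subseteq KR$ (Corollary \ref{mcor:invariance_radical} plus connectedness of $H$) with Mostow's lemma that for a finite-volume quotient $G/H$ the Zariski closure of $\Ad_\frg(H)$ contains $\Ad_\frg(S)$; these two facts are incompatible unless $S$ is trivial. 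Compactness of $M$ then comes from Mostow's theorem on finite-volume quotients of (compact extensions of) solvable groups, \emph{not} from $M=K/(K\cap H)$ -- your route is circular, since you only get transitivity of $K$ later, and your argument for it rests on the identity $\frg=\frk+\frh$, which amounts to $\p_\frr(\frh)=\frr$; in the paper that is Lemma \ref{lem:surjective_R} and is \emph{deduced from} $G=KH$, i.e.\ from transitivity of $K$, which in turn is Montgomery's theorem for compact simply connected homogeneous spaces (after the Onishchik--Vinberg structure $G=L\ltimes V$). Nothing in the infinitesimal data gives $\frg=\frk+\frh$ a priori.

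The abelianness of $R$ is the other gap: your suggested mechanism (``a nonabelian nilradical forces $\frh$ to meet $[\frr,\frr]$ or produces a non-simply-connected quotient'') is not substantiated and is not how it goes. The paper's argument needs compactness of $M$ first: since $H$ is connected, $H\subseteq K\Zen(R)^\circ$ by Corollary \ref{mcor:invariance_radical}, so $M$ surjects onto $R/\Zen(R)^\circ$, which is therefore compact; hence the nilradical $N$ is $T\Zen(R)^\circ$ with $T$ a compact torus, and compact subgroups of $N$ are central in $R$, giving $N\subseteq\Zen(R)$ and so $R=N$ abelian. So the logical order must be: $S=\one$ and $M$ compact (Mostow, twice), then $R$ abelian, then Onishchik--Vinberg and Montgomery for (2), (3), and only then the graph description of $H$ (the paper also passes to the universal cover of $G$ to define $\varphi$ cleanly). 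Your proposal identifies where the work lies but does not supply these ingredients, and as written steps (2)--(4) rest on each other circularly.
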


In Section \ref{sec:simplyconn} we give examples of isometry groups
of compact simply connected homogeneous $M$ with non-compact radical.
However, for metric index $1$ or $2$ the isometry
group of a simply connected $M$ is always compact:

\begin{mthm}\label{mthm:sc_index2}
The isometry group of any simply connected pseudo-Riemannian
homogeneous manifold of finite volume with metric index $\ell\leq2$
is compact.
\end{mthm}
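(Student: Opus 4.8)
The plan is to combine Theorem~\ref{mthm:geometric} with the index-$\leq 2$ classifications (Theorems~\ref{mthm:lorentzian} and~\ref{mthm:index2}) to force the solvable radical $R$ to be trivial. Let $M$ be a simply connected pseudo-Riemannian homogeneous space of finite volume with metric index $\ell\leq 2$, and set $G=\Iso(M)^\circ$, $H$ the point stabilizer, $\frg$ the isometry Lie algebra with the induced nil-invariant form $\met=\met_p$. By Theorem~\ref{mthm:geometric}, $M$ is compact, $G=KR$ with $R$ abelian, $R=A\times V$ where $A=\Zen(G)^\circ$ is the maximal compact subgroup of $R$ and $V\cong\RR^n$ with $V^K=\zsp$, and (if $\dim R>0$) $H=(H\cap K)E$ with $E$ the graph of a non-trivial homomorphism $\varphi:R\to K$ whose restriction to $A$ is injective.

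The main step is to show $V=\zsp$, i.e.\ $\dim R = \dim A$, so that $R=A$ is central and compact, whence $G=K\cdot A$ is compact and we are done by part~(2) of Theorem~\ref{mthm:geometric} (or directly: a compact group acting transitively gives a compact $M$ with compact isometry group component). To rule out $V\neq\zsp$, I would pass to the metric Lie algebra $(\frg,\met)$ and its effective quotient $\bar\frg = \frg/\fri$, where $\fri$ is the maximal ideal of $\frg$ contained in $\frg^\perp$; since $\frh$ (the Lie algebra of $H$) lies in $\frg^\perp$ and since the kernel of the $G$-action on $M$ is discrete (so $\frg$ acts effectively), one checks $\fri$ is contained in $\frh$ and the relative index of $\met$ on $\frg$ equals the metric index $\ell\leq 2$ of $M$. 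Now apply Theorem~\ref{mthm:lorentzian} (if $\ell=1$) or Theorem~\ref{mthm:index2} (if $\ell=2$) to $\bar\frg$. In every case on those lists, the solvable radical $\bar\frr$ is either abelian, or Lorentzian of oscillator type, or solvable non-abelian with invariant scalar product of index $2$; and in all of these the radical of the metric Lie algebra carries a genuine \emph{invariant} (not merely degenerate-induced) structure that is incompatible with the geometric constraints from Theorem~\ref{mthm:geometric}(3)--(4). Concretely: the homomorphism $\varphi:R\to K$ has image in a compact semisimple group, so $\varphi(V)$ is a connected abelian subgroup of $K$, hence its closure is a torus; but $V$ is simply connected and $\varphi|_A$ injective forces $\ker\varphi$ to be a complement to a subspace mapping injectively into a torus — one derives that the corresponding isotropy representation of $H$ on $\frg/\frh$ cannot preserve a non-degenerate form of index $\leq 2$ unless $V=\zsp$, using that the restriction of $\met$ to $\frr$ would have to be definite on $V$ (being $K$-equivariantly identified, via $\varphi$, with a subspace on which $K$ acts with compact closure) while simultaneously $V^K=\zsp$ forces a non-trivial skew action, contradicting definiteness in the relevant index range.

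Alternatively, and perhaps more cleanly, I would argue at the level of $\frr$: by Theorem~\ref{mthm:smallindex}(2)--(3) applied to the effective $\bar\frg$, we have $\frg^\perp\subseteq\frk\times\frz(\frr)$, $\frg^\perp\cap\frr=\zsp$, and $\frk\perp[\frr,\frr]$, $\frs\perp(\frk\times\frr)$; combined with $R$ abelian (so $[\frr,\frr]=\zsp$) this makes $\met$ restricted to $\frr$ non-degenerate modulo $\frg^\perp\cap\frr=\zsp$, i.e.\ $\met|_\frr$ is a non-degenerate $K$-invariant form on $\frr$ of index $\leq 2$. Since $\frr=\fra\oplus\frv$ with $\frv^{\frk}=\zsp$ and $\frk$ acting on $\frv$ without nonzero fixed vectors, $K$-invariance of a non-degenerate form together with the absence of fixed vectors forces the $\frk$-action on $\frv$ to be a sum of non-trivial irreducibles each carrying an invariant form; a non-trivial compact-type action preserving an indefinite form of index $\leq 2$ on a space with no fixed vectors can be excluded by inspecting the possible low-dimensional real representations of compact semisimple Lie algebras and their invariant bilinear forms (the fixed-point-free condition kills the trivial summands, and index $\leq 2$ is too small to accommodate the remaining possibilities unless $\frv=\zsp$). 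Hence $\frv=\zsp$, $R=A$ is central and compact, and $G$ is compact.

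The hard part will be the final dimension-counting/representation-theoretic exclusion showing $\frv=\zsp$: one must rule out that the compact part $K$ acts non-trivially and fixed-point-freely on a non-degenerate subspace of index $\leq 2$. The cleanest route is to observe that any non-trivial real representation of a compact Lie algebra preserving a bilinear form either preserves a definite form (so contributes $0$ to the index) or, if it preserves an indefinite one, the representation space must contain a non-trivial invariant definite subspace as well, and the fixed-point-free hypothesis $\frv^{\frk}=\zsp$ together with $K$-invariance of the \emph{whole} decomposition $\frg=\frk\times\frs\times\frr$ (direct sum of ideals, by Theorem~\ref{mthm:smallindex}(1)) is incompatible with $\frk$ acting by skew maps on an indefinite $\frv$ of index at most two — essentially because compact groups only preserve definite forms on irreducible pieces, so index contributions come only from pairing dual non-isomorphic pieces, which would require $\dim\frv\geq$ something forcing index $\geq$ more than $2$, unless $\frv=\zsp$. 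Once $\frv=\zsp$ is established, the conclusion that $\Iso(M)$ is compact is immediate from Theorem~\ref{mthm:geometric}.
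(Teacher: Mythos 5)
Your overall skeleton (Theorem~\ref{mthm:geometric} plus the index~$\leq 2$ structure theory to force $V=\zsp$, hence $G=KA$ compact) is the paper's route, but the argument you actually give for the key step has a hole, and it overlooks that the step is one line. Theorem~\ref{mthm:smallindex}(1) applied to $(\frg,\met_p)$ (no effective quotient is needed: $\frg^\perp=\frh$ contains no non-trivial ideal because $\Iso(M)^\circ$ acts effectively) says the Levi decomposition is a direct product of ideals, i.e.\ $[\frk,\frr]=\zsp$; combined with $V^K=\zsp$ from Theorem~\ref{mthm:geometric}(3) this gives $V=\zsp$ immediately, which is exactly what the paper does. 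Your representation-theoretic substitute does not close: the principle you lean on, that a compact semisimple $\frk$ cannot act non-trivially and fixed-point-freely preserving a non-degenerate form of index $\leq 2$, is false in the definite case ($\so_3$ acting on $\RR^3$ with a Euclidean form is non-trivial, fixed-point-free, and invariant). Your isotypic-decomposition argument only excludes an \emph{indefinite} invariant form on $\frv$; it leaves open the case $\met|_{\frv}$ definite with $\frk$ acting non-trivially, and ruling that out requires a genuine nil-invariance input (e.g.\ skewness of the nilpotent operators $\ad(v)$, $v\in\frv$, yields $\frv=[\frk,\frv]\perp\frv$, contradicting definiteness), or simply quoting Theorem~\ref{mthm:smallindex}(1), whose proof via Proposition~\ref{prop:commuting_ideals} is where the real work sits. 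The classification lists of Theorems~\ref{mthm:lorentzian} and~\ref{mthm:index2}, and the first ``concrete'' paragraph about $\varphi:R\to K$ and tori, play no effective role as written.

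The second gap is the endgame. The theorem asserts compactness of the full isometry group, whereas your conclusion (``immediate from Theorem~\ref{mthm:geometric}'') only gives compactness of $G=\Iso(M)^\circ$; a priori $\Iso(M)$ could have infinitely many components. The paper closes this by observing that $K$ is characteristic in $G$ and acts transitively, so $\Tan_xM$ is identified with $\frk/(\frh\cap\frk)$ and the isotropy representation of $\Iso(M)_x$ factors through a closed subgroup of the compact group $\Aut(\frk)$; hence $M$ carries an $\Iso(M)$-invariant Riemannian metric, and since $M$ is compact, $\Iso(M)$ is compact. Some argument of this kind (or an appeal to finiteness of the component group, as in Gromov's theorem mentioned in the paper's remark) must be added for your proof to establish the stated result.
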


As follows from Theorem \ref{mthm:geometric}, the isometry Lie algebra of a
simply connected pseudo-Riemannian homogeneous space of finite volume
has abelian radical.
This motivates a closer investigation of Lie algebras with abelian
radical that admit nil-invariant symmetric bilinear forms in
Section \ref{sec:abelianradical}.
In this direction, we prove:

\begin{mthm}\label{mthm:abelian_rad1}
Let $\frg$ be a Lie algebra whose solvable radical $\frr$ is
abelian.
Suppose $\frg$ is equipped with a nil-invariant symmetric bilinear
form $\met$ such that the metric radical $\frg^\perp$ of $\met$
does not contain a non-trivial ideal of $\frg$.
Let $\frk\times\frs$ be a Levi subalgebra of $\frg$, where $\frk$
is of compact type and $\frs$ has no simple factors of compact
type.
Then $\frg$ is an orthogonal direct product of ideals
\[
\frg = \frg_1 \times \frg_2 \times \frg_3,
\]
with
\[
\frg_1=\frk\ltimes\fra,
\quad
\frg_2=\frs_0,
\quad
\frg_3=\frs_1\ltimes\frs_1^*,
\]
where $\frr=\fra\times\frs_1^*$ and $\frs=\frs_0\times\frs_1$ are
orthogonal direct products,
and $\frg_3$ is a metric cotangent algebra.
The restrictions of $\met$ to $\frg_2$ and $\frg_3$ are invariant
and non-degenerate.
In particular, $\frg^\perp\subseteq\frg_1$.
\end{mthm}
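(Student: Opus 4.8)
The plan is to exploit the strong invariance property (Theorem \ref{mthm:invariance}) together with the structural restrictions on $\frg^\perp$ coming from Corollary \ref{mcor:invariance_radical}, and then to analyze the $\frs$-module structure of the abelian radical $\frr$. First I would invoke Theorem \ref{mthm:invariance}: since $\gs = \frs \ltimes \frr$, the form $\met_{\gs}$ is invariant under $\ad(\frg)$, and $\met$ is invariant under $\gs$. In particular $\frk \perp [\gs,\gs]$, because for $X \in \frk$, $Y,Z \in \gs$ we get $\langle X, [Y,Z]\rangle = -\langle [Y,X], Z\rangle$ and $[Y,X] \in \gs$ lets us push the bracket around using invariance, forcing $\langle X, [\gs,\gs]\rangle = 0$; combined with $[\frk,\gs]=0$ this already isolates $\frk \ltimes \fra$ as an orthogonal summand once $\fra$ is chosen correctly. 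The key point is that $\frr$ abelian means $[\gs,\gs] = [\frs,\frs] + [\frs,\frr] = \frs + [\frs,\frr]$, so I need to understand $[\frs,\frr] \subseteq \frr$ as an $\frs$-submodule.

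Next, decompose $\frr$ as an $\frs$-module. Since $\frs$ is semisimple, $\frr = \frr^{\frs} \oplus \frr'$ where $\frr^{\frs}$ is the space of $\frs$-fixed vectors and $\frr' = [\frs,\frr]$ is a sum of nontrivial irreducibles. The restriction of $\met$ to $\frr$ is $\frs$-invariant by part (2) of Theorem \ref{mthm:invariance}. I would argue that $\frr^{\frs}$ and $\frr'$ are mutually orthogonal: for $v \in \frr^{\frs}$, $w = [X,u] \in \frr'$ with $X \in \frs$, invariance gives $\langle v, [X,u]\rangle = -\langle [X,v], u\rangle = 0$. Then, on the nontrivial part $\frr'$, the $\frs$-module structure together with $\frs$-invariance of the form and the bound on the relative index (which forces the isotropic subspaces to be small) should force $\frr'$ to pair with a complementary $\frs$-submodule of $\frs$ itself — this is where the "cotangent algebra" $\frs_1 \ltimes \frs_1^*$ emerges. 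Concretely, I expect that the only way an $\frs$-invariant symmetric form can be non-degenerate on (a piece of) $\frs \ltimes \frr'$ with controlled index is if $\frr'$ is dual, as an $\frs$-module, to an ideal $\frs_1 \subseteq \frs$, with the form realizing the natural pairing $\frs_1 \times \frs_1^* \to \RR$; the complementary ideal $\frs_0$ on which the Killing form is (up to scalar) the restriction of $\met$ accounts for $\frg_2$. I would set $\fra = \frr^{\frs}$ and $\frs_1^* = \frr'$, check $\frr = \fra \times \frs_1^*$ and $\frs = \frs_0 \times \frs_1$ are orthogonal direct product decompositions, and verify the bracket relations making $\frg_3 = \frs_1 \ltimes \frs_1^*$ a metric cotangent algebra.

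The main obstacle will be the middle step: showing that the nontrivial $\frs$-submodule $\frr' = [\frs,\frr]$ must be $\frs$-dual to an ideal of $\frs$ and that $\met$ restricted to $\frg_3$ is exactly the canonical pairing (hence non-degenerate), rather than some more complicated configuration. This requires combining the classification of irreducible $\frs$-modules carrying invariant bilinear pairings with the constraint that $\met$ is nil-invariant on all of $\frg$ — in particular using that nilpotent elements of the algebraic hull act skew-symmetrically, which controls how $\frr'$ can sit relative to $\frs$ and rules out, e.g., self-dual irreducible summands contributing to $\frr'$ in a degenerate way. I would lean on the analysis of $\frs$-submodules of the radical carried out in Subsection \ref{sec:fsubmodules} and on Corollary \ref{mcor:invariance_radical} (which already tells us $[\frg^\perp,\gs] \subseteq \centre{\gs}\cap\frg^\perp$ and $\frg^\perp \subseteq \frk \ltimes \centre{\gs}$, so $\frg^\perp \cap \frr \subseteq \fra$, giving $\frg^\perp \subseteq \frg_1$). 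Once the module-theoretic dichotomy "trivial summand vs. dual-pair summand" is established, the orthogonality relations and the identification of $\frg_2$, $\frg_3$ follow by the invariance computations sketched above, and non-degeneracy of $\met|_{\frg_2 \times \frg_3}$ follows because any vector in its radical would lie in $\frg^\perp \cap (\frs \ltimes \frr')$, which we have shown is trivial.
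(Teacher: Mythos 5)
Your skeleton (invariance from Theorem \ref{mthm:invariance}, the splitting $\frr=\frr^{\frs}\oplus[\frs,\frr]$ with $\fra=\frr^{\frs}$, locating $\frg^\perp$ via Corollary \ref{mcor:invariance_radical}, and recognizing $[\frs,\frr]$ as dually paired with an ideal $\frs_1$ of $\frs$) is the right one, but there are two genuine gaps. The first concerns the ideal property of the decomposition: the theorem asserts $\frg=\frg_1\times\frg_2\times\frg_3$ as a product of \emph{ideals}, which requires $[\frk,[\frs,\frr]]=\zsp$. Your outline never proves this, and the identity ``$[\frk,\gs]=\zsp$'' you invoke to isolate $\frk\ltimes\fra$ is false in general: $\frg_1=\frk\ltimes\fra$ is a genuinely semidirect product, and $\frk$ may act nontrivially on $\frr$ (see Example \ref{ex:not_easy}). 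This is exactly where the effectivity hypothesis enters in the paper's proof: setting $\frc=[\frs,\frr]\cap[\frk,\frr]$, one checks $\frc\perp\frr$ (from $\frr$-invariance and $\frr$ abelian) and $\frc\perp(\frk\times\frs)$ (Corollary \ref{cor:ksperp}), so $\frc$ is an ideal contained in $\frg^\perp$, hence $\frc=\zsp$; this yields $[\frs,\frr]=[\frs,\frr^{\frk}]$ and $[\frk,[\frs,\frr]]=\zsp$, and only then are $\frk\ltimes\fra$ and $\frs\ltimes[\frs,\frr]$ ideals. (Trying to recover this afterwards by a Schur-type argument is not automatic, since $\frs_1^*$ may contain isomorphic irreducible summands.)

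The second gap is the mechanism for the duality step, which you place in the wrong toolbox: the lemmas of Subsection \ref{sec:fsubmodules} assume $\mu(\gs)\leq 2$, and there is no index bound in this theorem, so ``the bound on the relative index'' you appeal to does not exist; no classification of irreducible modules with invariant pairings or extra nil-invariance input is needed either. What is needed, and missing from your sketch, is that $\frb=[\frs,\frr]$ is \emph{totally isotropic}: your computation only gives $\frr^{\frs}\perp\frb$, whereas $\frb\perp\frb$ follows from $\frr$-invariance and $\frr$ abelian, namely $\langle [S,u],v\rangle=\langle S,[u,v]\rangle=0$ for $u,v\in\frr$, $S\in\frs$. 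Once you also know that $\frs\ltimes\frb$ is nondegenerate (your argument for this is fine: $\frz(\gs)=\fra$ gives $\frg^\perp\subseteq\frk\ltimes\fra$ by Corollary \ref{mcor:invariance_radical}, and the two halves are orthogonal), let $\frs_0$ be the kernel of the $\frs$-action on $\frb$ and $\frs_1$ its complementary ideal; then $\frs_0=\frb^\perp\cap\frs$, so $\frs_1\cap\frb^\perp=\zsp$, and total isotropy of $\frb$ plus nondegeneracy forces $\dim\frs_1=\dim\frb$, i.e.\ a perfect $\frs$-equivariant pairing $\frs_1\times\frb\to\RR$; invariance of $\met$ then identifies the $\frs_1$-action on $\frb\cong\frs_1^*$ as the coadjoint action and $\met$ as the dual pairing. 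With these two repairs your plan coincides with the paper's proof.
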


For the definition of metric cotangent algebra, see Example \ref{ex:sl2sl2}.
We call an algebra $\frg_1=\frk\ltimes\fra$ with
$\frk$ semisimple of compact type and $\fra$ abelian a Lie algebra
of \emph{Euclidean type}.
By Theorem \ref{mthm:geometric}, isometry Lie algebras of compact
simply connected pseudo-Riemannian homogeneous spaces are
of Euclidean type.
However, not every Lie algebra of Euclidean type appears as the
isometry Lie algebra of a compact pseudo-Riemannian homogeneous
space.
In fact, this is the case for the Euclidean Lie algebras
$\euc_n=\so_n\ltimes\RR^n$ with $n\neq 3$.

\begin{mthm}\label{thm:noSOnRn2}
The Euclidean group $\Euc_n=\OO_n\ltimes\RR^n$, $n\neq 1,3$,
does not have
compact quotients with a pseudo-Riemannian metric such that
$\Euc_n$ acts isometrically and almost effectively.
\end{mthm}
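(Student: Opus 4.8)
The plan is to argue by contradiction, assuming that for some $n\neq 1,3$ there is a compact manifold $M$ carrying a pseudo-Riemannian metric for which $\Euc_n=\OO_n\ltimes\RR^n$ acts isometrically and almost effectively. Since $M$ is compact it has finite volume, so by Section~\ref{sec:isometry_algebras} the bilinear form $\met_p$ induced on $\euc_n=\so_n\ltimes\RR^n$ at a point $p\in M$ is nil-invariant, and almost effectivity means the metric radical $\frg^\perp=\frh+\ker$ contains no non-trivial ideal of $\euc_n$ except possibly inside the (discrete) ineffective kernel, which we may quotient out. Thus $(\euc_n,\met_p)$ is an effective metric Lie algebra with nil-invariant form. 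Note $\euc_n$ is its own Levi decomposition with $\frk=\so_n$ (of compact type, for $n\neq 1,2$; the cases $n=1,2$ are excluded or trivial since then $\euc_n$ is solvable and $\RR^n$ cannot be an ideal in $\frg^\perp$ of an effective algebra acting transitively only if $n=0$) and solvable radical $\frr=\RR^n$, which is abelian.

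\textbf{Applying the abelian-radical structure theorem.} Since $\frr=\RR^n$ is abelian and $\met_p$ is nil-invariant with $\frg^\perp$ containing no non-trivial ideal, Theorem~\ref{mthm:abelian_rad1} applies with $\frs=\zsp$ (there are no non-compact simple factors). The conclusion forces $\frg=\frg_1=\frk\ltimes\fra=\so_n\ltimes\RR^n$ itself, with $\frg^\perp\subseteq\frg_1$, and there are no $\frg_2,\frg_3$ summands. More importantly, Theorem~\ref{mthm:abelian_rad1} is proved under the hypothesis that $\frg^\perp$ contains no non-trivial ideal, and the natural action of $\so_n$ on $\RR^n$ is irreducible for $n\geq 2$; hence the only $\so_n$-submodules of $\RR^n$ are $\zsp$ and $\RR^n$. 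Since $\RR^n$ is a non-trivial ideal of $\euc_n$, it cannot be contained in $\frg^\perp$; therefore $\frg^\perp\cap\RR^n=\zsp$, because any $\so_n$-invariant subspace of $\RR^n$ lying in $\frg^\perp$ would be an ideal of $\euc_n$ inside $\frg^\perp$. But then the stabilizer subalgebra $\frh\subseteq\frg^\perp$ also satisfies $\frh\cap\RR^n=\zsp$, so the $G$-orbit through $p$, which is all of $M$ by transitivity once we also know $K$ acts transitively, has dimension $\dim\euc_n-\dim\frh\geq \dim\RR^n=n$; combined with $\frh$ projecting injectively to $\so_n$.

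\textbf{Extracting the contradiction.} The key obstruction is then dimensional and topological. By Theorem~\ref{mthm:geometric}(2), applied to a transitive refinement of the action (or directly: a compact homogeneous space $M=G/H$ of finite volume with $G=\Euc_n$), the compact Levi factor $K=\OO_n$ already acts transitively on $M$, so $M=\OO_n/(\OO_n\cap H)$ is a compact homogeneous space of $\OO_n$. Meanwhile the abelian radical $\RR^n$ acts on $M$ with stabilizer $\frh\cap\RR^n=\zsp$ at every point, i.e. $\RR^n$ acts \emph{locally freely}; a locally free action of $\RR^n$ on a compact manifold is impossible to reconcile with the $\OO_n$-homogeneity unless $n=0$, because the $\RR^n$-orbits would be immersed copies of $\RR^n$ (or of $\RR^n/\Lambda$ for a discrete $\Lambda$, forcing $\frh\cap\RR^n\neq\zsp$ after all if any orbit is to be compact) that must be permuted by the transitive compact group $\OO_n$. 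Concretely: $\RR^n$ normalizes nothing properly here, and one shows $V^{K}=\zsp$ from Theorem~\ref{mthm:geometric}(3) forces $\fra$ to be a sum of non-trivial $\so_n$-modules, contradicting that $\RR^n$ is the standard irreducible module together with the requirement, from effectivity and the structure theorem, that $\fra$ admit an $\so_n$-invariant complement to $\frg^\perp\cap\fra=\zsp$ on which $\met$ is definite and $\so_n$-invariant — but a definite $\so_n$-invariant form on the standard module $\RR^n$ exists, so the real contradiction must come from the homomorphism $\varphi\colon R\to K$ of Theorem~\ref{mthm:geometric}(4): here $R=\RR^n$ and $K=\OO_n$, and $\varphi$ would have to be a non-trivial homomorphism $\RR^n\to\OO_n$ with $\varphi|_{\RR^n}$ injective, which is impossible since $\RR^n$ is not compact and has no non-trivial compact quotients while $\OO_n$ is compact — injectivity of $\varphi$ on the non-compact group $\RR^n$ into the compact $\OO_n$ is absurd. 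The case $n=3$ is genuinely excluded because $\mathrm{SO}_3$ acts transitively on $S^3=\mathrm{SU}_2$ making $\Euc_3$-quotients possible; the cases $n=1$ are excluded as $\Euc_1$ is solvable. I expect the main obstacle to be bookkeeping the reduction to the transitive case and confirming that Theorem~\ref{mthm:geometric}(4)'s homomorphism $\varphi$ is exactly the obstruction, i.e. cleanly ruling out that $H$ could have positive-dimensional intersection with $\RR^n$ without $\RR^n\subseteq\frg^\perp$.
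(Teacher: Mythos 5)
There is a genuine gap: your argument never establishes the algebraic fact that actually drives the paper's proof, namely Theorem \ref{thm:noSOnRn} (for $n\geq 4$, \emph{every} nil-invariant symmetric bilinear form on $\so_n\ltimes\RR^n$ has $\RR^n\subseteq\frg^\perp$), which is proved via the skew-pairing analysis of Proposition \ref{prop:so3_skew_module} and Examples \ref{ex:so3R3}, \ref{ex:so4R4}. Once one has that, almost effectivity is contradicted immediately, because the translation ideal sits inside the metric radical. You instead assert the opposite, $\frg^\perp\cap\RR^n=\zsp$, and your justification tacitly assumes $\frg^\perp\cap\RR^n$ is an $\so_n$-submodule of the irreducible module $\RR^n$; that is unjustified, since $\frg^\perp$ is only stable under $\inv(\frg,\met)$, which by Theorem \ref{mthm:invariance} contains $\frr=\RR^n$ but need not contain $\frk=\so_n$ (indeed $\frg^\perp$ is in general not even a subalgebra). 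Even granting that claim, it yields no contradiction by itself, and Theorem \ref{mthm:abelian_rad1} cannot help either: $\frk\ltimes\fra$ of Euclidean type is precisely one of the allowed conclusions of that theorem, so nothing is ruled out at that stage.

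The final contradiction you propose is also incorrect on two counts. First, Theorem \ref{mthm:geometric} requires $M$ simply connected and $G=\Iso(M)^\circ$; a compact quotient $\Euc_n/H$ need satisfy neither hypothesis, and even in the simply connected setting part (4) only asserts injectivity of $\varphi|_A$ on the \emph{maximal compact} subgroup $A$ of $R$, which is trivial when $R=\RR^n$, so no injectivity of $\varphi$ on all of $\RR^n$ is claimed. Second, injective homomorphisms from a non-compact $\RR^n$ into a compact group do exist (an irrational winding $\RR\to\TT^2\subset\SO_4$, for instance), so the asserted absurdity is false. More fundamentally, the remark following Theorem \ref{thm:noSOnRn2} in the paper exhibits compact quotients $M=\Euc_n/H$ (graphs of homomorphisms $\RR^n\to\OO_n$) on which $\Euc_n$ acts effectively; hence any purely topological or group-theoretic argument that ignores the metric, as your last paragraph does, cannot succeed — the obstruction must come from the nil-invariant bilinear form itself, which is exactly what Theorem \ref{thm:noSOnRn} supplies. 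Finally, the case $n=2$ is part of the statement and is not handled in your proposal; the paper disposes of it separately using solvability of $\euc_2$ and Theorem \ref{thm:nilinvariant} to force $\RR^2\subseteq\euc_2^\perp$.
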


Note that $\Euc_n$ acts transitively and effectively on compact
manifolds with finite fundamental group, as we remark
at the end of Section \ref{sec:abelianradical}.
% \wolf{END: introduction from small paper}

%%%
\subsection*{Notations and conventions}

%%
%\subsubsection*{Generalities}
%
%The identity element of a group $G$ is denoted by $e$.
%If $A$ and $B$ are subsets of $G$,  we put
%$A B = \{ a b \mid a \in A, b \in B \}$.
%
%%
%\subsubsection*{Lie groups and Lie algebras}
The identity element of a group $G$ is denoted by $e$.
We let $G^\circ$ denote the connected component
of the identity of $G$.

Let $H$ be a subgroup of a Lie group $G$.
We write
$\Ad_{\frg}(H)$ for the
adjoint representation of $H$ on the Lie algebra $\frg$ of $G$,
to distinguish it from the adjoint representation $\Ad(H)$ on
its own Lie algebra $\frh$.

If $V$ is a $G$-module, then we write
$V^G=\{v\in V\mid gv=v \text{ for all } g\in G\}$ for the
module of \emph{$G$-invariants}.
Similary, $V^\frg=\{v\in V\mid Xv=0 \text{ for all } X\in\frg\}$
for a $\frg$-module.

The centralizer and the normalizer of $\frh$ in $\frg$ are denoted
by $\zen_\frg(\frh)$ and $\nor_\frg(\frh)$, respectively.
The center of $\frg$ is denoted by $\zen(\frg)$.
We use similar notation for Lie groups.

If $\frg_1$ and $\frg_2$ are two Lie algebras, the notation
$\frg_1\times\frg_2$ denotes the direct product of Lie algebras.
The notations $\frg_1+\frg_2$ and $\frg_1\oplus\frg_2$ are used
 to indicate sums and direct sums of vector spaces.

The \emph{solvable radical} $\frr$ of $\frg$ is the maximal
solvable ideal of $\frg$.
The semisimple Lie algebra $\frf=\frg/\frr$ is a direct product 
$\frf=\frk\times\frs$ of Lie algebras,
where $\frk$ is a semisimple Lie algebra of
\emph{compact type}, meaning that the Killing form of $\frk$ is
definite, and $\frs$ is semisimple without factors of compact type.

For any linear operator $\varphi$, $\varphi = \varphi_{\rm ss} + \varphi_{\rm n}$ denotes its Jordan decomposition, where $ \varphi_{\rm ss}$ is semisimple, and $\varphi_{\rm n}$ is nilpotent. 
Further notation will be introduced in Section \ref{sec:scalar}.

%%%
\subsection*{Acknowledgements}
Wolfgang Globke was partially supported by the Australian Research Council grant {DE150101647} and the Austrian Science Foundation FWF grant I 3248.
He would also like to thank the Mathematical Institute of the
University of G\"ottingen,
where part of this work was carried out, for its hospitality
and support.

% !TEX root = homogeneousSpacesLieAlgebras.tex

%%%%%%%%%%%%%%%%%%%%%%%
\section{Isometry Lie algebras} %  of pseudo-Riemannian manifolds of finite volume}
%%%%%%%%%%%%%%%%%%%%%%%
\label{sec:isometry_algebras}

Let $(M,\g)$ be a pseudo-Riemannian manifold of finite volume,
and let $$ G \subseteq \Iso(M,\g) $$ be a Lie  group of isometries of $M$.
Identify the Lie algebra $\frg$ of $G$ with a subalgebra of 
Killing vector fields on $(M,\g)$.
Let $\SS^2\frg^*$ denote the space of symmetric bilinear
forms on $\frg$, and let
\[
\Phi:M \to  \SS^2\frg^* ,\quad p \mapsto \Phi_{p}
\]
be the Gau\ss~map, where
\[
\Phi_{p}(X,Y) = \g_{p}(X_{p}, Y_{p}).
\]

\vspace{1ex}
The adjoint representation of $G$ on $\frg$ induces a representation
$\rho:G\to\GL(\SS^2\frg^*)$. 

\begin{thm}\label{thm:isom_is_nilinvariant}
Let $\ra{A}$ be the real Zariski closure of $\rho(G)$ in the group $\GL(\SS^2\frg^*)$. Let 
$p \in M$. Then the bilinear form $\Phi_{p}$ is invariant by all unipotent elements in 
$\ra{A}$. 
\end{thm}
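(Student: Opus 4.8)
The plan is to exploit the finite volume of $M$ together with the fact that the Gauß map $\Phi$ intertwines the $G$-action on $M$ with the representation $\rho$ of $G$ on $\SS^2\frg^*$. The key observation is that, since $G$ acts by isometries, it preserves the volume measure $\mu$ on $M$; pushing $\mu$ forward under $\Phi$ yields a finite $\rho(G)$-invariant Borel measure $\nu=\Phi_*\mu$ on the vector space $\SS^2\frg^*$ (or on its projectivization, after suitable normalization). The point $\Phi_p$ lies in the support of $\nu$. So the statement reduces to a general fact about linear representations: if a finite measure on a finite-dimensional real vector space is invariant under an algebraic group $\ra{A}$, then every point of its support is fixed by all unipotent elements of $\ra{A}$.

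First I would make precise the equivariance $\Phi(g\cdot p) = \rho(g)\,\Phi_p$, which is immediate from $g$ being an isometry: $\g_{gp}((g_*X)_{gp},(g_*Y)_{gp}) = \g_p(X_p,Y_p)$ combined with the identification of $X$ with a Killing field and the definition of the adjoint action on Killing fields. Then I would set $\nu = \Phi_*\mu$, a finite measure on $\SS^2\frg^*$, invariant under $\rho(G)$ and hence (by continuity of the $\GL(\SS^2\frg^*)$-action and density) invariant under the Zariski closure $\ra{A}$ acting through its real points. Next, the core linear-algebra/measure-theory step: let $u\in\ra{A}$ be unipotent and let $N=\log u$, a nilpotent endomorphism of $\SS^2\frg^*$. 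The one-parameter group $t\mapsto \exp(tN)$ lies in $\ra{A}$ and preserves $\nu$. On any affine subspace complementary to $\ker N$ in a Jordan block, the flow $\exp(tN)$ translates to infinity, so by Poincaré recurrence (applied to the finite invariant measure $\nu$) the measure must be concentrated on $\ker N$; more carefully, one decomposes $\SS^2\frg^*$ into generalized eigenspaces/Jordan blocks for $N$ and argues block by block that $\nu$-almost every point is fixed by $\exp(tN)$, hence by $u$. Therefore $\supp\nu \subseteq \mathrm{Fix}(u)$, and since $\Phi_p\in\supp\nu$ we get $u\cdot\Phi_p=\Phi_p$, i.e. $\Phi_p$ is $u$-invariant.

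**The main obstacle** will be the recurrence/concentration argument for the unipotent flow: one has to be careful that $\exp(tN)$ acting on $\SS^2\frg^*$ genuinely pushes mass to infinity off $\ker N$, which requires examining the Jordan structure of $N$ and noting that on a nontrivial Jordan block the orbit of any point not in the kernel is an unbounded polynomial curve. The clean way to phrase this is: if a probability measure $\nu$ on $\RR^n$ is invariant under a unipotent one-parameter subgroup $\{u_t\}$, then $\nu$ is supported on the fixed-point set $\{x : u_t x = x\ \forall t\}$ — this follows because for $x$ not fixed the forward orbit $\{u_t x : t\ge 0\}$ escapes every compact set, contradicting Poincaré recurrence. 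I would isolate this as a lemma (it is essentially the Furstenberg-type lemma the paper's preamble alludes to via \verb|\newtheorem{furstenberg}|), prove it in a line or two, and then the theorem follows formally. A minor technical point to address: $\ra{A}$ is the real Zariski closure, so its unipotent elements all arise from (or are limits along) unipotent one-parameter subgroups defined over $\RR$, which is standard for real algebraic groups; and one must note that $\rho(G)$-invariance of $\nu$ upgrades to $\ra{A}$-invariance because the stabilizer of a finite measure in $\GL(\SS^2\frg^*)$ is Zariski closed.
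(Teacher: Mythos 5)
Your proof is correct, but it takes a genuinely different route from the paper's. The paper projectivizes: it pushes the finite invariant measure to $\PP(\SS^2\frg^*)$, invokes the fact that the stabilizer of that measure in $\PGL(\SS^2\frg^*)$ is real algebraic (Zimmer, Theorem 3.2.4) together with Furstenberg's Lemma, which says the support lies in a finite union of projective subspaces $\bar W_i$ on which the stabilizer acts with compact closure; a unipotent element lying in a compact subgroup of $\PGL(W_i)$ must act trivially there, whence $u\cdot\Phi_p=\Phi_p$. You instead keep the pushforward measure $\nu=\Phi_*\mu$ on the vector space $\SS^2\frg^*$ itself and use the elementary dynamical fact that a finite measure invariant under a unipotent map is concentrated on its fixed-point set, via escape to infinity of non-fixed orbits plus Poincar\'e recurrence. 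Your version is more self-contained (no Furstenberg machinery) and it even avoids the paper's preliminary reduction to non-isotropic orbits, since $\Phi_p=0$ causes no trouble on the vector space; note also that you only need recurrence for the single map $u$, so the one-parameter group $\exp(tN)$ is not really required. The one step you should not wave through is the upgrade from $\rho(G)$-invariance to $\ra{A}$-invariance of $\nu$: your parenthetical ``by continuity and density'' is not a valid reason, since Zariski density gives nothing via continuity; the correct justification, which you do state afterwards, is that the stabilizer of a finite measure on $\SS^2\frg^*$ in $\GL(\SS^2\frg^*)$ is real algebraic. That fact is itself not literally Zimmer's Theorem 3.2.4 (which concerns measures on projective space), but it follows from it by the standard affine embedding $v\mapsto[v:1]$ of $\SS^2\frg^*$ into $\PP(\SS^2\frg^*\oplus\RR)$, equivariant for $g\mapsto\mathrm{diag}(g,1)$, so the linear stabilizer is the preimage of a projective measure stabilizer under an algebraic homomorphism; with that lemma recorded, your argument is complete.
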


%The proof uses some properties of the projective linear
%group, all of which can be found in Zimmer
%\cite[Section 3.2]{zimmer}. 

\begin{proof}
Note that the above Gau\ss~map $\Phi$ is equivariant with respect to $\rho$, 
since $G$ acts by isometries on $M$.
The pseudo-Riemannian metric on $M$ defines a finite $G$-invariant measure
on $M$.

Since the claim clearly holds on totally isotropic
$G$-orbits, we may in the following assume that all orbits of $G$ are
non-isotropic, that is, $\Phi_p\neq0$ for all $p\in M$.

%Then $M$ is a disjoint union
%$M = M_0\cup M_1$ of $G$-invariant subsets, where
%\[
%M_0=\{p\in M\mid\Phi(p)=0\}=\Phi^{-1}(\{0\})
%\]
%is a closed subset and its complement $M_1$ is an open subset of
%$M$.
%
%For all $p\in M_0$, $\Phi(p)=0$ is cleary invariant by $\rho(G)$ and
%also by the Zariski closure $\ra{A}$.
%Hence the theorem holds on $M_0$.
%Since $M$ carries a finite $G$-invariant Borel measure, so does the open
%subset $M_1$. In the following, we may thus assume $M=M_1$.

Put $V=\SS^2\frg^*$. For a subset $W\subseteq V\backslash\zsp$, let
$\bar{W}$ denote its image in the projective space $\PP(V)$.
Similarly, for subsets in $\GL(V)$  and their image in
the projective linear group $\PGL(V)$.

The finite $G$-invariant measure on $M$ induces a finite $G$-invariant
measure $\nu$ on the projective space
$\PP(V)$ with support $\supp\nu = \bar{\Phi(M)}\subset\PP(V)$.
Let $\PGL(V)_{\nu}$ denote the stabilizer of $\nu$ in the
projective linear group. This is a real algebraic subgroup of 
$\PGL(V)$, \cf\cite[Theorem 3.2.4]{zimmer}. 
Also, by construction, $\bar{\rho(G)}\subseteq \PGL(V)_{\nu}$.

There exist vector subspaces ${W}_1,\ldots,{W}_r$
of $V$ such that 
$\supp\nu \subseteq \bar{W} = \bar{W}_{1} \cup \ldots \cup \bar{W}_{r}$
and the quasi-linear subspace $\bar W$ is minimal with this property.
%Let $\bar{W} = \bar{W}_{0} \cup \ldots \cup \bar{W}_{l}$ be the
%smallest quasi-linear subspace of $\PP(V)$ that contains
%$\supp \nu$.
Note that the identity component of $\PGL(V)_{\nu}$ preserves
all $\bar{W}_{i}$, and by Furstenberg's Lemma \cite[Corollary 3.2.2]{zimmer},
its restriction to $\PGL(W_{i})$ has compact closure.

Since  $\PGL(V)_{\nu}$ is real algebraic, the image of $\ra{A}$ in
$\PGL(V)$ is contained in $\PGL(V)_{\nu}$. 
Choose $W_{i}$ such that $\Phi_{p} \in W_{i}$. Let $u \in \ra{A}$ be a 
unipotent element. Since the restriction of $u$ to 
$\PGL(W_{i})$ is unipotent and it is 
contained in a compact subset  of 
$\PGL(W_{i})$, it must be the identity of $\bar W_{i}$. 
This implies $u  \cdot \Phi_{p} = \Phi_{p}$.
%Since $U$ has compact image in  $\PGL(W_{i})$, $U b = b$.
\end{proof}

In terms of Definition \ref{def:nilinvariance} below, this
implies the following:

\begin{cor} \label{cor:isom_inilinvariant}
For $p \in M$,  let $\met_{p}$ denote the symmetric bilinear 
form induced on the Lie algebra $\frg$ of $G$ by pulling back $\g_{p}$ 
along the orbit map $g \mapsto g \cdot p$. Then 
$\met_{p}$ is nil-invariant and its kernel contains
the Lie algebra $\frg_p$ of the stabilizer $G_p$ of $p$ in $G$.
If $G$ acts transitively on $M$, 
then the kernel of $\met_{p}$ equals $\frg_p$.
\end{cor}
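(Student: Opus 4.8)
The plan is to deduce Corollary \ref{cor:isom_inilinvariant} directly from Theorem \ref{thm:isom_is_nilinvariant} together with the definition of nil-invariance that will be recorded as Definition \ref{def:nilinvariance}. First I would observe that the orbit map $\alpha_p\colon G\to M$, $g\mapsto g\cdot p$, has differential at $e$ given by $X\mapsto X_p$ (the value of the Killing field $X$ at $p$), so that by construction $\met_p(X,Y)=\g_p(X_p,Y_p)=\Phi_p(X,Y)$; thus $\met_p$ is precisely the bilinear form $\Phi_p\in\SS^2\frg^*$ studied above. The representation $\rho$ of $G$ on $\SS^2\frg^*$ is the one induced by $\Ad_\frg(G)$, and differentiating it gives the representation of $\frg$ on $\SS^2\frg^*$ induced by $\ad_\frg$. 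Hence the smallest algebraic Lie subalgebra of $\mathrm{End}(\SS^2\frg^*)$ containing $\ad_\frg$ is the Lie algebra $\frak a$ of the Zariski closure $\ra A=\overline{\rho(G)}$; the unipotent (equivalently nilpotent) elements of $\frak a$ are exactly the infinitesimal generators of the one-parameter unipotent subgroups of $\ra A$. Theorem \ref{thm:isom_is_nilinvariant} says every unipotent element of $\ra A$ fixes $\Phi_p$, so differentiating along a one-parameter unipotent subgroup shows every nilpotent element of $\frak a$ annihilates $\met_p$; this is exactly the assertion that $\met_p$ is nil-invariant in the sense of Definition \ref{def:nilinvariance}.

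Next I would address the kernel of $\met_p$. Since $\met_p(X,Y)=\g_p(X_p,Y_p)$ and $\g_p$ is a non-degenerate scalar product on $\Tan_pM$, we have $X\in\frg^{\perp_{\met_p}}$ if and only if $X_p$ is $\g_p$-orthogonal to the subspace $\{Y_p\mid Y\in\frg\}=\im(\d\alpha_p)_e=\Tan_p(G\cdot p)$. In particular $X_p=0$, i.e.\ $X\in\frg_p=\ker(\d\alpha_p)_e$, forces $X\in\frg^{\perp_{\met_p}}$; this gives the inclusion $\frg_p\subseteq\frg^{\perp_{\met_p}}$ claimed in general. When $G$ acts transitively, $\Tan_p(G\cdot p)=\Tan_pM$, so $X_p$ being orthogonal to the full tangent space forces $X_p=0$ by non-degeneracy of $\g_p$, whence $X\in\frg_p$; combined with the previous inclusion this yields $\ker\met_p=\frg_p$ in the transitive case.

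The only genuinely non-formal point is the identification of the smallest algebraic Lie subalgebra containing $\ad_\frg$ with the Lie algebra of $\overline{\rho(G)}$, and the matching-up of its nilpotent elements with unipotent elements of the group — but this is a standard fact about algebraic groups in characteristic zero (the Zariski closure of $\rho(G)$ is an algebraic group whose Lie algebra is the smallest algebraic subalgebra containing $\d\rho(\frg)=\ad_\frg$, and $\exp$ restricts to a bijection between nilpotent elements of the Lie algebra and unipotent elements of the group on the unipotent variety). I would phrase this as a one-line appeal to the conventions fixed in Section \ref{sec:scalar} rather than belabor it. So in sum: the corollary is essentially a translation of Theorem \ref{thm:isom_is_nilinvariant} into the language of metric Lie algebras, plus an elementary linear-algebra computation of the radical of the pulled-back form; I do not anticipate a serious obstacle.
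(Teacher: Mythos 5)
Your argument is correct and is exactly the translation the paper intends: the paper states this corollary as an immediate consequence of Theorem \ref{thm:isom_is_nilinvariant} via Definition \ref{def:nilinvariance} without a separate proof, and your unwinding (identifying $\met_p$ with $\Phi_p$, matching nilpotents of the algebraic hull of $\ad_\frg$ with unipotents of $\ra{A}$ acting on $\SS^2\frg^*$, differentiating the fixed-point statement, and the elementary computation of the radical via the orbit map) is precisely the intended reasoning. The only cosmetic caveat is that for possibly disconnected $G$ one only needs (and only has, in general) the inclusion of the algebraic hull of the induced $\ad_\frg$-action in the Lie algebra of $\overline{\rho(G)}$ rather than equality, which is all your argument uses.
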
 

%\begin{cor}\label{cor:why_nilinvariant}
%Let $G$ be a Lie group acting isometrically on a
%pseudo-Riemannian manifold
%$(M,\g)$ of finite volume, and let $H$ be the stabilizer of a point $p\in M$.
%Let $\met$ denote the symmetric bilinear form induced on
%the Lie algebra $\frg$ of $G$ by pulling back to $G$ the 
%restriction of the metric on $M$ to the orbit $Gp$.
%Then $\met$ is nil-invariant and its metric radical contains
%the Lie algebra $\frh$ of $H$.
%If $G$ acts transitively on $M$, then the metric radical
%of $\met$ coincides with $\frh$.
%\end{cor}

% !TEX root = homogeneousSpacesLieAlgebras.tex

%%%%%%%%%%%%%%%%%%%%%%%
\section{Metric Lie algebras}
\label{sec:scalar}
%%%%%%%%%%%%%%%%%%%%%%%

Let $\frg$ be a finite-dimensional real Lie algebra
with a symmetric bilinear form $\met$.
The pair $(\frg,\met)$ is called a \emph{metric Lie algebra}.\footnote{Some authors (e.g.~\cite{KO1}) use this term for Lie algebras
with an \emph{invariant} scalar product.}
% and $\met$ is also called a \emph{product} on $\frg$.

Let $\frh$ be a subalgebra of $\frg$.
The restriction of $\met$ to $\frh$  will
be denoted by $\met_\frh$.
The form $\met$ is called
\emph{$\frh$-invariant} if
\begin{equation}
\langle \ad(X)Y_1,Y_2\rangle = - \langle Y_1,\ad(X)Y_2\rangle
\end{equation}
for all $X\in\frh$ and $Y_1,Y_2\in\frg$.
We define
\[
\inv(\frg,\met)=\{X\in\frg\mid \langle \ad(X)Y_1,Y_2\rangle=-\langle Y_1,\ad(X)Y_2\rangle \text{ for all } Y_1,Y_2\in\frg\}.
\]
This is the maximal subalgebra of $\frg$ under which $\met$
is invariant.
If $\met$ is $\frg$-invariant, we simply say $\met$ is \emph{invariant}.

The kernel of $\met$ is the subspace 
\[
\frg^\perp = \{X\in\frg \mid \langle X,Y\rangle=0 \text{ for all } Y\in\frg\} \, .
\]
It is also called the \emph{metric radical} for $(\frg, \met)$. It is an invariant subspace for the Lie  brackets with  elements of $\inv(\frg,\met)$, and,  if $\met$ is invariant, then
$\frg^\perp$ is an ideal in $\frg$.
%On the other hand, we call $\frg^\perp$ (or $\met$)
%\emph{effective} if $\frg^\perp$
%contains no non-trivial ideal of $\frg$.
%More generally, for any subalgebra $\frh$ of $\frg$,
%we call the pair $(\frg,\frh)$ \emph{effective} if $\frh$
%contains no non-zero ideal of $\frg$.

%
\subsection{Nil-invariant bilinear forms}

Let $\ac{\Inn(\frg)}$ denote
the Zariski closure 
of the adjoint group $\Inn(\frg)$ 
in $\Aut(\frg)$. 

\begin{definition}\label{def:nilinvariance}
A symmetric bilinear form $\met$ on $\frg$ is called
\emph{nil-invariant}, if  for all $X_1,X_2\in\frg$, 
\begin{equation} \label{eq:nilinv}
\langle \varphi X_1, X_2\rangle = -\langle X_1, \varphi X_2\rangle,     
\end{equation}
for all nilpotent elements $\varphi$ of the
Lie algebra of $\ac{\Inn(\frg)}$.
\end{definition}

In particular, \eqref{eq:nilinv} holds for the nilpotent parts
$\varphi=\ad(Y)_{\rm n}$ of the Jordan decomposition of the adjoint
representation of any $Y\in\frg$.

%The condition is equivalent to 
%\begin{equation}
%\langle \phi X_1, \phi X_2\rangle = \langle X_1, X_2\rangle
%\end{equation}
%for all unipotent elements $\phi$ in $\ac{\Inn(\frg)}$.
%
\subsection{Index of symmetric bilinear forms}
\label{subsec:signature}
Let $\met$ be a symmetric bilinear form on a finite-dimensional vector space $V$.
An element $x\in V$ is called \emph{isotropic} if $\langle x,x\rangle=0$.
A subspace $W \subseteq V$ is called \emph{isotropic}, if there exists 
$x \in W$, $x \neq 0$, with  $\langle x,x\rangle=0$. $W$ is called
\emph{totally isotropic} if $W \subseteq W^{\perp}$. 
%and $W$ is called \emph{maximally isotropic} if it is a totally
%isotropic subspace of maximal dimension in $V$.

The dimension of a maximal totally isotropic subspace of $V$
is called the \emph{index} $\mu(V)$ of $V$. Set
\[
\ell(V) = \mu(V) -  \dim V^\perp,
\]
so that $\ell(V)$ is the index of the non-degenerate bilinear form
induced by $\met$ on $V/V^\perp$.
We call $\ell$ the \emph{relative index} of $V$ (or $\met$).
%If $U$ is a subspace of $V$, then $\mu(U)$, $\ell(U)$ denote the
%index and relative index of $\met$ restricted to $U$.  
%\oliver{this was the original 
%definition? Alternatively define $\ell(U)$  as the index of $U$ in   $V/V^\perp$. } .

When there is no ambiguity about the space $V$, we simply write
$\mu=\mu(V)$ and  $\ell=\ell(V)$. We then say that $V$ is of \emph{index $\ell$ type}. 
In particular, for $\ell=1$, we say $V$ is of
\emph{Lorentzian type}.
We call $V$ \emph{Lorentzian} if $\mu=\ell=1$.

If $\met$ is non-degenerate, that is, if $V^\perp=\zsp$,
then we call $\met$ a \emph{scalar product} on $V$.
We say that the scalar product $\met$ is \emph{definite} if $\mu = 0$. 

Let $W\subseteq V$ be a vector subspace.
We say $W$ is \emph{definite}, \emph{Lorentzian}, of  relative index  $\ell(W)$ or of index  $\mu(W)$, respectively, if the restriction $\met_{W}$ is. Observe further that $\mu(W) \leq \mu(V)$ and $\ell(W) \leq \ell(V)$.

%If $V$ is a real vector space, then the \emph{signature} of
%$(V,\met)$ is the triple $(\nu_+,\nu_-,\nu_0)$
%with $\nu_0=\dim V^\perp$, and $\nu_+$ (or $\nu_-$) the
%maximal dimension of a positive (or negative) definite
%subspace of $V$.
%These numbers satisfy
%\[
%\dim V=\nu_++\nu_-+\nu_0,
%\quad
%\mu=\nu_0+\min\{\nu_+,\nu_-\}.
%\]
%In the non-degenerate case $\nu_0=0$ we simply write
%$(\nu_+,\nu_-)$ for the signature.
%

%%%
\subsection{Examples of metric Lie algebras}
\label{subsec:metric_lie_algebras}

\begin{example}\label{ex:abelian}
Consider $\RR^n$ with a scalar product $\met$ represented by the
matrix
$\left(\begin{smallmatrix}
\I_{n-s} & 0 \\
0 & -\I_s
\end{smallmatrix}\right)$,
where $s\leq n-s$. Then $\met$ has index $s$, and we write
$\RR^n_s$ for $(\RR^n,\met)$.
If we take $\RR^n$ to be an abelian Lie algebra,
together with $\met$ it becomes a metric Lie algebra
denoted by $\ab^n_s$.
\end{example}

The Heisenberg algebra occurs naturally in % is one of the essential objects in
the construction of Lie algebras with invariant scalar products.

\begin{example}\label{ex:heisenberg} Let $(\cdot,\cdot)$ be a Hermitian form on $\CC^n$. 
Define the  \emph{Heisenberg algebra} $\heis_{2n+1}$
as the vector space $\CC^n\oplus\frz$, where $\frz=\Span\{Z\}$,
with Lie brackets defined by
\[
[X,Y]=\Im(X, Y) Z,
\]
for any $X,Y\in\CC^n$. 
Thus $\heis_{2n+1}$ is a real $2n+1$-dimensional two-step
nilpotent Lie algebra with one-dimensional center (as such
it is unique up to isomorphism of Lie algebras). Equip $\CC^n$
with the bilinear product $\met = \Re( \cdot  , \cdot  )$. Declaring
$\frz$ to be perpendicular to $\heis_{2n+1}$ turns $\heis_{2n+1}$
into a metric Lie algebra, whose relative index $\ell(\heis_{2n+1})$ is determined 
by the index of 
the Hermitian form. 
%We say a Lie algebra is of \emph{Heisenberg type} if it is
%a direct sum $\heis_{2n+1}\times\fra$
%with an abelian Lie algebra $\fra$.
\end{example}

\begin{example}\label{ex:oscillator0}
Put $\frd=\Span\{J\}$. Define the $2n+2$-dimensional \emph{oscillator algebra}
$\osc$ as the semidirect product
\[
\osc  = \frd\ltimes\heis_{2n+1},
\]
where $J$ acts by multiplication with the imaginary unit on $\CC^n$. 
Given any metric on $\heis_{2n+1}$ as in Example \ref{ex:heisenberg}, an invariant scalar product $\met$ of 
index $\ell(\heis_{2n+1}) + 1$ on $\osc$ is obtained by requiring
$
\langle J,Z \rangle =1$ and 
$
\CC^n\perp J$.
\end{example}

%
%\begin{example}\label{ex:oscillator0}
%Given $\frd=\Span\{D\}$ and a tuple
%$\alpha=(\alpha_1,\ldots,\alpha_n)$ of non-zero real numbers,
%define
%the $2n+2$-dimensional \emph{oscillator algebra}
%$\osc(\alpha)$ as the semidirect product
%\[
%\osc(\alpha) = \frd\ltimes\heis_{2n+1},
%\]
%where the action of $D$ on $\heis_{2n+1}$ in terms of the
%canonical basis $X_1,\ldots,X_n$ of $\CC^n$ is given by
%\[
%\ad(D)Z=0,
%\quad
%\ad(D)X_k = \i\alpha_k X_k
%\text{ for $k=1,\ldots,n$}.
%\]
%An invariant scalar product $\met$ on $\osc(\alpha)$ is obtained by
%requiring
%\[
%\langle D,Z \rangle =1,
%\quad
%\langle X_j,X_k\rangle = \frac{1}{\alpha_j}\Re (X_j,X_k),
%\quad
%\CC^n\perp(\frd+\frz),
%\]
%where $(\cdot,\cdot)$ denotes the canonical Hermitian form
%on $\CC^n$.
%The signature of $\met$ is $(p+1,q+1)$, where $p$ is the number
%of positive $\alpha_j$ and $q$ the number of negative $\alpha_j$.
%\end{example}

Example \ref{ex:oscillator0} is an important special case of
the following construction:

\begin{example}\label{ex:general_oscillator}
Given $\psi\in\so_{n-s,s}$ define the \emph{oscillator algebra}
$$ \frg= \osc(\psi)$$ 
as follows.  On the vector space $\frg=\frd\oplus\ab^n_s\oplus\frj$
with $\frd=\Span\{D\}$, $\frj=\Span\{Z\}$ define a Lie product by declaring: 
\[
[D,X] = \psi(X),
\quad
[X,Y]=\langle[D,X],Y\rangle Z \; . 
\]
where,  $X,Y\in\ab^n_s$.
Next extend the
indefinite scalar product on $\ab^n_s$ to $\frg$ by
\[
\langle D,D\rangle = \langle Z,Z\rangle =0,
\quad
\langle D,Z\rangle = 1,
\quad
(\fra\oplus\frj)\perp\ab^n_s.
\]
Then $\met$ is an invariant scalar product of index $s+1$ 
on $\frg$.
The Lie algebra $\osc(\psi)$ is solvable. It is nilpotent if and
only if $\psi$ is nilpotent.  If $\psi$ is a $k$-step nilpotent operator, 
then $\frg$ is a $k$-step nilpotent algebra.
If $\psi$ is not zero, then the ideal $\frh=\ab_s^n\oplus\frj$ is
of Heisenberg type (that is, nilpotent with one-dimensional commutator
$[\frh,\frh]=\frj$).
%Moreover,  $[\frg,\frg]=[\frg,\frn]=\im\psi\oplus\frj$,
%$\zen(\frg)=\zen(\frn)=\ker\psi\oplus\frj$.
%
\end{example}

\subsubsection{Invariant Lorentzian scalar products}
%{Lorentzian metric Lie algebras}
The main building blocks for metric Lie algebras with invariant Lorentzian scalar products
are obtained by: 

\begin{example}\label{ex:semisimple}
%Any semisimple Lie algebra is a metric Lie algebra with
%the invariant scalar product that is a multiple of the Killing form
%on each simple factor.
The Killing form on $\sl_2(\RR)$ is an invariant
Lorentzian scalar product.
In fact, all semisimple Lie algebras with an invariant Lorentzian
scalar product are products of $\sl_2(\RR)$ by simple factors of
compact type.
\end{example}

%\begin{example}\label{ex:heisenberg}
%The \emph{Heisenberg algebra} $\heis_{2n+1}$
%is a $2n+1$-dimensional two-step nilpotent Lie algebra with
%with basis
%$X_1,\ldots,X_n$, $Y_1,\ldots,Y_n$, $Z$, such that its
%non-zero commutator relations are given by
%\[
%[X_i,Y_i]=\alpha_i Z,\quad i=1,\ldots,n
%\]
%for certain real numbers $\alpha_i\neq0$.
%An invariant pre-Lorentzian product $\met$ on
%$\heis_{2n+1}$ is defined by
%\[
%\langle X_i,X_j\rangle=\langle Y_i,Y_j\rangle=\delta_{ij}.
%\]
%We say a Lie algebra is of \emph{Heisenberg type} if it is
%an orthogonal direct sum of $\heis_{2n+1}$ with the above product
%and an abelian Lie algebra with a semidefinite product.
%\end{example}

\begin{example}\label{ex:oscillator1}
%Given $\frd=\Span\{D\}$ and a tuple
%$\alpha=(\alpha_1,\ldots,\alpha_n)$ of non-zero real numbers,
%define
%the $2n+2$-dimensional \emph{oscillator algebra}
%$\osc(\alpha)$ as the semidirect product
%\[
%\osc(\alpha) = \frd\ltimes\heis_{2n+1},
%\]
%where the action of $D$ on $\heis_{2n+1}$ in terms of the
%canonical basis $X_1,\ldots,X_n$ of $\CC^n$ is given by
%\[
%\ad(D)Z=0,
%\quad
%\ad(D)X_k = \i\alpha_k X_k
%\text{ for $k=1,\ldots,n$}.
%\]
%An invariant scalar product $\met$ on $\osc(\alpha)$ is obtained by
%requiring
%\[
%\langle D,Z \rangle =1,
%\quad
%\langle X_j,X_k\rangle = \frac{1}{\alpha_j}(X_j,X_k),
%\quad
%\CC^n\perp(\frd+\frz),
%\]
%where $(\cdot,\cdot)$ denotes the canonical Hermitian form
%on $\CC^n$.
%The signature of $\met$ is $(p+1,q+1)$, where $p$ is the number
%of positive $\alpha_i$ and $q$ the number of negative $\alpha_i$.
For $\psi \in \so_{n}$, the oscillator algebra $\osc(\psi)$ is Lorentzian. 
We say that such a metric Lie algebra is Lorentzian of \emph{oscillator type}. 
% if it is an orthogonal direct sum of $\osc(\psi)$ and an abelian Lie algebra with definite scalar pro\-duct. 
\end{example}

\begin{remark}
Classification of Lie algebras with invariant Lorentzian
scalar products were derived by Medina \cite{medina}
and by Hilgert and Hofmann \cite{HH}. It can be deduced 
that algebras of oscillator type are the only non-abelian solvable
Lie algebras which admit an invariant Lorentzian scalar product.
This is also a direct consequence of the reduction theory
of solvable metric Lie algebras, see Section \ref{sec:solvable}.
\end{remark}

%\begin{example}\label{ex:oscillator}
%Given a tuple of non-zero real numbers
%$\alpha=(\alpha_1,\ldots,\alpha_n)$,
%the $2n+2$-dimensional \emph{oscillator algebra}
%$\osc(\alpha)$ is the semidirect product
%\[
%\osc(\alpha) = \frd\ltimes\heis_{2n+1},
%\]
%where $\frd=\Span\{D\}$ acts on $\heis_{2n+1}$ by
%\[
%\ad(D)Z=0,
%\quad
%\ad(D)X_i = \alpha_i Y_i,\
%\ad(D)Y_i = -\alpha_i X_i
%\text{ for $i=1,\ldots,n$}.
%\]
%The Lorentzian type product $\met$ on $\heis_{2n+1}$ can be
%extended to an invariant Lorentzian scalar product on
%$\osc(\alpha)$ by setting
%\[
%\langle D,Z \rangle =1.
%\]
%We say a Lie algebra is of \emph{oscillator type} if it is
%an orthogonal direct sum of $\osc(\alpha)$ with the above
%Lorentzian product
%and an abelian Lie algebra with semidefinite pro\-duct.
%\end{example}

%We shall call a connected Lie group $Q$ an \emph{oscillator group}
%(or of \emph{oscillator type}) if its Lie algebra $\frq$ is an
%oscillator algebra  $\osc(\alpha)$ (or an algebra of oscillator
%type) for some tuple $\alpha$ of positive real numbers.

%

%%%

% !TEX root = homogeneousSpacesLieAlgebras.tex

%%%%%%%%%%%%%%%%%%%%%%%
\section{Review of the solvable case}
%%%%%%%%%%%%%%%%%%%%%%%
\label{sec:solvable}

The first two authors studied nil-invariant symmetric bilinear forms on solvable Lie algebras in \cite{BG}.
The main result \cite[Theorem 1.2]{BG} is:

\begin{thm}\label{thm:nilinvariant}
Let $\frg$ be a solvable Lie algebra and  $\met$ a 
nil-invariant symmetric bilinear form on $\frg$.
Then $\langle\cdot,\cdot\rangle$ is invariant.
In particular, $\frg^\perp$ is an ideal in $\frg$.
\end{thm}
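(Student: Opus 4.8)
\emph{Plan of proof.}
The idea is to reduce the statement to the structure of the algebraic hull of the adjoint representation. Let $\fra$ be the Lie algebra of $\ac{\Inn(\frg)}$. Since $\frg$ is solvable, $\ac{\Inn(\frg)}$ is a connected solvable algebraic group, so $\fra=\frt\oplus\frn$, where $\frn$ is its nilradical --- equivalently, the set of all nilpotent elements of $\fra$ --- and $\frt$ is a maximal torus. By Definition~\ref{def:nilinvariance}, nil-invariance of $\met$ says precisely that every element of $\frn$ acts skew-symmetrically for $\met$; since every element of $\Nil(\frg)$ acts nilpotently on $\frg$, this in particular gives $\ad(\Nil(\frg))\subseteq\frn$, so $\met$ is $\Nil(\frg)$-invariant, and a fortiori $[\frg,\frg]$-invariant. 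Now the operators on $\frg$ that are skew for $\met$ form the Lie algebra of the stabiliser of $\met$ in $\GL(\frg)$, hence an algebraic subalgebra of $\gl(\frg)$. Therefore $\met$ is invariant, i.e.\ $\ad(\frg)$ acts skew-symmetrically, if and only if its algebraic hull $\fra$ does, which --- as $\frn$ already does --- holds if and only if the torus $\frt$ does.

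To control the action of $\frt$ I would pass to the weight-space decomposition $\frg_{\CC}=\bigoplus_\chi\frg_\chi$. The crucial structural fact is that $\frg_\chi\subseteq[\frg,\frg]$ for every nonzero weight $\chi$: indeed $\fra$ acts trivially on the abelianisation $\frg/[\frg,\frg]$, since the kernel of this action is an algebraic subalgebra of $\fra$ which contains $\ad(\frg)$ (as $[X,\frg]\subseteq[\frg,\frg]$ for all $X$) and is therefore all of $\fra$; hence $\frt$ acts trivially on $\frg/[\frg,\frg]$, forcing each nonzero weight space into $[\frg,\frg]$. In particular $\frg_\chi\subseteq\Nil(\frg)$ for $\chi\neq0$, so $\met$ is already $\frg_\chi$-invariant for each such $\chi$. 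Since $\frt$ acts on $\frg_\chi$ by the scalar $\chi$, the statement ``$\frt$ acts skew-symmetrically'' is equivalent to the family of orthogonality relations
\[
\frg_\chi\perp\frg_\psi\qquad\text{whenever}\qquad\chi+\psi\neq0 ,
\]
which say exactly that the only $\met$-nonorthogonal pairs of weight spaces are the pairs $(\frg_\chi,\frg_{-\chi})$, as must hold for an invariant form. Thus the theorem reduces to these relations.

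Establishing the orthogonality relations is the technical core, and I expect it to be the main obstacle. I would argue by induction on $\dim\frg$, the nilpotent case ($\frt=\zsp$) being immediate. Two ingredients would be combined. First, for $\chi\neq0$ one writes any vector of $\frg_\chi\subseteq[\frg,\frg]$ as a sum of brackets $[a,b]$ of weight vectors; whichever of $a,b$ has nonzero weight lies in $\Nil(\frg)$, so skew-symmetry of its adjoint operator turns an identity $\langle[a,b],v\rangle=-\langle b,[a,v]\rangle$ into a relation between weight-space pairings whose weights have strictly smaller ``height'' with respect to a generic integer-valued functional on the character lattice of $\frt$; used together with the grading $[\frg_\chi,\frg_\psi]\subseteq\frg_{\chi+\psi}$, this propagates orthogonality among all pairs with at least one nonzero weight. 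Second, the weight-zero subalgebra $\frg_0$ inherits a nil-invariant symmetric bilinear form for its own adjoint representation --- because the restriction to $\frg_0$ of the centraliser of $\frt$ in $\fra$ is an algebraic subalgebra containing $\ad_{\frg_0}(\frg_0)$, whose nilpotent elements are restrictions to $\frg_0$ of elements of $\frn$ --- so by the induction hypothesis $\met|_{\frg_0}$ is $\frg_0$-invariant. The delicate point, where real work is needed, is precisely the interaction with $\frg_0$: the weight-zero directions are the ones whose adjoint operators need not be nilpotent, so their skew-symmetry is not given for free and must be extracted from the bracket relations together with the fact that every nonzero weight space is spanned by brackets.

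Finally, once $\met$ is known to be invariant, the assertion that $\frg^\perp$ is an ideal is immediate from the elementary properties of invariant bilinear forms recalled in Section~\ref{sec:scalar}.
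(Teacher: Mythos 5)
Your first two paragraphs are fine: reducing invariance to skewness of the torus part $\frt$ of the algebraic hull (nil-invariance already gives skewness of the unipotent part, and the skew operators form an algebraic subalgebra), the fact that $\fra$ acts trivially on $\frg/[\frg,\frg]$ so that $\frg_\chi\subseteq[\frg,\frg]_\CC\subseteq\Nil(\frg)_\CC$ for $\chi\neq 0$, and the translation of skewness of $\frt$ into the relations $\frg_\chi\perp\frg_\psi$ for $\chi+\psi\neq 0$ are all correct. But at that point you have only repackaged the statement: by definition nil-invariance controls the nilpotent part of the hull, and the entire content of the theorem is precisely the behaviour of the semisimple (torus) part, i.e.\ exactly the orthogonality relations you postpone. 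These you do not prove, and your sketch of how to prove them has two concrete problems. First, the proposed rewriting $\langle[a,b],v\rangle=-\langle b,[a,v]\rangle$ replaces the weight pair $(\alpha+\beta,\psi)$ by $(\beta,\alpha+\psi)$; the sum of the weights is conserved, and no functional on the character lattice that you could call a ``height'' visibly decreases under this move (the maximum, the minimum, and the first entry of a generic linear functional can all increase, depending on the signs of $\lambda(\alpha)$ and $\lambda(\beta)$), so termination of the propagation is not established and is not obvious. Second, the intended use of the induction on $\dim\frg$ is unclear: what you need involving the zero weight space are the relations $\frg_0\perp\frg_\chi$ for $\chi\neq 0$, and invariance of $\met|_{\frg_0}$ under $\ad_{\frg_0}$ (the inductive conclusion) says nothing about pairings between $\frg_0$ and the nonzero weight spaces. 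You acknowledge this yourself (``the delicate point, where real work is needed''), so as it stands the proposal is a plan whose core step is missing, not a proof.

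For comparison, the paper closes exactly this gap by a different and more elementary route, avoiding the hull and weights altogether: pick a regular element $H_0\in\frg$, let $\frg_0=\frg(H_0,0)$ be the associated Cartan subalgebra with Fitting decomposition $\frg=\frg_0\oplus\frg_1$, and note $\frg_1\subseteq[\frg,\frg]\subseteq\frn$. Skewness of $\ad(\frn)$ gives, for every $X\in\frg$, both skewness of $\ad(X)$ on $\RR X+\frn$ and the identity $X\perp[X,\frn]$; applied to $H_0$ and to the (Zariski-open, hence spanning) set of regular elements of $\frg_0$, for which $\frg(X,0)=\frg_0$, this yields $\frg_0\perp\frg_1$, and together with invariance of $\met$ on the nilpotent subalgebra $\frg_0$ one concludes that $\ad(\frg_0)$ is skew, whence invariance since $\frg=\frg_0+\frn$. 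If you want to salvage your weight-space strategy, the lesson from that argument is that the decisive orthogonality is obtained from an identity of the form $X\perp[X,\frn]$ evaluated on a spanning family of elements, not from a combinatorial induction over weight pairs; you would need to supply an analogous mechanism (or a genuinely decreasing invariant for your rewriting) to make your outline into a proof.
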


An important tool in the study of
(nil-)invariant products $\met$ on solvable $\frg$
is the reduction by a totally isotropic  ideal $\frj$
in $\frg$.
Since $\met$ is invariant,  $\frj^\perp$ is a subalgebra. Therefore, we can consider the quotient Lie algebra
\begin{equation}
\bar \frg = \frj^\perp / \, \frj.
\label{eq:reduction_j}
\end{equation}
Since $\frj$ is totally isotropic, $\bar \frg$ inherits a non-degenerate symmetric bilinear form from $\frj^{\perp}$ that is
(nil-)invariant as well.
The metric Lie algebra $(\bar \frg, \langle\cdot,\cdot\rangle)$ 
is called the \emph{reduction}  of $(\frg, \langle\cdot,\cdot\rangle)$ by $\frj$.
Reduction by $\frj$ decreases the index of $\met$.

Let $\frn$ be the nilradical of $\frg$.
The ideal
\begin{equation}
\frj_0=\zen(\frn)\cap[\frg,\frn]
\label{eq:j0}
\end{equation}
is a characteristic totally isotropic ideal in $\frg$,
whose orthogonal space $\frj_0^\perp$ is also an ideal in
$\frg$ and contains $\frj_0$ in its center.
Then $\frj_0=\zsp$ if and only if $\frg$ is
abelian.
In particular, $\frg$ is abelian if $\met$ is definite.
This implies \cite[Proposition 5.4]{BG}:

\begin{prop}\label{prop:completereduction}
Let $(\frg, \met)$ be a solvable metric 
Lie algebra with  nil-invariant symmetric bilinear form $\met$.
After a finite sequence of reductions 
with respect to totally isotropic and central ideals,  
$(\frg, \met)$ reduces to an abelian metric Lie algebra.
\end{prop}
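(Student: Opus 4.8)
The plan is to prove the statement by induction on $\mu(\frg)$, the index of $\met$, using the reduction construction \eqref{eq:reduction_j} together with the properties of the characteristic ideal $\frj_0$ from \eqref{eq:j0}. If $\met$ is definite, then $\mu(\frg)=0$; since $\frg$ is solvable, we have already noted that $\frj_0 = \zsp$, hence $\frg$ is abelian and there is nothing to reduce. So assume $\mu(\frg) \geq 1$ and that the claim holds for all solvable metric Lie algebras with nil-invariant form of strictly smaller index. By Theorem \ref{thm:nilinvariant}, $\met$ is in fact invariant, so all of the structural facts recalled around \eqref{eq:j0} are available: $\frj_0 = \zen(\frn)\cap[\frg,\frn]$ is a totally isotropic ideal, $\frj_0^\perp$ is an ideal containing $\frj_0$ in its center, and $\frj_0 = \zsp$ if and only if $\frg$ is abelian.

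First I would dispose of the abelian case: if $\frg$ is abelian, it is already an abelian metric Lie algebra and the empty sequence of reductions suffices. Otherwise $\frj_0 \neq \zsp$, and since $\frj_0$ is a nonzero totally isotropic ideal, the reduction $\bar\frg = \frj_0^\perp/\frj_0$ is defined and carries a non-degenerate nil-invariant (indeed invariant) symmetric bilinear form. The key numerical point is that reduction by a nonzero totally isotropic ideal strictly decreases the index: passing to $\frj_0^\perp$ can only decrease $\mu$ weakly, and then quotienting by the $d = \dim\frj_0 \geq 1$ dimensional totally isotropic subspace $\frj_0$ removes it together with a complementary isotropic subspace, so $\mu(\bar\frg) \leq \mu(\frg) - d < \mu(\frg)$. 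Moreover $\bar\frg$ is again solvable, being a subquotient of the solvable algebra $\frg$. Hence the induction hypothesis applies to $(\bar\frg,\met)$: after finitely many further reductions with respect to totally isotropic central ideals, $\bar\frg$ reduces to an abelian metric Lie algebra. Prepending the single reduction $\frg \rightsquigarrow \bar\frg$ to this sequence yields the desired finite chain of reductions from $(\frg,\met)$ to an abelian metric Lie algebra.

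The one point requiring a little care — and the main (small) obstacle — is verifying that $\frj_0$ is genuinely \emph{central} in $\frj_0^\perp$, so that each step of the chain is a reduction "with respect to totally isotropic and central ideals" as claimed in the statement, not merely with respect to totally isotropic ideals; but this is exactly the content of the assertion recalled after \eqref{eq:j0} that $\frj_0^\perp$ "contains $\frj_0$ in its center," which follows from invariance of $\met$ together with $\frj_0 \subseteq \zen(\frn)$. One should also note that after reduction the relevant ideal of $\bar\frg$ used in the next step is again of the form $\frj_0(\bar\frg)$, so the induction is self-sustaining and the whole chain consists of such reductions. Since each reduction strictly lowers a nonnegative integer invariant, the process terminates after at most $\mu(\frg)$ steps, which gives finiteness and completes the proof.
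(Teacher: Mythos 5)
Your argument is correct and follows exactly the route the paper intends: it iterates reduction by the characteristic totally isotropic ideal $\frj_0=\zen(\frn)\cap[\frg,\frn]$, using that $\frj_0\neq\zsp$ unless $\frg$ is abelian, that $\frj_0$ is central in $\frj_0^\perp$, and that each reduction strictly lowers the index (the paper itself only records these ingredients and cites \cite[Proposition 5.4]{BG} for the conclusion). Your explicit verification of the strict inequality $\mu(\bar\frg)\leq\mu(\frg)-\dim\frj_0$ and of the centrality point simply fills in the details left implicit there.
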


The proposition is useful in particular to derive properties 
of  solvable metric Lie algebras of low index.

\subsection{Invariant scalar products of index $2$} \label{sec:solvable_invariant}
%{Metric Lie algebras of index $2$}

%\begin{example}\label{ex:q+q}
%Let $(\frq,\met_1)$ be a Lorentzian oscillator type algebra
%and $(\frw,\met_2)$ any solvable Lorentzian metric Lie algebra.
%Then the orthogonal direct sum $\frq\times\frw$ of metric Lie
%algebras is a non-abelian solvable metric Lie algebra of index
%$2$ or of signature $(\dim\frq,\dim\frw)$, depending on how the
%signs of the Lorentzian scalar products in the two summands are
%chosen.
%Any orthogonal direct sum $\frg_1\times\frg_2$ of two
%metric Lie algebras $(\frg_1,\met_1)$, $(\frg_2,\met_2)$
%with invariant Lorentzian scalar products yields a metric Lie
%algebra with invariant scalar product of index $2$ or of
%signature $(\dim\frg_1,\dim\frg_2)$, depending on how the signs
%of the Lorentzian scalar products in the two summands are
%chosen.
%\end{example}

\begin{example} \label{ex:KO1} 
Let $\psi \in \so_{n,1}$. Then the oscillator algebra  $\osc(\psi)$ as defined in Example
\ref{ex:general_oscillator} is of index $\ell =2$. 
%Let $(\frg,\met)$ be a Lie algebra on the vector space
%$\frg=\frd\oplus\ab^n_1\oplus\frj$ as constructed in
%Example \ref{ex:KO0} with $s=1$.
%Then $\met$ is an invariant scalar product of signature
%$(n,2)$ on $\frg$.
%In particular, if $\frg$ is nilpotent,
%then it is at most two-step nilpotent.
\end{example}

Let $\alpha^j=(\alpha^j_1,\ldots,\alpha^j_n)$, $j =  1,2$, denote
tuples of real numbers, and let us put  $\frd = \Span\{ D_{1}, D_{2}\}$,  
$\frj =\Span\{Z_1, Z_2\}$. Let $X_1,\ldots,X_n$, $Y_1,\ldots,Y_n$ 
be an orthonormal basis of $\fra= \ab^{2n}_0$,
% where no index $i$ allows $\alpha^1_i=0=\alpha^2_i$,

\begin{example}\label{ex:KO2}
We define a metric Lie algebra $\frg=\osc(\alpha^1,\alpha^2)$ as follows.
%and let $\frd_1=\Span\{D_1\}$, $\frd_2=\Span\{D_2\}$,
%$\frj_1=\Span\{Z_1\}$, $\frj_2=\Span\{Z_2\}$.
The Lie product on $$ \frg = \frd \oplus\ab^{2n}_0\oplus \frj$$
 is given by the relations
\begin{equation} \label{eq:oscrel}
[X_i,Y_j]=\delta_{ij}(\alpha^1_i Z_1+\alpha^2_i Z_2),
\quad
[D_k,X_j]=\alpha^k_j Y_j,
\quad
[D_k,Y_j]=-\alpha^k_j X_j. 
% \quad k=1,2,
\end{equation}
Define a scalar product $\met$ of index $2$ on $\frg$ 
%\[
%\frg=\frd \oplus\ab^{2n}_0\oplus \frj
%\]
by
\begin{equation} \label{eq:oscmetric}
\langle D_1,D_2\rangle=\langle Z_1,Z_2\rangle=0,
\quad
\langle D_i,Z_j\rangle=\delta_{ij},
\quad
D_i,Z_i\perp\fra \,  . 
\end{equation}
Then $\frg$ is a solvable Lie algebra with
invariant scalar product $\met$. 
Observe that 
$[\frg,\frg]=[\frg,\frn] \subseteq \fra \oplus\frj$, where $\frn$ is 
the nilradical of $\frg$. Then $\frn$  is at most two-step nilpotent, since
$[\frn, \frn] \subseteq \frj$. 
% and
%$[\frn,\frn]=\zen(\frg)=\zen(\frn)=\frj$ is abelian.
%\oliver{It is not correct that always $\frn=[\frg,\frg]$,  for example if  $\alpha_{1} = \alpha_{2}$}
\end{example}

\begin{example}\label{ex:KO3}
We define a metric Lie algebra  $\frg=\osc_{1}(\alpha^1,\alpha^2)$ as follows. 
Consider $\fra = \ab^{2n+1}_0 = \Span\{W\} + \ab^{2n}_0$, where $W \perp \ab^{2n}_0$, and 
$\langle W, W \rangle =1$. 
%$\frd_1=\Span\{D_1\}$, $\frd_2=\Span\{D_2\}$,
%$\frj_1=\Span\{Z_1\}$, $\frj_2=\Span\{Z_2\}$.
A Lie product on 
$$
\frg=\frd \oplus\ab^{2n+1}_0 \oplus\frj
$$
is given by the
relations \eqref{eq:oscrel} and 
\begin{gather*}
[D_1,D_2]=W,
\quad
[D_1,W]=-Z_2,
\quad
[D_2,W]=Z_1. 
% \\
%[X_i,Y_j]=\delta_{ij}(\beta^1_i Z_1+\beta^2_i Z_2),
%\quad
%[D_k,X_j]=\alpha^i_j Y_j,
%\quad
%[D_k,Y_j]=-\alpha^i_j X_j,
%\quad k=1,2. 
\end{gather*}
Define a scalar product $\met$ of index $2$ on $\frg$ using \eqref{eq:oscmetric}.  
%by
%\[
%%\langle D_1,D_2\rangle=\langle Z_1,Z_2\rangle=0,
%%\quad
%\langle D_i,Z_j\rangle=\delta_{ij},
%\quad
%D_i,Z_i\perp\ab^{2n+1}_0,
%\quad i,j=1,2.
%\]
%% all other relations zero.
Then $\frg$ is a solvable Lie algebra with
invariant scalar product $\met$.
Note if $n=0$, or $\alpha^{1} = \alpha^{2} = 0$ then $\frg$ is three-step nilpotent.
Otherwise, $[\frn, \frn ] \subseteq  \frj \subseteq \frz(\frg)$. 
%Otherwise, $
%$\frn=[\frg,\frg]=\ab_0^{2n+1}\oplus\frj$ 
%is two-step nilpotent,
%$[\frg,\frn]=\ab_0^{2n}\oplus\frj$,
%and $[\frn,\frn]=\zen(\frg)=\frj$
%is abelian.
%Moreover, $\zen(\frn)=\RR W\oplus\frj$. 
\end{example}

The three families of Lie algebras in Examples \ref{ex:KO1}, \ref{ex:KO2}, \ref{ex:KO3} were
found by Kath and Olbrich \cite{KO1} to contain all
indecomposable non-simple metric Lie algebras with invariant
scalar product of index $2$. Thus we note:  

\begin{prop} \label{prop:solv_ind2}
Any solvable metric Lie algebra with invariant scalar product
of index $2$ is obtained by taking direct products of metric 
Lie algebras in Examples \ref{ex:oscillator1}, \ref{ex:KO1}  to \ref{ex:KO3} 
or abelian metric
Lie algebras.  
\end{prop}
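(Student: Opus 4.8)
The plan is to establish Proposition~\ref{prop:solv_ind2} by invoking the classification of indecomposable metric Lie algebras with invariant scalar product of index at most $2$ due to Kath and Olbrich, and then feeding it through a decomposition argument. First I would recall the general structure theory: a metric Lie algebra $(\frg,\met)$ with \emph{non-degenerate} invariant scalar product decomposes as an orthogonal direct sum of indecomposable metric ideals; this is elementary, since if $\fri\subseteq\frg$ is a non-degenerate ideal then $\fri^\perp$ is a complementary ideal (using invariance), and one induces on dimension. So it suffices to enumerate the indecomposable solvable metric Lie algebras with invariant scalar product of index $\le 2$. Index $0$ forces the algebra to be abelian (a definite invariant form on a solvable Lie algebra makes $[\frg,\frg]$ both isotropic and orthogonal to everything, hence trivial); so the only indecomposable definite piece is $\RR$. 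In index $1$, by the remark following Example~\ref{ex:oscillator1} (Medina; Hilgert--Hofmann) the only non-abelian indecomposable solvable building block is an oscillator algebra $\osc(\psi)$ with $\psi\in\so_n$, i.e.\ Lorentzian of oscillator type.

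Next I would treat the genuinely index-$2$ indecomposable pieces. By the cited theorem of Kath--Olbrich \cite{KO1}, every indecomposable \emph{non-simple} metric Lie algebra with invariant scalar product of index $2$ is isomorphic to one of $\osc(\psi)$ with $\psi\in\so_{n,1}$ (Example~\ref{ex:KO1}), $\osc(\alpha^1,\alpha^2)$ (Example~\ref{ex:KO2}), or $\osc_1(\alpha^1,\alpha^2)$ (Example~\ref{ex:KO3}). Since we are in the solvable case, the ``simple'' alternative in Kath--Olbrich's list — which accounts for $\sl_2(\RR)$ with (a multiple of) its Killing form, an index-$1$ object, and has no index-$2$ simple analogue among real forms relevant here — does not arise. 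Hence every indecomposable solvable metric Lie algebra with invariant scalar product of index exactly $2$ appears in Examples~\ref{ex:KO1}--\ref{ex:KO3}. Here I should also note a bookkeeping point: when one forms an orthogonal direct product of two Lorentzian-type pieces, the total index is $2$, which is why Example~\ref{ex:oscillator1} must be included in the list (two copies of an oscillator algebra of oscillator type, or one such copy together with an index-$1$ oscillator $\osc(\psi)$, $\psi\in\so_{n,1}$, need not be indecomposable of index $2$ yet still arise as factors).

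Putting the pieces together: given an arbitrary solvable metric Lie algebra $(\frg,\met)$ with invariant scalar product of index $2$, it is non-degenerate by hypothesis (we are speaking of a scalar product), so the orthogonal decomposition into indecomposables applies. Each indecomposable factor is solvable with invariant scalar product of index $\le 2$; by the enumeration above, each such factor is abelian, of oscillator type (Example~\ref{ex:oscillator1}), or one of the three Kath--Olbrich families (Examples~\ref{ex:KO1}--\ref{ex:KO3}). Collecting the abelian factors into a single abelian metric Lie algebra yields exactly the asserted description.

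The main obstacle is the correct invocation and bookkeeping of the Kath--Olbrich classification rather than any hard computation: one must verify that restricting their list to the solvable case leaves precisely Examples~\ref{ex:KO1}--\ref{ex:KO3} (plus the lower-index building blocks that survive as orthogonal summands), and that the index-additivity under orthogonal direct sums is handled so that no index-$2$ indecomposable is overlooked and no spurious one is introduced. In particular one must be careful that a summand of index $1$ is permitted inside an index-$2$ product, which is why Example~\ref{ex:oscillator1} (and the abelian Lorentzian and definite pieces) must explicitly appear in the statement. Everything else is the standard orthogonal-decomposition argument for non-degenerate invariant forms, which is routine.
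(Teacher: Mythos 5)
Your argument is essentially the paper's own: the paper offers no independent proof but derives the proposition directly from the Kath--Olbrich classification of indecomposable non-simple metric Lie algebras with invariant scalar product of index $2$ (Examples \ref{ex:KO1}--\ref{ex:KO3}), combined with the standard orthogonal decomposition into indecomposables and the index $\leq 1$ classification (abelian, respectively oscillator type as in Example \ref{ex:oscillator1}). Your additional bookkeeping about index-additivity and why the lower-index building blocks must appear in the list is correct and consistent with the paper.
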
 

We use this to derive the following particular observation, which will
play an important role in Section \ref{sec:fsubmodules}.
An ideal in a metric Lie algebra $(\frg, \met)$ is called
\emph{characteristic} if it is preserved by every skew derivation of $\frg$. 

\begin{prop}\label{prop:index2algebras_q}
Let $\frg$ be a solvable Lie algebra with invariant
bilinear form $\met$ of index $\mu \leq 2$. 
Then $(\frg, \met)$ has a characteristic ideal $\frq$ that 
satisfies: 
\begin{enumerate}
 \item $\dim [\frq, \frq] \leq 2$.
 \item $\codim_{\frg} \frq \leq 2$. 
\end{enumerate}
\end{prop}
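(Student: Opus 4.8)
The strategy is to use the classification in Proposition~\ref{prop:solv_ind2}: after passing to a direct product decomposition into indecomposable factors, it suffices to exhibit, in each factor, a characteristic ideal $\frq$ with $\dim[\frq,\frq]\leq 2$ and $\codim\frq\leq 2$, and then to combine them. However, one must be careful: taking a product of the ideals produced factor-by-factor need not keep the codimension bound, so the combination step has to be organized so that at most two ``units of codimension'' and at most two ``units of derived dimension'' are ever spent in total. The key observation that makes this work is that each non-abelian indecomposable factor appearing in Proposition~\ref{prop:solv_ind2} already carries index $1$ or $2$ by itself, and the total index is the sum of the indices of the factors; since $\mu\leq 2$, there is at most one factor of index $2$, or at most two factors of index $1$, with all remaining factors abelian (hence definite, contributing nothing).

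\textbf{Main steps.} First I would write $\frg=\frg_0\times\frg_1\times\cdots\times\frg_m$ as an orthogonal direct product with $\frg_0$ abelian and each $\frg_i$ ($i\geq 1$) indecomposable non-abelian, appealing to the reduction theory of Section~\ref{sec:solvable} (or directly to Proposition~\ref{prop:solv_ind2}). Each $\frg_i$ is then one of the algebras $\osc(\psi)$ with $\psi\in\so_n$ (Example~\ref{ex:oscillator1}), $\osc(\psi)$ with $\psi\in\so_{n,1}$ (Example~\ref{ex:KO1}), $\osc(\alpha^1,\alpha^2)$ (Example~\ref{ex:KO2}), or $\osc_1(\alpha^1,\alpha^2)$ (Example~\ref{ex:KO3}). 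Second, in each such model I would take $\frq_i=\frn_i$, the nilradical, or rather a slight adjustment of it: inspecting the bracket relations shows $[\frg_i,\frg_i]=[\frg_i,\frn_i]\subseteq\fra\oplus\frj$ with $[\frn_i,\frn_i]\subseteq\frj$ and $\dim\frj\leq 2$, while $\frg_i/\frn_i$ is spanned by the images of the $D_k$, so $\codim_{\frg_i}\frn_i\leq 2$ in the index-$2$ cases and $\leq 1$ in the Lorentzian (oscillator-type) case. The nilradical is characteristic, so $\frq_i$ is a characteristic ideal with the two bounds \emph{inside $\frg_i$}. Third — the actual content — I would set $\frq=\frg_0\times\frg_1'\times\cdots\times\frg_m'$ where $\frg_i'$ is $\frg_i$ itself when $\frg_i$ is definite-contributing-nothing... but more precisely: since $\sum_i\mu(\frg_i)=\mu\leq 2$, either one index-$2$ factor is present and all others abelian, in which case $\frq=\frg_0\times\frn_1\times\frg_2\times\cdots$ works with $\codim=\codim_{\frg_1}\frn_1\leq 2$ and $\dim[\frq,\frq]=\dim[\frn_1,\frn_1]\leq\dim[\frg_1,\frg_1]\cap\frj\leq 2$; or there are at most two index-$1$ factors $\frg_1,\frg_2$, each of oscillator type with $\codim_{\frg_i}\frn_i=1$ and $[\frn_i,\frn_i]=[\frg_i,\frg_i]$ one-dimensional, and I take $\frq=\frg_0\times\frn_1\times\frn_2\times\frg_3\times\cdots$, so $\codim\frq=1+1=2$ and $[\frq,\frq]=[\frn_1,\frn_1]\oplus[\frn_2,\frn_2]$ has dimension $\leq 2$. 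In every case $\frq$ is a product of characteristic ideals and of the whole abelian/definite factors, hence characteristic in $\frg$.

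\textbf{Expected obstacle.} The routine part is reading off $\dim[\frn_i,\frn_i]$ and $\codim_{\frg_i}\frn_i$ from the explicit relations in Examples~\ref{ex:oscillator1}, \ref{ex:KO1}--\ref{ex:KO3}; this is bookkeeping. The genuine difficulty is the \emph{combination step}: one must be sure that the index really is additive over the orthogonal factors (so that the number of ``expensive'' factors is controlled), that ``characteristic'' is inherited correctly by a product of characteristic ideals of direct factors (a skew derivation of $\frg$ need not respect the given product decomposition a priori, so one should either argue that the decomposition into indecomposables is itself canonical up to the relevant equivalence, or choose $\frq$ in a manifestly derivation-invariant way — e.g. built from the nilradical and the metric radical/center, which are intrinsic). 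I would resolve the latter by defining $\frq$ intrinsically: for instance, $\frq = \frn \cap \frn^{\perp\perp}$ together with a correction coming from $\zen(\frg)$, chosen so that the verification reduces to the model computation above but the definition of $\frq$ never refers to the decomposition. Making that intrinsic description match the case analysis is the point where the most care is needed.
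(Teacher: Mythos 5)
There are two genuine gaps. The first is the case analysis inside the indecomposable factors: your key bookkeeping claim, that in each non-abelian factor the nilradical $\frn_i$ satisfies $[\frn_i,\frn_i]\subseteq\frj$ and $\codim_{\frg_i}\frn_i\leq 2$ because ``$\frg_i/\frn_i$ is spanned by the $D_k$'', fails precisely for the \emph{nilpotent} members of the classification. For $\osc(\psi)$ with $\psi\in\so_{n,1}$ nilpotent (Example \ref{ex:KO1}), and for $\osc_1(\alpha^1,\alpha^2)$ with $n=0$ or $\alpha^1=\alpha^2=0$ (Example \ref{ex:KO3}, which the paper explicitly notes is three-step nilpotent), the whole factor is nilpotent, so $\frn_i=\frg_i$ and $[\frn_i,\frn_i]=[\frg_i,\frg_i]$ has dimension $3$ (for instance $\im\psi\oplus\frj$ with $\rk\psi=2$, resp.\ $\Span\{W,Z_1,Z_2\}$). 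So your $\frq$ --- which in every one of your cases is just the nilradical of $\frg$, since abelian factors coincide with their nilradicals --- violates condition (1). (Incidentally, this also means the ``characteristic'' worry you spend the last paragraph on is moot: the nilradical is preserved by all derivations; the problem is not characteristicness but the bound on $[\frq,\frq]$.) The intrinsic candidates you gesture at ($\frn\cap\frn^{\perp\perp}$ plus ``a correction from the center'') are not worked out and do not obviously repair this. The paper's proof avoids the issue by taking the manifestly characteristic ideal $\frq=[\frg,\frg]+\frz(\frg)$ and checking (1) and (2) for it on the algebras of Examples \ref{ex:oscillator1}, \ref{ex:KO1}--\ref{ex:KO3} and their products; in the nilpotent cases above this $\frq$ has codimension $\leq 2$ and $[\frq,\frq]\subseteq\frj$, so it passes where the nilradical does not.

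The second gap is that the statement allows $\met$ to be \emph{degenerate}: the hypothesis is $\mu\leq 2$, where $\mu$ counts the kernel $\frg^\perp$ as well, while Proposition \ref{prop:solv_ind2} classifies only invariant non-degenerate scalar products of index $\leq 2$, so your appeal to it does not cover the degenerate case at all. This is not a cosmetic omission: the proposition is applied in Proposition \ref{prop:commuting_ideals} to the restriction $\met_\frr$, which is typically degenerate. The paper treats this by a short separate argument on $\frg_0=\frg/\frg^\perp$: if $\ell=0$ then $[\frg,\frg]\subseteq\frg^\perp$ has dimension $\leq 2$ and $\frq=\frg$ works; if $\ell=1$ then $\dim\frg^\perp\leq 1$ and the preimage of $[\frg_0,\frg_0]+\frz(\frg_0)$ works; $\ell=2$ forces non-degeneracy. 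Your argument would need an analogous reduction (taking care that passing to preimages can increase $\dim[\frq,\frq]$ by $\dim\frg^\perp$) before the classification can be invoked.
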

\begin{proof} It is easily checked that  the characteristic ideal $ \frq  = [\frg, \frg] + \frz(\frg)$ of $\frg$ 
satisfies (1) and (2)  for the Examples 
 \ref{ex:oscillator1}, \ref{ex:KO1}  to \ref{ex:KO3},
and for products of oscillators as in Example \ref{ex:oscillator1}. Hence, the proposition 
is satisfied for all invariant scalar products of index $\ell = \mu \leq 2$.  

Suppose now that $\met$ is degenerate and $\ell = 0$. Then $\frg/\frg^{\perp}$ inherits 
a definite invariant scalar product.  Hence, $\frg/\frg^{\perp}$ is abelian, 
and $[\frg, \frg] \subseteq \frg^{\perp}$. Since $\dim \frg^{\perp} \leq \mu \leq 2$, 
it follows that  $\frq= \frg$ has the required properties. 

Finally, suppose $\met$ is degenerate, $\ell = 1$. Then $\frg_{0}= \frg/\frg^{\perp}$ is Lorentzian. 
It follows that   $\frg_{0}$ admits a codimension one characteristic ideal $\frq_{0}  = [\frg_{0}, \frg_{0}] + \frz(\frg_{0})$, 
where $\dim [\frq_{0}, \frq_{0}]  \leq  1$. Thus the preimage  $\frq$ of $\frq_{0}$ in $\frg$ has the required properties. 
\end{proof} 

In the following $\frn$ denotes the nilradical of the Lie algebra $\frg$.

\begin{cor}\label{cor:index2algebras} % \oliver{needed, anywhere?}
Let $\frg$ be a solvable metric Lie algebra which admits an invariant
scalar product  of  index $\leq 2$. 
 If $\frg$ is not nilpotent 
then \[
\zen(\frg)\cap[\frn,\frn]=\zen(\frn)\cap[\frn,\frn]=[\frn,\frn] \; .
\]
%In particular, if $\frg$ is not nilpotent itself,
%then the nilradical $\frn$ of $\frg$ is at most two-step
%nilpotent.
\end{cor}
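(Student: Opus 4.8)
The plan is to reduce the corollary to the structural information provided by Proposition~\ref{prop:solv_ind2} and Proposition~\ref{prop:index2algebras_q}. First I would recall the general chain of inclusions valid in \emph{any} Lie algebra with nilradical $\frn$: one always has $[\frn,\frn]\subseteq\zen(\frn)$ only in the two-step case, so instead I would rely on the containments $\zen(\frg)\cap[\frn,\frn]\subseteq\zen(\frn)\cap[\frn,\frn]\subseteq[\frn,\frn]$, which hold trivially, and the task is to prove the reverse inclusions, i.e.\ that every element of $[\frn,\frn]$ lies in $\zen(\frg)$. Equivalently, it suffices to show $[\frn,\frn]\subseteq\zen(\frg)$ whenever $\frg$ is solvable, non-nilpotent, and carries an invariant scalar product of index $\le 2$.

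The key step is to invoke the classification. By Proposition~\ref{prop:solv_ind2}, after splitting off an abelian orthogonal factor (which contributes nothing to $\frn$ beyond a central summand, and on which the statement is vacuous), $\frg$ is an orthogonal direct product of indecomposable metric Lie algebras drawn from Examples~\ref{ex:oscillator1}, \ref{ex:KO1}, \ref{ex:KO2}, \ref{ex:KO3}. Both $\frn$ and $[\frn,\frn]$ and $\zen(\frg)$ respect orthogonal direct products, so it is enough to verify the inclusion $[\frn,\frn]\subseteq\zen(\frg)$ on each indecomposable building block (the abelian factors being trivial, and noting that if $\frg$ is non-nilpotent then at least one non-abelian oscillator factor is present, but the inclusion we must check is factor-wise regardless). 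For the oscillator algebras of Example~\ref{ex:oscillator1} one has $[\frn,\frn]=\zsp$. For Example~\ref{ex:KO2}, the relations \eqref{eq:oscrel} give $[\frn,\frn]\subseteq\frj$ and $\frj=\Span\{Z_1,Z_2\}$ is central in $\frg$ by inspection of \eqref{eq:oscrel}. For Example~\ref{ex:KO3}, the added relations show $[\frn,\frn]\subseteq\frj\subseteq\zen(\frg)$ as recorded there (the case $n=0$ or $\alpha^1=\alpha^2=0$ being three-step nilpotent and hence excluded by the non-nilpotency hypothesis, or handled directly since then still $[\frn,\frn]=\frj$ is central). Thus in every case $[\frn,\frn]\subseteq\zen(\frg)$, and combined with $[\frn,\frn]\subseteq\zen(\frn)$ (immediate from $[\frn,\frn]\subseteq\zen(\frg)\subseteq\zen(\frn)$, since the center of the whole algebra centralizes the subalgebra $\frn$) the chain of equalities follows.

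Alternatively, and perhaps more cleanly, I would bypass the case-by-case check by using Proposition~\ref{prop:index2algebras_q}: it furnishes a characteristic ideal $\frq$ with $\dim[\frq,\frq]\le 2$ and $\codim_\frg\frq\le 2$. Since $\frn$ is the nilradical and $[\frg,\frg]\subseteq\frn$, one analyzes how $[\frn,\frn]$ sits inside $[\frq,\frq]$; the low codimension of $\frq$ forces $[\frn,\frn]$ to be small and the skew-derivation-invariance (in particular $\ad$-invariance under $\frg$, using that $\ad(X)$ for $X\in\frn$ is nilpotent and hence skew by invariance of $\met$) pins it down inside the center. The main obstacle I anticipate is precisely this last point: translating "small and characteristic" into "central" requires knowing that $\ad(\frg)$ acts trivially on $[\frn,\frn]$, and for that one needs that the relevant operators are nilpotent skew-symmetric maps preserving a two-dimensional non-degenerate or degenerate subspace—such a map need not vanish in general, so one must use the specific structure (that $[\frn,\frn]$ lands in a totally isotropic piece like $\frj$, on which nilpotent skew maps coming from $\ad(\frg)$ act trivially). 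For that reason I expect the honest route really does go through the explicit models of Proposition~\ref{prop:solv_ind2}, where one simply reads off that $[\frn,\frn]\subseteq\frj\subseteq\zen(\frg)$; the "obstacle" is just the bookkeeping of confirming this for each family, which is routine given the displayed bracket relations.
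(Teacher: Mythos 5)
Your overall route is the intended one (the paper states this as a corollary of the classification in Proposition~\ref{prop:solv_ind2} and gives no separate argument), but as written the case analysis has a genuine gap. The assertion that the inclusion $[\frn,\frn]\subseteq\zen(\frg)$ "is factor-wise regardless" is false: the list of building blocks contains nilpotent non-abelian algebras, namely $\osc(\psi)$ with $\psi\in\so_{n,1}$ nilpotent (Example~\ref{ex:KO1}, which your verification skips entirely) and $\osc_1(\alpha^1,\alpha^2)$ with $n=0$ or $\alpha^1=\alpha^2=0$ (Example~\ref{ex:KO3}). For these the nilradical is the whole factor, so $[\frn,\frn]=[\frg,\frg]$ is \emph{not} central; e.g.\ in the five-dimensional case of Example~\ref{ex:KO3} one has $W\in[\frn,\frn]$ but $[D_1,W]=-Z_2\neq\zsp$, so your fallback claim that "still $[\frn,\frn]=\frj$ is central" there is wrong. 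These blocks really must be excluded, and the exclusion is where the hypothesis that $\frg$ is not nilpotent enters, via an argument you do not give: such a nilpotent non-abelian block has index exactly $2$, so by additivity of the index over orthogonal products all remaining factors are definite, hence abelian, and then $\frg$ would be nilpotent, contradicting the hypothesis. Once this is in place, every non-abelian factor is non-nilpotent; its nilradical is $\ab\oplus\frj$ (possibly enlarged, in Examples~\ref{ex:KO2} and~\ref{ex:KO3}, by a combination of the $D_k$ acting nilpotently), and in each case $[\frn,\frn]\subseteq\frj$, which is central in the factor, giving the statement.

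Two smaller points. First, your claim that $[\frn,\frn]=\zsp$ for Example~\ref{ex:oscillator1} is incorrect: for $\psi\neq 0$ the nilradical is the Heisenberg-type ideal $\ab^n_0\oplus\frj$ and $[\frn,\frn]=\frj\neq\zsp$; this happens to be harmless because $\frj$ is central, but it signals that the nilradicals were not actually computed. Second, the alternative argument via Proposition~\ref{prop:index2algebras_q} is, as you yourself concede, not a proof; the honest route is the inspection of the explicit models, and your write-up of it would be correct after repairing the treatment of Example~\ref{ex:KO1} and of the nilpotent cases as above.
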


% !TEX root = homogeneousSpacesLieAlgebras.tex

%%%%%%%%%%%%%%%%%%%%%%%
\section{Nil-invariant symmetric bilinear forms}
\label{sec:biinvariant}
%%%%%%%%%%%%%%%%%%%%%%%

Let $\frg$ be a finite-dimensional real Lie algebra with solvable radical $\frr$.
Let
\begin{equation}
\frg= (\frk \times \frs) \ltimes \frr
\label{eq:levi}
\end{equation}
be a Levi decomposition
%  of $\frg$
, where $\frk$ is semisimple of compact type and
$\frs$ is semisimple without factors of compact type. 
Furthermore, we put 
\[
\gs  = \frs \ltimes  \frr  \; .
\]
% for the maximal ideal in $\frg$ which is of non-compact type.\\
Note that $\gs$ is a characteristic 
ideal of $\frg$.\\ 

The purpose of this section is to show:   

{
\renewcommand{\themthm}{\ref{mthm:invariance}}
\begin{mthm}
Let $\met$ be a  nil-invariant symmetric bilinear form on $\frg$,  and  let $\met_{\gs}$ denote the restriction of $\met$ to $\gs$.
Then: %  we have: 
\begin{enumerate}
\item
$\met_{\gs\! }$
is invariant by  the adjoint action of $\frg$ on $\gs$.
\item
$\met$ is invariant by $\gs$. 
\end{enumerate}
\end{mthm}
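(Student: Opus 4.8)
The plan is to exploit the interplay between nil-invariance and the algebraic hull $\ac{\Inn(\frg)}$. First I would record the basic structural fact that the nilpotent elements of the Lie algebra of $\ac{\Inn(\frg)}$ include not only the nilparts $\ad(Y)_{\rm n}$ for $Y\in\frg$, but also, because $\gs=\frs\ltimes\frr$ is a characteristic ideal on which $\frs$ acts with no compact factors, the images $\ad(X)$ for $X\in\frs$ (these are contained in the semisimple part of the algebraic hull, and, crucially, for an $\RR$-split-semisimple Lie algebra of noncompact type every $\ad(X)$ lies in a subalgebra of endomorphisms all of whose elements are limits/combinations of nilpotents of the hull). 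More precisely, I would argue that the smallest algebraic subalgebra containing $\ad(\frs)$ is $\ad(\frs)$ itself (semisimple groups are algebraic), and that a semisimple Lie algebra of noncompact type is spanned by the nilpotent elements of its adjoint representation — indeed $\frs=[\frs,\frs]$ and, by the Jacobson--Morozov theorem applied to each noncompact simple factor, $\frs$ is generated as a vector space by $\ad$-nilpotent elements. Hence nil-invariance already forces $\met$ to be skew under $\ad(X)$ for every $X\in\frs$.

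The harder half is to handle $\frr$. Here I would use Theorem \ref{thm:nilinvariant} (the solvable case, from \cite{BG}): the restriction $\met_{\frr}$ is a nil-invariant form on the solvable Lie algebra $\frr$, hence invariant under $\ad_{\frr}(\frr)$. But I need invariance of the full form $\met$ on $\gs$ under $\ad(Y)$ for $Y\in\frr$, acting on all of $\gs$ (not just on $\frr$). For this I would split $\gs$ using a vector-space complement and examine the mixed terms $\langle[Y,X],Z\rangle$ with $Y,Z\in\frr$, $X\in\frs$, and $\langle[Y,X_1],X_2\rangle$ with $X_1,X_2\in\frs$. The key device is that for $Y$ in the nilradical $\frn$ of $\frr$ (in fact of $\frg$), $\ad(Y)$ is nilpotent on $\frg$, so nil-invariance applies directly; for general $Y\in\frr$ one writes $\ad(Y)=\ad(Y)_{\rm ss}+\ad(Y)_{\rm n}$ and observes that the semisimple part $\ad(Y)_{\rm ss}$, restricted to $\gs$, can be absorbed into a torus in the algebraic hull whose action must then also be skew — this uses that the relevant torus is generated together with the unipotents it normalizes, or alternatively that $[\frr,\frr]\subseteq\frn$ so that $\ad(Y)_{\rm ss}$ acts on $\gs/\frn$ as a derivation of an abelian algebra and a separate compactness/reductivity argument pins it down.

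I expect the main obstacle to be exactly this last point: controlling the semisimple (toral) parts $\ad(Y)_{\rm ss}$ for $Y$ in the radical, since these are \emph{not} nilpotent and nil-invariance says nothing about them directly. The way around it is to show that the $\RR$-split part of such a torus in $\ac{\Inn(\frg)}$ lies in the algebraic group generated by the unipotent radical of $\ac{\Inn(\frg)}$ together with $\ad(\frs)$ — equivalently, that modulo $\ad(\frk)$ (the anisotropic part) every semisimple element of the hull is a product of elements on which skewness is already known — and then conclude by taking Zariski closures: the set of $\varphi$ in the Lie algebra of $\ac{\Inn(\frg)}$ satisfying $\langle\varphi X_1,X_2\rangle=-\langle X_1,\varphi X_2\rangle$ is a Lie subalgebra, it is algebraic, and it contains all nilpotents; if it also contains $\ad(\gs)$ we are done with part (2), and part (1) follows since $\ad(\frk)$ preserves $\gs$ and normalizes $\ad(\gs)$, so $\ad(\frg)|_{\gs}=\ad(\frk)|_{\gs}+\ad(\gs)|_{\gs}$ acts skew-symmetrically on $\met_{\gs}$ once one checks the $\ad(\frk)$-contribution separately using that $\met_{\gs}$ is the restriction of a form on which $[\frk,\gs]\subseteq\gs$ acts. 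The reduction-theory Proposition \ref{prop:completereduction} may be invoked to streamline the solvable bookkeeping.
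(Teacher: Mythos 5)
Your treatment of $\frs$ is essentially the paper's: nil-invariance gives skewness of $\ad(X)$ for every $\ad$-nilpotent $X\in\frs$, and since a semisimple algebra without compact factors is generated by such elements and $\inv(\frg,\met)$ is a subalgebra, $\frs\subseteq\inv(\frg,\met)$; this is exactly Lemmas \ref{lem:isotropic_s} and \ref{lem:sinvariant}. The gap is in the step you yourself flag as the main obstacle. Your proposed resolution rests on the claim that the $\RR$-split semisimple parts $\ad(Y)_{\rm ss}$, $Y\in\frr$, lie (modulo $\ad(\frk)$) in the algebraic subgroup of $\ac{\Inn(\frg)}$ generated by the unipotent radical and $\ad(\frs)$, so that skewness propagates formally. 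This is false in general: already for $\frg=\frr$ the two-dimensional nonabelian algebra with $[D,X]=X$, the element $\ad(D)$ is split semisimple and is not contained in the group generated by the unipotent elements of the hull (there is no $\frs$ to help). If your mechanism were valid, Theorem \ref{thm:nilinvariant} for solvable algebras would be an immediate formality, whereas its proof genuinely uses regular elements, Cartan subalgebras and a Fitting decomposition; in the example above invariance of $\ad(D)$ holds only because nil-invariance forces degeneracies of $\met$, not because $\ad(D)$ is reachable from unipotents. Your fallback ("a separate compactness/reductivity argument pins it down") and the treatment of the $\ad(\frk)$-contribution ("once one checks it separately") are not arguments, and it is precisely these mixed terms, $\langle[X,Y],Z\rangle$ with $X\in\frg^{\frs}$ or $Y\in\frk$, that carry the content of both parts of the theorem.

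The paper bridges this differently. For part (1) it writes $\frg=\gs+\frg^{\frs}$ and, for a single element $X\in\frg^{\frs}$ (which covers both $\frk$ and $\frr^{\frs}$), observes that the subalgebra generated by $\frr$ and $X$ is solvable, so Theorem \ref{thm:nilinvariant} applies to the restriction of $\met$ there and handles the $\frr$--$\frr$ terms; the terms involving $\frs$-components are killed by the orthogonality relation $\frg^{\frh}\perp[\frh,\frg]$ and $[\frg^{\frh},\frg]\perp\frh$ of Lemma \ref{lem:gs_orthog}. For part (2) the remaining mixed terms with one entry in $\frk$ vanish by Lemma \ref{lem:gk_orthog}, whose proof splits $\gs=(\gs\cap\ker\ad(X)_{\rm ss})+\ad(X)\gs$ and uses nil-invariance on $\ad(X)_{\rm n}$ on the first piece and invariance by $\frs\ltimes\frn$ on the second. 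Nothing in your proposal plays the role of these two lemmas or of the application of the solvable theorem to $\frr+\RR X$ rather than to $\frr$ alone, so as it stands the argument does not close.
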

}

% If $\frg= \frg_1$ we say that \emph{$\frg$ is of non-compact type}. 
%In particular, if $\frg= \frg_1$ is of non-compact type then 
%every nilinvariant symmetric form $\met$ is
%invariant. \\
%\vspace{1ex}
The proof of Theorem \ref{mthm:invariance} begins with a few 
auxiliary results. 

\begin{lem}\label{lem:isotropic_s}
Let $\frs \subseteq \frg$ be a semisimple subalgebra of non-compact type. Then the subalgebra generated by all $X \in \frs$, such that  $\ad(X): \frg \to \frg$ is nilpotent, is $\frs$. 
\end{lem}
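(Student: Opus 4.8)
The plan is to exploit the structure theory of semisimple Lie algebras of non-compact type, where the root space decomposition furnishes an abundance of nilpotent adjoint operators. First I would fix a Cartan decomposition $\frs = \frk_\frs \oplus \frp_\frs$ and a maximal abelian subspace $\fra \subseteq \frp_\frs$, giving a restricted root space decomposition $\frs = \frm \oplus \fra \oplus \bigoplus_{\lambda \in \Sigma} \frs_\lambda$, where $\Sigma$ is the (nonempty, since $\frs$ has no compact factors) set of restricted roots and $\frm = \zen_{\frk_\frs}(\fra)$. The key observation is that for each restricted root $\lambda$, every element $X \in \frs_\lambda$ is $\ad_\frs$-nilpotent, hence $\ad_\frg(X)$ is nilpotent as well: indeed $\ad_\frs(X)$ shifts the grading by $\lambda$, and since $\ad_\frg(X)$ restricted to $\frs$ is $\ad_\frs(X)$ while $\frs$ acts on $\frg$ through a finite-dimensional representation, nilpotency of $\ad_\frs(X)$ on $\frs$ together with the fact that $X$ lies in a nilpotent subalgebra of $\frg$ (namely $\bigoplus_{k \geq 1} \frs_{k\lambda}$, or more simply: $X$ is a nilpotent element of $\frs$ in the sense of the Jordan decomposition, so $\ad_\frg(X)$ is nilpotent by functoriality of Jordan decomposition for representations of semisimple algebras).

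Next I would let $\frs'$ denote the subalgebra of $\frs$ generated by all $X \in \frs$ with $\ad_\frg(X)$ nilpotent. By the previous paragraph, $\frs'$ contains every restricted root space $\frs_\lambda$, $\lambda \in \Sigma$. It therefore contains $[\frs_\lambda, \frs_{-\lambda}]$ for all $\lambda$, and in particular it contains the vectors $H_\lambda \in \fra$ dual to the roots; since the restricted roots span $\fra^*$, these $H_\lambda$ span $\fra$, so $\fra \subseteq \frs'$. Thus $\frs'$ contains $\fra \oplus \bigoplus_\lambda \frs_\lambda$, which is all of $\frs$ except possibly $\frm$. To capture $\frm$: one knows $\frm$ together with $\fra$ and the root spaces generates $\frs$, but more to the point, $\frm \subseteq [\frs_\lambda, \frs_{-\lambda}]$ summed over $\lambda$ for each simple factor (this is the standard fact that a semisimple Lie algebra is generated by its root spaces relative to any maximal split torus, equivalently that $\frs = \frn^- \oplus \fra \oplus \frm \oplus \frn^+$ with $\frm \subseteq [\frn^-, \frn^+]$ — a consequence of $\frs$ having no compact ideal, so every simple factor is noncompact and admits a nontrivial split torus). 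Hence $\frs' = \frs$.

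I would present this cleanly by reducing to the case $\frs$ simple (the general claim follows since the generated subalgebra is an ideal, being stable under the relevant automorphisms, or simply by applying the simple case to each factor — though one must check the generated subalgebra respects the factor decomposition, which it does because $\ad$-nilpotency is detected factor by factor). For $\frs$ simple and noncompact, the restricted root system is nonempty, and the argument above applies verbatim. The main obstacle I anticipate is the bookkeeping needed to argue that the subalgebra generated by the nilpotent elements — which a priori could be a proper subalgebra — actually recovers $\frm$ and hence all of $\frs$; the cleanest route is the identity $\frs = \sum_{\lambda \in \Sigma}[\frs_\lambda, \frs_{-\lambda}] + \sum_{\lambda \in \Sigma}\frs_\lambda$, valid for $\frs$ semisimple of noncompact type, which I would either cite from standard references on real semisimple Lie algebras (e.g., the theory of restricted roots / Iwasawa decomposition) or prove in a line by noting that the right-hand side is a nonzero ideal of $\frs$ (it is $\ad(\fra)$- and $\ad(\frm)$-stable and contains a root space) and $\frs$ has no proper nonzero ideal without a compact factor. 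With this in hand the lemma is immediate.
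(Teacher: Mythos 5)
Your proof is correct, but it takes a genuinely different route from the paper's. You argue bottom-up: fix a Cartan decomposition and a maximal abelian subspace of $\frp$, observe that the restricted root spaces consist of nilpotent elements (hence, by functoriality of the Jordan decomposition for representations of semisimple algebras, of elements acting nilpotently on all of $\frg$ -- this is also the correct justification, whereas your parenthetical appeal to ``$X$ lies in a nilpotent subalgebra of $\frg$'' would not by itself suffice), and then show the generated subalgebra swallows $\fra$ via the coroots and $\frm$ via the identity $\frs=\sum_\lambda[\frs_\lambda,\frs_{-\lambda}]+\sum_\lambda\frs_\lambda$, which you rightly secure by noting the right-hand side is a nonzero ideal of each (necessarily non-compact) simple factor. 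The paper instead argues top-down: it makes the same initial reduction to $\ad_\frs$-nilpotency, but then passes to the quotient $\frs_1=\frs/\frs_0$ by the ideal generated by all nilpotent elements, notes that $\frs_1$ has no nilpotent elements, hence its Cartan subalgebras contain no $\RR$-split elements (otherwise nonzero weight spaces would produce nilpotents), and concludes from Borel that $\frs_1$ is of compact type, contradicting the non-compact type hypothesis unless $\frs_1=\zsp$. The trade-off: the paper's argument is shorter given the cited criterion and, as remarked there, transfers verbatim to any field of characteristic zero with ``compact'' replaced by ``anisotropic''; your argument is more constructive and self-contained but leans on specifically real structure theory (Cartan involution, restricted roots) and needs the extra ideal bookkeeping to recover $\frm$, which you do supply correctly.
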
 
\begin{proof} Call $X \in \frs$ nilpotent if  $\ad(X): \frs \to \frs$ is nilpotent. Since, for every representation of $\frs$, nilpotent elements are mapped to nilpotent operators, it is sufficient to prove the statement for $\frs = \frg$. 
So let $\frs_{0}$ be the subalgebra of $\frs$ generated by all nilpotent elements.
Since the set of all nilpotent elements is preserved by every automorphism
of $\frs$, it follows that
$\frs_{0}$ is an ideal. Therefore the semisimple Lie algebra $\frs_{1} = \frs/\frs_{0}$ does not contain any nilpotent elements. Let $\fra$ be a Cartan subalgebra of $\frs_{1}$, and $\fra_{\rm s}$ the subspace consisting of elements $X \in \fra$, where $\ad(X)$ is   
split semisimple (that is, diagonalizable over $\RR$). 
%  for all $X \in \fra_{s}$,  and $\ad(X)$ has purely imaginary eigenvalues,  for all $X \in \fra_{c}$. 
The weight spaces for the non-trivial roots of $\fra_{\rm s}$ consist of nilpotent elements of $\frs_{1}$.  Since, by construction,  $\frs_{1}$ has no 
nilpotent elements, this implies that $\fra$ has no elements split over $\RR$. 
This in turn implies that $\frs_{1}$ is of compact type (\cf Borel \cite[\S 24.6(c)]{borel}). 
By assumption,  $\frs$ is of non-compact type, so  $\frs_{1}$ must be trivial. 
\end{proof}

\begin{lem}\label{lem:sinvariant}
Let $\frn$ be the nilradical of $\frg$. Then 
$\met$ is invariant by $\frs \ltimes \frn$.
\end{lem}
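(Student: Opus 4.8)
The plan is to prove that the subalgebra $\inv(\frg,\met)$ — the largest subalgebra of $\frg$ under which $\met$ is invariant — contains both $\frn$ and $\frs$. Since $\frn$ is an ideal of $\frg$, the subspace $\frs+\frn$ is already a subalgebra, equal to the semidirect product $\frs\ltimes\frn$; so once $\frs,\frn\subseteq\inv(\frg,\met)$ it follows that $\frs\ltimes\frn\subseteq\inv(\frg,\met)$, which is exactly the assertion. Thus everything reduces to the two membership statements $\frn\subseteq\inv(\frg,\met)$ and $\frs\subseteq\inv(\frg,\met)$.

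First I would treat $\frn$. Every $X\in\frn$ acts nilpotently on $\frg$ in the adjoint representation: since $\frn$ is an ideal we have $\ad(X)\frg\subseteq\frn$, and since $\frn$ is nilpotent the restriction $\ad(X)|_{\frn}$ is nilpotent, so a suitable power of $\ad(X)$ annihilates $\frg$. Hence $\ad(X)=\ad(X)_{\rm n}$, and the nil-invariance of $\met$ (in the form noted right after Definition \ref{def:nilinvariance}, applied with $Y=X$) forces $\ad(X)$ to be skew for $\met$; that is, $X\in\inv(\frg,\met)$.

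Next I would treat $\frs$. By Lemma \ref{lem:isotropic_s}, $\frs$ is generated as a Lie algebra by the set of elements $X\in\frs$ for which $\ad(X)\colon\frg\to\frg$ is nilpotent. For each such generator $X$ we again have $\ad(X)=\ad(X)_{\rm n}$, so nil-invariance gives $\ad(X)$ skew and hence $X\in\inv(\frg,\met)$; since $\inv(\frg,\met)$ is a subalgebra, it then contains the whole subalgebra these $X$ generate, namely $\frs$. Together with the previous paragraph this completes the proof.

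I do not expect a real obstacle here: the lemma falls out of nil-invariance combined with Lemma \ref{lem:isotropic_s}, the only mild point being the routine verification that elements of the nilradical act nilpotently in the adjoint representation. The genuinely hard step — enlarging $\frn$ to the full solvable radical $\frr$, which is what is needed to reach the conclusion that $\met$ is invariant by $\gs=\frs\ltimes\frr$ in Theorem \ref{mthm:invariance} — lies outside this lemma and is handled in the arguments that follow.
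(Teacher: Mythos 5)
Your proposal is correct and follows essentially the same route as the paper: nil-invariance makes every $X$ with $\ad(X)$ nilpotent lie in $\inv(\frg,\met)$, which gives $\frn\subseteq\inv(\frg,\met)$, and Lemma \ref{lem:isotropic_s} then yields $\frs\subseteq\inv(\frg,\met)$ since $\inv(\frg,\met)$ is a subalgebra. The extra details you supply (nilpotency of $\ad(X)$ for $X\in\frn$, and that $\frs+\frn$ is a subalgebra) are exactly the routine verifications the paper leaves implicit.
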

\begin{proof}
Since $\met$ is nil-invariant, $\inv(\frg, \met)$ contains all $X$ such that the operator $\ad(X): \frg \to \frg$ is nilpotent. In particular,  $\frn$ is contained in  $\inv(\frg, \met)$. Since $\frs$ is of non-compact type, the subalgebra generated by all $X \in \frs$ with $\ad(X)$ nilpotent is $\frs$, see Lemma \ref{lem:isotropic_s} Therefore, also $\frs \subseteq  \inv(\frg, \met)$. 
\end{proof}

%\begin{remark}\label{rem:der}
%For the following arguments recall  (Jacobson \cite[Theorem III.7]{jacobson}):
%any derivation $\varphi$ of $\frr$ maps $\frr$ to its nilradical $\frn$.
%%If $\varphi$ is semi\-simple, this implies
%%$\varphi(\frr) = \varphi(\frn) \subseteq\frn$.
%\end{remark}

% The Jacobi identity shows that $\frr_0$ is a subalgebra.

%\begin{lem}\label{lem:gs_orthog} Let $\frs_1$ be a subalgebra of $\frs$.
%Let $\frg^{\frs_1}$ be the maximal trivial submodule for the adjoint action of\/  
%$\frs_1$ on $\frg$.  Then $\frg^{\frs_1} \perp [\frs_1, \frg]$ and 
%% $\frr_0\perp(\frr_1\rtimes\frs)$; 
%$[\frg^{\frs_1}, \frg] \perp \frs_1$. 
%\end{lem}

Recall that any derivation $\varphi$ of the solvable Lie algebra $\frr$  satisfies $\varphi(\frr) \subseteq \frn$
(Jacobson \cite[Theorem III.7]{jacobson}).
In particular, if $\varphi$ is semisimple, there exists a
decomposition  $\frr = \fra + \frn$ into vector subspaces,
where $\varphi(\fra) = 0$.
Similarly, for any subalgebra $\frh$ of $\frg$ acting reductively on $\frr$, we have $\frr = \frr^{\frh} + \frn$,
where $[\frh,\frr^{\frh}]=\zsp$.
% Note that these decompositions are not necessarily direct.

\begin{lem}\label{lem:gs_orthog} Let $\frh$ be a subalgebra of\/ $\inv(\frg, \met)$, and
let $\frg^{\frh}$ be the maximal trivial submodule for the adjoint action of\/
$\frh$ on $\frg$.  Then $\frg^{\frh} \perp [\frh, \frg]$. Moreover, if $\frh$ is a semisimple 
subalgebra contained in 
$\frs$ then  
% $\frr_0\perp(\frr_1\rtimes\frs)$; 
$[\frg^{\frh}, \frg] \perp \frh$. 
\end{lem}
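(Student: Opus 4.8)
The plan is to exploit the invariance of $\met$ under $\frh$ together with the standard fact that for $X \in \frh$, the operator $\ad(X)$ acts on $\frg$ preserving the decomposition into the trivial isotypic component $\frg^{\frh}$ and a complementary $\frh$-submodule. For the first assertion, take $Z \in \frg^{\frh}$ and an element of $[\frh,\frg]$, which we may assume to be of the form $[X,Y]$ with $X \in \frh$, $Y \in \frg$. Since $\met$ is $\frh$-invariant, we have
\[
\langle Z, [X,Y]\rangle = -\langle [X,Z], Y\rangle = 0,
\]
because $[X,Z] = 0$ by definition of $\frg^{\frh}$. As such brackets span $[\frh,\frg]$, this gives $\frg^{\frh} \perp [\frh,\frg]$.

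For the second assertion, assume $\frh$ is a semisimple subalgebra contained in $\frs$. Here I would first note that, since $\frh$ is semisimple, its action on $\frg$ is completely reducible, so $\frg = \frg^{\frh} \oplus [\frh,\frg]$ as $\frh$-modules (the complement to the trivial isotypic part is precisely $[\frh,\frg]$, as each nontrivial irreducible summand $V$ satisfies $[\frh,V]=V$). Consequently $[\frg^{\frh},\frg] = [\frg^{\frh}, \frg^{\frh}] + [\frg^{\frh}, [\frh,\frg]]$. The key observation is that $[\frg^{\frh},\frg^{\frh}]$ again lies in $\frg^{\frh}$ (it is $\frh$-fixed, as $\frh$ acts by derivations and kills both factors), while $[\frg^{\frh}, [\frh,\frg]] \subseteq [\frh,\frg]$ since $[\frh,\frg]$ is a submodule; moreover $[\frg^{\frh},\frg^{\frh}] \perp \frh$ because $\frg^{\frh} \perp [\frh,\frg] \supseteq [\frh,\frh] = \frh$ (using that $\frh$ is semisimple, so $\frh=[\frh,\frh]$, together with the first part). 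So it remains to show $[\frg^{\frh}, [\frh,\frg]] \perp \frh$. For $Z \in \frg^{\frh}$, $W \in \frg$, $X, X' \in \frh$, invariance of $\met$ under $\frh$ gives
\[
\langle [Z,[X,W]], X'\rangle = -\langle [X,W], [Z,X']\rangle = -\langle [X,W], [Z,X']\rangle,
\]
and I would now use that $[Z,X'] \in \frg^{\frh}$: indeed $\frg^{\frh}$ is a subalgebra, so $[\frg^{\frh},\frh] \cap \frg^{\frh}$... wait — rather, since $X' \in \frh \subseteq [\frh,\frg]$ is not in $\frg^{\frh}$ in general. The cleaner route is to instead write $\langle [Z,[X,W]], X'\rangle = -\langle [X, [Z, [X,W]]]\,,\dots\rangle$; the most robust argument is the Jacobi identity: $[Z,[X,W]] = [X,[Z,W]] + [[Z,X],W] = [X,[Z,W]] - [[X,Z],W] = [X,[Z,W]]$ since $[X,Z]=0$. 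Hence $[Z,[X,W]] \in [\frh,\frg]$, and therefore $[\frg^{\frh},\frg] \subseteq \frg^{\frh} + [\frh,\frg] \subseteq \frg^{\frh} + [\frh,\frg]$, with the $[\frh,\frg]$-part perpendicular to $\frh$ by the first assertion applied with the roles interchanged, and the $\frg^{\frh}$-part perpendicular to $\frh = [\frh,\frh] \subseteq [\frh,\frg]$ again by the first assertion. Combining, $[\frg^{\frh},\frg] \perp \frh$.

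The main obstacle I anticipate is the bookkeeping in the second part: one must be careful that $\frh \subseteq \gs$ is needed only to guarantee $\frh \subseteq \inv(\frg,\met)$ via Lemma \ref{lem:sinvariant} (or rather Lemma \ref{lem:isotropic_s}), so that $\met$ is genuinely $\frh$-invariant and the adjoint manipulations above are legitimate; and that $\frh = [\frh,\frh]$ (semisimplicity) is what lets us absorb $\frh$ into $[\frh,\frg]$ to invoke the first assertion. Once the Jacobi-identity reduction $[Z,[X,W]] = [X,[Z,W]] \in [\frh,\frg]$ is in hand, everything reduces to the first assertion, so no further estimates are needed.
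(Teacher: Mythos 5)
Your first assertion and most of your reduction for the second are fine: the splitting $\frg=\frg^{\frh}\oplus[\frh,\frg]$ (Weyl), the inclusion $[\frg^{\frh},\frg^{\frh}]\subseteq\frg^{\frh}$ with $\frg^{\frh}\perp\frh=[\frh,\frh]$, and the Jacobi computation $[Z,[X,W]]=[X,[Z,W]]\in[\frh,\frg]$ are all correct. The gap is in the last step: you dispose of the component lying in $[\frh,\frg]$ by declaring it perpendicular to $\frh$ ``by the first assertion applied with the roles interchanged''. The first assertion says $\frg^{\frh}\perp[\frh,\frg]$; no interchange of roles yields $[\frh,\frg]\perp\frh$, and that statement is false in general (already for $\frg=\frh=\frs=\sl_2(\RR)$ with the Killing form one has $\frh\subseteq[\frh,\frg]$ and $\met_{\frh}$ nondegenerate, so membership in $[\frh,\frg]$ carries no orthogonality to $\frh$). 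Thus the crucial piece, $[\frg^{\frh},[\frh,\frg]]\perp\frh$, is left unproved. Your closing remark also misdiagnoses where $\frh\subseteq\frs$ enters: it is not needed to make $\met$ $\frh$-invariant (that is a hypothesis of the lemma), but to control the complementary module.

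The missing ingredient is Lemma \ref{lem:sinvariant}: since $\frh\subseteq\frs$, one has $[\frh,\frg]\subseteq[\frs,\frg]\subseteq\frs+\frn\subseteq\inv(\frg,\met)$. With this your skeleton closes: for $Z\in\frg^{\frh}$, $U\in[\frh,\frg]$, $X'\in\frh$, skewness of $\ad(U)$ gives $\langle[Z,U],X'\rangle=\langle Z,[U,X']\rangle$, and $[U,X']\in[\frh,\frg]$, so this vanishes by the first assertion. This is essentially the paper's argument, except that the paper decomposes an arbitrary $Y\in\frg$ as $Y=Y_1+Y_2$ with $Y_1\in\frg^{\frs}$ (not $\frg^{\frh}$) and $Y_2\in\frs\ltimes\frn$: the piece $[V,Y_1]$ lies in $\frg^{\frh}$ and is handled by part one together with $\frh=[\frh,\frh]$, while the piece $[V,Y_2]$ is handled precisely by the invariance of $\met$ under $Y_2\in\frs\ltimes\frn$. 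Without invoking this invariance of the complementary part, the second assertion cannot be reduced to the first.
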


\begin{proof}
%Let $X\in\frr^{\frs}$, $S\in\frs$ and $Y \in \frg$. Using Lemma \ref{lem:sinvariant},
%we get $\langle X, [S, Y ]  \rangle = - \langle [S, X] , Y \rangle = 0$. This shows
%that $\fr^{\frs} \perp [\frs, \frg]$. 
% and $Z\in\frn \rtimes\frs$
%% Since $[\frs,\frr_0]=\zsp$ and the action of $\frr_1\rtimes\frs$ preserves $\met$, 
%we find
%$
%\langle X,[S,Z]\rangle
%= -\langle [S,X],Z\rangle =0$. This shows $\frr_0\perp(\frr_1\rtimes\frs)$.
%
% Recall that $\frr = \frr_0 + \frn$. 
Let $V \in  \frg^{\frh}$ and $X \in \frh$, $Y \in \frg$.
Then  $\langle V,[X,Y] \rangle = \langle [V,X],Y \rangle =0$.
Hence, $\frg^{\frh} \perp [\frh, \frg]$. % Since $\frs$ is generated by commutators, this also shows that $\frg^{\frs} \perp \frs$.   

% Now apply this to  the semi-simple part  $\varphi = \ad(X)_{ss}$ of $\ad(X)$. 
Now assume $\frh$ is a semisimple subalgebra of $\frs$.
We may write  $Y = Y_1 + Y_2$, where  
$ Y_1 \in \frg^{\frs}$ and $Y_2 \in \frs \ltimes \frn$. 
Thus $[V, Y_{1}] \in \frg^{\frh}$. 
Since $\frh$ is also semisimple, $\frh = [\frh, \frh]$. 
Therefore, the first part of this lemma shows that $[V, Y_{1}] \perp \frh$.
By Lemma \ref{lem:sinvariant},  $Y_2  \in \inv(\frg, \met)$. 
Therefore, 
\[
\langle [V, Y_{2}], X \rangle   
= \langle V, [Y_2, X] \rangle =  \langle [X, V], Y_2 \rangle =0.
\] 
That is, $[V, Y_{2}] \perp \frh$ as well.
%\qedhere
%\]
\end{proof}

%A straightforward consequence of
%$[\frs,\frr_0]=\zsp$ and $\frr_0\perp(\frr_1\rtimes\frs)$ is:
%
%\begin{cor}\label{cor:r0orthog}
%$[\frr_0,\frr\rtimes\frs]\perp\frs$.
%\end{cor}

%\begin{prop}\label{prop:RSrestriction}
%The restriction of\/ $\met$\/ to $\gs$ is invariant by $\frg$.
%\end{prop}
\begin{proof}[Proof of Theorem \ref{mthm:invariance}, part (1)]
Since $\frs$ acts reductively on $\frg$, we have
$\frg = \gs + \frg^{\frs}$.
Therefore, by  Lemma \ref{lem:sinvariant}, 
%\label{eq:r0r1}
%\end{equation}
%and $\frr_0$ is a subalgebra of $\frg$. 
it is enough to prove invariance of $\met_{\gs}$ under $\frg^{\frs}$. 

Let $X \in \frg^{\frs}$, $Y, Z \in \gs$. 
Decompose $Y = Y_\frs + Y_\frr$, $Z = Z_\frs + Z_\frr$, according
to the direct sum $\gs = \frs \oplus \frr$.
Using Lemma \ref{lem:gs_orthog}, we get 
% and $[\frr_0,\frs]=\zsp$
$\langle [X, Y ], Z \rangle = \langle [X, Y_\frr  ], Z_\frr \rangle$.
By Theorem \ref{thm:nilinvariant}, the restriction  of
$\met$ to the solvable Lie algebra generated by $\frr$ and $X$   is invariant on that subalgebra.
Hence, $\langle [X, Y_\frr  ], Z_\frr \rangle
=  -\langle Y_\frr , [X, Z_\frr  ] \rangle =  -\langle Y  , [X, Z ] \rangle$. 
\end{proof}

%For the adjoint action of $\frk\oplus\frs$
%on $\frr$,
%we can choose an invariant complement $\fra$ to the nilradical
%$\frn$ and write
%$\frr$ as a direct sum of vector spaces
%\begin{equation}
%\frr = \fra\oplus\frn.
%\label{eq:ran}
%\end{equation}
%Clearly $\fra\subset\frr_0$, so that $\fra\perp(\frr_1\rtimes\frs)$.

\begin{lem}\label{lem:gk_orthog}
Let $\frf$ be a subalgebra of\/  $\frg$ and 
$\frg^{\frf}$ the maximal trivial submodule for the adjoint action of\/ $\frf$ on $\frg$. 
Then 
% $\frr_0\perp(\frr_1\rtimes\frs)$; 
$[\frg^{\frf}, \gs] \perp \frf$. 
 In particular, 
% $\frr_0\perp(\frr_1\rtimes\frs)$; 
$[\frg^{\frk}, \frg] \perp \frk$. 
\end{lem}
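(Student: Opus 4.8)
The plan is to mimic the structure of Lemma \ref{lem:gs_orthog}, which already handled the case of a semisimple subalgebra contained in $\frs$; here $\frf$ is an arbitrary subalgebra, and the claim is weaker in that orthogonality is only asserted between $[\frg^{\frf},\gs]$ and $\frf$, not using any invariance of $\met$ under $\frf$. First I would take $V\in\frg^{\frf}$, $X\in\frf$, and $Y\in\gs$, and split $Y=Y_1+Y_2$ with $Y_1\in\frg^{\frs}\cap\gs$ and $Y_2\in\frs\ltimes\frn$, using that $\frs$ acts reductively on $\gs$ so that $\gs=\gs^{\frs}+\frn$ and in fact $\gs = (\frr^{\frs}) + (\frs\ltimes\frn)$. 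Since $\met$ is invariant under $\frs\ltimes\frn$ by Lemma \ref{lem:sinvariant}, for the $Y_2$ part I would write $\langle[V,Y_2],X\rangle = \langle V,[Y_2,X]\rangle$ — wait, that requires $Y_2\in\inv(\frg,\met)$, which holds — and then $[Y_2,X]$ lies in $\gs$, but I cannot yet conclude it is in $\frg^{\frf}$. So the cleaner route is: $\langle [V,Y_2],X\rangle = \langle V, -[X,Y_2]\rangle$; now $[X,Y_2]\in[\frf,\gs]$, and I want $V\perp[\frf,\gs]$. That is exactly the first assertion of Lemma \ref{lem:gs_orthog} applied with $\frh=\frf$? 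No — Lemma \ref{lem:gs_orthog} requires $\frh\subseteq\inv(\frg,\met)$, which $\frf$ need not satisfy.

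Let me reconsider. The correct decomposition to exploit is on the first slot: since $\frg^{\frf}\subseteq\frg$ and $\gs$ is an ideal, $[\frg^{\frf},\gs]\subseteq\gs$. I want to pair this against $\frf$. The key is that $\frf$ normalizes $\frg^{\frf}$ trivially and that $\gs$ splits under $\frs$. So: decompose $\gs=\frg^{\frs}_{\gs}\oplus(\frs\ltimes\frn)$ where $\frg^{\frs}_{\gs}=\frr^{\frs}$. For $V\in\frg^{\frf}$ and $Y\in\gs$ write $Y=Y_1+Y_2$ accordingly. For $Y_1\in\frr^{\frs}$: since $V\in\frg^{\frf}$, $[V,Y_1]\in\gs$; I would like to say $[V,Y_1]\in\frg^{\frf}$, which holds when $Y_1\in\frg^{\frf}$ too, but $Y_1\in\frr^{\frs}$ need not be $\frf$-fixed. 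So this also fails directly. The genuinely available tool is Lemma \ref{lem:gs_orthog}'s \emph{first} conclusion $\frg^{\frh}\perp[\frh,\frg]$, valid for any $\frh\subseteq\inv(\frg,\met)$; and $\frs\ltimes\frn\subseteq\inv(\frg,\met)$. Hence, taking $\frh$ to range over $\frs$ and over $\frn$, I get $\frg^{\frs}\perp[\frs,\frg]$ and $\frg^{\frn}\perp[\frn,\frg]$, and more usefully $\frg^{\frs\ltimes\frn}\perp[\frs\ltimes\frn,\frg]\supseteq[\frf\cap(\frs\ltimes\frn),\frg]$ — still not quite the shape needed.

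The cleanest approach, and the one I expect to be intended, is: for $V\in\frg^{\frf}$, $X\in\frf$, $Y\in\gs$, use that $\met$ is $(\frs\ltimes\frn)$-invariant to reduce to the case $Y\in\frg^{\frs}\cap\gs=\frr^{\frs}$, on which $\frs$ acts trivially and so $\frf$'s semisimple part $\frf_{\rm ss}$ acting on $\frr^{\frs}$ is essentially irrelevant; then pair $\langle[V,Y],X\rangle$ and move $X$ across using that the \emph{solvable} Lie algebra generated by $\frr$ and $X$ carries an invariant form by Theorem \ref{thm:nilinvariant} — exactly the trick closing the proof of Theorem \ref{mthm:invariance}(1). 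Concretely: $[V,Y]\in\gs$ with $Y\in\frr$; project $[V,Y]$ onto $\frr$ along $\frs$, noting the $\frs$-component is orthogonal to $\frf$ because by part (1) of Theorem \ref{mthm:invariance} (already proved) $\met_{\gs}$ is $\frg$-invariant hence $\frs\perp$-stuff can be handled, and $\langle\frs\text{-part},X\rangle$: if $X\in\frr$ we use solvable invariance, if $X\in\frs$-direction we use Lemma \ref{lem:sinvariant}. Then the remaining $\frr$-component pairing $\langle[V,Y]_{\frr},X_{\frr}\rangle = -\langle Y,[X,V]_{\frr}\rangle = 0$ since $V\in\frg^{\frf}$ gives $[X,V]=0$.

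The main obstacle I anticipate is the bookkeeping of the two decompositions — $Y\in\gs$ along $\frs\oplus\frr$, and $X\in\frf$ having components in both the compact-type Levi part $\frk$, the $\frs$ part, and $\frr$ — and making sure that at each stage the relevant piece of $\met$ that gets invoked (either $(\frs\ltimes\frn)$-invariance from Lemma \ref{lem:sinvariant}, or solvable invariance from Theorem \ref{thm:nilinvariant}, or $\gs$-invariance of $\met_{\gs}$ from the just-proved part (1)) genuinely applies to the vectors in question. The ``in particular'' clause with $\frf=\frk$ is then immediate since $\frg=\gs+\frg^{\frk}$ (as $\frk$ acts reductively on $\frg$), so $[\frg^{\frk},\frg]=[\frg^{\frk},\gs]+[\frg^{\frk},\frg^{\frk}]$, and both summands are $\perp\frk$: the first by what was just shown, the second because $[\frg^{\frk},\frg^{\frk}]\subseteq\frg^{\frk}$ and $\frg^{\frk}\perp[\frk,\frg]\supseteq\frk=[\frk,\frk]$ by the first conclusion of Lemma \ref{lem:gs_orthog}.
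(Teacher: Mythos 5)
Your plan never closes the two places where the argument actually has to do work, and you partly acknowledge this yourself. First, the ``reduction to $Y\in\frr^{\frs}$'': writing $Y=Y_1+Y_2$ with $Y_2\in\frs\ltimes\frn\subseteq\inv(\frg,\met)$ and moving $Y_2$ across gives $\langle [V,Y_2],X\rangle=\langle V,[Y_2,X]\rangle$ with $[Y_2,X]\in[\frf,\gs]$, and knowing that $\frg^{\frf}\perp[\frf,\gs]$ is essentially the statement being proved; Lemma \ref{lem:gs_orthog} cannot be invoked here because $\frf\not\subseteq\inv(\frg,\met)$ in general. You flag exactly this obstruction, but your ``cleanest approach'' then simply reasserts the reduction without supplying the missing argument. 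Second, in the remaining case $Y\in\frr^{\frs}$, the step $\langle [V,Y],X\rangle=-\langle Y,[X,V]\rangle$ is unjustified: it would need $\ad(V)$ to be skew, and $V\in\frg^{\frf}$ is not known to lie in $\inv(\frg,\met)$. The solvable-subalgebra trick from the proof of Theorem \ref{mthm:invariance}(1) does not transfer, since $V$, $Y$, $X$ need not lie in a common solvable subalgebra, and your case split ``$X\in\frr$ / $X$ in the $\frs$-direction'' never treats the $\frk$-component of $X$, which is the whole difficulty: $\frk$ is exactly the part of $\frg$ carrying no invariance property. (Also $[V,Y]\in\frr$ already, $\frr$ being an ideal, so the ``project onto $\frr$ along $\frs$'' step is moot.)

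The idea you are missing is to decompose $\gs$ relative to the Jordan decomposition of $\ad(V)$ for $V\in\frg^{\frf}$, namely $\gs=(\gs\cap\ker\ad(V)_{\rm ss})+\ad(V)\gs$, which is how the paper handles an arbitrary $\frf$ with no invariance hypothesis. For $Y\in\ker\ad(V)_{\rm ss}$ one has $\langle [V,Y],X\rangle=\langle \ad(V)_{\rm n}Y,X\rangle=-\langle Y,\ad(V)_{\rm n}X\rangle=0$ by nil-invariance, because $[V,X]=0$ forces $\ad(V)_{\rm n}X=0$; for $Y=[V,Y']$ one notes $Y\in[\frg,\gs]\subseteq\frs\ltimes\frn\subseteq\inv(\frg,\met)$ (Lemma \ref{lem:sinvariant}), so $\langle [V,Y],X\rangle=\langle V,[Y,X]\rangle=-\langle V,[[Y',X],V]\rangle=0$, using the Jacobi identity with $[V,X]=0$ and the skewness of $\ad([Y',X])$. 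Finally, your derivation of the ``in particular'' clause is also flawed: $\frg=\gs+\frg^{\frk}$ is false in general (take $\frg=\frk$ simple), and you apply Lemma \ref{lem:gs_orthog} with $\frh=\frk$, which is illegitimate since $\frk\not\subseteq\inv(\frg,\met)$. The correct observation is simply that $[\frg^{\frk},\frk]=\zsp$, so $\frg=\frk+\gs$ gives $[\frg^{\frk},\frg]=[\frg^{\frk},\gs]$, and the first part applies.
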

\begin{proof}
Let  $X\in \frg^{\frf}$, $Y \in \gs$ and $K \in \frf$. 
% Let $\ad(X) = \ad(X)_{\rm ss}+ \ad(X)_{\rm n}$, where $\ad(X)_{\rm ss}$ is semisimple and
% $\ad(X)_{\rm n}$ is nilpotent, be the Jordan decomposition. 
% (In particular,
%$\ker \ad(X) \subset \ker  \ad(X)_{\rm n}$ and
%$\im \ad(X)_{\rm ss} \subset \im \ad(X)$.)  
Since $\gs$ is an ideal in $\frg$,  we may write
\[
\gs = (\gs\cap\ker \ad(X)_{\rm ss})  +  \ad(X)\gs\,.
\]
Suppose first that $Y \in \ker \ad(X)_{\rm ss}$. Then we get
\[
\langle [X,Y], K\rangle = 
\langle \ad(X)_{\rm n}Y , K \rangle
=
-\langle Y, \ad(X)_{\rm n} K \rangle
= 0 \, .
\]
The latter term vanishes, since $K \in \ker \ad(X) \subseteq  \ker \ad(X)_{\rm n} $.

Next suppose $Y = \ad(X) Y'$, for some $Y' \in \gs$. Since 
$Y \in [\frg, \gs] \subseteq \frs \ltimes \frn$,  Lemma \ref{lem:sinvariant}
implies that $Y \in \inv(\frg,\met)$. 
%In particular, $Y\in \frs \ltimes \frn$ is contained in $\inv(\frg,\met)$. 
%From Lemma \ref{lem:sinvariant} it follows that 
% $Y \in  \inv(\frg, \met)$.  Then   
Thus 
\[
\langle [X,Y], K\rangle =   \langle X, [Y, K ] \rangle =    \langle X, [ [X, Y'] , K ] \rangle =
-  \langle X, [ [Y', K] , X ] \rangle = 0 \; .
\] 
The latter term is zero, since $[Y',K] \in [ \gs, \frg] \subseteq \inv(\frg,\met)$,
and therefore $\ad([Y',K])$ is a skew-symmetric linear map. This shows $[\frg^{\frf}, \gs] \perp \frf$.
Finally, for the last statement observe that $[\frg^{\frk}, \frg] = [\frg^{\frk}, \gs]$. 
\end{proof}

\begin{proof}[Proof of Theorem \ref{mthm:invariance}, part (2)]
%\begin{thm}\label{thm:rsinvariant}
%$\met$ is $\frr\rtimes\frs$-invariant.
%\end{thm}
%\begin{proof}
%By Proposition \ref{prop:RSrestriction}, (1) holds.
% We have already shown that part (1) holds.
By Lemma \ref{lem:sinvariant},  $\met$ is invariant by $\frs + \frn$. Since $\gs = \frs + \frr^{\frk} + \frn$, 
to prove that  $\met$ is $\gs$-invariant, 
it suffices to show that $\ad(X)$ is skew for all $X \in \frr^\frk$.
By part (1), the restriction of $\ad(X)$ to the ideal $\gs$ is skew. 
Hence, it remains to show that $ \langle [X,Y],Z \rangle = - \langle Y, [X, Z]\rangle$,
where at least one of $Y,Z$, say $Y$, is in $\frk$. 
This is satisfied, since  \[ 0 = \langle [X,Y],Z \rangle =  -\langle Y ,[X,Z ]\rangle 
\; .   \]
Note that the right term is zero because of Lemma \ref{lem:gk_orthog}. % This shows (2).
 \end{proof}

Lemma \ref{lem:gs_orthog} and Lemma 
\ref{lem:gk_orthog} also imply:

\begin{cor}  \label{cor:ksperp} Let $\met$ be a nil-invariant symmetric bilinear form on
$\ \frg= (\frk \times \frs) \ltimes \frr $. Then
\begin{enumerate}
 \item $\frs \perp [\frk, \frg]$  and $\frk \perp [\frs, \frg]$. 
 \item The simple factors of $\frs$ are pairwise orthogonal.
\end{enumerate}
\end{cor}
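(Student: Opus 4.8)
The plan is to deduce Corollary \ref{cor:ksperp} directly from the two orthogonality lemmas already established, exploiting the semisimplicity of $\frk$ and $\frs$ to rewrite each factor as its own commutator.

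\medskip

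\textbf{Part (1).} First I would prove $\frs \perp [\frk, \frg]$. Since $\frs$ is contained in $\gs$, and $\frg^{\frk}$ denotes the maximal trivial $\frk$-submodule of $\frg$, Lemma \ref{lem:gk_orthog} gives $[\frg^{\frk}, \gs] \perp \frk$; but this is the wrong pairing for what we want. Instead, observe that because $\frk$ and $\frs$ commute in the Levi decomposition, $\frs \subseteq \frg^{\frk}$. Applying Lemma \ref{lem:gk_orthog} with $\frf = \frk$ yields $[\frg^{\frk},\gs] \perp \frk$, and in particular $[\frs, \gs] \perp \frk$; since also $[\frs,\frk]=\zsp$ and $\frg = \gs + \frg^{\frs} \supseteq \gs + \frk$, we get $[\frs,\frg] \perp \frk$. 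For the symmetric statement $\frs \perp [\frk,\frg]$, I would use Lemma \ref{lem:gs_orthog}: take $\frh = \frs$, which is a semisimple subalgebra of $\inv(\frg,\met)$ (by Lemma \ref{lem:sinvariant}) contained in $\frs$ itself; then the lemma gives $[\frg^{\frs}, \frg] \perp \frs$. Since $\frk$ commutes with $\frs$, we have $\frk \subseteq \frg^{\frs}$, hence $\frk = [\frk,\frk] \subseteq [\frg^{\frs},\frg]$, and therefore $[\frk,\frg] \subseteq [\frg^{\frs},\frg] \perp \frs$. This settles (1).

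\medskip

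\textbf{Part (2).} Let $\frs = \frs_1 \times \cdots \times \frs_m$ be the decomposition into simple ideals, and fix $i \neq j$. I want $\frs_i \perp \frs_j$. Each $\frs_i$ is a semisimple subalgebra of $\inv(\frg,\met)$ contained in $\frs$ (again by Lemma \ref{lem:sinvariant}), so Lemma \ref{lem:gs_orthog} applies with $\frh = \frs_i$: we get $[\frg^{\frs_i}, \frg] \perp \frs_i$. Since $\frs_j$ commutes with $\frs_i$ for $j \neq i$, we have $\frs_j \subseteq \frg^{\frs_i}$, so $\frs_j = [\frs_j, \frs_j] \subseteq [\frg^{\frs_i}, \frg]$, and therefore $\frs_j \perp \frs_i$. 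This gives pairwise orthogonality of the simple factors.

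\medskip

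I do not anticipate a serious obstacle here: the entire content is the correct bookkeeping of which trivial submodule to use and the repeated application of the identity $\frl = [\frl,\frl]$ for a semisimple $\frl$, which converts "$X$ is in a trivial submodule" into "the bracket $[X,\frg]$ lands in the relevant space." The one point that deserves care is making sure that whenever Lemma \ref{lem:gs_orthog} is invoked, its hypotheses are genuinely met — namely that the relevant subalgebra lies in $\inv(\frg,\met)$ and is a semisimple subalgebra of $\frs$ — and this is exactly what Lemma \ref{lem:sinvariant} guarantees for $\frs$ and all of its subalgebras generated by nilpotent elements, in particular for each simple ideal $\frs_i$ and for $\frs$ itself.
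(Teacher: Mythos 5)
Your proposal is correct and is essentially the paper's own (unwritten) argument: the paper simply states that Lemma \ref{lem:gs_orthog} and Lemma \ref{lem:gk_orthog} imply the corollary, and you spell out exactly that derivation, using $\frs\subseteq\frg^{\frk}$ with $[\frg^{\frk},\gs]\perp\frk$, then $\frk\subseteq\frg^{\frs}$ (resp.\ $\frs_j\subseteq\frg^{\frs_i}$) with $[\frg^{\frh},\frg]\perp\frh$ for the semisimple $\frh\subseteq\frs$, whose hypotheses hold by Lemma \ref{lem:sinvariant}. The only blemishes are cosmetic: the intermediate step $\frk=[\frk,\frk]$ is unnecessary for part (1), and $\frs_i\subseteq\inv(\frg,\met)$ follows simply from $\frs_i\subseteq\frs\subseteq\inv(\frg,\met)$ rather than from any ``generated by nilpotent elements'' consideration.
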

 
\begin{example}[Nil-invariant products on semisimple Lie algebras]
Let
\[
\frg=\frk\times \frs
\]
be semisimple, where $\frk$ is an ideal of compact type and $\frs$ is 
of non-compact type. For any  nil-invariant bilinear form $\met$,  
\[
(\frg, \met) =  (\frk, \met_{\frk}) \times  (\frs, \met_{\frs})
\]  
decomposes as a direct product of metric Lie algebras,  where $\met_{\frs}$ is invariant.
% Suppose further that $(\frg,\met)$ is effective:  then $\frg^\perp\subset\frk$. 
\end{example}

%%%

%\begin{thm}\label{thm:ginvariant}
%The restriction of $\met$ to $\frr\rtimes\frs$
%is $\frg$-invariant.
%\end{thm}
%\begin{proof}
%\begin{proof}
%In light of Theorem \ref{thm:RSinvariant}, it remains to show that
%$K$ preserves the restriction of $\met$ to $\frr\rtimes\frs$.
%Since $[\frk,\frs]=\zsp$, we only need to consider Lie brackets
%with elements in $\frr$.
%Recall that $\frr=\fra\oplus\frn$ as in \eqref{eq:ran}.
%
%Let $X,Y\in\frr$ and $T\in\frk$. Since $\ad(T)$ is semisimple,
%$[T,X]=[T,X']$ for some $X'\in\frn$.
%Now use $[\frk,\fra]=\zsp$
%and Lemma \ref{lem:kag} to conclude
%\[
%\langle A,[T,X]\rangle = -\langle [A,X'],T\rangle
%= 0 = -\langle [T,A],X\rangle.
%\qedhere
%\]
%\end{proof}

%Theorem \ref{thm:rsinvariant} has an immediate consequence:
%
%\begin{cor}\label{cor:ideals}
%$(\frr\rtimes\frs)^\perp\cap(\frr\rtimes\frs)$ and
%$\frr^\perp\cap(\frr\rtimes\frs)$ are ideals in $\frg$.
%\end{cor}

%%%

% !TEX root = homogeneousSpacesLieAlgebras.tex

%%%%%%%%%%%%%%%%%%%%%%%
\section{Totally isotropic ideals and metric radicals}
\label{sec:classificationLie}
%%%%%%%%%%%%%%%%%%%%%%%

Let $\frg$ be a finite-dimensional real Lie algebra with
nil-invariant symmetric bilinear form $\met$
and subalgebras $\frk$, $\frs$, $\frr$, $\gs$ as in Section
\ref{sec:biinvariant}. 
We let  $\ell$ denote the \emph{relative}  index of
$\met$ (which is the index of the
non-degenerate bilinear form induced by $\met$ on 
$\frg/\frg^\perp$).  

%%%
\subsection{Transporter algebras} \label{sec:transporters}

For any subspaces $U\subseteq V$ of $\frg$ and any subalgebra $\frq$ of
$\frg$, define
\[
\nor_{\frq}(V,U)=\{X\in\frq\mid [X,V]\subseteq U\}.
\]
Clearly, $\nor_{\frq}(V,U)$ is a subalgebra of $\frq$.
Also, % from the definition,  
$[\frq, V] \subseteq U$  if and only if  $\nor_{\frq}(V,U) = \frq$.
\medskip

Suppose $\frb \subseteq \gs$ is a totally isotropic ideal of $\frg$ contained in $\gs$. 
Then consider
$$ \frb_0  =  \frb \cap  \frg^\perp  \subseteq  \frb . $$ 
By Theorem \ref{mthm:invariance} part (2), $\met$ is invariant by $\gs$. Therefore, $\frb_0$ is an ideal of  
$ \gs$. \medskip
%
%As another  direct consequence of $\gs$-invariance% Theorem \ref{mthm:invariance}
%, we have 
%Moreover, 
%\begin{equation}
%[\frb^{\perp}, \frb] \subseteq \frb_0.
%\label{eq:bbb0}
%\end{equation}

For any  subalgebra $\frq$ of $\frg$ define the
\emph{transporter subalgebra} for $\frb$ in $\frq$ as 
\begin{equation}
\nor_{\frq}(\frb,\frb_0)
=
\{X\in\frq \mid [X,\frb]\subseteq \frb_0\}.
\label{eq:transporter0}
\end{equation}

%Recall that the relative index $\ell$ denotes the index of the
%non-degenerate bilinear form induced by $\met$ on $\frg/\frg^\perp$.

\begin{lem}[Transporter Lemma]\label{lem:transporter0}
For $\frq$, $\frb$, $\frb_0$ as above, we have
\begin{gather} 
\nor_{\frq}(\frb,\frb_0) = \frq\cap[\frg,\frb]^\perp,
\label{eq:transporter} \\
% In particular, $\frk\cap\frb^\perp\subset\nor_{\frk}(\frb,\frb_0)$.
% the codimension of the subalgebra $\nor_{\frk}(\frb,\frb_0)$ in $\frk$ satisfies 
\codim_{\frq} \nor_{\frq}(\frb,\frb_0)\ \leq \ \codim_{\frq} \frq \cap \frb^{\perp}\ \leq\  \dim \frb - \dim \frb_{0}  \ \leq\ \ell.
\label{eq:transporter_codim}
\end{gather}
\end{lem}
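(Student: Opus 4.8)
The plan is to prove the chain of (in)equalities in \eqref{eq:transporter_codim} from left to right, using the identity \eqref{eq:transporter} for the first part and then bounding dimensions directly.

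First I would establish the identity $\nor_{\frq}(\frb,\frb_0) = \frq\cap[\frg,\frb]^\perp$. For the inclusion ``$\subseteq$'', take $X\in\frq$ with $[X,\frb]\subseteq\frb_0 = \frb\cap\frg^\perp$; then for any $Z\in\frg$ and $B\in\frb$, using that $\met$ is $\gs$-invariant (Theorem \ref{mthm:invariance}(2)) and that $\frb\subseteq\gs$, we get $\langle X,[Z,B]\rangle = -\langle [Z,X],B\rangle$ — wait, more carefully: I want $\langle X, [Z,B]\rangle = \pm\langle [B,X], Z\rangle$ via invariance of $\met$ under the element $B\in\frb\subseteq\gs$, and $[B,X]=-[X,B]\in\frb_0\subseteq\frg^\perp$, so this pairing vanishes; hence $X\perp[\frg,\frb]$. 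Conversely, if $X\in\frq$ and $X\perp[\frg,\frb]$, then for $B\in\frb$ and any $Z\in\frg$, $\langle [X,B],Z\rangle = -\langle B,[X,Z]\rangle$ — here I need $[X,Z]\in$ something, or rather I should use invariance under $B$ again: $\langle[X,B],Z\rangle = -\langle[B,X],Z\rangle = \langle X,[B,Z]\rangle = -\langle X,[Z,B]\rangle = 0$ since $[Z,B]\in[\frg,\frb]$. This shows $[X,B]\perp\frg$, i.e. $[X,B]\in\frg^\perp$; since $\frb$ is an ideal of $\frg$, also $[X,B]\in\frb$, so $[X,B]\in\frb_0$, giving $X\in\nor_{\frq}(\frb,\frb_0)$. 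This proves \eqref{eq:transporter}.

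Next, the codimension estimates. From \eqref{eq:transporter} and $\frb\subseteq[\frg,\frb]$ — careful, $\frb$ need not be contained in $[\frg,\frb]$; instead I use $[\frg,\frb]\supseteq\frb_0$ is not what I want either. The point is $\nor_{\frq}(\frb,\frb_0) = \frq\cap[\frg,\frb]^\perp \subseteq \frq\cap\frb^\perp$ because $\frb\subseteq[\frg,\frb]$ fails in general, but $[\frg,\frb]^\perp\subseteq\frb^\perp$ requires $\frb\subseteq[\frg,\frb]$. Hmm — actually since $\frb$ is an ideal, $[\frg,\frb]\subseteq\frb$, so $[\frg,\frb]^\perp\supseteq\frb^\perp$, the opposite inclusion! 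Let me reconsider: the claimed first inequality is $\codim_{\frq}\nor_{\frq}(\frb,\frb_0)\leq\codim_{\frq}(\frq\cap\frb^\perp)$, equivalently $\dim(\frq\cap\frb^\perp)\leq\dim\nor_{\frq}(\frb,\frb_0) = \dim(\frq\cap[\frg,\frb]^\perp)$, which does follow from $[\frg,\frb]\subseteq\frb$ hence $\frb^\perp\subseteq[\frg,\frb]^\perp$. Good. For the middle inequality: $\codim_{\frq}(\frq\cap\frb^\perp)\leq\codim_{\frg}\frb^\perp = \dim\frg - \dim\frb^\perp = \dim\frb$; but I need the sharper bound $\dim\frb - \dim\frb_0$. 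Here I would use that $\frb_0=\frb\cap\frg^\perp$, so pairing $\frq$ against $\frb$ factors through $\frb/\frb_0 \hookrightarrow (\frg/\frg^\perp)^*$; the map $\frq\to\Hom(\frb/\frb_0,\RR)$, $X\mapsto\langle X,\cdot\rangle|_\frb$, has kernel exactly $\frq\cap\frb^\perp$ (since $\frb_0\subseteq\frg^\perp$ means $X\perp\frb_0$ automatically), so $\codim_{\frq}(\frq\cap\frb^\perp)\leq\dim(\frb/\frb_0) = \dim\frb-\dim\frb_0$. Finally $\dim\frb - \dim\frb_0 \leq\ell$: since $\frb$ is totally isotropic, $\frb/\frb_0$ injects into a totally isotropic subspace of the nondegenerate space $\frg/\frg^\perp$, whose isotropic subspaces have dimension at most $\ell$.

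The main obstacle I anticipate is being careful about which invariance is available: $\met$ is only known to be $\gs$-invariant, not $\frg$-invariant, so every manipulation pairing across the bracket must use an element of $\gs$ in the ``$\ad$'' slot — and this is exactly why the hypothesis $\frb\subseteq\gs$ is imposed, since it guarantees $\frb_0$ is a $\gs$-ideal and lets the adjoint action of $\frb$ itself be skew. Keeping track of signs and of the distinction between $X\perp[\frg,\frb]$ and $[X,\frb]\perp\frg$ is the delicate bookkeeping, but no deep idea beyond Theorem \ref{mthm:invariance}(2) is needed.
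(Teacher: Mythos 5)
Your argument is correct and follows essentially the same route as the paper: the identity \eqref{eq:transporter} comes from applying the $\gs$-invariance of $\met$ (Theorem \ref{mthm:invariance}(2)) with an element of $\frb\subseteq\gs$ in the adjoint slot, together with $[\frq,\frb]\subseteq\frb$ since $\frb$ is an ideal, and the codimension chain then follows from $[\frg,\frb]\subseteq\frb$, the rank of the pairing $\frg\times\frb\to\RR$ (your map $\frq\to(\frb/\frb_0)^*$ is just the paper's $\codim_\frg\frb^\perp=\dim\frb-\dim\frb_0$ in disguise), and total isotropy of $\frb$. The false starts you flag are resolved correctly, so no gap remains.
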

\begin{proof}
Let  $Z\in\frb$ and $X \in \frq$ and  $Y\in\frg$. Since $Z \in \gs$,  we have $\langle [Y,Z], X\rangle
=-\langle Y,[ X,Z]\rangle$.
This  shows  the equivalence of
$X\perp [\frg,\frb]$ and $X\in\nor_{\frq}(\frb,\frb_0)$.
Hence, the equation \eqref{eq:transporter} holds.

As $[\frg,\frb]\subseteq \frb$ and thus
$[\frg,\frb]^\perp\supseteq \frb^\perp$,
we clearly have
$\codim_{\frq} \frq \cap [\frg ,\frb]^\perp  \leq \codim_{\frq} \frq \cap \frb^{\perp}$.
Now $\codim_\frg\frb^\perp = \dim \frb - \dim \frb \cap \frg^{\perp}$. 
Since $\frb$ is totally isotropic, % we also get 
this means $\codim_\frg\frb^\perp  \leq \ell$.
Since  $\codim_{\frq} \frq \cap \frb^{\perp} \leq \codim_\frg\frb^\perp$, 
the inequalities \eqref{eq:transporter_codim} follow. 
%Furthermore, $\dim \frg \geq  \dim  \frb^{\perp} +  \codim_{\frq} \frq \cap \frb^{\perp} = \dim \frg - \codim_\frg\frb^\perp +  \codim_{\frq} \frq \cap \frb^{\perp}$. Hence,  $\codim_{\frq} \frq \cap \frb^{\perp} \leq \codim_\frg\frb^\perp$. 
%%
%%Suppose that $\codim_\frq \frq\cap\frb^\perp>\ell$.
%%This means $\frq$ contains a subspace $U_\frq$ such that
%%$\dim U_\frq>\ell$ and $U_\frq\cap\frb^\perp=\zsp$,
%%and hence
%%\[
%%\dim\frg\ \geq\ \dim U_\frq+\dim\frb^\perp\ >\
%%\ell+\dim\frg-\codim_\frg\frb^\perp\ \geq\ \dim\frg,
%%\]
%%a contradiction. So $\codim_\frq\frq\cap\frb^\perp\leq\ell$.
%%Using \eqref{eq:transporter}, 
%%the  inequalities  \eqref{eq:signature} follow.
%Now the inequalities \eqref{eq:transporter_codim} follow. 
\end{proof}

\begin{remark} Equality holds in \eqref{eq:transporter_codim} 
if and only if $\dim\frq + \frb^{\perp} = \dim \frg$.
\end{remark}

The following relations between transporters are satisfied: 
\begin{lem} \label{lem:transrel} \hspace{1ex}
\begin{enumerate}
 \item $[\gs, \nor_{\frg}(\frb,\frb_0)] \subseteq \frb^{\perp} \cap \gs \subseteq \nor_{\gs}(\frb,\frb_0)$
 \item $ \nor_{\gs}(\frb,\frb_0)$ is an ideal of $\frg$.
 \item $\frb^{\perp} \subseteq \nor_{\frg}(\frb,\frb_0)$.
\end{enumerate}
\end{lem}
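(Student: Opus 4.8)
The plan is to verify the three inclusions in Lemma \ref{lem:transrel} directly, using the Transporter Lemma \ref{lem:transporter0} (in particular the identity $\nor_{\frq}(\frb,\frb_0)=\frq\cap[\frg,\frb]^\perp$) together with the invariance properties of $\met$ already established, namely that $\met$ is $\gs$-invariant by Theorem \ref{mthm:invariance}(2) and that $[\gs,\frg]\subseteq\frs\ltimes\frn\subseteq\inv(\frg,\met)$ by Lemma \ref{lem:sinvariant}.

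First I would prove (1). For the first inclusion, take $X\in\nor_{\frg}(\frb,\frb_0)=\frg\cap[\frg,\frb]^\perp$ and $Y\in\gs$; I want $[Y,X]\in\frb^\perp$. For any $Z\in\frb$, since $Z\in\gs$ and $\met$ is $\gs$-invariant, $\langle[Y,X],Z\rangle=-\langle X,[Y,Z]\rangle$, and $[Y,Z]\in[\gs,\frb]\subseteq[\frg,\frb]$, so this pairing vanishes by the choice of $X$. Hence $[Y,X]\in\frb^\perp$; since $[\gs,\nor_{\frg}(\frb,\frb_0)]\subseteq\gs$ as $\gs$ is an ideal, we land in $\frb^\perp\cap\gs$. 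The second inclusion $\frb^\perp\cap\gs\subseteq\nor_{\gs}(\frb,\frb_0)$ is just the special case $\frq=\gs$ of \eqref{eq:transporter}: $\nor_{\gs}(\frb,\frb_0)=\gs\cap[\frg,\frb]^\perp\supseteq\gs\cap\frb^\perp$ since $[\frg,\frb]\subseteq\frb$ forces $[\frg,\frb]^\perp\supseteq\frb^\perp$.

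Next, (2). The transporter $\nor_{\gs}(\frb,\frb_0)=\{X\in\gs\mid[X,\frb]\subseteq\frb_0\}$ is a subalgebra of $\gs$; to see it is an ideal of $\frg$, I take $X\in\nor_{\gs}(\frb,\frb_0)$ and $W\in\frg$ and check $[W,X]\in\nor_{\gs}(\frb,\frb_0)$. It lies in $\gs$ since $\gs$ is an ideal. For the bracket condition, use the Jacobi identity: $[[W,X],\frb]\subseteq[W,[X,\frb]]+[X,[W,\frb]]\subseteq[W,\frb_0]+[X,\frb]$. Now $[W,\frb_0]\subseteq\frb_0$ because $\frb_0=\frb\cap\frg^\perp$ is an ideal of $\gs$ — but $W$ is only in $\frg$, so I must argue $\frb_0$ is actually an ideal of $\frg$: indeed $\frb$ is an ideal of $\frg$ by hypothesis, and $\frg^\perp$ is invariant under Lie brackets with elements of $\inv(\frg,\met)\supseteq\frn$; the cleaner route is to note $[\frg,\frb_0]\subseteq[\frg,\frb]\cap[\frg,\frg^\perp]$, and since $\met$ is $\gs$-invariant and $\frb\subseteq\gs$, for $V\in\frb_0$, $W\in\frg$, $Y\in\frg$ we can try $\langle[W,V],Y\rangle$ — this requires care as $\met$ need not be $\frg$-invariant. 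The safe observation is $[W,\frb_0]\subseteq\frb$ (ideal) and $[W,\frb_0]\subseteq\frg^\perp$ whenever $W\in\gs$ by $\gs$-invariance, but for general $W\in\frg$ we instead use that $[\frg,\frb]\subseteq\gs$ and $[W,[X,\frb]]\subseteq[W,\frb_0]$; and the term $[X,[W,\frb]]\subseteq[X,\frb]\subseteq\frb_0$ since $[W,\frb]\subseteq\frb$ and $X$ transports $\frb$ into $\frb_0$. So the only genuinely delicate point is that $[W,\frb_0]\subseteq\frb_0$ for $W\in\frg$; this is exactly the assertion that $\frb_0$ is a $\frg$-ideal, which holds because $\frb_0\subseteq\frg^\perp\cap\gs$ and $\gs$-invariance combined with $\frb_0\subseteq\frb$ being a $\frg$-ideal handle the two halves — I expect this to be the main obstacle and would spell it out carefully, likely by first showing $\frg^\perp\cap\gs$ is a $\frg$-ideal using $\met_{\gs}$ being $\Ad(\frg)$-invariant (Theorem \ref{mthm:invariance}(1)).

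Finally, (3) is immediate: $\frb^\perp\cap\gs\subseteq\nor_{\gs}(\frb,\frb_0)\subseteq\nor_{\frg}(\frb,\frb_0)$ from (1) gives part of it, but for the full statement $\frb^\perp\subseteq\nor_{\frg}(\frb,\frb_0)$ I use \eqref{eq:transporter} once more: $\nor_{\frg}(\frb,\frb_0)=\frg\cap[\frg,\frb]^\perp\supseteq\frb^\perp$ since $[\frg,\frb]\subseteq\frb$ implies $[\frg,\frb]^\perp\supseteq\frb^\perp$. In summary, (1) and (3) are quick consequences of the $\gs$-invariance of $\met$ and the transporter identity, while (2) requires the preliminary fact that $\frb_0=\frb\cap\frg^\perp$ is an ideal of all of $\frg$, which I would establish first from Theorem \ref{mthm:invariance} before running the Jacobi identity argument.
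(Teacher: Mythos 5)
Parts (1) and (3) of your proposal are fine and match the paper's argument: both reduce, via the identity $\nor_{\frq}(\frb,\frb_0)=\frq\cap[\frg,\frb]^{\perp}$ of Lemma \ref{lem:transporter0}, to the containment $[\frg,\frb]^{\perp}\supseteq\frb^{\perp}$ and to skew-symmetry of $\ad(Y)$ for $Y\in\gs$ (note that in (1) the invariance you need is that of $\met$ under $Y\in\gs$, not under $Z$; the conclusion is unaffected since $Y$ does lie in $\gs$).

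The gap is in (2). Your argument needs $[W,\frb_0]\subseteq\frb_0$ for all $W\in\frg$, i.e.\ that $\frb_0=\frb\cap\frg^{\perp}$ is an ideal of $\frg$, and you propose to deduce this from Theorem \ref{mthm:invariance}(1) by first showing that $\frg^{\perp}\cap\gs$ is a $\frg$-ideal. That preliminary fact is false at this level of generality. Theorem \ref{mthm:invariance}(1) says only that the restriction $\met_{\gs}$ is invariant under the adjoint action of $\frg$ on $\gs$; this makes $\gs^{\perp}\cap\gs$ (the radical of $\met_{\gs}$) an ideal of $\frg$, but it gives no control over orthogonality to elements of $\frk$ outside $\gs$, hence none over $\frg^{\perp}\cap\gs$. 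Indeed, whether $\frb\cap\frg^{\perp}$ is an ideal of $\frg$ is exactly what Proposition \ref{prop:transporter_k} and Corollary \ref{cor:isotrop_ideals_l2_0} establish later, under the extra hypothesis $\nor_{\frk}(\frb,\frb_0)=\frk$ (e.g.\ when $\ell\leq 2$), and Example \ref{ex:so3so4} shows it can genuinely fail: there $\frb=\frr$ is a totally isotropic ideal contained in $\gs$, while $\frb_0=\frg^{\perp}\cap\frr=\so_3^{\triangle}$ is not an ideal of $\frg$ (although the conclusion of part (2) still holds there, since $\frr$ is abelian). So the Jacobi-identity route breaks at precisely the term you flagged and cannot be repaired by the lemma you propose. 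The correct argument treats (2) exactly like (1) and (3): by \eqref{eq:transporter}, $X\in\nor_{\gs}(\frb,\frb_0)$ means $X\in\gs$ and $X\perp[\frg,\frb]$; then for $Y\in\frg$ and $Z\in[\frg,\frb]\subseteq\frb\subseteq\gs$, skew-symmetry of $\ad(X)$ and of $\ad(Z)$ (both lie in $\gs$) gives $\langle[Y,X],Z\rangle=\langle Y,[X,Z]\rangle=\langle[Z,Y],X\rangle=0$, because $[Z,Y]\in[\frg,\frb]$ and $X\perp[\frg,\frb]$; since $[Y,X]\in\gs$, another application of \eqref{eq:transporter} yields $[Y,X]\in\nor_{\gs}(\frb,\frb_0)$, with no ideal property of $\frb_0$ ever required.
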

\begin{proof}  
Let $Y \in \gs$  and  $X \in  \nor_{\frg}(\frb,\frb_0)$, 
$Z \in  \frb$. Since $ [ X, Z] \in \frb_{0}$, we get 
$\langle  [Y,X] , Z] \rangle = \langle  X, [Z, Y]  \rangle =  \langle   [X, Z], Y  \rangle
= 0 $.  This shows $[Y,X] \perp \frb$.  By \eqref{eq:transporter},  $\frb^{\perp} \cap \gs \subseteq  \nor_{\gs}(\frb,\frb_0)$. This shows (1). 

Let $Y \in \frg$  and  $X \in  \nor_{\gs}(\frb,\frb_0)$, 
$Z \in  [\frg,\frb]$. We get $\langle  [Y,X] , Z  \rangle = \langle  Y, [Z, X]  \rangle =  \langle   [Y, Z], X  \rangle$. Since  $ [Y, Z] \in  [\frg,\frb]$,  \eqref{eq:transporter} shows that  $\langle  [Y,X] , Z \rangle = 0$. Thus,  $[Y,X] \in \nor_{\gs}(\frb,\frb_0)$. This shows (2). 

Finally, $\frb^{\perp} \subseteq [\frg, \frb]^{\perp}$, which, in light of  \eqref{eq:transporter},  shows (3).  
\end{proof}

\subsubsection{Totally isotropic ideals} 

\begin{prop} \label{prop:transporter_ideals}
Let $\fri$ be an ideal of $\frk$ contained in $\nor_{\frk}(\frb,\frb_0)$. 
Then  $\fri +  \nor_{\gs}(\frb,\frb_0)$ is an ideal of $\frg$ contained in  $\nor_{\frg}(\frb,\frb_0)$. 
In particular,  $[\fri + \nor_{\gs}(\frb,\frb_0), \frb]$ is an ideal of $\frg$ contained in $\frg^{\perp}$. 
%In particular, if $\frg^\perp$ does not contain any  non-trivial ideal of $\frg$ 
%then $[\fri + \gs, \frb] = \zsp$.
\end{prop}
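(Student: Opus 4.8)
The plan is to verify the three assertions of the proposition in sequence, using the transporter relations of Lemma \ref{lem:transrel} together with the fact that $\fri$, being an ideal of $\frk$, commutes with $\gs$ (recall $\frg=(\frk\times\frs)\ltimes\frr$, so $[\frk,\gs]\subseteq\gs$ and $[\fri,\gs]\subseteq\gs$; more precisely, since $\frk$ and $\frs$ are ideals in the Levi factor and $\frk\times\frs$ acts on $\frr$, the ideal $\fri$ of $\frk$ satisfies $[\fri,\frs]=\zsp$). First I would check that $\frm:=\fri+\nor_{\gs}(\frb,\frb_0)$ is a subalgebra: $\nor_{\gs}(\frb,\frb_0)$ is a subalgebra, $\fri$ is a subalgebra, and $[\fri,\nor_{\gs}(\frb,\frb_0)]\subseteq\gs$; one must see this bracket lands back in $\nor_{\gs}(\frb,\frb_0)$, which follows because $\nor_{\gs}(\frb,\frb_0)$ is an ideal of $\frg$ by Lemma \ref{lem:transrel}(2). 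The same observation shows $[\frg,\frm]\subseteq[\frg,\fri]+[\frg,\nor_{\gs}(\frb,\frb_0)]\subseteq\fri+\nor_{\gs}(\frb,\frb_0)=\frm$, using that $\fri$ is an ideal of $\frg$ (it is an ideal of $\frk$ and $[\frs,\frk]=[\frr,\frk]\subseteq\gs\cap\ldots$; actually $\fri\triangleleft\frk$ and $\frk$ centralizes nothing automatically, so I must argue $\fri$ is an ideal of $\frg$ — this holds since $\fri$ is an ideal of the ideal $\frk$ of $\frg$ only if $\frk$ is characteristic, but $\frk$ need not be an ideal of $\frg$; rather $[\frg,\frk]\subseteq\frk$ fails in general). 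Let me instead argue directly: $[\frs,\fri]=\zsp$ and $[\frr,\fri]\subseteq\frr\subseteq\gs$ with $[\frr,\fri]$ landing in $\nor_{\gs}(\frb,\frb_0)$ because $\fri\subseteq\nor_{\frk}(\frb,\frb_0)\subseteq\nor_{\frg}(\frb,\frb_0)$ and $[\gs,\nor_{\frg}(\frb,\frb_0)]\subseteq\nor_{\gs}(\frb,\frb_0)$ by Lemma \ref{lem:transrel}(1). This is the crux of the "ideal of $\frg$" claim for $\frm$.

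Next I would show $\frm\subseteq\nor_{\frg}(\frb,\frb_0)$. The inclusion $\nor_{\gs}(\frb,\frb_0)\subseteq\nor_{\frg}(\frb,\frb_0)$ is immediate from the definitions (it is the transporter taken inside the subalgebra $\gs$ versus inside $\frg$), and $\fri\subseteq\nor_{\frk}(\frb,\frb_0)\subseteq\nor_{\frg}(\frb,\frb_0)$ by hypothesis; since $\nor_{\frg}(\frb,\frb_0)$ is a subalgebra, the sum $\frm=\fri+\nor_{\gs}(\frb,\frb_0)$ is contained in it as well — here I should be slightly careful that a sum of two subspaces of a subalgebra is again in that subalgebra, which is trivially true as it is just a linear span. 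So $[\frm,\frb]\subseteq\frb_0\subseteq\frg^\perp$.

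Finally, for the last sentence I want $[\frm,\frb]$ to be an ideal of $\frg$ contained in $\frg^\perp$. Containment in $\frg^\perp$ follows from $\frm\subseteq\nor_{\frg}(\frb,\frb_0)$ since $\frb_0=\frb\cap\frg^\perp\subseteq\frg^\perp$. For the ideal property I would use the Jacobi identity: $[\frg,[\frm,\frb]]\subseteq[[\frg,\frm],\frb]+[\frm,[\frg,\frb]]$; the first term lies in $[\frm,\frb]$ because $\frm$ is an ideal of $\frg$ (just proved), and the second term lies in $[\frm,\frb]$ because $\frb$ is an ideal of $\frg$ so $[\frg,\frb]\subseteq\frb$. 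Hence $[\frg,[\frm,\frb]]\subseteq[\frm,\frb]$.

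\textbf{Main obstacle.} The one genuinely delicate point is establishing that $\frm=\fri+\nor_{\gs}(\frb,\frb_0)$ is an ideal of $\frg$ (and a fortiori a subalgebra): one must control the bracket $[\frr,\fri]$ and $[\gs,\fri]$ and see it is absorbed into $\nor_{\gs}(\frb,\frb_0)$, which requires combining $\fri\subseteq\nor_{\frg}(\frb,\frb_0)$ with Lemma \ref{lem:transrel}(1)–(2); the rest is a routine application of the Jacobi identity and the definitions of the transporters.
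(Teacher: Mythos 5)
Your argument is correct and takes essentially the paper's route: both proofs reduce to $\frg=\frk+\gs$, absorb the $\frk$-bracket using that $\fri$ is an ideal of $\frk$ together with Lemma \ref{lem:transrel}(2), and absorb the $\gs$-bracket via Lemma \ref{lem:transrel}(1) applied to $\fri\subseteq\nor_{\frg}(\frb,\frb_0)$, with your version merely splitting $\gs$ into $\frs$ and $\frr$ (the step $[\frs,\fri]=\zsp$ is not needed) and spelling out the Jacobi-identity argument for the final assertion, which the paper leaves implicit. The only point you should state explicitly when verifying $[\frg,\frm]\subseteq\frm$ is the trivial inclusion $[\frk,\fri]\subseteq\fri$ coming from the hypothesis that $\fri$ is an ideal of $\frk$.
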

\begin{proof}  
Clearly,  $\frj = \fri +  \nor_{\gs}(\frb,\frb_0)$ is contained in  $\nor_{\frg}(\frb,\frb_0)$.  
Recall that $\frg = \frk + \gs$. Since $\fri$ is an ideal in $\frk$, $[ \frk , \frj] \subseteq \fri + [\frk, \nor_{\gs}(\frb,\frb_0)]$.
Using (2) of Lemma \ref{lem:transrel}, we conclude   $[ \frk , \frj] \subseteq \frj$.  By (1) of  Lemma \ref{lem:transrel}, $[ \gs , \frj] \subseteq  \nor_{\gs}(\frb,\frb_0) \subseteq \frj$. 
\end{proof} 

\begin{cor} \label{cor:isotropideal_ab}
Assume that  $\frg^\perp$ does not contain any non-trivial
ideal of $\frg$. Then  every  totally isotropic ideal $\frb$ of $\frg$ 
contained in $ \gs$ is abelian. 
\end{cor}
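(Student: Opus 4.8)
The goal is to show that under the effectivity hypothesis, any totally isotropic ideal $\frb \subseteq \gs$ of $\frg$ is abelian. The plan is to produce a non-trivial ideal of $\frg$ inside $\frg^\perp$ whenever $[\frb,\frb] \neq \zsp$, contradicting effectivity. The natural candidate comes from Proposition \ref{prop:transporter_ideals}: taking $\fri = \zsp$ there, $\nor_{\gs}(\frb,\frb_0)$ is an ideal of $\frg$, and $[\nor_{\gs}(\frb,\frb_0),\frb]$ is an ideal of $\frg$ contained in $\frg^\perp$. By effectivity this forces $[\nor_{\gs}(\frb,\frb_0),\frb] = \zsp$.

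First I would record that $\frb^\perp \cap \gs \subseteq \nor_{\gs}(\frb,\frb_0)$ by part (1) of Lemma \ref{lem:transrel} (its proof shows $[\gs,\nor_{\frg}(\frb,\frb_0)]\subseteq \frb^\perp\cap\gs\subseteq\nor_{\gs}(\frb,\frb_0)$, and $\frb^\perp\cap\gs$ lies in $\nor_{\gs}(\frb,\frb_0)$ directly from \eqref{eq:transporter} applied with $\frq = \gs$). The key point is then that $\frb$ itself is contained in $\frb^\perp \cap \gs$: indeed $\frb$ is totally isotropic, so $\frb \subseteq \frb^\perp$, and $\frb \subseteq \gs$ by hypothesis. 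Hence $\frb \subseteq \nor_{\gs}(\frb,\frb_0)$. Combining with the previous paragraph, $[\frb,\frb]\subseteq [\nor_{\gs}(\frb,\frb_0),\frb] = \zsp$, which is exactly the assertion that $\frb$ is abelian.

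So the argument is short once the inclusions are lined up; the only real content is assembling Proposition \ref{prop:transporter_ideals} (with $\fri=\zsp$) with the observation $\frb \subseteq \frb^\perp \cap \gs \subseteq \nor_{\gs}(\frb,\frb_0)$ and then invoking effectivity to kill the resulting isotropic ideal. I do not anticipate a serious obstacle: the subtlety, if any, is simply making sure that $\frb_0 = \frb\cap\frg^\perp$ really is a $\gs$-ideal (which uses Theorem \ref{mthm:invariance}(2), as already noted before the Transporter Lemma) so that the transporter machinery applies, and that $[\nor_{\gs}(\frb,\frb_0),\frb]\subseteq \frg^\perp$ — but both of these are packaged in Proposition \ref{prop:transporter_ideals} and Lemma \ref{lem:transrel}, which I am free to use.

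\begin{proof}
Set $\frb_0 = \frb\cap\frg^\perp$. Applying Proposition \ref{prop:transporter_ideals} with $\fri=\zsp$, the subalgebra $\nor_{\gs}(\frb,\frb_0)$ is an ideal of $\frg$ and $[\nor_{\gs}(\frb,\frb_0),\frb]$ is an ideal of $\frg$ contained in $\frg^\perp$. By the hypothesis that $\frg^\perp$ contains no non-trivial ideal of $\frg$, we conclude $[\nor_{\gs}(\frb,\frb_0),\frb] = \zsp$. On the other hand, since $\frb$ is totally isotropic we have $\frb\subseteq\frb^\perp$, and by assumption $\frb\subseteq\gs$, so $\frb\subseteq\frb^\perp\cap\gs$. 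By \eqref{eq:transporter} (applied with $\frq=\gs$) we have $\frb^\perp\cap\gs\subseteq\nor_{\gs}(\frb,\frb_0)$, hence $\frb\subseteq\nor_{\gs}(\frb,\frb_0)$. Therefore $[\frb,\frb]\subseteq[\nor_{\gs}(\frb,\frb_0),\frb]=\zsp$, so $\frb$ is abelian.
\end{proof}
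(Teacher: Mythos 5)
Your proof is correct and takes essentially the same route as the paper's: the paper also uses the transporter machinery, deducing $\frb\subseteq\nor_{\gs}(\frb,\frb_0)$ (via Lemma \ref{lem:transrel}) and then killing $[\frb,\frb]$ as an ideal of $\frg$ contained in $\frg^{\perp}$ by effectivity. Your only variation is to invoke Proposition \ref{prop:transporter_ideals} with $\fri=\zsp$ and annihilate the a priori larger ideal $[\nor_{\gs}(\frb,\frb_0),\frb]$ first, which is a harmless reordering of the same argument.
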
 
\begin{proof}
 By (3) of Lemma \ref{lem:transrel}, $\frb \subseteq \nor_{\gs}(\frb, \frb_{0})$. Therefore, 
 $[\frb, \frb]$ is an ideal of $\frg$ and contained in $\frg^{\perp}$. Hence, $[\frb, \frb] = \zsp$.   
\end{proof}

The case of a large transporter in $\frk$ has particularly strong consequences: 
\begin{prop} \label{prop:transporter_k} 
%Let $\frb$ be a totally isotropic ideal of 
%$\frg$ that is  contained in $\gs$, and 
Assume that $\nor_\frk(\frb,\frb_0) = \frk$. 
Then: 
\begin{enumerate} %  \setcounter{enumi}{1}
\item $\frb \cap \frg^{\perp}$ is an ideal in $\frg$. 
\end{enumerate} 
If  furthermore $\frg^\perp$ does not 
contain any non-trivial ideal of $\frg$, then: 
\begin{enumerate}  \setcounter{enumi}{1}
\item $\frb \cap \frg^{\perp} = \zsp$,  $\dim \frb \leq \ell$,  $[\frk, \frb] =  \zsp$. 
\end{enumerate}
\end{prop}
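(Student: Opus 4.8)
The plan is to exploit the hypothesis $\nor_\frk(\frb,\frb_0)=\frk$ together with Proposition \ref{prop:transporter_ideals}, applied with $\fri=\frk$. Indeed, $\frk$ is an ideal of $\frk$ contained in $\nor_\frk(\frb,\frb_0)$, so by that proposition $\frj:=\frk+\nor_{\gs}(\frb,\frb_0)$ is an ideal of $\frg$ contained in $\nor_{\frg}(\frb,\frb_0)$, and $[\frj,\frb]$ is an ideal of $\frg$ contained in $\frg^\perp$. For part (1), I first want to upgrade this: the element $\frb$ itself lies in $\frj$. This is because $\frb\subseteq\gs$ (by hypothesis $\frb$ is a totally isotropic ideal of $\frg$ contained in $\gs$, which is the standing assumption for the whole subsection) and $\frb\subseteq\nor_{\gs}(\frb,\frb_0)$ by part (3) of Lemma \ref{lem:transrel}. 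Hence $\frb\subseteq\frj$, so $[\frb,\frb]\subseteq[\frj,\frb]$, but more importantly I claim $\frb\cap\frg^\perp=[\frj,\frb]$ up to containment in both directions, or at least that $\frb\cap\frg^\perp$ is itself an ideal. To see the latter directly: $\frb\cap\frg^\perp=\frb_0$ is an ideal of $\gs$ (as noted in the text, since $\met$ is $\gs$-invariant by Theorem \ref{mthm:invariance}(2)), and since $\nor_\frk(\frb,\frb_0)=\frk$ we also get $[\frk,\frb]\subseteq\frb_0$, hence $[\frk,\frb_0]\subseteq[\frk,\frb]\subseteq\frb_0$; as $\frg=\frk+\gs$, it follows that $\frb_0=\frb\cap\frg^\perp$ is an ideal of $\frg$. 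This proves (1).

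For the second part, assume in addition that $\frg^\perp$ contains no non-trivial ideal of $\frg$. By (1), $\frb\cap\frg^\perp$ is an ideal of $\frg$ contained in $\frg^\perp$, so $\frb\cap\frg^\perp=\zsp$, i.e. $\frb_0=\zsp$. Then the totally isotropy of $\frb$ gives $\dim\frb\le\mu\le\ell+\dim\frg^\perp$; but in fact since $\frb\cap\frg^\perp=\zsp$, the image of $\frb$ in $\frg/\frg^\perp$ is totally isotropic of dimension $\dim\frb$, so $\dim\frb\le\ell$. (Alternatively this follows from the bound $\dim\frb-\dim\frb_0\le\ell$ in \eqref{eq:transporter_codim} of the Transporter Lemma, now that $\frb_0=\zsp$.) Finally, with $\frb_0=\zsp$ the hypothesis $\nor_\frk(\frb,\frb_0)=\frk$ reads $[\frk,\frb]\subseteq\frb_0=\zsp$, which is exactly $[\frk,\frb]=\zsp$. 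This completes (2).

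The only genuinely substantive point is the verification in part (1) that $\frb\cap\frg^\perp$ is an ideal of all of $\frg$ and not merely of $\gs$; this is where the full strength of the hypothesis $\nor_\frk(\frb,\frb_0)=\frk$ enters (it forces $[\frk,\frb]\subseteq\frb_0$, which together with $\gs$-invariance of $\met$ closes up $\frb_0$ under the $\frk$-action). Everything else is a direct combination of Lemma \ref{lem:transrel}, Proposition \ref{prop:transporter_ideals}, and the Transporter Lemma, so I do not anticipate a real obstacle there.
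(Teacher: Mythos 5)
Your proof is correct and follows essentially the same route as the paper: $\gs$-invariance of $\met$ (Theorem \ref{mthm:invariance}) gives $[\gs,\frb\cap\frg^{\perp}]\subseteq\frb\cap\frg^{\perp}$, the hypothesis $\nor_\frk(\frb,\frb_0)=\frk$ gives $[\frk,\frb_0]\subseteq\frb_0$, and $\frg=\frk+\gs$ closes up $\frb_0$ as an ideal, after which (2) is immediate from effectivity and the bound $\dim\frb-\dim\frb_0\leq\ell$ of the Transporter Lemma. The initial detour through Proposition \ref{prop:transporter_ideals} is superfluous, but the direct argument you then give is exactly the paper's.
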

\begin{proof} Since, by Theorem \ref{mthm:invariance}, $\met$ is invariant by $\gs$, 
$[\gs,  \frg^{\perp}] \subseteq  \frg^{\perp}$. Hence, $[\gs, \frb \cap  \frg^{\perp}] \subseteq  \frb \cap  \frg^{\perp}$. Since $\nor_\frk(\frb,\frb_0) = \frk$ this implies that $\frb_{0} =  \frb \cap  \frg^{\perp}$ is an ideal in $\frg$. 
\end{proof} 

\subsubsection{Metric radical of $\gs$}
In  the following consider the special case $$ \frb=\gs^\perp\cap\gs \,  . $$ Thus $\frb$ is totally isotropic and it is the metric radical of $\gs$ (with respect to the induced metric  $\met_{\gs}$).
By Theorem \ref{mthm:invariance}, $\met_{\gs}$ is invariant by $\frg$.
Therefore, $\frb$ is an  ideal in $\frg$.  Moreover,  % the subalgebra 
% The subalgebra $\frb_0$ is
$$ \frb_0  =  \frg^\perp \cap  \gs $$  is an ideal in $\gs$. 
\begin{lem}\label{lem:SRGperp} 
% \begin{enumerate}
% \item
 $[\gs, \gs^{\perp}]  \subseteq \frb_{0}$. In particular, $[\gs, \frb] \subseteq \frb_{0} \subseteq \frg^\perp$.
%\item[(2)]
%$\nor_{\frk}(\frb,\frb_0) = \frk\cap[\frk,\frb]^\perp$.
%\item[(3)]
%If \  $\frg^\perp$ does not contain a non-trivial ideal of $\frg$, then
%$\frb \subseteq \centre{\gs}$.
% \end{enumerate}
\end{lem}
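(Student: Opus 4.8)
We want to show $[\gs,\gs^{\perp}]\subseteq\frb_{0}=\frg^{\perp}\cap\gs$, from which the ``in particular'' claim follows immediately since $\frb=\gs^{\perp}\cap\gs\subseteq\gs^{\perp}$. Here $\gs^{\perp}$ denotes the orthogonal complement of $\gs$ taken inside all of $\frg$ with respect to $\met$. The natural strategy is to take $X\in\gs$, $W\in\gs^{\perp}$, and an arbitrary $Y\in\frg$, and compute $\langle[X,W],Y\rangle$; we want to conclude this vanishes whenever $Y$ ranges over a complement of $\gs$, and that $[X,W]$ lies in $\gs$, so that $[X,W]\in\gs^{\perp}\cap\gs$; but in fact we want the stronger conclusion $[X,W]\in\frg^{\perp}\cap\gs$, so really we must show $\langle[X,W],Y\rangle=0$ for \emph{all} $Y\in\frg$.

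First I would record that $[X,W]\in\gs$: since $\gs$ is an ideal of $\frg$ (it is characteristic, as noted in Section~\ref{sec:biinvariant}) and $X\in\gs$, we get $[X,W]\in\gs$ regardless of where $W$ sits. So it remains to prove $[X,W]\perp\frg$. Decompose $Y=Y_{\frk}+Y_{\gs}$ according to $\frg=\frk+\gs$ (as vector spaces, using the Levi decomposition). For the $Y_{\gs}$ part: by Theorem~\ref{mthm:invariance}(1), $\met_{\gs}$ is invariant under the adjoint action of $\frg$ on $\gs$, so $\ad(X)$ acts skew-symmetrically on $\gs$ with respect to $\met_{\gs}$; hence $\langle[X,W'],Y_{\gs}\rangle$-type manipulations... but wait, $W$ itself is not in $\gs$. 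The cleaner route: $\langle[X,W],Y_{\gs}\rangle = -\langle W,[X,Y_{\gs}]\rangle$ would require $X$-invariance of $\met$ pairing $\gs^{\perp}$ with $\gs$; since $[X,Y_{\gs}]\in\gs$ and $W\in\gs^{\perp}$, the right side is $0$ by definition of $\gs^{\perp}$. So the point is simply: $\langle[X,W],Y_{\gs}\rangle=-\langle W,[X,Y_{\gs}]\rangle=0$ provided $\ad(X)$ is skew on the relevant subspace — and by Theorem~\ref{mthm:invariance}(2), $\met$ is invariant by $\gs$, in particular by $X\in\gs$, so $\ad(X)$ is genuinely skew on all of $\frg$. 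That single observation handles \emph{every} $Y$: $\langle[X,W],Y\rangle=-\langle W,[X,Y]\rangle$, and $[X,Y]\in\gs$ (ideal!), hence $\langle W,[X,Y]\rangle=0$ since $W\in\gs^{\perp}$.

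So the proof is short: for $X\in\gs$, $W\in\gs^{\perp}$, $Y\in\frg$, invariance of $\met$ under $X$ (Theorem~\ref{mthm:invariance}(2)) gives $\langle[X,W],Y\rangle=-\langle W,[X,Y]\rangle$; since $\gs\trianglelefteq\frg$ we have $[X,Y]\in\gs$, so $\langle W,[X,Y]\rangle=0$. Therefore $[X,W]\perp\frg$, i.e.\ $[X,W]\in\frg^{\perp}$; and $[X,W]\in\gs$ again by normality, so $[X,W]\in\frg^{\perp}\cap\gs=\frb_{0}$. This proves $[\gs,\gs^{\perp}]\subseteq\frb_{0}$. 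Since $\frb_{0}\subseteq\frg^{\perp}$ and $\frb\subseteq\gs^{\perp}$, we get $[\gs,\frb]\subseteq[\gs,\gs^{\perp}]\subseteq\frb_{0}\subseteq\frg^{\perp}$, which is the ``in particular'' statement.

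**Main obstacle.** There is essentially no obstacle here — the lemma is a direct corollary of Theorem~\ref{mthm:invariance}(2) together with the fact that $\gs$ is an ideal. The only thing to be careful about is not to confuse $\gs^{\perp}$ (orthogonal inside $\frg$) with the metric radical $\frg^{\perp}$, and to make sure the skew-symmetry being invoked is the full $\gs$-invariance of $\met$ on $\frg$ (part (2)), not merely the $\frg$-invariance of the restriction $\met_{\gs}$ (part (1)) — the latter alone would not let us pair $W\notin\gs$ against things. With part (2) in hand, the argument is a two-line computation.
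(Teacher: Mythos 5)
Your proof is correct and is essentially identical to the paper's: both apply the $\gs$-invariance of $\met$ (Theorem \ref{mthm:invariance}(2)) to move $\ad(X)$, $X\in\gs$, across the pairing, and then use that $\gs$ is an ideal so that the resulting bracket lands in $\gs$ and is killed by $\gs^{\perp}$. Your side remarks (distinguishing $\gs^{\perp}$ from $\frg^{\perp}$, and part (2) from part (1)) are exactly the right points of care, but the argument itself matches the paper's two-line computation.
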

\begin{proof}
Let $Y \in \frg$, $X \in \gs$, $B \in \gs^{\perp}$. Since $\gs$ is an
ideal, $[Y,X] \in \gs$.  
Since $\met$ is $\gs$-invariant, we obtain 
$\langle Y, [X, B] \rangle = \langle [Y, X], B  \rangle
= 0$. This shows $\frg \perp  [\gs, \gs^{\perp}]$. 
%Similarly, 
%choosing $B \in \gs^{\perp}$ we deduce $\frg \perp  [\gs,  \gs^{\perp} ]$. 
%Part (2) follows from  \eqref{eq:transporter}.
\end{proof}

Since $[ \gs, \frb]$ is an ideal in $\frg$,  we deduce: 

\begin{cor}\label{cor:GSGperp}
If\ $\frg^\perp$ does not contain a non-trivial ideal of\ $\frg$, 
then 
\begin{enumerate}
\item[]  $ \gs^\perp\cap \gs  \subseteq \centre{\gs}$. In particular, $\frg^\perp\cap \gs  \subseteq \centre{\gs}$.
\end{enumerate}
\end{cor}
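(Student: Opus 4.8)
The plan is to read off the statement directly from Lemma \ref{lem:SRGperp} together with the effectivity hypothesis, using that the bracket of two ideals is again an ideal. Recall the setup preceding the corollary: with $\frb = \gs^\perp\cap\gs$, Theorem \ref{mthm:invariance}(2) gives that $\met_{\gs}$ is invariant under the adjoint action of $\frg$, so $\frb$ is an ideal of $\frg$; and $\frb_0 = \frg^\perp\cap\gs$ is an ideal of $\gs$.

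First I would invoke Lemma \ref{lem:SRGperp}, which says $[\gs,\frb]\subseteq\frb_0\subseteq\frg^\perp$. Next I would observe that $[\gs,\frb]$ is an ideal of $\frg$: indeed, $\gs$ is a (characteristic) ideal of $\frg$ and $\frb$ is an ideal of $\frg$, and the bracket of two ideals of a Lie algebra is again an ideal (this is exactly the remark made in the line just before the corollary statement). Combining these two facts, $[\gs,\frb]$ is an ideal of $\frg$ contained in $\frg^\perp$; by the hypothesis that $\frg^\perp$ contains no non-trivial ideal of $\frg$, this forces $[\gs,\frb]=\zsp$. Equivalently, $\frb=\gs^\perp\cap\gs\subseteq\centre{\gs}$, which is the first assertion.

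For the ``in particular'' clause I would note that $\frg^\perp\subseteq\gs^\perp$ (anything perpendicular to all of $\frg$ is in particular perpendicular to $\gs$), hence $\frg^\perp\cap\gs\subseteq\gs^\perp\cap\gs=\frb\subseteq\centre{\gs}$, finishing the proof.

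I do not anticipate any real obstacle here: all the work has already been done in Lemma \ref{lem:SRGperp} and in Theorem \ref{mthm:invariance}(2), and the only additional input is the elementary fact that $[\gs,\frb]$ is an ideal of $\frg$ so that effectivity can be applied. The one point worth stating carefully is precisely why $[\gs,\frb]$ is an ideal of $\frg$ (rather than merely of $\gs$), which is where one uses that \emph{both} $\gs$ and $\frb$ are ideals of the ambient algebra $\frg$.
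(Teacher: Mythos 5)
Your proof is correct and is essentially the paper's own argument: Lemma \ref{lem:SRGperp} gives $[\gs,\frb]\subseteq\frg^\perp$, the bracket of the two ideals $\gs$ and $\frb$ of $\frg$ is an ideal of $\frg$, and effectivity forces it to vanish, so $\frb\subseteq\frz(\gs)$ and the ``in particular'' clause follows from $\frg^\perp\subseteq\gs^\perp$. The only quibble is a citation slip: the fact that $\met_{\gs}$ is invariant under the adjoint action of $\frg$ (hence that $\frb$ is an ideal of $\frg$) is part (1) of Theorem \ref{mthm:invariance}, not part (2).
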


%\begin{cor}\label{cor:GSGperp2}
%If \ $\frg^\perp$ does not contain a non-trivial ideal of\ $\frg$, 
%then  $$ \gs^\perp \subseteq \frn_{\frk}(\gs, \centre{\gs} \cap \gs^{\perp}) + \frn_{\frn}(\gs,  \centre{\gs} \cap \gs^{\perp}) \subseteq \frk + \frn . $$
%$$ \frg^\perp \subseteq \frn_{\frk}(\gs, \centre{\gs} \cap \frg^{\perp}) + \frn_{\frn}(\gs,  \centre{\gs} \cap \frg^{\perp}) \subseteq \frk + \frn . $$
%\end{cor}
 
The following strengthens  Proposition \ref{prop:transporter_k} for  
$\frb = \gs^{\perp} \cap \gs$ and $\frb_0 =  \frg^\perp \cap \gs $:

\begin{prop} \label{prop:transporter_k_gs}
 Assume that $\nor_\frk(\frb,\frb_0) = \frk$. 
Then: 
\begin{enumerate}
\item  % $\frb_{0} \subseteq\frg^{\perp}$ is an ideal of $\frg$. Moreover, 
$[\frg,  \gs^{\perp} \cap \gs ]  \subseteq   \frg^\perp \cap \gs $.  In particular, $ \frg^\perp \cap \gs $ is 
an ideal in $\frg$. 
\end{enumerate} 
If furthermore $\frg^{\perp}$ contains no non-trivial ideal of\ $\frg$,
then:
\begin{enumerate} 
\setcounter{enumi}{1}
\item  $\frg^{\perp} \cap \gs = \zsp$ and $[\frg,  \gs^{\perp} \cap \gs ] = \zsp$. 
\item $[ \gs^\perp, \gs] =  [\frg^\perp, \gs] = \zsp$. 
% In particular,  $\frg^\perp$  centralizes 
% $\gs$.
% \item $\dim \frb \leq \ell$. 
%\item  
%$\frg^\perp$ is contained in $\frk_0 \times \frz(\gs)$, 
%where $\frk_0 = \frz_{\frk}(\gs)$ is the centralizer of $\gs$ in $\frk$.
% \item $\frg^{\perp} \cap \gs = \zsp$. 
\end{enumerate}
\end{prop}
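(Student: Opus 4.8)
The plan is to leverage the Transporter Lemma machinery together with the strong invariance of $\met_{\gs}$ established in Theorem \ref{mthm:invariance}. For part (1), I would apply Proposition \ref{prop:transporter_k} (1) to the totally isotropic ideal $\frb = \gs^\perp \cap \gs$ and $\frb_0 = \frg^\perp \cap \gs$; the hypothesis $\nor_\frk(\frb,\frb_0) = \frk$ is exactly what is needed there. But here we want more, namely that $[\frg, \frb] \subseteq \frb_0$, not just that $\frb_0$ is an ideal. For this I would combine two facts: first, Lemma \ref{lem:SRGperp} already gives $[\gs, \frb] \subseteq \frb_0$; second, since $\nor_\frk(\frb,\frb_0) = \frk$, we have $[\frk, \frb] \subseteq \frb_0$ by definition of the transporter. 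Since $\frg = \frk + \gs$, these two inclusions together yield $[\frg, \frb] \subseteq \frb_0 = \frg^\perp \cap \gs$. In particular $\frb_0$ is stable under $\ad(\frg)$, hence an ideal of $\frg$ (it is already a subspace of the ideal $\gs$, and visibly $[\frg,\frb_0]\subseteq[\frg,\frb]\subseteq\frb_0$).

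For part (2), assume in addition that $\frg^\perp$ contains no non-trivial ideal of $\frg$. From part (1), $\frg^\perp \cap \gs = \frb_0$ is an ideal of $\frg$ contained in $\frg^\perp$, so $\frb_0 = \zsp$. Feeding this back into part (1) gives $[\frg, \gs^\perp \cap \gs] \subseteq \frb_0 = \zsp$, which is the second assertion. (Equivalently, one can quote Proposition \ref{prop:transporter_k}(2) verbatim for this choice of $\frb$, $\frb_0$.)

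For part (3), the point is to remove the intersection with $\gs$ on the first factor, i.e.\ to pass from $[\gs^\perp \cap \gs, \gs] = \zsp$ to $[\gs^\perp, \gs] = \zsp$, and similarly for $\frg^\perp$. The key observation is that for computing brackets with elements of $\gs$, only the $\gs$-component matters: if $B \in \gs^\perp$, decompose $B = B_1 + B_2$ according to a reductive decomposition $\frg = \frg^{\frs} + \gs$ (using that $\frs$ acts reductively), or more simply note that $[\frg^{\frs}, \gs]$ lands in $\gs$ and one controls it via Theorem \ref{mthm:invariance}(1). Concretely, for $X \in \gs$ and $B \in \gs^\perp$ and arbitrary $Y \in \gs$, invariance of $\met_{\gs}$ under the $\frg$-action gives $\langle [B,X], Y\rangle = -\langle X, [B,Y]\rangle$; now $[B,Y] \in [\frg, \gs] \subseteq \gs$ and $X \in \gs$, and since $B \in \gs^\perp$ we want to show this pairing vanishes — but $[B,Y]$ need not be orthogonal to all of $\gs$. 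Instead, I would argue: $[\gs^\perp, \gs] \subseteq \gs$ and lands in $\gs^\perp \cap \gs$ once we know $\gs^\perp$ is $\ad(\gs)$-stable modulo the metric radical; combined with part (2) this forces $[\gs^\perp, \gs] \subseteq \gs^\perp \cap \gs$ and then a second application of part (2) (that $[\gs^\perp \cap \gs, \gs] = \zsp$ is not quite enough, so one iterates) collapses it to $\zsp$. The cleanest route: show $[\gs^\perp,\gs]\subseteq \gs^\perp\cap\gs$ directly from $\gs$-invariance of $\met_\gs$ (for $B\in\gs^\perp$, $X\in\gs$, $Y\in\gs$: $\langle[B,X],Y\rangle=\langle B,[X,Y]\rangle=0$ since $[X,Y]\in\gs$ and $B\perp_{\gs}\gs$, wait — $B$ is only $\met$-orthogonal to $\gs$, and $\met_\gs$ is the restriction, so indeed $\langle B, Z\rangle = 0$ for all $Z \in \gs$); hence $[B,X]\in\gs^\perp\cap\gs = \zsp$ by part (2). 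Thus $[\gs^\perp,\gs]=\zsp$, and a fortiori $[\frg^\perp,\gs]=\zsp$ since $\frg^\perp \subseteq \gs^\perp$ (as $\met_\gs$ is a restriction, every vector orthogonal to all of $\frg$ is orthogonal to all of $\gs$).

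The main obstacle I anticipate is part (3): one must be careful that $\gs^\perp$ denotes the $\met$-orthogonal complement of $\gs$ in $\frg$, so elements of $\gs^\perp$ pair trivially with all of $\gs$ — this is precisely what makes the final computation $\langle [B,X], Y\rangle = \langle B, [X,Y]\rangle = 0$ go through, using that $[X,Y] \in \gs$ for $X,Y \in \gs$. Once that orthogonality is correctly invoked, part (3) reduces immediately to part (2). I would also double-check that $\frg^\perp \subseteq \gs^\perp$, which is immediate from the definitions since orthogonality to all of $\frg$ implies orthogonality to the subspace $\gs$.
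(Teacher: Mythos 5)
Your parts (1) and (2) are correct and coincide with the paper's own proof: $[\frk,\frb]\subseteq\frb_0$ is the transporter hypothesis, $[\gs,\frb]\subseteq\frb_0$ is Lemma \ref{lem:SRGperp}, and $\frg=\frk+\gs$ gives $[\frg,\frb]\subseteq\frb_0$, so $\frb_0$ is an ideal; effectivity then forces $\frb_0=\zsp$ and hence $[\frg,\frb]=\zsp$.

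The gap is in the last step of (3). Your computation shows, for $B\in\gs^\perp$ and $X,Y\in\gs$, that $\langle [B,X],Y\rangle=\langle B,[X,Y]\rangle=0$, hence only that $[B,X]\in\gs^\perp\cap\gs=\frb$. You then conclude ``$=\zsp$ by part (2)'', but part (2) asserts $\frg^\perp\cap\gs=\zsp$, not $\gs^\perp\cap\gs=\zsp$; under the hypotheses of the proposition $\frb$ is only forced to be \emph{central}, not zero. For instance, let $\frg=\so_3\times\RR Z$ with $Z$ central and $\met$ defined by $\langle K,Z\rangle=\lambda(K)$ for a nonzero $\lambda\in\so_3^*$ and all other products zero: this form is nil-invariant (the algebraic hull of $\ad(\frg)$ contains no nonzero nilpotent operators), $\frg^\perp=\ker\lambda$ contains no nonzero ideal, $\nor_\frk(\frb,\frb_0)=\frk$, and yet $\gs^\perp\cap\gs=\RR Z\neq\zsp$. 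So the step as written fails. The repair lies inside your own computation: since $\gs$ is an ideal, $[X,Y]\in\gs$ for \emph{every} $Y\in\frg$, so the same identity with $Y$ ranging over all of $\frg$ gives $[B,X]\perp\frg$, i.e.\ $[\gs^\perp,\gs]\subseteq\frg^\perp\cap\gs=\frb_0$, which vanishes by part (2). This is exactly the full strength of Lemma \ref{lem:SRGperp} ($[\gs,\gs^\perp]\subseteq\frb_0$), and it is how the paper deduces (3); your closing remark that $\frg^\perp\subseteq\gs^\perp$ then yields $[\frg^\perp,\gs]=\zsp$ as you state.
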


\begin{proof}  By assumption, $[\frk, \frb] \subseteq\frb_0$. 
By  Lemma \ref{lem:SRGperp},
$[\gs, \frb] \subseteq\frb_{0} \subseteq\frg^\perp$, so that 
 $[\frg, \frb] \subseteq\frb_{0}$. In particular, $\frb_{0}$ is an ideal in $\frg$.
Thus (1), (2) follow, and also (3), since $[ \gs^\perp, \gs] \subseteq\frb_{0}$, by  Lemma \ref{lem:SRGperp}. 
% For (4),  $\frg^{\perp} \cap \gs = \frb \cap \frg^{\perp} = \frb_{0} = \zsp$, by (2). 
\end{proof} 

\begin{remark}
It is not difficult to see (compare Lemma \ref{lem:rep_kernel} below) that the centralizer of $\gs$ in $\frg$
is $\frz_{\frg}(\gs) =  \frz_{\frk}(\gs) \times \frz(\gs)$. 
\end{remark} 

\subsection{Metric radical of $\frg$} \label{sec:metric_radical}
\begin{lem} \label{lem:rep_kernel}
Let $W$ be a $\frg$-module. Suppose that  $\frc = \{ Z \in \frg \mid Z \cdot W = \zsp \}$, the centralizer of $W$, is contained in $\frk + \frr$. Then
$\frc  = (\frc  \cap \frk) + (\frc  \cap \frr)$.
\end{lem}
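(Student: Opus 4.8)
First I would set up notation using the Levi decomposition $\frg = (\frk\times\frs)\ltimes\frr$ of the excerpt, so that $\frg = \frk + \gs$ with $\gs = \frs\ltimes\frr$ a characteristic ideal. The module $W$ is a $\frg$-module and $\frc$ is the annihilator of $W$ in $\frg$, hence an ideal of $\frg$, and by hypothesis $\frc\subseteq\frk+\frr$. The goal is to show $\frc$ is the (direct) sum of its intersections with $\frk$ and $\frr$. The natural approach is to use the projection $\pi\colon\frk+\frr\to\frk$ along $\frr$ (which makes sense set-theoretically on the subspace $\frk+\frr$, since $\frk\cap\frr=\zsp$ and $\frk$ normalizes $\frr$) and show $\pi(\frc)\subseteq\frc$ and $(1-\pi)(\frc)\subseteq\frc$; together with $\frk\cap\frr=\zsp$ this gives $\frc = (\frc\cap\frk)\oplus(\frc\cap\frr)$.

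To see that the $\frk$-component of any $X\in\frc$ again annihilates $W$, I would argue as follows. Write $X = X_\frk + X_\frr$ with $X_\frk\in\frk$, $X_\frr\in\frr$. Since $\frc$ is an ideal of $\frg$ and $\frk$ is semisimple, $[\frk,\frc]$ is a $\frk$-submodule of $\frc$; more to the point, I want to recover $X_\frk$ from $X$ by a Lie-algebraic operation internal to $\frk+\frr$. The cleanest route: $\frr$ is a solvable ideal of $\frg$, so $[\frg,\frr]\subseteq\frn$ (the nilradical) and in particular the adjoint action of $\frk$ on $\frr$ is a representation of the semisimple algebra $\frk$; decompose $\frr$ as a $\frk$-module. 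Consider the element $X_\frk = X - X_\frr$. Because $X\in\frc$ and $\frc$ is an ideal, for every $k\in\frk$ we have $[k,X]\in\frc$; writing this out, $[k,X] = [k,X_\frk] + [k,X_\frr]$ with $[k,X_\frk]\in\frk$ and $[k,X_\frr]\in\frr$, so the $\frk$-component of $[k,X]$ is $[k,X_\frk]$, which therefore lies in $\pi(\frc)$. Iterating and using $\frk = [\frk,\frk]$ shows that $[\frk,X_\frk] = \pi([\frk,\frc])\subseteq$ things annihilating $W$... but this only controls $[\frk,X_\frk]$, not $X_\frk$ itself. The honest fix is to use that $X_\frk$ lies in the semisimple $\frk$: the subalgebra of $\frk$ generated by $\{\pi(Y) : Y\in\frc\}$ is an ideal of $\frk$ (being $\frk$-invariant, since $\pi$ is $\frk$-equivariant and $\frc$ is $\frk$-invariant), hence a sum of simple ideals, hence equals $[\,\cdot\,,\,\cdot\,]$ of itself, so it is spanned by brackets $[k,\pi(Y)] = \pi([k,Y])$ with $k\in\frk$, $Y\in\frc$; and each such element annihilates $W$ by the previous display. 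Thus $\pi(\frc)$ annihilates $W$, i.e. $\pi(\frc)\subseteq\frc\cap\frk$, and correspondingly $(1-\pi)(\frc) = \frc\cap(\frk+\frr)\cap\frr$-part also annihilates $W$ since it equals $X - \pi(X)$ with both other terms in $\frc$.

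The main obstacle I anticipate is exactly this point: $\pi$ is only defined on the \emph{vector space} $\frk+\frr$, not a Lie algebra homomorphism on all of $\frg$, and the $\frk$-component $X_\frk$ of an annihilating element need not obviously annihilate $W$ — one really has to exploit that $\frc$ is an \emph{ideal} (not just a subalgebra) together with semisimplicity of $\frk$ (so that $\frk$ is its own derived algebra and its $\frk$-invariant subalgebras are again perfect). Once $\pi(\frc)\subseteq\frc$ is established, the decomposition $\frc = (\frc\cap\frk)\oplus(\frc\cap\frr)$ is immediate from $\frc\subseteq\frk+\frr$, $\frk\cap\frr = \zsp$, and the fact that $X - \pi(X)\in\frc\cap\frr$ for $X\in\frc$. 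I would also double-check that $\frc\cap\frr$ and $\frc\cap\frk$ are themselves ideals of the relevant algebras, which follows formally once the splitting is in place.
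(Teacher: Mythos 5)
Your setup (reduce to showing $\pi(\frc)\subseteq\frc$ for the projection $\pi$ of $\frk+\frr$ onto $\frk$, then split off the $\frr$-part) is reasonable, but the central step is circular. After observing that $[k,Y]\in\frc$ for $Y\in\frc$, $k\in\frk$, and that its $\frk$-component is $[k,\pi(Y)]$, you assert that ``each such element annihilates $W$ by the previous display''. The previous display gives no such thing: knowing that the sum $[k,\pi(Y)]+[k,Y_\frr]$ annihilates $W$ says nothing about either summand separately, and whether the $\frk$-component of an element of $\frc$ annihilates $W$ is exactly the content of the lemma. The perfectness trick only trades ``$\pi(Y)$ annihilates $W$'' for ``$[\frk,\pi(\frc)]$ annihilates $W$'', and the latter is never established (it would follow from $\pi(\frc)\subseteq\frc$, i.e.\ from the conclusion). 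Note also that nothing in your argument uses that $\frk$ is of compact type, that $\frr$ is solvable, or any feature of $W$ beyond its annihilator being an ideal; some such input is indispensable for separating the two components, and formal bracket manipulations plus perfectness of $\frk$ cannot supply it.

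For comparison, the paper argues through the module: on each irreducible subquotient of a composition series of $W$ the radical acts reductively and commutes with the image of the Levi subalgebra, whose centralizer considerations force the $\frk$-component $K$ of $Z=K+X\in\frc$ to kill every composition factor; since $K\in\frk$ is of compact type it acts semisimply on $W$, hence kills $W$, giving $K\in\frc\cap\frk$ and $X=Z-K\in\frc\cap\frr$. If you prefer to stay with your projection picture, the gap can be closed structurally, but it costs two solvability arguments: first, bracketing $Y=\pi(Y)+Y_\frr\in\frc$ with $\frr$ shows that on $V=\frr/(\frc\cap\frr)$ the operator induced by $\pi(Y)$ equals $-\ad(Y_\frr)$ induced on $V$, so the image of the semisimple ideal $\pi(\frc)\subseteq\frk$ in $\gl(V)$ lies in the solvable algebra $\ad(\frr)|_V$ and therefore vanishes, i.e.\ $[\pi(\frc),\frr]\subseteq\frc\cap\frr$; second, this makes $\pi(Y)\mapsto Y_\frr \bmod (\frc\cap\frr)$ a well-defined Lie algebra homomorphism from a semisimple algebra into the solvable algebra $\frr/(\frc\cap\frr)$, hence zero, so $Y_\frr\in\frc\cap\frr$ and $\pi(Y)\in\frc$. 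As written, however, your proposal is missing precisely this content.
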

\begin{proof}  Let $\frg = \frf \ltimes \frr$ (where $\frf \supseteq \frk$ is a semisimple subalgebra, and $\frr$ the maximal solvable ideal of $\frg$) be a Levi decomposition of $\frg$. 
Assume first that $W$ is an irreducible $\frg$-module. Then the action of $\frr$ on $W$ is reductive and commutes with $\frf$. Since the image of $\frf$ in $\gl(W)$ has trivial center, the claim of the lemma follows in this case.
For the general case, consider a H\"older sequence of submodules $W \supseteq W_{1} \supseteq \ldots \supseteq W_{k} = \zsp$ such that 
the $\frg$-module $W_{i}/W_{i+1}$ is irreducible. The above implies that, for any $Z = K + X  \in \frc $, where $K \in \frf$ and $X \in \frr$, $K$ (and $X$) act trivially on  $W_{i}/W_{i+1}$. Since $K \in \frk$ is semisimple on $W$, this implies that $K$ acts trivially on $W$. That is, $K \in  \frc  \cap \frk$ and therefore also $X \in \frc  \cap \frr$. 
\end{proof}
 
 \begin{prop} \label{prop:gsperp} If\ $\frg^\perp$ does not contain a non-trivial ideal of\ $\frg$, 
then 
\begin{enumerate}
\item $ \gs^\perp \subseteq \frn_{\frk}(\gs, \centre{\gs} \cap \gs^{\perp}) + \frn_{\frn}(\gs, \centre{\gs}  \cap \gs^{\perp})$.  
\item $ \frg^\perp \subseteq \frn_{\frk}(\gs, \centre{\gs} \cap \frg^{\perp}) + \frn_{\frn}(\gs,  \centre{\gs} \cap \frg^{\perp})$.  
\end{enumerate}
\end{prop}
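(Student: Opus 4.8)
The plan is to reduce the statement to a statement about the action of $\frg$ on a suitable module and then apply Lemma~\ref{lem:rep_kernel}. Since the two parts are entirely parallel (part (1) uses $\gs^\perp$ and the ideal $\gs^\perp\cap\gs$ of $\gs$, part (2) uses $\frg^\perp$ and the ideal $\frg^\perp\cap\gs$), I would prove them simultaneously. Fix $\frb=\gs^\perp\cap\gs$ and $\frb_0$ either $\gs^\perp\cap\gs$ or $\frg^\perp\cap\gs$; in both cases $\frb_0$ is an ideal of $\gs$ contained in $\centre{\gs}$ by Corollary~\ref{cor:GSGperp}. The key observation is that, by the Transporter Lemma (Lemma~\ref{lem:transporter0}), $\nor_\frg(\frb,\frb_0)=\frg\cap[\frg,\frb]^\perp$ and $\gs^\perp,\frg^\perp\subseteq[\frg,\frb]^\perp$, so both $\gs^\perp$ and $\frg^\perp$ are contained in the transporter subalgebra $\frt:=\nor_\frg(\gs,\centre{\gs}\cap\frb_0)$, once one checks $[\gs^\perp,\gs]$ and $[\frg^\perp,\gs]$ land in $\centre{\gs}\cap\frb_0$. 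For this last inclusion I would use Lemma~\ref{lem:SRGperp}, which gives $[\gs,\gs^\perp]\subseteq\frg^\perp\cap\gs$, together with Corollary~\ref{cor:GSGperp} to place the bracket inside $\centre{\gs}$; for $\frg^\perp$ one uses instead Theorem~\ref{mthm:invariance}(2) ($\met$ is $\gs$-invariant) to get $[\gs,\frg^\perp]\subseteq\frg^\perp$ and then intersects with $\gs$.

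Having identified the transporter $\frt=\nor_\frg(\gs,\centre{\gs}\cap\frb_0)$ as containing $\gs^\perp$ (resp.\ $\frg^\perp$), the remaining task is to show that $\frt\cap\frr$ is actually contained in $\frn$, i.e.\ that no nonzero semisimple part of an element of $\frr$ can transport $\gs$ into the central ideal $\centre{\gs}\cap\frb_0$. Here I would pass to the $\frg$-module $W=\gs/(\centre{\gs}\cap\frb_0)$, on which $\frt$ is precisely the subalgebra acting trivially. Since $\centre{\gs}\cap\frb_0$ is central in $\gs$ and an ideal of $\frg$, $W$ is a genuine $\frg$-module and its annihilator $\frc=\{Z\in\frg\mid Z\cdot W=\zsp\}$ equals $\frt$. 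Now $\frc$ contains $\gs^\perp$, which a priori sits inside $\frk+\frr$: indeed any element of $\frg$ orthogonal to all of $\gs$ has trivial $\frs$-component (since the Killing form of $\frs$ is nondegenerate and $\frs\subseteq\gs$, using Corollary~\ref{cor:ksperp}), so $\gs^\perp\subseteq\frk+\frr$; the same holds for $\frg^\perp\subseteq\gs^\perp$. Thus $\frc\subseteq\frk+\frr$, and Lemma~\ref{lem:rep_kernel} applies to give $\frc=(\frc\cap\frk)+(\frc\cap\frr)$. Since $\frc\cap\frr$ consists of elements of $\frr$ acting trivially (hence semisimply and trivially) on $W$, and any derivation of $\frr$ maps $\frr$ into $\frn$, a standard argument shows that an element of $\frr$ whose adjoint action on the quotient $\gs/(\centre{\gs}\cap\frb_0)$ is trivial must in fact lie in $\frn$: decompose $X\in\frc\cap\frr$ as $X_{\rm ss}+X_{\rm n}$ relative to the Jordan decomposition of $\ad(X)$ on $\frg$, note $X_{\rm ss}$ also annihilates $W$ and acts semisimply, and conclude $X_{\rm ss}=0$ because a nonzero semisimple operator cannot act trivially while its eigenvalue-$0$ space fails to be everything — more precisely one uses that $\ad(X_{\rm ss})$ preserves a complement to $\frn$ in $\frr$ on which it would have to vanish, forcing $X_{\rm ss}$ central in $\frg$ and hence in $\centre{\gs}$, which is absorbed. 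Therefore $\frc\cap\frr\subseteq\frn$, and we obtain $\gs^\perp\subseteq(\frc\cap\frk)+(\frc\cap\frn)=\frn_\frk(\gs,\centre{\gs}\cap\gs^\perp)+\frn_\frn(\gs,\centre{\gs}\cap\gs^\perp)$, and similarly for $\frg^\perp$.

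The main obstacle I anticipate is the final step: carefully justifying that $\frc\cap\frr\subseteq\frn$, i.e.\ eliminating a hypothetical nonzero split-semisimple contribution from the radical. This requires combining the structure of $\centre{\gs}\cap\frb_0$ as a central ideal with the reductive-action decomposition $\frr=\frr^\frh+\frn$ (for $\frh$ acting reductively) recalled before Lemma~\ref{lem:gs_orthog}, and invoking that $\frg^\perp$ (resp.\ $\gs^\perp$) contains no nontrivial ideal of $\frg$ — a nonzero central semisimple element would generate such an ideal inside the transporter, and one must trace through why it would then have to lie in $\frg^\perp$, contradicting the hypothesis. Everything else is bookkeeping with the transporter relations of Lemma~\ref{lem:transrel} and the invariance results of Theorem~\ref{mthm:invariance}.
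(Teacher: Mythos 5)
Your outline for part (1) follows the paper's route (pass to the $\frg$-module $W=\gs/(\gs\cap\gs^\perp)$, apply Lemma \ref{lem:rep_kernel}, then push $\frn_\frr$ into $\frn_\frn$ because the target is central), but the step that makes Lemma \ref{lem:rep_kernel} applicable is not justified. You argue ``$\frc$ contains $\gs^\perp$, and $\gs^\perp\subseteq\frk+\frr$, thus $\frc\subseteq\frk+\frr$'' --- this is a non sequitur: a subalgebra containing $\gs^\perp$ can be strictly larger, and the hypothesis of Lemma \ref{lem:rep_kernel} concerns the \emph{whole} annihilator $\frc$. Moreover your justification of $\gs^\perp\subseteq\frk+\frr$ via nondegeneracy of the Killing form of $\frs$ does not work: the invariant form $\met_\frs$ may be degenerate (even zero on simple factors), and $\frr$ need not be orthogonal to $\frs$ (metric cotangent algebras, Example \ref{ex:sl2sl2}), so orthogonality of $Z$ to $\gs$ does not kill its $\frs$-component. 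The paper gets $\frc\subseteq\frk+\frr$ differently: by effectivity and Corollary \ref{cor:GSGperp}, $[\frc,\gs]\subseteq\gs\cap\gs^\perp\subseteq\frz(\gs)\subseteq\frr$, and since $[\frk,\frs]=\zsp$ this forces the $\frs$-component of every element of $\frc$ to centralize $\frs$, hence to vanish. (Incidentally, the last step you flag as the main obstacle is easier than you suggest: if $X\in\frr$ and $[X,\gs]\subseteq\frz(\gs)$, then $\ad(X)^2$ vanishes on $\gs$ because $\frr\subseteq\gs$ centralizes $\frz(\gs)$, so $\ad_\frr(X)$ is nilpotent and $X\in\frn$.)

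The more serious gap is part (2). Your ``simultaneous'' treatment requires $\frz(\gs)\cap\frb_0=\frg^\perp\cap\gs$ to be an ideal of $\frg$, so that $W=\gs/(\frz(\gs)\cap\frg^\perp)$ is a $\frg$-module. That is false in general: by Theorem \ref{mthm:invariance}(2) this space is only an ideal of $\gs$, since $\frk$ need not preserve $\frg^\perp$; Example \ref{ex:so3so4} exhibits exactly this, with $\frg^\perp\cap\gs=\so_3^\triangle$ not $\frk$-invariant (it becomes an ideal of $\frg$ only under the extra hypothesis $\ell\leq2$, via Proposition \ref{prop:transk} and Corollary \ref{cor:isotrop_ideals_l2_0}, which come later and are not available here). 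So Lemma \ref{lem:rep_kernel} cannot be invoked for that quotient, and (2) does not follow from (1) by relabeling $\frb_0$. The paper proves (2) by a separate argument that your proposal does not contain: given $Z=K+X\in\frn_\frg(\gs,\gs\cap\frg^\perp)$ with $K\in\frk$, $X\in\frr$ (which is where part (1) enters), one uses that $\ad(K)$ acts semisimply on $\gs$ to write $\gs=W_1+(\gs\cap\gs^\perp)$ with $[K,W_1]=\zsp$, and then, with Lemma \ref{lem:SRGperp}, shows that $[K,\gs]$ and $[X,\gs]$ land in $\frg^\perp$ \emph{individually}. Supplying this extra argument (or some substitute for it) is exactly what part (2) needs.
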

\begin{proof} By Lemma \ref{lem:SRGperp}, $[ \gs^\perp, \gs ] \subseteq \frb_{0} = \gs \cap \frg^{\perp} \subseteq \gs \cap \gs^{\perp} = \frb$. Since $\frb$ is an ideal of $\frg$, $W = \gs/\frb$ 
is a $\frg$-module. Now $\gs^\perp$ is contained in  $\frc  = \frn_{\frg}(\gs, \gs \cap \gs^{\perp})$, 
which is the centralizer of $W$.  In view of our assumption on ideals in $\frg^\perp$, observe that  $[\frc , \gs] \subseteq  \frb \subseteq \frz(\gs) \subseteq \frr$ by Corollary \ref{cor:GSGperp}.  Now $[\frc , \gs] \subseteq \frr$ implies that $\frc $ is contained in  
$\frk + \frr$.
Therefore, Lemma \ref{lem:rep_kernel} applies, showing  $ \gs^\perp \subseteq \frn_{\frk}(\gs, \gs \cap \gs^{\perp}) + \frn_{\frr}(\gs,  \gs \cap \gs^{\perp})$. 
 Since $[\frr, \centre{\gs}] = \zsp$, it follows that $\frn_{\frr}(\gs,  \centre{\gs}) \subseteq \frn_{\frn}(\gs,  \centre{\gs})$. Hence, (1) holds. 
 
 To prove (2), suppose $Z = K + X \in \frn(\gs, \gs \cap \frg^{\perp})$, 
where $K \in \frk$, $X \in \frr$.  
By (1), $K \in \frc  = \frn(\gs, \gs \cap \gs^{\perp})$. 
Since $K$ acts as a semisimple derivation on $\gs$, we can decompose 
$\gs = W_{1} +  (\gs \cap \gs^{\perp})$, where $[K, W_{1}] = \zsp$.  Now, for $w \in \gs$, write 
$w= w_{1} + v$, where $w_{1} \in W_{1}$, $v \in \gs \cap \frg^{\perp}$. 
Note that  $0 = \langle [Z, v] , Y \rangle =   \langle [K, v] , Y \rangle +  \langle [X, v] , Y \rangle $  for all $Y \in \frg$.
By Lemma \ref{lem:SRGperp},  $[X, v]  \in \frg^{\perp}$. This 
implies $[K, w] = [K, v] \in \frg^{\perp}$. It follows also that $[X, w] \in \frg^{\perp}$. 
\end{proof}

\begin{lem} \label{lem:trans_ideal}
Let $\frj = \frn_{\frn}(\gs, \centre \gs  \cap \frg^{\perp})$. Then $\frj$ is an ideal in $\frg$. 
\end{lem}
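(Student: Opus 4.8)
The plan is to show that $\frj$ is closed under bracketing with each of the three pieces $\frk$, $\frs$, $\frr$ of the Levi decomposition $\frg = (\frk\times\frs)\ltimes\frr$, using that $\frj$ is an intersection of a transporter in the nilradical $\frn$ with the condition that brackets land in the \emph{ideal} $\centre{\gs}\cap\frg^\perp$ of $\gs$. First I would record the basic observations: $\frn$ is a characteristic ideal of $\frg$, so $[\frg,\frj]\subseteq\frn$ automatically, and it remains only to check that $[\frg,\frj]$ still transports $\gs$ into $\frc := \centre{\gs}\cap\frg^\perp$. Note $\frc$ is an ideal of $\gs$: it is the intersection of the $\gs$-ideal $\frg^\perp\cap\gs$ (this uses Theorem~\ref{mthm:invariance}(2), that $\met$ is $\gs$-invariant) with $\centre{\gs}$. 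Actually the cleaner route is to first prove $[\frg,\frc]\subseteq\frc$, i.e.\ that $\frc$ is an ideal of all of $\frg$, not just of $\gs$: since $\frc\subseteq\frg^\perp$ contains no nontrivial ideal of $\frg$ hypothesis is \emph{not} assumed here, I should instead argue directly that $\frc$ is $\frg$-invariant — $\centre{\gs}$ is characteristic in $\gs\trianglelefteq\frg$ hence $\frg$-invariant, while $\frg^\perp\cap\gs$ need not be $\frg$-invariant in general. So I would work with $\frc$ only as a $\gs$-ideal and be careful about which brackets are allowed.

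The heart of the argument is the Jacobi identity bookkeeping. Take $X\in\frj$, $Y\in\frg$, and $W\in\gs$; I must show $[[Y,X],W]\in\frc$. By Jacobi, $[[Y,X],W] = [Y,[X,W]] - [X,[Y,W]]$. The second term: $[Y,W]\in\gs$ since $\gs$ is an ideal, so $[X,[Y,W]]\in[\frj,\gs]\subseteq\frc$ by definition of $\frj$. The first term: $[X,W]\in\frc\subseteq\centre{\gs}$, so $[Y,[X,W]]$ — here I need $[\frg,\centre{\gs}\cap\frg^\perp]\subseteq\centre{\gs}\cap\frg^\perp$. For the $\centre{\gs}$ part this is fine since $\centre{\gs}$ is $\frg$-invariant as noted. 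For the $\frg^\perp$ part: $[X,W]\in\centre{\gs}$ and we also have $[X,W]\in\frg^\perp$; I want $[Y,[X,W]]\in\frg^\perp$. Since $[X,W]\in\centre{\gs}$, for any $Z\in\gs$ we get $\langle [Y,[X,W]], Z\rangle = -\langle [X,W],[Y,Z]\rangle = 0$ because $[Y,Z]\in\gs$ and $[X,W]\in\gs^\perp$ — wait, I only know $[X,W]\in\frg^\perp\subseteq\gs^\perp$, which is exactly what's needed, and this uses $\gs$-invariance of $\met$ (Theorem~\ref{mthm:invariance}(2)). For $Z\in\frk$: I need $\langle[Y,[X,W]],Z\rangle = 0$; since $[X,W]\in\centre{\gs}$, Lemma~\ref{lem:gk_orthog} or a direct computation via $[\frg^\perp\cap\gs,\gs]=\zsp$-type relations should handle the pairing against $\frk$. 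Putting both terms together gives $[[Y,X],W]\in\frc$, hence $[Y,X]$ transports $\gs$ into $\frc$; combined with $[Y,X]\in\frn$, this yields $[Y,X]\in\frj$.

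I expect the main obstacle to be the delicate point in the first Jacobi term: verifying that $[Y,[X,W]]$ remains in $\frg^\perp$ and not merely in $\gs^\perp$ or $\centre{\gs}$. The leverage is that $[X,W]$ lies in $\centre{\gs}\cap\frg^\perp$ simultaneously; one uses $\centre{\gs}$-membership to kill pairings with $\gs$ via $\gs$-invariance, and one must separately control pairings with $\frk$ — this is where I would invoke the orthogonality relations from Corollary~\ref{cor:ksperp} or Lemma~\ref{lem:gk_orthog}, together with the fact (Remark after Proposition~\ref{prop:transporter_k_gs}, or Lemma~\ref{lem:rep_kernel}) that $\frz_\frg(\gs) = \frz_\frk(\gs)\times\frz(\gs)$, to see that elements of $\centre{\gs}\cap\frg^\perp$ pair trivially against all of $\frg$. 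If that clean statement is available, the proof collapses to: $\frc$ is a $\frg$-submodule contained in $\frg^\perp$, $\frj$ is the full preimage under the transporter condition, and stability under Jacobi is immediate. I would present it in that streamlined order.
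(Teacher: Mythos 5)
There is a genuine gap, and it sits exactly where the paper has to use nil-invariance. Your Jacobi bookkeeping disposes of the second term $[X,[Y,W]]$ and of the $\frz(\gs)$-containment correctly (this part coincides with the paper's argument), but for the first term you need $[Y,[X,W]]\in\frg^\perp$, and after handling the pairing against $\gs$ (which, incidentally, uses part~(1) of Theorem \ref{mthm:invariance} --- invariance of $\met_{\gs}$ under all of $\frg$ --- since your $Y$ is arbitrary, not part~(2)) you leave the pairing against $\frk$ to ``Lemma \ref{lem:gk_orthog} or a direct computation''. Neither Lemma \ref{lem:gk_orthog} nor Corollary \ref{cor:ksperp} applies here, and the ``clean statement'' you hope for --- that $\frc=\frz(\gs)\cap\frg^\perp$ is a $\frg$-submodule contained in $\frg^\perp$, i.e.\ $[\frg,\frc]\subseteq\frg^\perp$ --- is false in general: in Example \ref{ex:so3so4} one has $\frc=\frz(\gs)\cap\frg^\perp=\so_3^\triangle$, while $[\frk,\so_3^\triangle]=\so_3^{\scl}$ is not orthogonal to $\frk$ (and that example even has no nontrivial ideal inside $\frg^\perp$; in any case no effectivity hypothesis is available in this lemma). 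So no argument that only remembers $[X,W]\in\frc$ can close the gap; you must use that the element is a bracket $[X,W]$ with $X\in\frj\subseteq\frn$.

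The missing ingredient is the metric computation the paper runs: since $X\in\frn$, $\ad(X)$ is skew (nil-invariance, Lemma \ref{lem:sinvariant}), and since $W\in\gs$, $\ad(W)$ is skew (Theorem \ref{mthm:invariance}(2)); hence for all $K,Y'\in\frg$ and $W\in\gs$,
\[
\langle [[K,X],W],\, Y'\rangle=\langle [K,X],[W,Y']\rangle=\langle K,[X,[W,Y']]\rangle=0,
\]
because $[W,Y']\in\gs$ and therefore $[X,[W,Y']]\in\frg^\perp$ by the definition of $\frj$. This gives directly $[[K,X],\gs]\subseteq\frg^\perp$, which is what your first Jacobi term was supposed to deliver; combined with your (correct) observations that $[[K,X],\gs]\subseteq\frz(\gs)$ and $[K,X]\in\frn$, the lemma follows. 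Note this is the only point where $\frj\subseteq\frn$ enters metrically, and it never appears in your proposal.
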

\begin{proof} Let $N \in \frj$, and $K, Y \in \frg$, $v \in \gs$. Then $\langle [[K, N], v], Y \rangle = \langle [K, N], [v, Y] \rangle =  \langle K, [N,  [v, Y]] \rangle = 0$, since $v' =  [v, Y] \in \gs$  and 
therefore, $[N, v'] \in \frg^{\perp}$. This shows $[ [K, N], \gs] \subseteq \frg^{\perp}$. 

Similarly, $ [[K, N], v] = [[K,v], N] + [[v,N], K]$, where $v' = [K,v] \in \gs$ and $[v', N] \in \centre \gs$, 
as well as $[v,N] \in \centre \gs$ and therefore,   $[[v,N], K] \in \centre \gs$. It follows $[ [K, N], \gs] \subseteq \centre \gs$. We conclude that $[K,N] \in \frj$. Hence, $\frj$ is an ideal of $\frg$.
\end{proof}

These considerations yield the following important property of
$\frg^\perp$: 

\begin{thm}  \label{mthm:gperp} 
Suppose that $\frg^\perp$ does not contain a non-trivial ideal of\ $\frg$. Then 
$$ \frg^\perp \subseteq \; \frn_{\frk}\left(\gs, \centre{\gs} \cap \frg^{\perp}\right) + \frz(\gs) . $$ 
\end{thm}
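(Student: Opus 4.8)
The plan is to combine Proposition~\ref{prop:gsperp}(2) with Lemma~\ref{lem:trans_ideal}. By Proposition~\ref{prop:gsperp}(2), we have the inclusion
\[
\frg^\perp \;\subseteq\; \frn_{\frk}\!\left(\gs,\centre{\gs}\cap\frg^{\perp}\right) \;+\; \frn_{\frn}\!\left(\gs,\centre{\gs}\cap\frg^{\perp}\right),
\]
so it suffices to show that the nilpotent part $\frj=\frn_{\frn}(\gs,\centre{\gs}\cap\frg^{\perp})$ is actually contained in $\frz(\gs)$. The key observation is that $\frj$ is itself an ideal of $\frg$ by Lemma~\ref{lem:trans_ideal}. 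I would then want to show that $[\frj,\gs]$ is an ideal of $\frg$ contained in $\frg^\perp$, which by the standing hypothesis forces $[\frj,\gs]=\zsp$, i.e. $\frj\subseteq\frz(\gs)$, completing the proof.

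To see that $[\frj,\gs]$ is an ideal: $\frj\subseteq\gs$ (it lies in $\frn\subseteq\frr\subseteq\gs$), and $\gs$ is an ideal of $\frg$, so $[\frj,\gs]\subseteq\gs$. For normality under $\frg$, take $K\in\frg$, $N\in\frj$, $v\in\gs$; the Jacobi identity gives $[K,[N,v]]=[[K,N],v]+[N,[K,v]]$. Since $\frj$ is an ideal, $[K,N]\in\frj$, so $[[K,N],v]\in[\frj,\gs]$; and since $\gs$ is an ideal, $[K,v]\in\gs$, so $[N,[K,v]]\in[\frj,\gs]$ as well. Hence $[K,[N,v]]\in[\frj,\gs]$, proving $[\frj,\gs]$ is an ideal of $\frg$. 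That $[\frj,\gs]\subseteq\frg^\perp$ is immediate from the definition of $\frj$: for $N\in\frj$ and $v\in\gs$ we have $[N,v]\in\centre{\gs}\cap\frg^{\perp}\subseteq\frg^\perp$. So $[\frj,\gs]$ is an ideal of $\frg$ inside $\frg^\perp$, hence trivial, giving $\frj\subseteq\frz_{\frg}(\gs)\cap\frn\subseteq\frz(\gs)$ (using that $\gs$ is an ideal so its centralizer intersected with $\gs$ — and $\frj\subseteq\gs$ — is the center of $\gs$).

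I would then also record that the $\frk$-part behaves well: one should check $\frn_{\frk}(\gs,\centre{\gs}\cap\frg^{\perp})$ is already the relevant transporter term in the statement, which is just the first summand verbatim, so no further manipulation is needed there. Assembling: $\frg^\perp\subseteq\frn_{\frk}(\gs,\centre{\gs}\cap\frg^{\perp})+\frj\subseteq\frn_{\frk}(\gs,\centre{\gs}\cap\frg^{\perp})+\frz(\gs)$, which is exactly the claim.

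The only real subtlety — and the step I'd be most careful about — is the reduction in Proposition~\ref{prop:gsperp}(2) that we are invoking as a black box; it already uses Corollary~\ref{cor:GSGperp} (that $\frg^\perp\cap\gs\subseteq\centre{\gs}$) and Lemma~\ref{lem:rep_kernel} to split the centralizer of the $\frg$-module $\gs/(\gs\cap\gs^\perp)$ into its $\frk$- and $\frr$-parts. Given that, the present theorem is essentially a packaging statement: the genuinely new input at this stage is only Lemma~\ref{lem:trans_ideal}'s ideal property plus the elementary Jacobi-identity argument above showing $[\frj,\gs]$ is an ideal in $\frg^\perp$ and hence zero. So I do not expect a serious obstacle here beyond making sure the inclusions $\frj\subseteq\frn\subseteq\frr$ and $\frr\subseteq\gs$ are used consistently, and that $\centre{\gs}\cap\frg^\perp$ is indeed preserved appropriately so that $\frj$ is well-defined as stated.
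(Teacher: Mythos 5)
Your argument is correct and follows the paper's own proof essentially verbatim: invoke Proposition \ref{prop:gsperp}(2) for the inclusion $\frg^\perp\subseteq\frn_{\frk}(\gs,\centre{\gs}\cap\frg^{\perp})+\frj$, use Lemma \ref{lem:trans_ideal} to see $\frj$ is an ideal, note $[\frj,\gs]$ is then an ideal of $\frg$ contained in $\frg^\perp$ (hence zero by effectivity), and conclude $\frj\subseteq\frz(\gs)$. The only difference is that you spell out via the Jacobi identity why $[\frj,\gs]$ is an ideal, which the paper leaves implicit.
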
 
\begin{proof} Consider the ideal $\frj$, as defined in Lemma \ref{lem:trans_ideal}. By (2) of Proposition \ref{prop:gsperp}, we have $\frg^\perp \subseteq \, \frn_{\frk} \left(\gs, \centre{\gs} \cap \frg^{\perp}\right)  + \frj$.  Since $\frj$ is an ideal in $\frg$, so is $[\frj, \gs]$. Since $[\frj, \gs] \subseteq \frg^{\perp}$,   the assumption on ideals in $\frg^{\perp}$ implies  that $[\frj, \gs] = \zsp$. It follows that $\frj$ is contained in $\centre \gs$. 
\end{proof}

%\wolf{BEGIN: Subsection 2.2 from small paper}
%
\subsubsection[Invariance by $\frg^\perp$]{Invariance by ${\frg^\perp}$}

We shall be interested in nil-invariant bi\-linear forms $\met$ on
$\frg$ induced by pseudo-Riemannian metrics on homogeneous spaces.
In this case, $\met$ is invariant by the stabilizer subalgebra
$\frg^\perp$. We can then further sharpen the
statement of Corollary \ref{mcor:invariance_radical}.

\begin{prop} \label{prop:gperp_inv}
Let $\frg$ and $\met$ be as in Corollary \ref{mcor:invariance_radical}.
If in addition $\met$ is $\frg^{\perp}$-invariant,
%then there exists an ideal $\frj$ of $\frg$ such that
%$\frj \subseteq \gs \cap \frg^{\perp}$ and 
%$\frj$ contains $[\frg^{\perp}, \gs]$.
then
\[
[\frg^{\perp}, \gs] = \zsp.
\]
\end{prop}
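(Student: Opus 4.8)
The goal is to upgrade the conclusion $[\frg^\perp,\gs]\subseteq\centre\gs\cap\frg^\perp$ of Corollary~\ref{mcor:invariance_radical} to the sharp statement $[\frg^\perp,\gs]=\zsp$ under the extra hypothesis that $\met$ is $\frg^\perp$-invariant. The plan is to localize the problem at the two pieces of $\frg^\perp$ identified by Theorem~\ref{mthm:gperp}, namely the part inside $\frn_\frk(\gs,\centre\gs\cap\frg^\perp)$ and the central part $\frz(\gs)$, and to rule out a nonzero bracket with $\gs$ by playing the new $\frg^\perp$-invariance against the structure results already established.

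First I would record what is available. By Theorem~\ref{mthm:gperp}, write any $Z\in\frg^\perp$ as $Z=K+C$ with $K\in\frn_\frk(\gs,\centre\gs\cap\frg^\perp)$ and $C\in\frz(\gs)$; since $C$ centralizes $\gs$, we have $[Z,\gs]=[K,\gs]$, and by construction $[K,\gs]\subseteq\centre\gs\cap\frg^\perp$. So it suffices to show $[K,\gs]=\zsp$ for such $K$, i.e.\ that the (semisimple, compact-type) derivation induced by $K$ on $\gs$ kills $\gs$. Write $\frb_0=\frg^\perp\cap\gs$ and $\frb=\gs^\perp\cap\gs$; by Corollary~\ref{cor:GSGperp} both lie in $\centre\gs$, and by Lemma~\ref{lem:SRGperp} we have $[\gs,\gs^\perp]\subseteq\frb_0$. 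The element $K$ acts on $\gs$ as a semisimple derivation with image in $\frb_0\subseteq\centre\gs$; since a semisimple operator is determined by its image and kernel, $\gs=\ker(\ad K|_{\gs})\oplus[K,\gs]$ with $[K,\gs]\subseteq\frb_0$.

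The key step is to use $\frg^\perp$-invariance to see that $K$ acts skew-symmetrically on $\gs$ with respect to $\met_\gs$. Indeed $K\in\frg^\perp$, so by hypothesis $\ad(K)$ is skew for $\met$, hence $\ad(K)|_\gs$ is skew for the (non-degenerate after passing to $\gs/\frb$) form induced by $\met_\gs$. Now $\ad(K)|_\gs$ is a \emph{semisimple} skew operator whose image lies in the \emph{totally isotropic} ideal $\frb_0$: for $X\in\gs$, $\langle KX,KX\rangle=0$ because $KX\in\frb_0\subseteq\frb=\gs^\perp\cap\gs$, so $K$ is a skew operator that is also "isotropic" on its image. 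Combining skew-symmetry with $[K,\gs]\subseteq\frb\subseteq(\gs)^\perp$ forces, for all $X,Y\in\gs$, $\langle KX,Y\rangle=-\langle X,KY\rangle$ and $\langle KX,Y\rangle=0$ (as $KX\in\gs^\perp$), hence $\langle X,KY\rangle=0$ for all $X$, i.e.\ $KY\in\gs^\perp\cap\gs=\frb$; but we already knew that. To actually conclude $KY=0$ I would diagonalize: since $\ad(K)|_\gs$ is semisimple and skew, its nonzero eigenvalues come in pairs $\pm\lambda$ with paired eigenspaces; a nonzero eigenvalue would give a hyperbolic plane in $\gs$ on which $\met_\gs$ is non-degenerate, contradicting that the whole image $[K,\gs]$ is totally isotropic (contained in $\frb$). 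Over $\RR$ one must also treat complex-conjugate eigenvalue pairs $\pm i\mu$, but these likewise produce a non-degenerate $2$-plane in the image, again contradicting isotropy of $\frb_0$. Hence $\ad(K)|_\gs=0$, so $[\frg^\perp,\gs]=\zsp$.

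The main obstacle I anticipate is the eigenvalue bookkeeping in the real case: carefully arguing that a semisimple skew-symmetric operator whose entire image is totally isotropic must vanish, handling both real eigenvalue pairs $\pm\lambda$ and complex pairs $\pm i\mu$, and making sure the pairing of generalized eigenspaces under a degenerate form behaves as claimed (one works with the non-degenerate form on $\gs/(\gs\cap\gs^\perp)$ and notes $K$ descends to it as zero on its image). Once that linear-algebra lemma is in place, the rest is assembling the decomposition $Z=K+C$ from Theorem~\ref{mthm:gperp}, invoking $\frg^\perp$-invariance for the skewness of $\ad(K)$, and invoking Lemma~\ref{lem:SRGperp} and Corollary~\ref{cor:GSGperp} for the isotropy of $[K,\gs]$.
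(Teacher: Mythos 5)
Your reduction to the single element $K$ is fine as far as it goes (one slip: $K$ itself need not lie in $\frg^{\perp}$, only $Z=K+C$ does; skewness of $\ad(K)$ has to be recovered by writing $\ad(K)=\ad(Z)-\ad(C)$ and using that $C\in\frz(\gs)\subseteq\gs\subseteq\inv(\frg,\met)$). The genuine gap is in the final linear-algebra step. You claim that a semisimple operator on $\gs$ which is skew for $\met_{\gs}$ and has image contained in the totally isotropic subspace $\frb_0=\frg^{\perp}\cap\gs$ must vanish, by pairing $\pm\lambda$ (or $\pm i\mu$) eigenspaces into hyperbolic planes. That pairing argument needs the form to be \emph{non-degenerate} on the relevant subspaces, and here it is not: the image $[K,\gs]$ lies inside $\frg^{\perp}$, hence inside the radical $\frb=\gs^{\perp}\cap\gs$ of $\met_{\gs}$. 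All eigenvectors for nonzero eigenvalues lie in that radical and pair to zero with everything, so no non-degenerate $2$-plane is produced and no contradiction arises. Indeed skew-symmetry of $\ad(K)$ is vacuous in this situation: $\langle [K,X],Y\rangle=0$ holds automatically for all $Y\in\frg$ because $[K,X]\in\frg^{\perp}$, so the identity $\langle [K,X],Y\rangle=-\langle X,[K,Y]\rangle$ only returns what you already knew. A bare counterexample to your lemma: on a $2$-dimensional space with the zero form, a rotation is semisimple, skew, has totally isotropic image, and is nonzero. Passing to the non-degenerate form on $\gs/\frb$ does not help either, since, as you note yourself, $K$ descends to the zero operator there.

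The missing ingredient is a second use of the effectivity hypothesis, which your argument never invokes at the final stage (it enters only through the quoted structure results). The paper closes the proof by \emph{constructing an ideal}: $\frg^{\perp}$-invariance gives $[[\frk,\frg^{\perp}],\gs]\subseteq\frg^{\perp}\cap\gs$; letting $\frk_0$ be the projection of $\frg^{\perp}$ to $\frk$, the subalgebra $\frq$ of $\frk$ generated by $\frm=\frk_0+[\frk,\frk_0]$ is an ideal of $\frk$ (this uses that $\frk$ is of compact type), and $[\frq,\gs]=[\frm,\gs]\subseteq\frg^{\perp}\cap\frz(\gs)$ is then an ideal of $\frg$ contained in $\frg^{\perp}$ and containing $[\frg^{\perp},\gs]$; effectivity kills it. Some argument of this kind, producing an ideal of $\frg$ inside $\frg^{\perp}$, is unavoidable: orthogonality considerations alone cannot force $[\frg^{\perp},\gs]=\zsp$, because every inner product in sight already vanishes.
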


%\begin{cor}  Suppose that $\met$ is invariant by $\frg^{\perp}$ and also is effective. Then $$ [\frg^{\perp}, \gs] = \zsp .$$ 
%\end{cor} 

The proof is based on the following
immediate observations:

\begin{lem}\label{lem0}
Suppose $\met$ is $\frg^{\perp}$-invariant.
Then $ [ [\frk, \frg^{\perp}], \gs]  \subseteq \frg^{\perp} \cap \gs$. 
\end{lem}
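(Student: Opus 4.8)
\textbf{Plan for the proof of Lemma \ref{lem0}.} The statement to prove is that, assuming $\met$ is $\frg^\perp$-invariant, one has $[[\frk,\frg^\perp],\gs]\subseteq\frg^\perp\cap\gs$. The strategy is a direct bracket computation using the two invariance properties now available: by Theorem \ref{mthm:invariance}(2), $\met$ is $\gs$-invariant, and by hypothesis $\met$ is $\frg^\perp$-invariant. So I would take $X\in\frg^\perp$, $K\in\frk$, $Y\in\gs$, and aim to show $[[K,X],Y]\perp\frg$, equivalently $\langle[[K,X],Y],W\rangle=0$ for every $W\in\frg$.

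\textbf{Main computation.} First note $[[K,X],Y]\in\gs$, since $\gs$ is an ideal of $\frg$ and $Y\in\gs$; hence it suffices to show it is perpendicular to all of $\frg$ (equivalently to $\gs$, but testing against all of $\frg$ is what we want). Using the Jacobi identity, write $[[K,X],Y]=[K,[X,Y]]-[X,[K,Y]]$. Now pair with $W\in\frg$. For the first term, since $[X,Y]\in[\frg^\perp,\gs]$ and (by Corollary \ref{mcor:invariance_radical}, i.e.\ Theorem \ref{mthm:gperp} together with Corollary \ref{cor:GSGperp}) $[\frg^\perp,\gs]\subseteq\centre{\gs}\cap\frg^\perp\subseteq\frg^\perp$, the element $[X,Y]$ lies in $\frg^\perp$; then $\langle[K,[X,Y]],W\rangle=-\langle[X,Y],[K,W]\rangle$ would use $K$-invariance, which we do not have. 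So instead I would exploit $\gs$-invariance: $[X,Y]\in\gs$, so $\langle[K,[X,Y]],W\rangle$ — here $K\notin\gs$, so this is not directly skew either. The cleaner route is to keep the bracket $[K,X]$ together: since $[K,X]\in[\frk,\frg^\perp]$, and we want to hit it with $\ad(Y)$ for $Y\in\gs\subseteq\inv(\frg,\met)$ (by Theorem \ref{mthm:invariance}(2)), we get
\[
\langle[[K,X],Y],W\rangle=\langle[Y,[K,X]],W\rangle=-\langle[K,X],[Y,W]\rangle.
\]
Wait — more carefully: $\langle[Y,[K,X]],W\rangle=-\langle[K,X],[Y,W]\rangle$ uses that $\ad(Y)$ is skew, valid since $Y\in\gs$. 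Now $[K,X]=-[X,K]$ and $X\in\frg^\perp$, so $\langle[K,X],V\rangle=-\langle[X,K],V\rangle=\langle[X,V],K\rangle$ would use $X$-invariance, which we have by hypothesis; with $V=[Y,W]$ this gives $\langle[K,X],[Y,W]\rangle=\langle[X,[Y,W]],K\rangle$. Finally $[Y,W]\in\gs$ (as $Y\in\gs$ ideal), so $[X,[Y,W]]\in[\frg^\perp,\gs]\subseteq\frz(\gs)\cap\frg^\perp$ by Corollary \ref{mcor:invariance_radical}; in particular $[X,[Y,W]]\in\frg^\perp$, hence $\langle[X,[Y,W]],K\rangle=0$. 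Chaining these equalities yields $\langle[[K,X],Y],W\rangle=0$ for all $W\in\frg$, so $[[K,X],Y]\in\frg^\perp$; since it also lies in $\gs$, it lies in $\frg^\perp\cap\gs$, as required.

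\textbf{Expected obstacle.} The only delicate point is bookkeeping of which invariance ($\gs$-invariance from Theorem \ref{mthm:invariance}(2) versus $\frg^\perp$-invariance from the hypothesis) is legitimately applied at each step, and making sure the intermediate elements genuinely lie in the subalgebras whose invariance is invoked — in particular that $[X,[Y,W]]\in\frg^\perp$, which relies on the already-established inclusion $[\frg^\perp,\gs]\subseteq\frg^\perp$ from Corollary \ref{mcor:invariance_radical}. No further input should be needed, and the computation is short once the correct order of skew-symmetry moves is fixed.
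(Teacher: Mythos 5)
Your computation is correct. The paper records Lemma \ref{lem0} only as an ``immediate observation'' and gives no written proof, and your chain of skew-symmetry moves is precisely the verification being left to the reader: for $K\in\frk$, $X\in\frg^\perp$, $Y\in\gs$, $W\in\frg$ one has $\langle[[K,X],Y],W\rangle=\pm\langle[K,X],[Y,W]\rangle=\pm\langle K,[X,[Y,W]]\rangle=0$, using first that $\ad(Y)$ is skew (Theorem \ref{mthm:invariance}, part (2)), then that $\ad(X)$ is skew (the hypothesis of the lemma), and finally that $[X,[Y,W]]\in\frg^\perp$; since also $[[K,X],Y]\in\gs$ because $\gs$ is an ideal, the claim follows.

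Two small points. First, there is a sign slip in your first equality ($[[K,X],Y]=-[Y,[K,X]]$, not $+[Y,[K,X]]$), but it is harmless because the end result is $0$. Second, you justify $[X,[Y,W]]\in\frg^\perp$ by appealing to Corollary \ref{mcor:invariance_radical}, which carries the effectivity hypothesis that $\frg^\perp$ contains no non-trivial ideal of $\frg$; that hypothesis is not part of the statement of Lemma \ref{lem0} (it is in force in Proposition \ref{prop:gperp_inv}, where the lemma is applied, so no harm is done there). You do not actually need it: for $X\in\frg^\perp$, $V\in\gs$ and any $Z\in\frg$, Theorem \ref{mthm:invariance}, part (2), gives $\langle[X,V],Z\rangle=-\langle[V,X],Z\rangle=\langle X,[V,Z]\rangle=0$, so $[\frg^\perp,\gs]\subseteq\frg^\perp$ holds without any effectivity assumption (and even without $\frg^\perp$-invariance), which keeps the lemma self-contained as stated.
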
 

and

\begin{lem} \label{lemA}
Let $\frh$ be any Lie algebra and $V$ a module for $\frh$. Suppose that
the subalgebra $\frq$ of $\frh$ is generated by the subspace $\frm$  of $\frh$. 
Then $\frq \cdot V = \frm \cdot V$. 
\end{lem}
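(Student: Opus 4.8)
The plan is to produce, by hand, a subalgebra of $\frh$ that stabilizes the subspace $W=\frm\cdot V$ and contains $\frm$; since $\frq$ is by hypothesis \emph{the} subalgebra generated by $\frm$, it will be contained in this subalgebra, and that gives the nontrivial inclusion $\frq\cdot V\subseteq\frm\cdot V$. The reverse inclusion $\frm\cdot V\subseteq\frq\cdot V$ is immediate from $\frm\subseteq\frq$, so the whole content is this one inclusion.

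Concretely, I would set $W=\frm\cdot V$ and define
\[
S=\{X\in\frh\mid X\cdot V\subseteq W\}.
\]
This is visibly a linear subspace of $\frh$, and it contains $\frm$ because $\frm\cdot V=W$. The one observation that makes everything work is that membership in $S$ also forces stability of $W$ itself: for $X\in S$ one has $X\cdot W\subseteq X\cdot V\subseteq W$, simply because $W\subseteq V$. Granting this, closure of $S$ under the bracket is a one-line module computation: for $X,Y\in S$ and $v\in V$,
\[
[X,Y]\cdot v=X\cdot(Y\cdot v)-Y\cdot(X\cdot v),
\]
and since $Y\cdot v\in W$ and $X\cdot v\in W$, both terms on the right lie in $W$ by the stability just noted; hence $[X,Y]\in S$. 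Thus $S$ is a subalgebra of $\frh$ with $\frm\subseteq S$, so $\frq\subseteq S$, i.e.\ $\frq\cdot V\subseteq W=\frm\cdot V$, and together with the trivial inclusion this proves the lemma.

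There is essentially no obstacle to grind through here — no induction on bracket length of elements of $\frq$ is needed, although one could alternatively argue that way using the left-normed description of the generated subalgebra and the Jacobi identity, at the cost of a more cumbersome induction. The only thing one must get right is the choice of $S$: one should take the operators carrying all of $V$ into $W$, not the a priori non-closed set of operators merely stabilizing $W$ from within, and then exploit $W\subseteq V$ to upgrade ``$X\cdot V\subseteq W$'' to ``$X\cdot W\subseteq W$'', which is exactly what the bracket computation requires.
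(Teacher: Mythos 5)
Your proof is correct and complete: the set $S=\{X\in\frh\mid X\cdot V\subseteq \frm\cdot V\}$ is indeed a subalgebra containing $\frm$, which forces $\frq\subseteq S$ and hence the nontrivial inclusion. The paper offers no written proof (it lists this as an ``immediate observation''), and your argument is exactly the clean packaging of the standard induction on bracket length that the authors have in mind.
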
 

Together with 

\begin{lem} \label{lemB}
Let $\frk$ be semisimple of compact type and $\frk_{0}$ a subalgebra of $\frk$. 
Then the subalgebra $\frq$ generated $\frm = \frk_{0} + [\frk, \frk_{0}]$ is an ideal of $\frk$. 
\end{lem}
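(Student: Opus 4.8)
\textbf{Proof proposal for Lemma \ref{lemB}.}

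The plan is to show that the span $\frm = \frk_0 + [\frk,\frk_0]$ is already closed under the bracket, so that $\frq = \frm$ and the assertion reduces to checking that $\frm$ is an ideal. First I would decompose $\frk$ as a direct sum of simple ideals, $\frk = \frk^{(1)}\times\cdots\times\frk^{(m)}$; since brackets respect this decomposition and $\frk_0$ is carried to its projections in a way that is harmless for the statement, I may as well argue one simple factor at a time, i.e.\ assume $\frk$ is simple. Next I would invoke the Killing form $B$ of $\frk$, which is negative definite since $\frk$ is of compact type, and in particular non-degenerate. The key point is that $[\frk,\frk_0]^\perp$ (orthogonal complement with respect to $B$) equals $\{X\in\frk : B(X,[\frk,\frk_0])=0\}=\{X : B([X,\frk_0],\frk)=0\}=\zen_\frk(\frk_0)$, by invariance of $B$. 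Hence $\dim[\frk,\frk_0] = \dim\frk - \dim\zen_\frk(\frk_0)$.

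The main step is then to verify directly that $\frm = \frk_0 + [\frk,\frk_0]$ is $\ad(\frk)$-invariant: for $X\in\frk$ one has $[X,\frk_0]\subseteq[\frk,\frk_0]\subseteq\frm$, and $[X,[\frk,\frk_0]]\subseteq[\frk,[\frk,\frk_0]]$, so it suffices to see $[\frk,[\frk,\frk_0]]\subseteq\frk_0+[\frk,\frk_0]$. By the Jacobi identity $[\frk,[\frk,\frk_0]]\subseteq[[\frk,\frk],\frk_0]+[\frk,[\frk,\frk_0]]$, which only says $[\frk,[\frk,\frk_0]]\subseteq[\frk,\frk_0]$ once we use $[\frk,\frk]=\frk$ — wait, that already gives $[\frk,[\frk,\frk_0]]\subseteq[\frk,\frk_0]$ directly since $[\frk,\frk_0]$ is spanned by brackets of $\frk$-elements. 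So in fact $[\frk,\frm]\subseteq[\frk,\frk_0]+[\frk,[\frk,\frk_0]]\subseteq[\frk,\frk_0]\subseteq\frm$, proving $\frm$ is an ideal of $\frk$ outright. Consequently the subalgebra generated by $\frm$ is contained in $\frm$, hence equals $\frm$, and being an ideal it is the desired ideal of $\frk$.

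The one place that genuinely needs care is the reduction to the simple case: when $\frk_0$ has ``diagonal'' components across several simple factors of $\frk$, one must check that $\frm$ is still an ideal. But since an ideal of a direct product $\frk^{(1)}\times\cdots\times\frk^{(m)}$ is simply a sub-product of ideals of the factors, and $[\frk,\frm]\subseteq\frm$ was established without using simplicity, the reduction is not even needed — the computation $[\frk,\frm]\subseteq\frm$ is valid for any semisimple $\frk$ with $[\frk,\frk]=\frk$. Thus I anticipate no real obstacle; the subtlety is only to resist over-complicating the argument and to note that the compact-type hypothesis is used solely (if at all) to guarantee $\frk$ is semisimple, which is what makes $[\frk,\frk]=\frk$ available. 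I would then close by remarking that $\frq=\frm$ is the ideal claimed.
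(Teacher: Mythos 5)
Your central step is false, and with it the whole argument collapses. You claim that the Jacobi identity gives $[\frk,[\frk,\frk_{0}]]\subseteq[\frk,\frk_{0}]$ "directly since $[\frk,\frk_{0}]$ is spanned by brackets of $\frk$-elements". It is not: for $X,Y\in\frk$, $Z\in\frk_{0}$ the Jacobi identity yields $[X,[Y,Z]]=[[X,Y],Z]+[Y,[X,Z]]$, and the second term lies again in $[\frk,[\frk,\frk_{0}]]$, not in $[\frk,\frk_{0}]$; the inclusion you obtain is the vacuous $[\frk,[\frk,\frk_{0}]]\subseteq[\frk,\frk_{0}]+[\frk,[\frk,\frk_{0}]]$. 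Already for $\frk=\so_3$, $\frk_{0}=\RR e_1$ one has $[\frk,\frk_{0}]=\Span\{e_2,e_3\}$ while $[e_2,e_3]=e_1$, so $[\frk,[\frk,\frk_{0}]]\not\subseteq[\frk,\frk_{0}]$. Worse, your conclusion that $\frm=\frk_{0}+[\frk,\frk_{0}]$ is itself a subalgebra (hence an ideal) is false: take $\frk=\so_5$ and $\frk_{0}=\so_3$ embedded as the upper-left block, so that $\frk=\so_3\oplus\so_2\oplus V$ with $V\cong\RR^3\otimes\RR^2$ the off-diagonal block. Then $\frm=\so_3\oplus V$, but $[V,V]$ has a non-zero $\so_2$-component, so $\frm$ is neither a subalgebra nor an ideal; only the subalgebra $\frq$ it generates (here all of $\so_5$) is an ideal. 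This is exactly why the lemma is stated for $\frq$ and not for $\frm$. A second warning sign is your remark that the compact-type hypothesis is inessential: for $\frk=\sl_2(\RR)$ and $\frk_{0}=\RR X$ with $X$ nilpotent, $\frm=\frq$ is a Borel subalgebra, which is not an ideal, so any proof that never uses compactness cannot be correct.

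For comparison, the paper's proof uses compactness precisely where your argument has nothing: since $\frk_{0}$ acts reductively on $\frk$ (definite Killing form), one has $\frk=\frz_{\frk}(\frk_{0})+[\frk,\frk_{0}]$. Then $[\frz_{\frk}(\frk_{0}),\frm]\subseteq\frm$ and $[[\frk,\frk_{0}],\frm]\subseteq\frm+[\frm,\frm]$, so $[\frk,\frm]\subseteq\frq$ — note that brackets are allowed to leave $\frm$, landing only in $\frq$ — and since $\frq$ is spanned by iterated commutators of elements of $\frm$, a Jacobi induction gives $[\frk,\frq]\subseteq\frq$. Your observation that $[\frk,\frk_{0}]^{\perp}=\frz_{\frk}(\frk_{0})$ with respect to the Killing form is correct and is in fact the way to obtain the decomposition $\frk=\frz_{\frk}(\frk_{0})+[\frk,\frk_{0}]$ in the compact case, but you never use it; building the argument around that decomposition, rather than around the false inclusion $[\frk,[\frk,\frk_{0}]]\subseteq[\frk,\frk_{0}]$, is what is needed to repair the proof.
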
 
\begin{proof} % We observe $[\frk_{0}, \frm] \subseteq \frm$. 
Put $\frz= \frz_{\frk}(\frk_{0})$. 
Then $[\frz, \frm] \subseteq  \frm$ and  $[ [\frk, \frk_{0}], \frm] \subseteq  \frm +  [\frm, \frm]$. 
Since $\frk =  [\frk, \frk_{0}] + \frz$, this shows $[\frk, \frm] \subseteq \frq$.
Since $\frq$ is linearly spanned by the iterated commutators of elements of $\frm$,
$[\frk, \frq] \subseteq \frq$. 
\end{proof}

\begin{proof}[Proof of Proposition \ref{prop:gperp_inv}] 
Let $\frk_{0}$ be the image of $\frg^{\perp}$ under the projection  homomorphism $\frg \to \frk$. 
Note that by Corollary \ref{mcor:invariance_radical},
$[\frg^{\perp}, \gs] = [\frk_{0}, \gs]$.
Let $\frq \subseteq \frk$ be the  subalgebra generated by
$\frm = \frk_{0} + [\frk, \frk_{0}]$ and consider $V = \gs$ as a module
for $\frq$.
% Since $\frq$ is a subalgebra of $\frk$, there is a decomposition  $V = [\frq, V] \oplus V^{\frq}$. 
Since $\frq$ is an ideal of  $\frk$, $[\frq, V]$ is a submodule for $\frk$, that is, $[\frk, [\frq, V]] \subseteq  [\frq, V]$.
By Lemmas \ref{lem0}, \ref{lemA} and Corollary \ref{mcor:invariance_radical}
we have
$[\frq, V] = [\frm, V] \subseteq \frg^{\perp} \cap \zen(\gs)$.
Hence, $\frj =  [\frm, V] \subseteq \frg^{\perp}$ is an ideal in $\frg$, with $\frj \supseteq [\frg^{\perp}, \gs] = [\frk_{0}, \gs]$.
Since $\frg^\perp$ contains no non-trivial ideals of $\frg$ by
assumption, we conclude that $\frj=\zsp$.
\end{proof}

% \wolf{END: Subsection 2.2 from small paper}

\subsection{Transporter in $\frk$ and low relative index} 

\begin{lem}\label{lem:subalg_k}
Let $\frk$ be a semisimple Lie algebra of compact type and
$\frl$ a sub\-algebra of $\frk$.
Then either $\frl = \frk$ or $m = \codim_{\frk} \frl >1$.
Assume further that $\frl$ does not contain any non-trivial
ideal of $\frk$.
Then, up to conjugation by an automorphism of $\frk$:  
\begin{enumerate}
\item
if $m=2$, then
$\frk = \so_3$ and  $\frl= \so_2$,
\item
if $m=3$, one of the following holds:
\begin{enumerate}
\item[\rm (a)]
$\frk=\so_3$ and $\frl=\zsp$,
\item[\rm (b)]
$\frk=\so_3\times\so_3$, 
$\frl$ is the image of a diagonal embedding $\so_3\to\so_3\times\so_3$.
% $X\mapsto(X,\sigma(X))$, for some $\sigma\in\Aut(\so_3)$.
\end{enumerate}
\end{enumerate}
\end{lem}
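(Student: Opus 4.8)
The plan is to exploit the standard bound on the maximal dimension of a proper subalgebra of a compact semisimple Lie algebra, and then to refine it in the two borderline codimensions $m=2$ and $m=3$ by going through the short list of low-dimensional compact semisimple Lie algebras. First I would treat the dichotomy $\frl=\frk$ or $\codim_{\frk}\frl>1$: if $\frl$ has codimension $1$, then $\frl$ would be an ideal of $\frk$ of codimension $1$, impossible for a semisimple Lie algebra since its simple factors have dimension $\geq 3$; more directly, a codimension-$1$ subspace of a Lie algebra with no abelian quotient cannot be a subalgebra unless it is everything. So $m\neq 1$.

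Next, assuming $\frl$ contains no non-trivial ideal of $\frk$, I would reduce to the case where $\frk$ is as small as possible. Write $\frk=\frk_1\times\cdots\times\frk_t$ as a product of compact simple ideals and let $\pi_i:\frk\to\frk_i$ be the projections. If for some $i$ the projection $\pi_i(\frl)=\frk_i$ is not injective on $\frl\cap\frk_i$... more efficiently: for each $i$, $\frl\cap\frk_i$ is an ideal of $\frl$ and $\dim\frk_i-\dim(\frl\cap\frk_i)\leq m$. If $\frl\cap\frk_i=\frk_i$ for some $i$, then $\frk_i$ is a non-trivial ideal of $\frk$ contained in $\frl$, contradiction; so $\frl\cap\frk_i$ is a proper subalgebra of $\frk_i$, whence $\dim\frk_i\geq\dim(\frl\cap\frk_i)+2$. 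Since $\dim\frk_i\geq 3$ for all $i$, and $\sum_i(\dim\frk_i-\dim(\frl\cap\frk_i))\leq\dim\frk-\dim\frl+(\text{stuff})$... Here the cleanest route is: the image of $\frl$ in $\prod_i(\frk_i/(\frl\cap\frk_i))$ is a subalgebra, and comparing dimensions shows that only few factors $\frk_i$ can be larger than $\so_3$ and only few can occur at all. Concretely, for $m=2$: $\dim\frk-\dim\frl=2$ forces $t=1$ and $\frk=\frk_1$ simple with a codimension-$2$ subalgebra containing no ideal; running through $\so_3$ (the subalgebras are $0,\so_2,\so_3$, giving codimensions $3,2,0$), $\su_3$, $\sp_2=\so_5$, $\mathfrak{g}_2$, etc., one checks the minimal codimension of a proper subalgebra of a simple compact $\frk$ of dimension $\geq 8$ exceeds $2$ (indeed $\geq 4$), so $\frk=\so_3$, $\frl=\so_2$.

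For $m=3$: again $\dim\frk-\dim\frl=3$. If $\frk$ is simple, the only possibility with a codimension-$3$ subalgebra and no non-trivial ideal is $\frk=\so_3$, $\frl=\zsp$ (every larger simple compact Lie algebra has minimal proper-subalgebra codimension $>3$). If $\frk$ is not simple, write $\frk=\frk_1\times\cdots\times\frk_t$ with each $\dim\frk_i\geq 3$; since $\frl$ contains no $\frk_i$ and the codimension of $\frl\cap\frk_i$ in $\frk_i$ is $\geq 2$, we get $2t\leq\sum_i\codim_{\frk_i}(\frl\cap\frk_i)$, and this sum is at most $3+\dim\frl/(\ldots)$... the controlling inequality is $\sum_i(\dim\frk_i - \dim(\frl\cap\frk_i))\leq 3 + \codim_{\frk/\prod(\frl\cap\frk_i)}(\text{image of }\frl)$; pushing this through forces $t=2$ and $\dim\frk_1=\dim\frk_2=3$, i.e.\ $\frk=\so_3\times\so_3$, with $\frl$ projecting onto each factor and $\frl\cap\frk_i=\zsp$, so $\frl$ is the graph of an isomorphism $\so_3\to\so_3$, i.e.\ a diagonal $\so_3$ up to automorphism. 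The main obstacle is organizing the case analysis on products of simple factors cleanly — the single-factor cases are a quick appeal to the known list of maximal subalgebras of the compact simple Lie algebras, but keeping the bookkeeping tight enough to rule out, e.g., $\so_3\times(\text{larger})$ or three small factors without a long enumeration is where care is needed. I would phrase the product estimate as a single inequality $\sum_i\bigl(\dim\frk_i-\dim(\frl\cap\frk_i)\bigr)\le m$ derived from the fact that the diagonal-type inclusion $\frl\hookrightarrow\prod_i\bigl(\frk_i/(\frl\cap\frk_i)\bigr)$ need not be surjective but $\dim\frl\ge\sum_i\dim(\frl\cap\frk_i)$ together with $\dim\frk=\sum_i\dim\frk_i$ gives exactly this, and then combine it with $\dim\frk_i-\dim(\frl\cap\frk_i)\ge 2$ for each $i$ to bound $t$.
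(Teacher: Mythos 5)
There is a genuine gap, and it sits exactly at the "controlling inequality" on which your whole product analysis rests. You claim $\sum_i\bigl(\dim\frk_i-\dim(\frl\cap\frk_i)\bigr)\le m$, derived from $\dim\frl\ge\sum_i\dim(\frl\cap\frk_i)$ and $\dim\frk=\sum_i\dim\frk_i$. These two facts give the \emph{reverse} inequality: $\sum_i\bigl(\dim\frk_i-\dim(\frl\cap\frk_i)\bigr)=\dim\frk-\sum_i\dim(\frl\cap\frk_i)\ge\dim\frk-\dim\frl=m$. The inequality as you state it is in fact false, and the diagonal $\so_3$ in $\frk=\so_3\times\so_3$ — precisely the subalgebra that must survive as case (b) — violates it: there $m=3$ but the left side is $6$. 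Worse, combined with your (correct) bound $\dim\frk_i-\dim(\frl\cap\frk_i)\ge2$ it would yield $2t\le m$, i.e.\ $t=1$ for $m\le3$, wrongly eliminating case (b) altogether; so the bookkeeping is internally inconsistent. The salvageable version of your estimate uses \emph{projections}, not intersections: $\sum_i\codim_{\frk_i}\pi_i(\frl)\le m$, since $\frl$ embeds in $\prod_i\pi_i(\frl)$. But then factors with $\pi_i(\frl)=\frk_i$ contribute nothing, and ruling out configurations such as $\so_3\times(\text{larger factor})$ requires a Goursat-type step (if $\pi_i(\frl)=\frk_i$ then $\frl\cap\frk_i$ is an ideal of $\frk_i$, hence $\zsp$ by the no-ideal hypothesis, etc.), which your sketch does not supply. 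Separately, your justification of the dichotomy $m\ne1$ is also flawed: "a codimension-one subspace of a Lie algebra with no abelian quotient cannot be a subalgebra unless it is everything" is false — the Borel subalgebra of $\sl_2(\RR)$ is a codimension-one subalgebra of a simple Lie algebra — and a codimension-one subalgebra need not be an ideal. Compactness is essential here and your argument never uses it.

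For contrast, the paper's proof avoids both the decomposition into simple factors and any appeal to the list of maximal subalgebras of compact simple Lie algebras. Since $\frk$ is of compact type, $\frl$ acts reductively, so $\frk=\frl\oplus\frw$ for an $\frl$-submodule $\frw$ of dimension $m$; if $m=1$ then $\frw$ is a one-dimensional ideal, impossible, which settles the dichotomy; and if $\frl$ contains no non-trivial ideal of $\frk$ it acts faithfully on $\frw$ by maps skew with respect to the (definite) Killing form, so $\frl$ embeds into $\so_m$. For $m=2,3$ this caps $\dim\frl\le\dim\so_m$, hence $\dim\frk\le m+\dim\so_m$, and a short enumeration ($\dim\frk=3,4,6$) gives exactly the cases listed. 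If you want to keep your classification-based route, you must replace the intersection inequality by the projection inequality plus the Goursat analysis of factors with surjective projection, and repair the $m\ne1$ step using complete reducibility as above.
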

\begin{proof}

As an $\ad_\frk(\frl)$-module, $\frk=\frl\oplus\frw$
for a  submodule $\frw$. For this,  note that any Lie subalgebra of $\frk$ acts reductively,
since $\frk$ is of compact type. 

% Assume now that $\frk\neq\frl$.
Suppose $\codim_\frk\frl=1$, that is, $\frw$ is one-dimensional. 
Then $[\frw,\frw]=\zsp$  and it follows that $\frw$ is also an ideal of $\frk$. 
A one-dimensional ideal cannot exist, since $\frk$ is semisimple. 
It follows that $\codim_\frk\frl>1$.

Since $\frk=\frl\oplus\frw$,  the kernel of the adjoint action of $\frl$ on $\frw$
is an ideal in $\frk$.  Assume further that $\frl$ contains no non-trivial
ideals of $\frk$. Then $\frl$ acts faithfully on $\frw$. 

For $m=2$, this means $\frl =\so_2$ and
$\dim\frk=m+\dim\frl=3$. Hence,  $\frk=\so_3$.

For $m=3$, $\frl$ embeds into $\so_3$. If $\frl=\zsp$, we have $\dim\frk=3$ and 
thus $\frk = \so_3$. Otherwise,  either $\frl  =  \so_{2}$ or $\frl = \so_3$.
In the first case, $\dim \frk = 4$. Since there is no four-dimensional simple Lie
algebra, this is not possible.   
In the latter case,  $\dim\frk=6$. This leaves $\frk=\so_3\times\so_3$ (being isomorphic to $\so_4$)
as the only possibility.  Since $\frl$ is not an ideal of $\frk$, 
$\frl$ projects injectively onto both factors of $\frk$.  
It follows that, up to automorphism of $\frk$,  $\frl$ is the image of an 
embedding $\so_3\to\so_3\times\so_3$, $X\mapsto(X,X)$. 
\end{proof}

% Next we remark: 

\subsubsection{Totally isotropic ideals and low relative index}  \label{sec:transporter_l2}

Let $\frb$ be any totally isotropic ideal of $\frg$ contained in $\gs$ and
put $\frb_{0} = \frb \cap \frg^{\perp}$. 

%\begin{prop} \label{prop:transkk} 
%If \ $\frg^\perp$ contains no non-trivial ideal of $\frg$ and $\ell \leq 2$, then
%\begin{equation} \label{eq:kperpgsb}
% \frk \perp [\gs, \frb ]  . \end{equation}
%In particular, in this situation,  $\nor_{\frk}(\frb, \frb_{0}) = \frk \cap  [\frk, \frb]^{\perp}$.
%\end{prop} 
%\begin{proof}
%By Corollary \ref{cor:GSGperp}, $\frb \cap \frg^{\perp} \subseteq \gs \cap \gs^{\perp}$ 
%is contained in $\frz(\gs)$. 
%Therefore, the $\frk$-module $\frb$ decomposes as  a sum of submodules 
%$\frb= \frb_{1} \oplus \frb \cap \frz(\gs)$, where $\dim \frb_{1} \leq \ell$. 
%Note, since $\frk$ is compact, $\dim \frb_{1} \leq 2$ implies that $[\frk, \frb_{1} ] = \zsp$. 
%Now $[\gs, \frb] = [\gs, \frb_{1}]$. Using $[\frk, \frb_{1} ] = \zsp$, we deduce
%that $\frk \perp [\gs, \frb]$. 
%
%Since $\frg = \frk + \gs$,  $[\frg, \frb]^{\perp} =  [\frk, \frb]^{\perp} \cap  [\gs, \frb]^{\perp}$.
%Together with \eqref{eq:kperpgsb},  this implies $\frk \cap [\frg, \frb]^{\perp} = \frk \cap  [\frk, \frb]^{\perp}$.
%Using \eqref{eq:transporter}, we get $\nor_{\frk}(\frb, \frb_{0}) = \frk \cap  [\frk, \frb]^{\perp}$. 
%\end{proof}

\begin{prop}\label{prop:transk}
If $\ell \leq 2$, then $\nor_{\frk}(\frb,\frb_0) = \frk$. 
\end{prop}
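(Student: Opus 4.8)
The strategy is to combine the codimension bound from the Transporter Lemma with the structural classification of proper subalgebras of a compact semisimple Lie algebra given in Lemma \ref{lem:subalg_k}. By Lemma \ref{lem:transporter0}, the transporter $\nor_{\frk}(\frb,\frb_0)$ has codimension at most $\ell\leq 2$ in $\frk$. By Lemma \ref{lem:subalg_k}, a subalgebra of codimension $1$ in a semisimple Lie algebra of compact type cannot exist, so the only possibility other than $\nor_{\frk}(\frb,\frb_0)=\frk$ is codimension exactly $2$, forcing (up to automorphism) $\frk=\so_3$ with $\nor_{\frk}(\frb,\frb_0)=\so_2$, plus possibly a compact simple complement on which the transporter acts as the full factor. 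So the whole proof reduces to ruling out the case where one simple factor $\frk_1\cong\so_3$ of $\frk$ contributes only $\so_2$ to the transporter.

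First I would reduce to that single bad factor: write $\frk=\frk_1\times\frk'$ where $\frk_1\cong\so_3$ and $\nor_{\frk}(\frb,\frb_0)=\so_2\times\frk'$, and set $\fri=\frk'$, which is an ideal of $\frk$ contained in $\nor_{\frk}(\frb,\frb_0)$. Then I would invoke Proposition \ref{prop:transporter_ideals}: $[\fri+\nor_{\gs}(\frb,\frb_0),\frb]$ is an ideal of $\frg$ contained in $\frg^\perp$. Under the standing hypothesis that $\frg^\perp$ contains no non-trivial ideal of $\frg$ (which is in force throughout Subsection \ref{sec:transporter_l2}), this ideal is zero, so $\frk'$ (and $\nor_{\gs}(\frb,\frb_0)$) actually centralizes $\frb$. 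The remaining task is to show that $\frk_1=\so_3$ also centralizes $\frb$, i.e. that $\nor_{\frk_1}(\frb,\frb_0)=\frk_1$, contradicting the assumption that it was only $\so_2$.

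For this I would analyze the action of $\frk_1\cong\so_3$ on the finite-dimensional space $\frb$, using that $\met$ restricted to $\gs$ is $\frg$-invariant (Theorem \ref{mthm:invariance}(1)), so $\frk_1$ acts by skew-symmetric maps with respect to the (possibly degenerate) form on $\frb$, and that $\frb_0=\frb\cap\frg^\perp$ is $\frk_1$-invariant with $[\frk_1,\frb]\not\subseteq\frb_0$ by assumption. The point is that $\frb$ is totally isotropic of dimension at most $\ell\leq 2$ modulo $\frg^\perp$: more precisely, since $\codim_\frg\frb^\perp\leq\ell\leq 2$ and $\frb$ is totally isotropic, the induced module $\frb/\frb_0$ has dimension at most $2$, hence the nontrivial part of the $\so_3$-action sits inside $\gl_2$ or smaller — but $\so_3$ has no nontrivial representation of dimension $\leq 2$, so $[\frk_1,\frb]\subseteq\frb_0$ after all, giving $\nor_{\frk_1}(\frb,\frb_0)=\frk_1$. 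Combined with the previous paragraph this yields $\nor_{\frk}(\frb,\frb_0)=\frk$.

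The main obstacle I anticipate is handling the interaction between $\frb_0$ and $\frb$ carefully: $\frb_0$ need not be complemented in $\frb$ as a $\frk_1$-module a priori, but $\frk_1$ of compact type acts reductively, so there is a $\frk_1$-invariant complement $\frw$ with $\frb=\frb_0\oplus\frw$ and $\dim\frw\leq\ell\leq 2$; then $[\frk_1,\frb]\subseteq\frb_0+[\frk_1,\frw]$ and the image of $\frk_1$ in $\gl(\frw)$ — a quotient of $\so_3$ — must be zero by dimension count, so $[\frk_1,\frw]\subseteq\frb_0$ and thus $[\frk_1,\frb]\subseteq\frb_0$ as needed. Verifying that $\dim(\frb/\frb_0)\leq\ell$ — which follows because $\frb$ totally isotropic forces $\frb\cap\frg^\perp$ to have codimension at most $\ell$ in $\frb$, exactly as in the proof of Lemma \ref{lem:transporter0} — is the one computation worth spelling out explicitly.
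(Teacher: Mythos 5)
Your reduction (codimension of $\nor_{\frk}(\frb,\frb_0)$ is at most $\ell\leq 2$ by Lemma \ref{lem:transporter0}, codimension $1$ is impossible and codimension $2$ forces an $\so_3/\so_2$ configuration by Lemma \ref{lem:subalg_k}) agrees with the paper. But the step you use to kill the codimension-two case has a genuine gap: you treat $\frb/\frb_0$ as a module for $\frk_1\cong\so_3$, which presupposes $[\frk_1,\frb_0]\subseteq\frb_0$. That is not available. The subspace $\frb_0=\frb\cap\frg^\perp$ is invariant under $\inv(\frg,\met)\supseteq\gs$, but $\met$ need not be $\frk$-invariant (any form on a compact $\frk$ is nil-invariant), so $\frb_0$ need not be a $\frk_1$-submodule of $\frb$; invariance of $\met_{\gs}$ under $\frg$ only yields $[\frk,\frb_0]\perp\gs$, not $[\frk,\frb_0]\perp\frg$. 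In fact, the statement that $\frb\cap\frg^\perp$ is an ideal of $\frg$ is Corollary \ref{cor:isotrop_ideals_l2_0}, which the paper deduces \emph{from} this proposition via Proposition \ref{prop:transporter_k}(1); assuming it here is circular. Your fallback (``$\frk_1$ acts reductively, so choose an invariant complement of $\frb_0$'') only applies once $\frb_0$ is known to be a submodule, which is exactly the missing point, and the skew-symmetry of the $\frk_1$-action on $(\frb,\met_\frb)$ that you invoke is vacuous because $\frb$ is totally isotropic. A natural repair, replacing $\frb_0$ by the genuine $\frk$-submodule $\frb\cap\gs^\perp$, only gives $[\frk,\frb]\subseteq\gs^\perp$, which is weaker than $[\frk,\frb]\subseteq\frg^\perp$, so the dimension-count shortcut does not close the argument.

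A second, smaller problem: you import the hypothesis that $\frg^\perp$ contains no non-trivial ideal of $\frg$. This is not part of the statement, it is not a standing assumption in Subsection \ref{sec:transporter_l2} (effectivity first enters at Corollary \ref{cor:isotrop_ideals_l2}), and Corollary \ref{cor:isotrop_ideals_l2_0} must hold without it. The paper's proof avoids both issues: Proposition \ref{prop:transporter_ideals} is used only to produce an ideal $\frb_1$ of $\frg$ with $[\fri,\frb]\subseteq\frb_1\subseteq[\frg,\frb]\cap\frg^\perp$ (no effectivity needed), so that $U=[\frg,\frb]/\frb_1$ is an honest module for $\frk/\fri\cong\so_3$, on which $\met$ descends to a skew pairing with $U^\perp=\frl/\fri$. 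Since $U$ may have large dimension, the contradiction comes not from a dimension count but from the classification of skew pairings for $\so_3$ (Corollary \ref{cor:skewkernel}, resting on Proposition \ref{prop:so3_skew_module}): the kernel of such a pairing is either $\zsp$ or all of $\so_3$, which is incompatible with $\frl/\fri\cong\so_2$. Some input of this kind is indispensable; without it, the codimension-two case is not excluded.
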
 
\begin{proof}
Put $\frl=\nor_\frk(\frb,\frb_0)$ and  $m=\codim_{\frk}\frl$. 
By Lemma \ref{lem:transporter0}, $\frl = \frk \cap [\frg, \frb]^{\perp}$ %  \eqref{eq:transporter_codim}
and  $m\leq\ell$. 

Assume now that $m \geq 1$. According to Lemma \ref{lem:subalg_k},  the case $m=1$ never occurs. 
Hence, in this case, we have $m =  2$. 
% Hence, $m= 0$ and $\nor_{\frk}(\frb,\frb_0)=\frk$,  if $\ell \leq 1$. 

Let $\fri\subseteq \frl$ be the maximal ideal of $\frk$
contained in $\frl$. Using Proposition \ref{prop:transporter_ideals}, we 
see that there exists an ideal $\frb_{1}$ of $\frg$, such that $[\fri, \frb] \subseteq \frb_{1}
 \subseteq  [\frg,\frb] \cap \frg^{\perp}$.
%  containing 
%Since $\frg^{\perp}$ contains no ideals, 
%Proposition  \ref{prop:transporter_ideals} shows that $\fri$ acts trivially on $\frb$.
Since $[\frg,\frb]$ and $\frb_{1}$ are ideals, $U  =  [\frg,\frb]/\frb_{1}$ is a module
for $\frk$. In fact, since  $[\fri, \frb] \subseteq \frb_{1}$, $U$ is a module for $\frk/\fri$. 
Also, since $\frb_{1} \subseteq \frg^{\perp}$, $\met$ restricted to $\frk  \times  [\frg,\frb]$ 
induces a skew pairing on $\frk \times  U$, such that  $U^{\perp} = \frl$. Since  
we have $\fri \perp [\frg,\frb]$, this shows that 
%
%is compact and semisimple $\fri$, $\dim \frb/\frb_{0} \leq \ell \leq 2$, implies 
%that $\fri$ acts trivially on  $\frb/\frb_{0}$. 
%This also implies that $\fri \perp [\frk,\frb]$. 
$\met$ restricted to  $\frk  \times  [\frg,\frb]$ descends to a skew pairing  % a skew pairing 
\begin{equation} \label{eq:skewp}
\met:(\frk/\fri) \times U \to \RR \; , \text{ where $U^{\perp} = \frl/\fri$.}
\end{equation} 
If  $m=2$, then by Lemma \ref{lem:subalg_k},
$\frk/\fri = \so_3$ and  $\frl/\fri = \so_2$. 
By Corollary \ref{cor:skewkernel}, either the skew pairing $\met$ in \eqref{eq:skewp} is zero 
(that is,  $U^{\perp} = \frk/\fri$) or $\frk \cap [\frg ,\frb]^\perp = \fri$. 
In the first case, $\frl = \frk \cap [\frg ,\frb]^\perp = \frk$. %  contradicting $m=2$. 
In the second case,  $\frl = \fri$, a contradiction to $\frl/\fri= \so_2$. 
Therefore, $m=0$. 
% that is, $\nor_{\frk}(\frb,\frb_0) = \frk$ if $\ell \leq 2$.
\end{proof}

Combining with Proposition \ref{prop:transporter_k} (1) we arrive at: 

\begin{cor}  \label{cor:isotrop_ideals_l2_0}
If $\ell \leq 2$ then, for any  totally isotropic ideal $\frb$ of $\frg$ contained in $\gs$, $\frg^{\perp} \cap \frb$ is an ideal in $\frg$. In particular, $\frg^{\perp} \cap \gs$ is an ideal in $\frg$.
\end{cor}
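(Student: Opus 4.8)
The plan is to obtain the corollary by feeding Proposition~\ref{prop:transk} into part~(1) of Proposition~\ref{prop:transporter_k}. Let $\frb$ be a totally isotropic ideal of $\frg$ contained in $\gs$ and set $\frb_0 = \frb \cap \frg^\perp$ as before. Since $\ell \leq 2$, Proposition~\ref{prop:transk} gives $\nor_{\frk}(\frb,\frb_0) = \frk$, which is exactly the hypothesis under which Proposition~\ref{prop:transporter_k} applies; its conclusion~(1) then says that $\frb \cap \frg^\perp$ is an ideal of $\frg$. This settles the general assertion.

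For the ``in particular'' part I would specialize to $\frb = \gs^\perp \cap \gs$, the metric radical of $(\gs,\met_{\gs})$. As noted in Subsection~\ref{sec:transporters}, this $\frb$ is totally isotropic, is contained in $\gs$, and is an ideal of $\frg$ because $\met_{\gs}$ is $\frg$-invariant by Theorem~\ref{mthm:invariance}. Hence the general case just proved applies to it, and it only remains to identify $\frg^\perp \cap \gs$ with $\frg^\perp \cap \frb$: any $X \in \frg^\perp \cap \gs$ is orthogonal to all of $\frg$, in particular to $\gs$, so $X \in \gs^\perp \cap \gs = \frb$, giving $\frg^\perp \cap \gs \subseteq \frg^\perp \cap \frb$; the reverse inclusion is immediate from $\frb \subseteq \gs$. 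Thus $\frg^\perp \cap \gs = \frb \cap \frg^\perp$ is an ideal of $\frg$.

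I do not expect any genuine obstacle here: the statement is a mechanical combination of two results already established. The only point deserving a moment's attention is the bookkeeping identity $\frg^\perp \cap \gs = \frg^\perp \cap (\gs^\perp \cap \gs)$, together with the observation that the particular $\frb = \gs^\perp \cap \gs$ does meet the hypotheses (``totally isotropic ideal of $\frg$ contained in $\gs$'') of Propositions~\ref{prop:transk} and~\ref{prop:transporter_k}; but that verification has already been carried out in Section~\ref{sec:classificationLie}.
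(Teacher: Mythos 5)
Your argument is correct and is exactly the paper's route: the corollary is stated there immediately after Proposition~\ref{prop:transk} with the words ``Combining with Proposition~\ref{prop:transporter_k}~(1) we arrive at,'' and your specialization to $\frb=\gs^\perp\cap\gs$ together with the identification $\frg^\perp\cap\gs=\frg^\perp\cap(\gs^\perp\cap\gs)$ is the same bookkeeping the paper relies on (having already noted in Subsection~\ref{sec:transporters} that this $\frb$ is a totally isotropic ideal of $\frg$ with $\frb_0=\frg^\perp\cap\gs$). Nothing is missing; note also, as you implicitly use, that neither Proposition~\ref{prop:transk} nor part~(1) of Proposition~\ref{prop:transporter_k} needs the effectivity hypothesis, which is why the corollary can be stated without it.
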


The following now summarizes our results on totally 
isotropic ideals in case $\ell \leq 2$:

\begin{cor} \label{cor:isotrop_ideals_l2} 
 Assume that $\frg^\perp$ does not contain any non-trivial ideal of
$\frg$ and  that $\ell \leq 2$. 
Then, for any  totally isotropic ideal  $\frb$  of $\frg$ contained in
$\gs$, 
\begin{enumerate}  % \setcounter{enumi}{1}
\item $\frb \cap \frg^{\perp} = \zsp$,  $\dim \frb \leq \ell$,  $[\frk, \frb] = \zsp$. 
%\item $\frs \perp \frb$. \oliver{!}  \oliver{add that $\frb$ is contained in $\frr$}
%\item $\frk \perp \frb$ or $[\frs,  \frb ] = \zsp$. \oliver{!} 
\end{enumerate} 
Furthermore, the following hold:  
\begin{enumerate} \setcounter{enumi}{1}
\item $\frg^{\perp} \cap \gs = \zsp$. 
\item $[\gs^{\perp} , \gs] = \zsp$. 
\item $[\frg, \gs \cap \gs^{\perp}] = \zsp$. 

\end{enumerate}
\end{cor}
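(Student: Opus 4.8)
\textbf{Plan of proof for Corollary \ref{cor:isotrop_ideals_l2}.}
The strategy is to assemble the corollary from the machinery already developed, feeding Proposition \ref{prop:transk} into the strengthened transporter results. First I would note that by Proposition \ref{prop:transk} the hypothesis $\ell \leq 2$ forces $\nor_{\frk}(\frb,\frb_0) = \frk$ for \emph{every} totally isotropic ideal $\frb \subseteq \gs$, where $\frb_0 = \frb \cap \frg^{\perp}$. This is precisely the hypothesis needed to invoke Proposition \ref{prop:transporter_k}. Since we are also assuming $\frg^{\perp}$ contains no non-trivial ideal of $\frg$, part (2) of Proposition \ref{prop:transporter_k} applies verbatim and yields $\frb \cap \frg^{\perp} = \zsp$, $\dim \frb \leq \ell$, and $[\frk,\frb] = \zsp$. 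This gives item (1).

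For the remaining items I would specialize to $\frb = \gs^{\perp} \cap \gs$, which is a totally isotropic ideal of $\frg$ contained in $\gs$ by the discussion preceding Lemma \ref{lem:SRGperp} (using that $\met_{\gs}$ is $\frg$-invariant, Theorem \ref{mthm:invariance}(1)). For this particular $\frb$ we again have $\nor_{\frk}(\frb,\frb_0) = \frk$ by Proposition \ref{prop:transk}, so Proposition \ref{prop:transporter_k_gs} applies. Its part (2) gives $\frg^{\perp} \cap \gs = \zsp$, which is item (2); its part (3) gives $[\gs^{\perp},\gs] = \zsp$, which is item (3). Item (4), namely $[\frg, \gs \cap \gs^{\perp}] = \zsp$, is exactly the statement $[\frg, \gs^{\perp} \cap \gs] = \zsp$ appearing in Proposition \ref{prop:transporter_k_gs}(2). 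So all four items are immediate consequences of the two propositions once Proposition \ref{prop:transk} licenses their hypotheses.

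The only point requiring a word of care is the logical order: Proposition \ref{prop:transk} must be available for \emph{all} totally isotropic ideals simultaneously, and Proposition \ref{prop:transporter_k_gs} must be applied to the specific ideal $\frb = \gs^{\perp}\cap\gs$ with $\frb_0 = \frg^{\perp}\cap\gs$ — one should check that this is the instance of $\frb_0 = \frb\cap\frg^{\perp}$ in that proposition, which holds since $\gs^{\perp}\cap\gs\cap\frg^{\perp} = \frg^{\perp}\cap\gs$. I expect no genuine obstacle here; the corollary is a bookkeeping assembly of Propositions \ref{prop:transk}, \ref{prop:transporter_k}, and \ref{prop:transporter_k_gs}, and the proof is essentially: "Apply Proposition \ref{prop:transk}, then Propositions \ref{prop:transporter_k} and \ref{prop:transporter_k_gs}."
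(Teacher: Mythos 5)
Your proposal is correct and coincides with the paper's own argument: the paper likewise invokes Proposition \ref{prop:transk} to get $\nor_{\frk}(\frb,\frb_0)=\frk$, deduces item (1) from Proposition \ref{prop:transporter_k}(2), and obtains items (2)--(4) from Proposition \ref{prop:transporter_k_gs} applied to $\frb=\gs^{\perp}\cap\gs$, $\frb_0=\frg^{\perp}\cap\gs$. Your check that $\gs^{\perp}\cap\gs\cap\frg^{\perp}=\frg^{\perp}\cap\gs$ is exactly the bookkeeping point that makes this specialization legitimate.
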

\begin{proof}
Since $\ell \leq 2$,  according to Proposition \ref{prop:transk} $\nor_\frk(\frb,\frb_0) = \frk$. 
Thus (1) holds due to part (2) of Proposition \ref{prop:transporter_k}.
%
%\oliver{remove next two sentences!} If the $\frs$-module $\frb$ is trivial, then $\frs \perp \frb$
%since $\met$ is invariant by $\frs = [\frs, \frs ] \subseteq \gs$. 
%Since  $\dim \frb \leq 2$, Lemma \ref{lem:fsubmodules_perp} implies (2) and (3). 
%\oliver{These seem to be improved, at least for $\frs$ in the next section}

Now (2), (3), (4) are consequence of Proposition \ref{prop:transporter_k_gs}.
\end{proof}

Combining with Theorem \ref{mthm:gperp}, we also obtain: 
\begin{cor} \label{cor:gperp_l2} 
Assume that $\frg^\perp$ does not contain any non-trivial ideal of
$\frg$ and  that $\ell \leq 2$. Then $\frg^\perp$ is contained in $\frz(\gs) \times \frk$ 
and  $\frg^\perp \cap \gs  = \zsp$. 
\end{cor}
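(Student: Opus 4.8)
The plan is to read off Corollary~\ref{cor:gperp_l2} by combining Theorem~\ref{mthm:gperp} with Corollary~\ref{cor:isotrop_ideals_l2}; both of these are already available under the running hypothesis that $\frg^\perp$ contains no non-trivial ideal of $\frg$, and the corollary is placed directly after Theorem~\ref{mthm:gperp} precisely so that it falls out of such a combination.

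First I would dispose of the assertion $\frg^\perp\cap\gs=\zsp$: this is exactly part~(2) of Corollary~\ref{cor:isotrop_ideals_l2}, whose hypotheses are the same as those of the present statement. This is the one place the index bound is really spent: via Proposition~\ref{prop:transk} one has $\nor_\frk(\frb,\frb_0)=\frk$ for $\frb=\gs^\perp\cap\gs$ (using $\ell\leq 2$), and then Proposition~\ref{prop:transporter_k_gs} forces $\frg^\perp\cap\gs=\zsp$.

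For the inclusion $\frg^\perp\subseteq\frz(\gs)\times\frk$, I would invoke Theorem~\ref{mthm:gperp}, which gives
\[
\frg^\perp\ \subseteq\ \nor_\frk\!\left(\gs,\centre{\gs}\cap\frg^\perp\right)+\frz(\gs)
\]
with no restriction on the index at all. The first summand is by definition a subalgebra of $\frk$, so already $\frg^\perp\subseteq\frk+\frz(\gs)$, and this sum is a direct sum of subspaces since $\frk\cap\gs=\zsp$ in the Levi decomposition while $\frz(\gs)\subseteq\gs$. To read the product notation $\frz(\gs)\times\frk$ Lie-theoretically I would then use the previous step: since $\centre{\gs}\cap\frg^\perp\subseteq\gs\cap\frg^\perp=\zsp$, the transporter collapses to the centralizer, $\nor_\frk(\gs,\centre{\gs}\cap\frg^\perp)=\frz_\frk(\gs)$, whence $\frg^\perp\subseteq\frz_\frk(\gs)+\frz(\gs)$; these two subalgebras commute, because $\frz_\frk(\gs)$ centralizes $\gs\supseteq\frz(\gs)$, and they meet only in $\zsp$, so in fact $\frg^\perp\subseteq\frz_\frk(\gs)\times\frz(\gs)\subseteq\frk+\frz(\gs)$, in accordance with the earlier remark that $\frz_\frg(\gs)=\frz_\frk(\gs)\times\frz(\gs)$.

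I do not expect any genuine obstacle: the statement is bookkeeping on top of the substantial work already carried out in Section~\ref{sec:classificationLie}, and the only point deserving care is tracking which hypothesis yields which conclusion — the containment in $\frk+\frz(\gs)$ costs only effectivity (Theorem~\ref{mthm:gperp}), whereas $\frg^\perp\cap\gs=\zsp$, and with it the collapse of the transporter to $\frz_\frk(\gs)$, is where $\ell\leq2$ enters (Corollary~\ref{cor:isotrop_ideals_l2}). The final proof should then be only two or three lines long.
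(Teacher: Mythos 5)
Your proposal is correct and follows essentially the same route as the paper, which obtains the corollary precisely by combining Theorem~\ref{mthm:gperp} with Corollary~\ref{cor:isotrop_ideals_l2} (i.e.\ Proposition~\ref{prop:transk} plus Proposition~\ref{prop:transporter_k_gs} for $\frb=\gs^\perp\cap\gs$, which is where $\ell\leq 2$ is used). Your extra observation that $\frg^\perp\cap\gs=\zsp$ collapses the transporter $\nor_\frk(\gs,\frz(\gs)\cap\frg^\perp)$ to $\frz_\frk(\gs)$, so that $\frg^\perp\subseteq\frz_\frk(\gs)\times\frz(\gs)$, is a harmless refinement consistent with the remark following Proposition~\ref{prop:transporter_k_gs} and in fact justifies the product notation in the statement.
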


\begin{remark}
As Example \ref{ex:so3so4} shows, these conclusions do not necessarily hold if $\ell\geq 3$. 
\end{remark} 
%%%
\subsection{Metric radicals of the characteristic  ideals} %  in $\boldsymbol{\frg}$} 
% {Totally isotropic ideals of $\boldsymbol{\frg}$}

This section serves to clari\-fy the relations between the metric
radicals of $\gs$, $\frr$ and $\frn$,
where $\frn$ denotes the nilradical of $\frr$. 
 %\oliver{all this is completely independent and simpler than the previous stuff.}

%\begin{lem}\label{lem:Kperp}
%$\frk \perp [\frz_{\gs }(\frk), \frg]$.
%In particular, 
%$\frk\perp[\frs,\frg]$, $\frs\perp[\frk,\frg]$ and $\frs\perp\frk$.
%\wolf{obsolete by Lemma \ref{lem:gk_orthog}}
%\end{lem}
%\begin{proof}
%Let $Y\in\frk$, $X\in  \frz_{\gs }(\frk)$. For all $W\in\frg$,
%$
%\langle Y,[X,W]\rangle
%=
%\langle [Y,X],W\rangle
%=0,
%$
%since $[\frk, X]=\zsp$. Since, $\frs \subseteq \frz_{\gs }(\frk)$, 
%this implies $\frk\perp[\frs,\frg]$.
%Now let $X \in \frs$ and $W\in\frk$. Then the latter  implies
%$\langle X,[Y,W] \rangle = 0$, since $[Y, W] \in \frk$ and 
%$\frs=[\frs,\frs]$.
%For $W\in\frr\rtimes\frs$, we obtain 
%$\langle X,[Y,W] \rangle
%=
%-\langle [X,W],Y\rangle
%=0$, by the first part of the lemma, since $[X, W] \in [\frs, \frg] \perp \frk$.
%\end{proof}

\begin{lem}\ \label{lem:jperp_2}
\begin{enumerate}
\item
$[\frr, [\frg,\frr]^{\perp}] \perp \frg$. 
\item
$[\frr,  \frn^\perp] \perp \frg$
and $[\frg, \frn^\perp \cap \gs]  \perp \frr$.
% \item
%$[\frs, \frn^\perp] \perp \frk + \frr$
%and $[\frk + \frr,  \frn^\perp \cap \gs] \perp \frs$.
\item
$[\gs, (\frs + \frn)^\perp] \perp \frg$
and $[\frg,  (\frs + \frn)^\perp\cap\gs] \perp \gs$.
\item
$[\gs, \frn^\perp] \perp (\frk + \frr)$
and $[\frk + \frr,  \frn^\perp \cap \gs] \perp \gs$.
\end{enumerate}
\end{lem}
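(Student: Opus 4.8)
The plan is to derive all eight relations from two inputs. The first is Theorem~\ref{mthm:invariance}(2): $\met$ is invariant under the adjoint action of every element of $\gs=\frs\ltimes\frr$, so that
$\langle[A,B],C\rangle=-\langle B,[A,C]\rangle$ whenever $A\in\gs$ — in particular for $A\in\frr$ and for $A\in\frn$. The second is a package of purely Lie-theoretic containments coming from Jacobson's theorem \cite{jacobson} that a derivation of a solvable Lie algebra maps it into its nilradical: since $\ad_\frg(Y)|_\frr$ is a derivation of $\frr$ for every $Y\in\frg$, we get $[\frg,\frr]\subseteq\frn$, hence $[\frr,\frr]$, $[\frk,\frr]$, $[\frs,\frr]$ and $[\gs,\frr]$ all lie in $\frn$; since $[\frk,\frs]=0$, also $[\gs,\frk]\subseteq\frn$; and writing $\gs=\frs+\frr$ gives $[\gs,\frg]\subseteq\frs+\frn$. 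Recall also that $\gs$ and $\frn$ are ideals of $\frg$. Throughout, let $\mathcal U$ denote $[\frg,\frr]$, $\frn$, $\frs+\frn$, $\frn$ in items (1)--(4), respectively.

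For the \emph{first} relation in each item (those of the shape $[\,\frr\text{ or }\gs,\ \mathcal U^{\perp}\,]\perp(\cdots)$) I would argue directly. Given $W$ in $\frr$ (items (1), (2)) or in $\gs$ (items (3), (4)), an element $B\in\mathcal U^{\perp}$, and a test vector $Y$ in the target space, skewness of $\ad(W)$ gives $\langle[W,B],Y\rangle=-\langle B,[W,Y]\rangle$, and the containments above place $[W,Y]$ in $\mathcal U$: for (1) via $[\frr,\frg]=[\frg,\frr]$, for (2) via $[\frr,\frg]\subseteq\frn$, for (3) via $[\gs,\frg]\subseteq\frs+\frn$, and for (4) via $[\gs,\frr]\subseteq\frn$ together with $[\gs,\frk]\subseteq\frn$. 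Since $B\perp\mathcal U$, the pairing vanishes. In items (1), (2), (3) this in fact yields the sharper statements $[\frr,[\frg,\frr]^{\perp}]\subseteq\frg^{\perp}$, $[\frr,\frn^{\perp}]\subseteq\frg^{\perp}$ and $[\gs,(\frs+\frn)^{\perp}]\subseteq\frg^{\perp}$; I would record these, since they are exactly what the second relations need.

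For the \emph{second} relation in each item (those of the shape $[\,\frg\text{ or }\frk+\frr,\ \mathcal U^{\perp}\cap\gs\,]\perp(\cdots)$), the active vector $B$ now lies in $\mathcal U^{\perp}\cap\gs\subseteq\gs$, so $\ad(B)$ is skew. For $Z$ in the source space and $W$ in the target space I would write $\langle[Z,B],W\rangle=\langle Z,[B,W]\rangle=-\langle Z,[W,B]\rangle$; then $[W,B]$ — with $W$ now in the role of the $\frr$- or $\gs$-element and $B\in\mathcal U^{\perp}$ — is controlled by the first relation of the \emph{same} item: in (2) by $[\frr,\frn^{\perp}]\subseteq\frg^{\perp}$ (take $W\in\frr$), in (3) by $[\gs,(\frs+\frn)^{\perp}]\subseteq\frg^{\perp}$, and in (4) by $[\gs,\frn^{\perp}]\perp(\frk+\frr)$, so that $\langle Z,[W,B]\rangle=0$ in each case. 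Hence the items must be carried out in the stated order, each second relation after its first.

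The only delicate point — rather than a genuine obstacle — is that one really needs $\ad$ to be skew on all of $\frr$ and on all of $\gs$, which is precisely Theorem~\ref{mthm:invariance}(2) and is strictly stronger than the $(\frs\ltimes\frn)$-invariance of Lemma~\ref{lem:sinvariant}; beyond that, the whole lemma is bookkeeping with the containments above and repeated use of $\langle[A,B],C\rangle=-\langle B,[A,C]\rangle$ for $A\in\gs$.
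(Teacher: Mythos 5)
Your proof is correct and follows essentially the same route as the paper: the paper condenses your two-step computation into a single remark (if $\fra\subseteq\inv(\frg,\met)$ and $[\fra,\frc]\subseteq\frb$, then $[\fra,\frb^{\perp}]\perp\frc$, and consequently $\fra\perp[\frb^{\perp}\cap\inv(\frg,\met),\frc]$), applied with exactly your containments $[\frg,\frr]\subseteq\frn$, $[\gs,\frg]\subseteq\frs+\frn$, $[\gs,\frk+\frr]\subseteq\frn$ and the $\gs$-invariance from Theorem \ref{mthm:invariance}(2). Your unpacked version, including deriving each second relation from the first relation of the same item, matches the paper's argument step for step.
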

%\begin{proof}
%This argument implies (1), (2), (3), (4), respectively. 
%\end{proof}

\noindent 
The lemma is clearly implied by: 

\begin{remark} \label{rem:abc} 
Let $\fra \subseteq\inv(\frg,\met)$, $\frb, \frc \subseteq\frg$ be
subspaces such that $[\fra, \frc] \subseteq\frb$. Then 
$[\fra, \frb^{\perp}] \perp \frc$.  Furthermore, this implies
$\fra \perp  [\frb^{\perp} \cap \inv(\frg,\met), \frc]$. 
\end{remark} 

\begin{lem}\label{lem:jperp_1}
Let $\frj \subseteq\gs$ be an ideal in $\frg$. Then the  following hold:
\begin{enumerate}
\item
$\frj^\perp\cap\gs$ and $\frj^\perp\cap\frj$ are ideals of $\frg$.
\item
$[\frj, \frj^\perp] \subseteq \frg^{\perp}$.
\end{enumerate}
\end{lem}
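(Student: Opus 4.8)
The plan is to prove Lemma \ref{lem:jperp_1} by treating the two assertions separately, using the invariance of $\met_{\gs}$ under $\frg$ (Theorem \ref{mthm:invariance}(1)) as the crucial input throughout, since $\frj$ lies inside $\gs$.

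For part (2), I would argue as follows. Let $X, Y \in \frj$ and $Z \in \frj^{\perp}$. Since $\frj$ is an ideal of $\frg$ contained in $\gs$, the bracket $[Z, X]$ lies in $\gs$, and we may apply the $\frg$-invariance of $\met$ on $\gs$: $\langle [X, Z], Y\rangle = -\langle Z, [X, Y]\rangle = 0$, because $[X,Y] \in \frj$ and $Z \in \frj^{\perp}$. Hence $[\frj, \frj^{\perp}] \perp \frj$. But we must show $[\frj, \frj^{\perp}] \perp \frg$, i.e.\ it lands in $\frg^{\perp}$. To get the full orthogonality, take instead arbitrary $Y \in \frg$: then $\langle [X, Z], Y\rangle = -\langle Z, [X, Y]\rangle$ since $[X,Z] \in \gs$ and $\met$ is $\gs$-invariant under $\frg$; now $[X, Y] \in [\frj, \frg] \subseteq \frj$ because $\frj$ is an ideal, so $\langle Z, [X,Y]\rangle = 0$ as $Z \in \frj^{\perp}$. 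This gives $[\frj, \frj^{\perp}] \perp \frg$, i.e.\ $[\frj,\frj^{\perp}] \subseteq \frg^{\perp}$, which is (2) (and is in the same spirit as Lemma \ref{lem:SRGperp}).

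For part (1), I would first observe that $\frj^{\perp} \cap \gs$ and $\frj^{\perp} \cap \frj$ are subspaces, and the task is to show each is invariant under $[\frg, \cdot\,]$. Both $\gs$ and $\frj$ are already known to be ideals of $\frg$ (the former is characteristic, the latter by hypothesis), so it suffices to show $\frj^{\perp}$ is ``$\frg$-invariant modulo'' intersecting with these ideals --- more precisely, that $[\frg, \frj^{\perp} \cap \gs] \subseteq \frj^{\perp}$ and $[\frg, \frj^{\perp} \cap \frj] \subseteq \frj^{\perp}$. For the key point: take $Y \in \frg$, $Z \in \frj^{\perp} \cap \gs$, and $X \in \frj$. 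Then $\langle [Y, Z], X\rangle = -\langle Z, [Y, X]\rangle$, valid because $[Y, Z] \in \gs$ (as $\gs$ is an ideal and $Z \in \gs$) so $\gs$-invariance of $\met$ under $\frg$ applies; and $[Y, X] \in \frj$ since $\frj$ is an ideal, so $\langle Z, [Y,X]\rangle = 0$. Thus $[Y,Z] \perp \frj$, i.e.\ $[Y, Z] \in \frj^{\perp}$, and combined with $[Y,Z] \in \gs$ we get $[Y,Z] \in \frj^{\perp} \cap \gs$. This handles $\frj^{\perp} \cap \gs$; for $\frj^{\perp} \cap \frj$ the same computation works verbatim (now $Z \in \frj \subseteq \gs$ as well, and $[Y,Z] \in \frj$ since $\frj$ is an ideal), giving $[Y,Z] \in \frj^{\perp} \cap \frj$.

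I do not expect a serious obstacle here; the lemma is essentially a bookkeeping consequence of Theorem \ref{mthm:invariance}(1) together with the fact that both $\gs$ and $\frj$ are ideals. The one subtlety to be careful about is that $\met$ is only invariant under $\frg$ when restricted to $\gs$ (not on all of $\frg$), so every application of the skew-symmetry identity $\langle [Y, Z], X \rangle = -\langle Z, [Y, X]\rangle$ must have at least one of the two brackets landing inside $\gs$; since in all the cases above the relevant arguments lie in $\gs$ (because $\gs$ and $\frj$ are ideals), this condition is automatically met. I would state the proof compactly, doing the computation once and noting it applies in each case.
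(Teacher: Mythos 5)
Your proof is correct and takes essentially the same route as the paper: part (1) is the paper's observation that invariance of $\met_{\gs}$ under $\frg$ (Theorem \ref{mthm:invariance}(1)) makes $\frj^\perp\cap\gs$, and hence $\frj^\perp\cap\frj$, an ideal, and your computation for part (2) is exactly the paper's Remark \ref{rem:abc} applied with $\fra=\frb=\frj$, $\frc=\frg$. Only note that the key identity in part (2) is licensed by Theorem \ref{mthm:invariance}(2) (i.e.\ $\frj\subseteq\gs\subseteq\inv(\frg,\met)$, so $\ad(X)$ is skew on all of $\frg$), not by the invariance of $\met_{\gs}$ alone, since there $Z\in\frj^\perp$ and $Y\in\frg$ need not lie in $\gs$; your closing heuristic (``at least one bracket lands in $\gs$'') is not a valid criterion in general, though every step you actually perform is covered by one of the two parts of Theorem \ref{mthm:invariance}.
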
 
\begin{proof}
Since $\met$ restricted to $\gs$ is $\frg$-invariant by Theorem
\ref{mthm:invariance},
$\frj^\perp\cap\gs$ is an ideal in $\frg$.
It follows that  $\frj^\perp\cap\frj$ is an ideal.
Hence (1) holds. Now (2) follows using the above remark % \ref{rem:abc} 
with $\fra = \frj$, $\frc = \frg$ and $\frb = \frj$. 
%Let $X_1 \in \frb$, $X_2\in\frb^\perp$,  $Y\in\frg$. Since
%$[Y, X_1]\in\frb$ and $\met$ is $\gs$-invariant,
%$\langle [X_1,X_2],Y\rangle=\langle X_2, [Y, X_1] \rangle=0$.
%Hence $[X_1,X_2]\in\frg^\perp$. This proves (2).
\end{proof}

\subsubsection{Radicals in effective metric Lie algebras} 

For all following results,  we shall also require that the metric Lie algebra $(\frg, \met)$ is 
effective.  That is, we assume for now that \emph{$\frg^\perp$ does not contain any non-trivial
ideal of $\frg$}. 

\begin{lem}  \label{lem:jperp_effective}
Let $\fri, \frj  \subseteq\gs$ be ideals in $\frg$. Then: 
\begin{enumerate}
\item
$[\frj, \frj^\perp \cap \gs]  = \zsp$ and
$\frj^\perp \cap \frj = \frj^\perp \cap \frz(\frj)$.
\item
If 
$\zen(\fri) \subseteq \frj \subseteq \fri$ then
$
\fri^\perp\cap \fri\ \subseteq\ \frj^\perp\cap \frj 
$. 
\end{enumerate}
\end{lem}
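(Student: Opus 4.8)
The plan is to prove the two assertions in Lemma~\ref{lem:jperp_effective} in order, using the already-established invariance properties. For part~(1), I would start from Lemma~\ref{lem:jperp_1}(2), which gives $[\frj, \frj^\perp] \subseteq \frg^\perp$; intersecting the second factor with $\gs$ and using that $\met_{\gs}$ is $\frg$-invariant (Theorem~\ref{mthm:invariance}), the ideal $[\frj, \frj^\perp \cap \gs]$ is contained in $\frg^\perp \cap \gs \subseteq \frz(\gs)$ by Corollary~\ref{cor:GSGperp}, hence it is an ideal of $\frg$ lying inside $\frg^\perp$. Effectivity then forces $[\frj, \frj^\perp \cap \gs] = \zsp$. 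For the identity $\frj^\perp \cap \frj = \frj^\perp \cap \frz(\frj)$: the inclusion $\supseteq$ is trivial, and for $\subseteq$, any element of $\frj^\perp \cap \frj$ commutes with $\frj$ precisely because $[\frj, \frj^\perp \cap \frj] \subseteq [\frj, \frj^\perp \cap \gs] = \zsp$ (noting $\frj \subseteq \gs$).

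For part~(2), assume $\zen(\fri) \subseteq \frj \subseteq \fri$ and take $X \in \fri^\perp \cap \fri$. Since $\frj \subseteq \fri$, clearly $X \perp \frj$, so $X \in \frj^\perp$; the work is to show $X \in \frj$, and in fact it suffices to show $X \in \zen(\fri)$, since $\zen(\fri) \subseteq \frj$. By part~(1) applied to the ideal $\fri$, we have $X \in \fri^\perp \cap \fri = \fri^\perp \cap \frz(\fri)$, so indeed $X \in \zen(\fri) \subseteq \frj$, giving $X \in \frj^\perp \cap \frj$. This chains together cleanly once part~(1) is in hand.

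The main obstacle, as I see it, is making sure that the ideal-theoretic bookkeeping is airtight: one must verify that $[\frj, \frj^\perp \cap \gs]$ really is an ideal of $\frg$ (which follows because $\frj$ and $\frj^\perp \cap \gs$ are both ideals of $\frg$ by Lemma~\ref{lem:jperp_1}(1), so their bracket is too) and that it lies in $\frg^\perp$ rather than merely in $\frg^\perp \cap \gs$ — but these coincide on the relevant subspace, and the containment in $\frg^\perp$ is exactly Lemma~\ref{lem:jperp_1}(2). Once effectivity is invoked to kill this ideal, everything else is formal. So the proof is short; the only subtlety is citing the correct combination of Theorem~\ref{mthm:invariance}, Corollary~\ref{cor:GSGperp}, and Lemma~\ref{lem:jperp_1}, and being careful that part~(2)'s hypothesis $\zen(\fri) \subseteq \frj$ is used only at the very last step.
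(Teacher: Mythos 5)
Your proof is correct and takes essentially the same route as the paper: Lemma \ref{lem:jperp_1} (the bracket $[\frj,\frj^\perp\cap\gs]$ is an ideal of $\frg$ contained in $\frg^\perp$) plus effectivity gives part (1), and part (2) follows by applying (1) to $\fri$ together with $\fri^\perp\subseteq\frj^\perp$. The detour through Corollary \ref{cor:GSGperp} to place the ideal inside $\frz(\gs)$ is harmless but unnecessary, since containment in $\frg^\perp$ already suffices.
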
 
\begin{proof}
By Lemma \ref{lem:jperp_1} (1), $[\frj, \frj^\perp \cap \gs]$ is an ideal of $\frg$
and contained in $\frg^\perp$.
Since $\frg^\perp$ does not contain any 
non-trivial ideal of $\frg$, 
 $[\frj, \frj^\perp \cap \gs] = \zsp$. 
%This implies that $\frb^\perp \cap \frb$ is contained in
%$\zen(\frb)$.  
Hence, (1) holds.
Under the assumption of  (2), this means
$\fri^\perp\cap \fri \subseteq\zen(\fri) \subseteq \frj$.
Since also $\fri^\perp \subseteq \frj^\perp$, (2) follows.
\end{proof}

%\begin{lem} \label{lem:jperp_3}
%Assume that $\frg^\perp$ does not contain any 
%non-trivial ideal of $\frg$. 
%\begin{enumerate}
%\item
%Let $\frj \subseteq\gs$ be  an ideal in $\frg$.  Then
%$[\frj, \frj^\perp \cap \gs]  = \zsp$ and
%$\frj^\perp \cap  \frj = \frj^\perp \cap \frz(\frj)$.
%In particular, if\ $\frj$ is isotropic, then $\frj$ is abelian. 
%\item
%Let $\frj_1, \frj_2 \subseteq\gs$ be ideals in $\frg$, such that 
%$ \frz(\frj_2) \subseteq \frj_1 \subseteq \frj_2$. Then
%\[
%\frj_2^\perp\cap \frj_2\ \subseteq\ \frj_1^\perp\cap \frj_1 .
%\]
%\end{enumerate}
%\end{lem}
%\begin{proof}
%By  Lemma \ref{lem:jperp_1}, $[\frj, \frj^\perp \cap \gs]$ is an ideal of $\frg$ and contained in $\frg^\perp$.
%Therefore, $[\frj, \frj^\perp \cap \gs] = \zsp$. 
%This implies that $ \frj^\perp \cap \frj $ is contained in  $\frz(\frj)$.  Hence, (1) holds.
%
%%Since $\frj_2^\perp\cap \frj_2$ is also an ideal of $\frg$,
%%we have
%%$[\frj_{2}, \frj_2^\perp\cap \frj_2]=\zsp$.
%For part (2), this implies
%$\frj_2^\perp\cap \frj_2\subseteq\zen(\frj_2)$.
%Since also $\frj_2^\perp \subseteq \frj_1^\perp$ and
%by assumption $\frz(\frj_2) \subseteq \frj_1$,
%the inclusion in (2) follows. 
%\end{proof}

The next result somewhat strengthens  Corollary \ref{cor:GSGperp}. 
\begin{prop} \label{prop:jperp_4}
The following hold: 
\begin{enumerate}
\item
$[\frr,   [\frg,\frr]^\perp \cap \gs] = \zsp$.
\item
$[\frr,  \frn^\perp \cap \gs] = \zsp$.
In particular,  $ \frn^\perp  \cap  \frr \subseteq \frz(\frr)$.
%\item
%$[\gs, \gs^\perp\cap\gs] = \zsp$.
%In particular, $\gs^\perp\cap\gs \subseteq \frz(\gs)$. 
\item
$[\gs, (\frs + \frn)^\perp\cap\gs] = \zsp$.
In particular, $(\frs + \frn)^\perp \cap\gs\subseteq\frz(\gs)$.
\end{enumerate}
\end{prop}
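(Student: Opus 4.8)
\textbf{Proof plan for Proposition \ref{prop:jperp_4}.}

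The three statements are instances of a common pattern: take an ideal $\frj\subseteq\gs$ of $\frg$ of the form $[\frg,\frh]$ (or a related characteristic ideal) where $\frh$ acts skew-symmetrically, pass to $\frj^\perp\cap\gs$, and show the relevant bracket is an ideal of $\frg$ contained in $\frg^\perp$, hence trivial by effectivity. So the plan is to set up each case so that Lemma \ref{lem:jperp_1} and Lemma \ref{lem:jperp_effective} apply, then bootstrap the ``in particular'' clauses.

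For (1): take $\frj = [\frg,\frr]\cap\gs$. This is an ideal of $\frg$ contained in $\gs$ (it is the intersection of two ideals, using that $\gs$ is a characteristic ideal). By Lemma \ref{lem:jperp_effective}(1), $[\frj,\frj^\perp\cap\gs]=\zsp$. But I want $[\frr,[\frg,\frr]^\perp\cap\gs]=\zsp$, which is a slightly different bracket. Here I would invoke Lemma \ref{lem:jperp_2}(1), which already gives $[\frr,[\frg,\frr]^\perp]\perp\frg$; combined with the fact that $[\frr,[\frg,\frr]^\perp\cap\gs]\subseteq\gs$ and that $\met$ is non-degenerate on $\gs/(\gs\cap\gs^\perp)$, and that $\gs\cap\gs^\perp\subseteq\frz(\gs)$ by Corollary \ref{cor:GSGperp}, one concludes the bracket lies in $\frz(\gs)\cap\frg^\perp$... more directly: $[\frr,[\frg,\frr]^\perp\cap\gs]$ is contained in $[\frg,\frr]$ (as $[\frg,\frr]$ is an ideal) and is $\perp\frg$ by Lemma \ref{lem:jperp_2}(1), so it sits inside $\frg^\perp\cap[\frg,\frr]$; since it is also an ideal of $\frg$ (bracket of the ideal $[\frg,\frr]^\perp\cap\gs$ with $\frr$, both terms being ideals or behaving well — this needs a short check using Lemma \ref{lem:jperp_1}(1)), effectivity forces it to vanish.

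For (2): apply the same scheme with $\frj = \frn$ (note $\frn\subseteq\frr\subseteq\gs$ and $\frn$ is characteristic, hence an ideal of $\frg$). Lemma \ref{lem:jperp_2}(2) gives $[\frr,\frn^\perp]\perp\frg$, so $[\frr,\frn^\perp\cap\gs]\subseteq\frg^\perp$; that this bracket is an ideal of $\frg$ follows since $\frn^\perp\cap\gs$ is an ideal of $\frg$ by Lemma \ref{lem:jperp_1}(1) and $\frr$ is an ideal. Effectivity then kills it. The ``in particular'' $\frn^\perp\cap\frr\subseteq\frz(\frr)$ is immediate: an element $X\in\frn^\perp\cap\frr\subseteq\frn^\perp\cap\gs$ satisfies $[\frr,X]=\zsp$ by what was just proved, and also $[X,\frr]=\zsp$, so $X$ centralizes $\frr$. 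For (3): take $\frj=(\frs+\frn)\cap\gs=\frs\ltimes\frn$, which is an ideal of $\frg$ contained in $\gs$; use Lemma \ref{lem:jperp_2}(3), giving $[\gs,(\frs+\frn)^\perp\cap\gs]\perp\frg$, hence $\subseteq\frg^\perp$; it is an ideal of $\frg$ by Lemma \ref{lem:jperp_1}(1) (with $\frj=\frs\ltimes\frn$) since $(\frs+\frn)^\perp\cap\gs$ is then an ideal and $\gs$ is an ideal; effectivity finishes it, and the ``in particular'' follows since then any $X\in(\frs+\frn)^\perp\cap\gs$ has $[\gs,X]=\zsp$, i.e.\ $X\in\frz(\gs)$.

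The main obstacle I anticipate is the bookkeeping of ``which brackets are ideals of $\frg$'': in each case the quantity I want to kill must be verified to be a genuine ideal of $\frg$, not merely a subspace contained in $\frg^\perp$, before effectivity can be invoked. This is where Lemma \ref{lem:jperp_1}(1) (asserting $\frj^\perp\cap\gs$ is an ideal of $\frg$ whenever $\frj\subseteq\gs$ is) does the heavy lifting, together with the observation that $\frr$, $\frn$, $\gs$, $\frs\ltimes\frn$ are all ideals of $\frg$. Once that is in place, the orthogonality statements of Lemma \ref{lem:jperp_2} supply the containment in $\frg^\perp$, and the conclusion is automatic. The deduction of the ``in particular'' clauses is then routine, using only that a subspace of $\gs$ that is bracket-annihilated by $\frr$ (resp.\ $\gs$) lies in $\frz(\frr)$ (resp.\ $\frz(\gs)$).
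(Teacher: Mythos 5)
Your proposal is correct and follows essentially the same route as the paper: for each case one applies Lemma \ref{lem:jperp_1}(1) to see that $\frj^\perp\cap\gs$ is an ideal of $\frg$ (with $\frj=[\frg,\frr]$, $\frn$, $\frs+\frn$ respectively), notes that its bracket with the ideal $\frr$ (resp.\ $\gs$) is again an ideal of $\frg$, uses Lemma \ref{lem:jperp_2} to place that bracket inside $\frg^\perp$, and concludes by effectivity; the ``in particular'' clauses follow exactly as you say.
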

\begin{proof}
By Lemma \ref{lem:jperp_1} (1),
$\frj^\perp\cap\gs$ is an ideal of $\frg$ for 
any ideal $\frj$ of $\frg$ contained in $\gs$.
Then for any ideal $\fri$ of $\frg$,
$[\fri,\frj^\perp \cap\gs]$ is also an ideal in $\frg$.  
Therefore,  if
$[\fri,\frj^\perp \cap\gs]\subseteq \frg^\perp$, then
$[\fri,\frj^\perp \cap\gs]=\zsp$.
In the view of Lemma \ref{lem:jperp_2}, (1), (2), (3) follow.  
%$\frj^\perp\cap\gs$ is an ideal in $\frg$, 
%since the restriction of $\met$ to $\gs$ is $\frg$-invariant.
%Hence, for any ideal $\fri$ of $\frg$,
%$[\fri, \frj^\perp \cap \gs]$
%is an ideal of $\frg$. In the view of Lemma \ref{lem:jperp_1}, (1), (2), (3) follow.  
\end{proof}

%Since $\frg^\perp$ does not contain any non-trivial
%ideal of $\frg$, 
We can deduce from (2) of  Proposition \ref{prop:jperp_4}
the equalities 
\begin{gather} 
\frn^\perp \cap \frr
=\frn^\perp \cap \frn
=\frn^\perp \cap \zen(\frn)
=\frn^\perp \cap \zen(\frr) ,\\
\frr^\perp \cap \frr
=\frr^\perp \cap \frn
=\frr^\perp \cap \zen(\frn)
=\frr^\perp \cap \zen(\frr).
\end{gather}
Also (3) of  Proposition \ref{prop:jperp_4} shows that 
\begin{equation} 
\label{eq:jperptower2}
% \frn^\perp  \cap  \frn \subseteq \zen(\frr)  \ \text{ and } \
\gs^\perp\cap\gs \subseteq \zen(\gs)\subseteq\zen(\frr)  , 
\end{equation}
Moreover,  using nil-invariance of $\met$
and Corollary \ref{cor:ksperp} (1), the above  yield 
\begin{equation}
\label{eq:jperptower3}
[\frg,\frn^\perp \cap \frn] \subseteq  \frr^\perp \cap \frr,
\
[\frs,\frn^\perp\cap\frn]\subseteq\frr^\perp\cap\frr\cap\frk^\perp
\ \text{ and }\ 
[\frk + \frr, \frn^\perp \cap \frn] \subseteq  \gs^\perp  \cap  \frr . 
\end{equation} 
Thus there is a  tower of totally isotropic ideals
of $\frg$ contained in $\frz(\frr)$:  
\begin{equation}\label{eq:jperptower}
\gs^\perp\cap\gs\ \subseteq\
\frr^\perp  \cap  \frr  \ \subseteq\
\frn^\perp  \cap  \frn  .  
\end{equation} 
% Furthermore, according to Lemma  \ref{lem:jperp_5},  
%and, according to Lemma \ref{lem:jperp_1}

%\medskip
%These relations yield the following important property of
%$\frg^\perp$: \oliver{The following is now replaced by Theorem \ref{mthm:gperp}.}
%
%\begin{cor} \label{cor:GperpinKR} 
%If $\frg^\perp$ does not contain any non-trivial
%ideal of $\frg$,  then 
%% \hspace{1cm}
%\[
%\frg^\perp\ \subseteq\ \gs^\perp\ \subseteq\ \frk\ltimes\frr.
%\]
%\end{cor}
%\begin{proof}
%%If $\gs^\perp$ projects non-trivially to $\frs$,
%%then $[\frs,\gs^\perp]$ projects non-trivially to $\frs$
%%by semisimplicity of $\frs$.
%Note that 
%$[\frs,\gs^\perp]\subseteq\gs^\perp\cap\gs\subseteq\frn^\perp\cap\frn$, 
%by \eqref{eq:jperptower}.
%Hence,  $\gs^\perp\subseteq  \frk\ltimes\frr$. 
%
%Moreover, by  \eqref{eq:jperptower2}, $[\gs,\gs^\perp] \subseteq \frz(\gs)$.  
%\end{proof}
%\oliver{Doppelt gemoppelt? Make it a corollary. Is this part used somewhere?
%} 

%\begin{cor}\label{cor:Rperp2nil}
%Suppose that $\frg^\perp$ does not contain any non-trivial
%ideal of $\frg$.
%Then
%the solvable radical of $\gs^\perp$ (or $\frg^\perp$)
%is the semidirect product of an abelian subalgebra and
%an abelian ideal.
%\end{cor}
%\begin{proof}
%By \eqref{eq:jperptower2},
%$\frg^\perp\cap\frr\subseteq\gs^\perp\cap\frr\subseteq\zen(\frr)$.
%Since $\gs^\perp/(\gs^\perp\cap\frr)$ is contained in a
%semisimple Lie algebra $\frk$ of compact type, it follows that
%the solvable radical of $\gs^\perp$ is a semidirect product
%of an abelian algebra and $\gs^\perp\cap\frr$.
%Similar for $\frg^\perp$.
%\end{proof}

%
\subsection{Actions of semisimple subalgebras on the solvable radical} \label{sec:fsubmodules} 
 Let $\frq$ be a subalgebra of $\frg$. We call the subspace $W \subseteq \frg$ a submodule for $\frq$ 
 if $[\frq, W] \subseteq W$. In the following we let $\frf \subseteq \frg$ denote a semisimple subalgebra of $\frg$. 
 As usual, we decompose $\frf = \frk \times \frs$, where $\frk$ is an ideal of compact type and 
 $\frs$ has no factor of compact type.  

\begin{lem}\label{lem:fsubmodules_perp}
Let % $\frf \subseteq \frg$ be a semisimple subalgebra,
$W \subseteq \frg$ be a submodule for $\frf$,  with $\dim [\frf, W]  \leq 2$. Then: 
\[
\frf \perp  [\frf, W], \ [\frk, W]  = \zsp, \ \frs \perp W.
\]
%
% 
%If $V$ is not a trivial module for $\frf$, then 
%\begin{enumerate}
%\item
%$\frf  = \frf_0 \times \sl_2(\RR)$, where $\frf_0$ is the kernel of
%the representation of $\frf$ on $V$.
%\item $V \subseteq \gs$ and 
%$\frf \perp V$. 
%\end{enumerate}
\end{lem}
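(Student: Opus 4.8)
The plan is to exploit that $\frf$ is semisimple, so $\frf=[\frf,\frf]$, and that $[\frf,W]$ is a submodule for $\frf$ of dimension at most two; a semisimple Lie algebra has no nontrivial representations of dimension one or two, hence $\frf$ acts trivially on $[\frf,W]$, i.e. $[\frf,[\frf,W]]=\zsp$. First I would establish this triviality claim: if $V:=[\frf,W]$ has $\dim V\le 2$, then the image of $\frf$ in $\gl(V)\subseteq\sl(V)$ is a quotient of the semisimple algebra $\frf$, hence semisimple, but $\sl_1$ and $\sl_2$ contain no nonzero semisimple ideal of dimension $\le 1$ and $\sl(V)$ for $\dim V\le 2$ has no semisimple subalgebra acting irreducibly except in dimension $2$ where $\sl_2$ acts, yet then $\frf\twoheadrightarrow\sl_2(\RR)$ would force a $2$-dimensional irreducible piece — still fine, but actually the cleaner route: $V$ decomposes into irreducibles of dimension $1$ (trivial) since an irreducible nontrivial $\frf$-module has dimension $\ge 3$ for $\frf$ semisimple real (the smallest nontrivial irreducible of any simple real Lie algebra has dimension $\ge 2$, and dimension exactly $2$ forces $\frf$ to have $\sl_2$-type behaviour but $2$-dimensional real irreducibles of $\sl_2(\RR)$ do not exist; the standard fact is that $3$ is the minimum). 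So $[\frf,V]=\zsp$, which already gives $[\frf,[\frf,W]]=\zsp$.

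Next I would use nil-invariance together with this triviality. Write $V=[\frf,W]$. For the orthogonality $\frf\perp V$: take $X,X'\in\frf$ and $w\in W$; then $[X',w]\in V$ and, since $\frf$ acts trivially on $V$, we have $[X,[X',w]]=0$. Now I want to play $\langle X,[X',w]\rangle$ against invariance. The point is that by Lemma \ref{lem:sinvariant} the subalgebra $\frs\ltimes\frn$ lies in $\inv(\frg,\met)$, and by Lemma \ref{lem:gs_orthog}/Lemma \ref{lem:gk_orthog} we have good orthogonality between $\frg^{\frh}$ and $[\frh,\frg]$. More directly: since $\frf=[\frf,\frf]$, every element of $\frf$ is a sum of brackets $[X_1,X_2]$ with $X_i\in\frf$, so $\langle [X_1,X_2], v\rangle$ for $v\in V$ — if $X_2\in\inv(\frg,\met)$ (which holds when $X_2\in\frs$, or can be arranged) this equals $-\langle X_1,[X_2,v]\rangle=0$ because $[X_2,v]\in[\frf,V]=\zsp$. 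Handling the compact part $\frk$ needs the nilpotent-part argument: for $K\in\frk$ and $v\in V$, write $K=[K_1,K_2]$ and use that $\ad(K_2)_{\rm n}$ is skew (nil-invariance) and $\ad(K_2)$ kills $v$, hence so does its nilpotent part; combined with $\ad(K_2)_{\rm ss}$ being handled by a reductive decomposition of $\frg$ relative to $K_2$. This is essentially the computation already performed in the proof of Lemma \ref{lem:gk_orthog}, so I would cite that structure rather than redo it.

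For $[\frk,W]=\zsp$: once $\frf\perp[\frf,W]$ is known, in particular $\frk\perp[\frk,W]$ and $\frk\perp[\frs,W]$; but more is true — apply the argument to the submodule $W':=W$ under $\frk$ alone, noting $[\frk,W]\subseteq[\frf,W]$ has dimension $\le 2$, so $\frk$ acts trivially on $[\frk,W]$, meaning $[\frk,[\frk,W]]=\zsp$. Then for a definite-type semisimple $\frk$ acting on the $\frk$-module $W$, the submodule $[\frk,W]$ is the sum of the nontrivial isotypic components, and $\frk$ acting trivially on it forces those components to be trivial, i.e. $[\frk,W]=\zsp$ outright. Finally $\frs\perp W$: decompose $W=W^{\frs}\oplus W'$ where $W'$ is the sum of nontrivial $\frs$-isotypic pieces; then $[\frs,W]=[\frs,W']\subseteq W'$, and since $\frs=[\frs,\frs]$ Lemma \ref{lem:gs_orthog} (first part, applied with $\frh=\frs$, noting $W^{\frs}\subseteq\frg^{\frs}$ so $W^{\frs}\perp[\frs,\frg]\supseteq\frs$ after using semisimplicity) gives $\frs\perp W^{\frs}$; and $\frs\perp W'$ follows since $W'=[\frs,W']$ modulo lower terms — more carefully, $W'\subseteq[\frs,W]+([\frs,W]$-type iterates$)$, all orthogonal to $\frs$ by the part already proved combined with $\frf\perp[\frf,W]$. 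The main obstacle I anticipate is the last point, $\frs\perp W$ in full (not just $\frs\perp[\frs,W]$): one must leverage that on a semisimple module every nontrivial isotypic component equals its own image under the action, i.e. $W'=[\frs,W']$, so that $\langle \frs,W'\rangle=\langle\frs,[\frs,W']\rangle\subseteq\langle[\frs,\frs],W'\rangle$ and invariance (Lemma \ref{lem:sinvariant}) closes the loop, while the trivial part is dispatched by Lemma \ref{lem:gs_orthog}.
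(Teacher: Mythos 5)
The central step of your plan is the claim that, because $\dim[\frf,W]\le 2$ and $\frf$ is semisimple, $\frf$ must act trivially on $[\frf,W]$, i.e.\ $[\frf,[\frf,W]]=\zsp$; you justify this by asserting that a semisimple real Lie algebra has no nontrivial irreducible real module of dimension $\le 2$ (``$2$-dimensional real irreducibles of $\sl_2(\RR)$ do not exist; the standard fact is that $3$ is the minimum''). This is false: $\sl_2(\RR)$ acts irreducibly on $\RR^2$ by its standard representation (the bound $3$ is correct for the compact form $\so_3\cong\su_2$, not for the split form). And this is not a peripheral case but precisely the critical one: if $W$ is a nontrivial $\frf$-module with $\dim[\frf,W]\le 2$, then the effective quotient of $\frf$ is $\sl_2(\RR)$ acting on the two-dimensional space $[\frf,W]$ by the standard representation, so $[\frf,[\frf,W]]=[\frf,W]\neq\zsp$. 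Consequently the computations resting on this triviality collapse; for instance your step $\langle [X_1,X_2],v\rangle=-\langle X_1,[X_2,v]\rangle=0$ ``because $[X_2,v]\in[\frf,V]=\zsp$'' is unjustified, and neither $\frf\perp[\frf,W]$ nor $\frs\perp W$ is obtained.

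What is needed to handle this case --- and what the paper's proof uses --- is a statement about skew pairings: since $[\frf,W]\subseteq[\frg,\gs]\subseteq\gs$ and $\gs\subseteq\inv(\frg,\met)$ by Theorem \ref{mthm:invariance}(2), the form induces a skew pairing $\sl_2(\RR)\times[\frf,W]\to\RR$, and Proposition \ref{prop:sl2_skew_module} shows that a nonzero skew pairing with a nontrivial irreducible $\sl_2$-module forces that module to be the three-dimensional adjoint one; as $\dim[\frf,W]=2$, the pairing vanishes, giving $\sl_2(\RR)\perp[\frf,W]$ and hence $\frf\perp[\frf,W]$. From there, $[\frk,W]=\zsp$ is immediate because the compact-type ideal $\frk$ lies in the kernel $\frf_0$ of the action on $W$ (it cannot surject onto $\sl_2(\RR)$); your separate compact-type argument for this point is repairable, since nontrivial real irreducibles of compact-type algebras do have dimension $\ge 3$. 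Likewise your treatment of $\frs\perp W$ (splitting $W=W^{\frs}\oplus[\frs,W]$, using Lemma \ref{lem:gs_orthog} on the invariants and $\frs=[\frs,\frs]\subseteq\inv(\frg,\met)$ on the rest) is essentially the paper's closing argument --- but it only works once $\frs\perp[\frs,W]$ is established, which your proposal does not achieve without an ingredient like Proposition \ref{prop:sl2_skew_module}.
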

\begin{proof}
Assume first that $W$ is not a trivial module. Thus $\dim [\frf, W ] = 2$ and 
$\frf = \frf_0 \times \sl_2(\RR)$, where $\frf_0$ is the kernel of
the representation of $\frf$ on $W$.
 As $W  \subseteq \frg^{\frf_0}$, 
 Lemma \ref{lem:gk_orthog} states that $[W, \gs]  \perp \frf_0$. Clearly,
\[
[W, \frf]  = [W,  \sl_2(\RR)] \ \subseteq \  [W, \gs]  .
\]
Therefore, $ \frf_0 \perp [W, \frf]  $ and $ [W, \frf]  \subseteq \gs$.    

Since $\gs \subseteq \inv(\frg, \met)$ (part (2) of Theorem \ref{mthm:invariance}), $\met$   induces a skew pairing $\sl_2(\RR) \times  [W, \frf]   \to \RR$ for the module $[W, \frf] $. 
Since $ [W, \frf] $ is of dimension two and non-trivial, 
Proposition \ref{prop:sl2_skew_module} shows that 
$\sl_2(\RR)\perp  [W, \frf] $. This now implies $\frf \perp  [\frf, W]$ and $[\frk, W]  = \zsp$, as
$\frk \subseteq \frf_{0}$. 
Since $\frs = [\frs,\frs] \subseteq \inv(\frg,\met)$, 
$[\frs,W] \perp \frs$  implies that $W \perp \frs$. 
\end{proof}

\begin{lem}\label{lem:com_q_submodules} Let $\frq$ be a subalgebra of $\frg$, and 
let\/ $W \subseteq \frr$ be a submodule for $\frq$. 
Then the  following hold  for\/ $\frl = \frq \cap [W,W]^{\perp}$:  
\begin{enumerate}
\item $\frl$ is a subalgebra, and $[\frl, W]  \subseteq W^{\perp}$. 
%\item if\/ $W$ is non-degenerate then $\frl$ is an ideal in $\frq$, and $[\frl, W] = \zsp$.
%\item if\/ $W$ is totaly isotropic then $\frl =\frq$. \oliver{do we need this?}
%\item if $W$ is an abelian subalgebra then $\frl = \frq$. 
\end{enumerate} 
Assume further that $\frq$ acts reductively on $W$. Then: 
\begin{enumerate}
\item[(2)] $\frl = \frq \cap [W_{1},W_{1}]^{\perp}$, where $W_{1} = [\frq, W]$.  
\item[(3)] $\frl$ is an ideal in $\frq$. 
\end{enumerate} 
If $\frq =\frf$ is semisimple and $\dim  [W_{1},W_{1}] \leq 2$ then: 
\begin{enumerate}
\item[(4)] $\frl = \frf$ and $\frf \perp [W,W] $. 
\item[(5)] $[\frf, W]$ is totally isotropic. 
\end{enumerate}  
\end{lem}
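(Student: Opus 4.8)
The plan is to exploit Theorem~\ref{mthm:invariance}(2): since $\met$ is invariant by $\gs=\frs\ltimes\frr$ and $W\subseteq\frr\subseteq\gs$, every $V\in W$ lies in $\inv(\frg,\met)$, so $\ad(V)$ is a skew operator on $\frg$. The recurring tools are therefore (a) $\langle[V,A],B\rangle=-\langle A,[V,B]\rangle$ for $V\in W$ and $A,B\in\frg$, and (b) $[\frq,W]\subseteq W$, because $W$ is a $\frq$-submodule. Throughout it is essential that $\frq$ itself need \emph{not} act by skew operators, so any manipulation has to move a bracket onto a $W$-element. For part~(1), (a) yields $\langle[X,V],V'\rangle=\langle X,[V,V']\rangle$ for $X\in\frq$ and $V,V'\in W$, hence $X\in\frl$ if and only if $X\in\frq$ and $[X,W]\subseteq W^\perp$; together with (b) this gives $[\frl,W]\subseteq W\cap W^\perp$. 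If $X,Y\in\frl$, then (a) and the Jacobi identity give $\langle[X,Y],[V,V']\rangle=-\langle[V,[X,Y]],V'\rangle$, and $[V,[X,Y]]$ is a combination of $[X,[Y,V]]$ and $[Y,[X,V]]$ with $[Y,V],[X,V]\in W\cap W^\perp\subseteq W$; so both terms lie in $[\frl,W]\subseteq W^\perp$ and pair trivially with $V'\in W$. Thus $[X,Y]\in\frq\cap[W,W]^\perp=\frl$, and $\frl$ is a subalgebra.

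Next, assume $\frq$ acts reductively on $W$ and decompose $W=W^\frq\oplus W_1$, where $W_1=[\frq,W]$ is the sum of the non-trivial isotypic summands. For part~(2): if $V\in W^\frq$ then $[\frq,V]=\zsp$, so (a) gives $\langle X,[V,V']\rangle=-\langle[V,X],V'\rangle=0$ for every $X\in\frq$, i.e.\ $\frq\perp[W^\frq,W]$; since $[W,W]=[W^\frq,W]+[W_1,W_1]$, we conclude $\frl=\frq\cap[W,W]^\perp=\frq\cap[W_1,W_1]^\perp$. For part~(3): $W_1\subseteq\frr$ is again a $\frq$-submodule, so part~(1) applied to $W_1$, together with part~(2) to identify the resulting subalgebra with $\frl$, gives $[\frl,W_1]\subseteq W_1\cap W_1^\perp$ and $\frl=\{X\in\frq:[X,W_1]\subseteq W_1^\perp\}$. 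A short computation with (a) (applied to elements of $W_1\cap W_1^\perp\subseteq W_1$, which act skew) shows that $W_1\cap W_1^\perp$ is a $\frq$-submodule; then, for $X\in\frq$ and $Y\in\frl$, Jacobi gives $[[X,Y],W_1]\subseteq[\frq,W_1\cap W_1^\perp]+[\frl,W_1]\subseteq W_1\cap W_1^\perp\subseteq W_1^\perp$, whence $[X,Y]\in\frl$. So $\frl$ is an ideal of $\frq$.

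Finally, let $\frq=\frf$ be semisimple (reductivity of the action on $W$ is then automatic by Weyl's theorem) with $\dim[W_1,W_1]\le2$. By part~(3), $\frl$ is an ideal of $\frf$, so $\frf=\frl\oplus\frc$ for a complementary ideal $\frc$. Since $\frc\cap[W_1,W_1]^\perp\subseteq\frc\cap\frl=\zsp$ by part~(2), the pairing $\frc\times[W_1,W_1]\to\RR$ has trivial left kernel, so $\dim\frc\le\dim[W_1,W_1]\le2$; but a non-zero semisimple Lie algebra has dimension at least $3$, forcing $\frc=\zsp$ and $\frl=\frf$. Hence $\frf\perp[W_1,W_1]$, and with $\frf\perp[W^\frf,W]$ from part~(2) this gives $\frf\perp[W,W]$, which is~(4). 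For~(5): $W_1$ is a sum of non-trivial irreducible $\frf$-modules, so $[\frf,W_1]=W_1$; on the other hand $[\frf,W_1]=[\frl,W_1]\subseteq W_1\cap W_1^\perp$, so $W_1=[\frf,W]$ is totally isotropic.

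The only genuine subtlety, and where nil-invariance is really used, is in part~(3): rewriting $\frl$ as the transporter $\{X\in\frq:[X,W_1]\subseteq W_1^\perp\}$ and proving that the metric radical $W_1\cap W_1^\perp$ of $W_1$ is $\frq$-stable. Once those are in place the remaining steps are formal, the recurring principle being that brackets may be shifted only onto elements of $W$ (equivalently, of $\gs$), never onto elements of $\frq$.
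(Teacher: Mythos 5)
Your proposal is correct and follows essentially the same route as the paper: everything rests on the fact that elements of $W\subseteq\frr\subseteq\gs$ act skew-symmetrically (Theorem \ref{mthm:invariance}(2)), so that $X\in\frl$ iff $[X,W]\perp W$, combined with the Jacobi identity, the splitting $W=W^{\frq}\oplus[\frq,W]$, and the absence of ideals of codimension $\leq 2$ in a semisimple algebra. The only deviation is in part (3), where you prove the ideal property by a direct Jacobi computation using the $\frq$-stability of $W_1\cap W_1^{\perp}$, whereas the paper splits $W=(W\cap W^{\perp})\oplus W'$ and identifies $\frl$ as the kernel of the $\frq$-action on $W'$; both arguments rest on the same observation that $\frq$ acts skewly on $(W,\met_W)$, so its metric radical is $\frq$-stable.
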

\begin{proof}
Observe that for any $u,v \in W$, $K \in \frg$, $\langle K, [u, v ] \rangle = \langle [ K, u],  v  \rangle$. 
In particular, $K \perp [W,W]$ is equivalent to $[K,W] \perp W$.  
To finish the proof of (1), assume that $K_{1}, K_{2} \perp 
 [W,W]$, where $K_1,K_2\in\frq$.
Then  also $ \langle [ [K_{1}, K_{2}] , u],  v  \rangle = \langle [ [K_{1}, u] , K_2],  v  \rangle + \langle [ [u,K_{2} ] , K_1],  v  \rangle =0$.  
 Hence, $ [K_{1}, K_{2}] \perp [W, W]$. This shows that $\frl$ is a subalgebra. 
 
%If $W$ is a non-degenerate subspace, $W^{\perp} \cap W = \zsp$. Hence, in this case, $K \perp [W,W]$ is equivalent to $[K,W] = \zsp$. This shows that $\frl$ is the centralizer in $\frq$ of the module $W$. In particular, $\frl$ is an ideal of $\frq$. Hence (2). 

Next we show (2). Since $\frq$ acts reductively on $W$, $W= W_0\oplus W_{1}$, with $W_1 = [\frq,W]$
and $[\frq, W_0]=\zsp$. For any $u, v \in W$, decompose $u = u_{0} + u_{1}$,
$v = v_{0} + v_{1}$, where $u_{i}, v_{i} \in W_{i}$. Then compute $\langle K, [u, v ] \rangle = \langle K, [u_{1}, v_{1} ] \rangle$.

Finally, if $\frq$ acts reductively, there is a decomposition into submodules $W = (W \cap W^{\perp}) \oplus W'$. 
Correspondingly, $K \in \frl$ if and only if $[K, W'] = \zsp$.  This shows that $\frl$ is an ideal in $\frq$. Hence, (3) holds.

If $\frf$ is semisimple, then $\frf$ acts reductively on $W$. By  part (3), $\frl = \frf \cap  [W,W]^{\perp}$
 is an ideal of $\frf$. Since  $\dim [W,W] \leq 2$ it is an ideal  of codimension at most two. Since $\frf$ 
 is semisimple this implies $\frl = \frf$. Hence, (4) holds.
% Since $W$ is contained in $\frn$,  
Now (4) together with (1) implies that $[\frf, W] \subseteq W^{\perp}$ is totally isotropic. 
\end{proof}

For any subspace $W$ of $\frg$, recall that  $\mu(W)$ denotes
the index of $W$.  

\begin{lem} \label{lem:combounded_f_submodules}
Let % $\frf$ be a semisimple subalgebra of $\frg$, and 
$W \subseteq \frr$  be a submodule for $\frf$, such that $\dim [W,W] \leq 2$. Then: 
\begin{enumerate} 
\item $[\frf, W] \subseteq W^{\perp}$ is totally isotropic. 
% \item If $\mu(\frn) \leq 2$ then $[\frk, W] = \zsp$, $\frs \perp W$. 
\item If $\mu(\gs) \leq 2$ then $[\frf, W] = \zsp$.
\end{enumerate}
\end{lem}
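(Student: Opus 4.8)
The plan is to deduce both parts from Lemma \ref{lem:com_q_submodules}, which already does most of the work. For part (1), I would first consider the submodule $W_1 = [\frf, W] \subseteq W$. Since $\frf$ acts reductively on $\frr$, part (2) of Lemma \ref{lem:com_q_submodules} tells us that $\frl = \frf \cap [W,W]^\perp = \frf \cap [W_1,W_1]^\perp$, and since $[W_1, W_1] \subseteq [W,W]$ has dimension at most $2$, part (4) of that lemma gives $\frl = \frf$, i.e.\ $\frf \perp [W,W]$. Then part (5) (equivalently, part (1) of Lemma \ref{lem:com_q_submodules}) yields $[\frf, W] \subseteq W^\perp$, so $[\frf, W]$ is totally isotropic. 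Note one must check the hypothesis $\dim[W_1,W_1] \le 2$ is implied by $\dim[W,W] \le 2$, which is immediate since $W_1 \subseteq W$. This dispenses with (1).

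For part (2), assume $\mu(\gs) \le 2$. The point is that $[\frf, W] \subseteq \gs$ (since $\frf \subseteq \frk \times \frs$ acts on $\frr$, and more precisely $[\frs, \frr] \subseteq \frr$ while $[\frk, W] = \zsp$ by Lemma \ref{lem:fsubmodules_perp} once we know $\dim[\frf, W] \le 2$—indeed we should invoke Lemma \ref{lem:fsubmodules_perp} here to get $[\frk, W] = \zsp$, hence $[\frf, W] = [\frs, W] \subseteq \frr \subseteq \gs$). By part (1), $[\frf, W]$ is a totally isotropic subspace of $\gs$, so $\dim[\frf, W] \le \mu(\gs) \le 2$. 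Now Lemma \ref{lem:fsubmodules_perp} applies directly to $W$ (its hypothesis is exactly $\dim[\frf, W] \le 2$): it gives $\frf \perp [\frf, W]$, $[\frk, W] = \zsp$, and $\frs \perp W$.

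It remains to upgrade $\frs \perp W$ and $\frf \perp [\frf, W]$ to $[\frf, W] = \zsp$. Since $[\frk, W] = \zsp$ we have $[\frf, W] = [\frs, W]$. For $X \in \frs$ and $v, w \in W$: because $\frs = [\frs, \frs] \subseteq \inv(\frg, \met)$ by Theorem \ref{mthm:invariance}(2), and $\frs \perp W$, we compute for any $Y \in \frg$ of the form $Y = [X', w']$ with $X' \in \frs$... the cleaner route: $[\frs, W] \subseteq \frr$ is a submodule for $\frf$ with $\dim[\frf, [\frs,W]] \le \dim[\frf,W] \le 2$, so by (1) applied to this submodule, $[\frf, [\frs, W]]$ is totally isotropic; iterating, one builds a descending chain. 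The decisive step is to observe that since $\frf$ acts reductively on $W$ and $[\frf, W]$ is totally isotropic inside $\gs$ while $\frs \perp W \supseteq [\frf, W]$, the skew pairing $\frs \times [\frf, W] \to \RR$ induced by $\met$ (using $\frs \subseteq \inv(\frg,\met)$) is identically zero; combined with faithfulness of the $\frs$-action on the non-trivial part of $W$ and Proposition \ref{prop:sl2_skew_module} (the $\sl_2$ skew-module statement referenced in Lemma \ref{lem:fsubmodules_perp}), a non-trivial $\frf$-submodule would force a nonzero skew pairing, a contradiction. Hence the $\frf$-action on $W$ is trivial, i.e.\ $[\frf, W] = \zsp$.

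\emph{Main obstacle.} The delicate point is the last step: ruling out a non-trivial two-dimensional $\sl_2(\RR)$-submodule $[\frf, W]$ sitting totally isotropically in $\gs$. Part (1) and Lemma \ref{lem:fsubmodules_perp} together give all the orthogonality relations $\frf \perp [\frf,W]$, $\frs \perp W$, but these do not by themselves kill $[\frf, W]$; one genuinely needs to exploit that $\gs \subseteq \inv(\frg,\met)$ so that $\met$ restricted to $\gs$ is invariant, turning $[\frf, W]$ into an $\sl_2(\RR)$-module with an invariant (necessarily degenerate, since totally isotropic) bilinear form — and the classification of skew $\sl_2(\RR)$-modules of small dimension (Proposition \ref{prop:sl2_skew_module}, already used in the proof of Lemma \ref{lem:fsubmodules_perp}) forces triviality. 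Making this reduction precise, and in particular handling the interplay between the $\frk$-part (which acts trivially on $W$ by Lemma \ref{lem:fsubmodules_perp}) and the $\frs$-part, is where the real content lies; everything else is a direct appeal to the preceding lemmas.
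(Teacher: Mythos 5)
Part (1) of your argument is correct and is essentially the paper's: it is an immediate application of Lemma \ref{lem:com_q_submodules}\,(5), after observing that $\dim[W_1,W_1]\leq\dim[W,W]\leq 2$ for $W_1=[\frf,W]\subseteq W$. Likewise your first steps in (2) — using total isotropy to get $\dim[\frf,W]\leq\mu(\gs)\leq 2$ and then invoking Lemma \ref{lem:fsubmodules_perp} to obtain $\frf\perp[\frf,W]$, $[\frk,W]=\zsp$ and $\frs\perp W$ — agree with the paper.

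The final step of (2), however, has a genuine gap. You argue that ``a non-trivial $\frf$-submodule would force a nonzero skew pairing,'' contradicting the vanishing of the pairing $\frs\times[\frf,W]\to\RR$. This is backwards: Proposition \ref{prop:sl2_skew_module} asserts that for the two-dimensional (standard) $\sl_2(\RR)$-module \emph{every} skew pairing vanishes, so the vanishing you observe is exactly what that proposition predicts and produces no contradiction; a zero pairing is perfectly compatible with a non-trivial module. Concretely, on $\sl_2(\RR)\ltimes\RR^2$ with $\met$ equal to the Killing form on $\sl_2(\RR)$, zero on $\RR^2$, and $\sl_2(\RR)\perp\RR^2$, the form is invariant, $W=\RR^2$ satisfies all of your orthogonality conclusions, yet $[\frf,W]=\RR^2\neq\zsp$; this is consistent with the lemma only because there $\mu(\gs)=3$. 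What is missing is a second use of the hypothesis $\mu(\gs)\leq 2$, via an index count: if $[\frf,W]\neq\zsp$, it is a two-dimensional totally isotropic subspace of $\frr$, the factor acting non-trivially is a copy of $\sl_2(\RR)\subseteq\frs$, and the restriction of $\met$ to this $\sl_2(\RR)$ is invariant (Theorem \ref{mthm:invariance}), hence proportional to the Killing form, so it contains an isotropic vector, i.e.\ $\mu(\frs)\geq 1$. Since $\frs\perp W\supseteq[\frf,W]$ and $\frs\cap\frr=\zsp$, such an isotropic line together with $[\frf,W]$ spans a three-dimensional totally isotropic subspace of $\gs$, contradicting $\mu(\gs)\leq 2$. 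This is precisely the paper's conclusion, written there as $2=\mu([\frf,W])\leq\mu(\gs)-1\leq 1$. (Your side remark about ``iterating'' to build a descending chain also leads nowhere, since $[\frs,[\frs,W]]=[\frs,W]$ when $[\frs,W]$ is a sum of non-trivial irreducibles.)
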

\begin{proof}
 
% If $\dim [W,W] \leq 1$ then $\codim_\frk \frl \leq1$.  Since $\frl$ is a subalgebra, by Lemma \ref{lem:subalg_k} part (1), we have $\frl = \frk$, showing (1). Similarly if $W$ is non-degenerate and 
 
By Lemma \ref{lem:com_q_submodules} part (5), 
$[\frf, W] \subseteq W^{\perp}$ is totally isotropic. 
In particular, assuming $\mu(\gs) \leq 2$, 
$\dim [\frf, W] \leq 2$.  
Then Lemma \ref{lem:fsubmodules_perp}  implies $[\frk, W] = \zsp$, $\frs \perp W$. 
Assuming $ [\frf, W]  \neq \zsp$, $\dim [\frf, W] = 2$ and $\frs$ contains $\sl_{2}(\RR)$, so that
$\mu(\frs) \geq \mu( \sl_{2}(\RR)) \geq 1$.  We get 
$2= \mu( [\frf, W] ) \leq \mu(\gs) - 1 \leq 1$. Thus  (2) follows. 
\end{proof}
 
We are ready to give the main result of this subsection.  
 
\begin{prop} \label{prop:commuting_ideals}
If $\mu(\gs) \leq 2$ then $[\frk\times\frs,\frr]=\zsp$. % This implies: $\frs \perp \frr$, $\frk \perp [\frr, \frr]$.
\end{prop}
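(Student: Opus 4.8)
The goal is to show that if $\mu(\gs) \leq 2$, then the semisimple Levi factor $\frk \times \frs$ acts trivially on the solvable radical $\frr$. The plan is to apply the machinery of the preceding lemmas---in particular Lemma \ref{lem:combounded_f_submodules}---to a well-chosen filtration of $\frr$ by $\frf$-submodules on which the derived subalgebra is small, and then bootstrap from the bottom of the filtration upward.

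First I would reduce to showing $[\frf, \frn] = \zsp$ for the nilradical $\frn$ of $\frr$, since $\frf$ acts reductively on $\frr$ and $[\frf, \frr] \subseteq \frn$ (any derivation of $\frr$ maps into $\frn$), so $[\frf,\frr] = \zsp$ follows once $[\frf,\frn]=\zsp$, or more precisely one works with $\frr = \frr^\frf + \frn$ and it suffices to kill the action on $\frn$. The key structural input is that the lower central series (or a refinement adapted to $\frf$-invariance) of $\frn$ gives a chain of $\frf$-submodules $\frn = \frn_1 \supseteq \frn_2 \supseteq \cdots$ with $[\frn_i, \frn_j] \subseteq \frn_{i+j}$, so each successive quotient has trivial bracket. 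The trick will be to choose the pieces $W$ to which Lemma \ref{lem:combounded_f_submodules} applies: one wants $\frf$-submodules $W \subseteq \frr$ with $\dim[W,W] \leq 2$. The cleanest candidates are the terms near the top of the central series where $[W,W]$ lands in a small central ideal; alternatively one may use the tower of totally isotropic ideals \eqref{eq:jperptower} and the ideals $\frn^\perp \cap \frn$, $\frr^\perp \cap \frr$ which lie in $\frz(\frr)$ and have dimension at most $\mu \leq 2$.

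The main step is an inductive argument. Suppose we have shown $[\frf, \frn_{i+1}] = \zsp$. Then $\frn_i$ is an $\frf$-submodule with $[\frn_i, \frn_i] \subseteq \frn_{i+1}$ on which $\frf$ acts trivially, but that bracket could be large, so Lemma \ref{lem:combounded_f_submodules} does not directly apply to $W = \frn_i$. Instead, since $[\frf, \frn_i] \subseteq \frn_{i+1}^{\frf}$... hmm, this needs care. The better route: consider $W = \frn_i / (\frn_i \cap \text{something})$, or work modulo the ideal $\frj = [\frf, \frn_i]$ which by reductivity complements a trivial submodule; one reduces to a genuine $\frf$-submodule of bracket-length bounded by $2$. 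Concretely, since $\mu(\gs) \le 2$ and $[\frf,W]$ is totally isotropic inside $\gs$ whenever $\dim[W,W] \le 2$ (Lemma \ref{lem:com_q_submodules}(5)), and such an isotropic subspace has dimension $\le 2$, one can run Lemma \ref{lem:fsubmodules_perp}: if $[\frf,W]\neq\zsp$ then $\frs \supseteq \sl_2(\RR)$ and $\mu(\frs) \ge 1$, forcing $2 = \mu([\frf,W]) \le \mu(\gs) - 1 \le 1$, a contradiction. So $[\frf,W] = \zsp$ at each stage.

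The hard part will be organizing the induction so that at every stage the relevant $\frf$-submodule genuinely has $\dim [W,W] \le 2$---the naive central series does not obviously give this, because $[\frn_1,\frn_1]$ can be high-dimensional. I expect the actual argument proceeds by downward induction along the central series: one first kills the action on the smallest nonzero term $\frn_c$ (which is central, abelian, hence $[\frn_c,\frn_c] = \zsp$, so Lemma \ref{lem:combounded_f_submodules}(2) applies and $[\frf,\frn_c]=\zsp$), then considers $\frn_{c-1}$ with $[\frn_{c-1},\frn_{c-1}] \subseteq \frn_c$; but to bound $\dim[\frn_{c-1},\frn_{c-1}]$ one uses that this subspace is a totally isotropic $\frf$-stable ideal, hence of dimension $\le \ell \le \mu(\gs) \le 2$ by the transporter results (Corollary \ref{cor:isotrop_ideals_l2} and the tower \eqref{eq:jperptower}). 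So the bound $\dim[W,W] \le 2$ comes not from nilpotency length but from the isotropy/index constraint $\mu(\gs) \le 2$ applied to the derived ideals, and Lemma \ref{lem:combounded_f_submodules} then propagates triviality of the $\frf$-action up the series, each step reducing to the previous by the reductivity of $\frf$ on $\frr$. Chaining these yields $[\frf,\frn] = \zsp$ and hence $[\frk\times\frs,\frr] = \zsp$.
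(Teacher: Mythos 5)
Your overall strategy---produce $\frf$-submodules $W\subseteq\frr$ with $\dim[W,W]\le 2$ and feed them into Lemma \ref{lem:combounded_f_submodules}---is the right engine, and the bottom steps of your induction (the last one or two terms of the lower central series of $\frn$) are fine. The gap is the mechanism you propose for bounding $\dim[W,W]$ further up the series. You claim the relevant derived spaces are totally isotropic $\frf$-stable ideals, hence of dimension at most $\mu(\gs)\le 2$; this is true for $[\frn_{c-1},\frn_{c-1}]\subseteq\frn_c\subseteq\zen(\frn)\cap[\frr,\frn]$, but it fails for the terms you actually need, in particular for $W=\frn$ itself. Concretely, the nilpotent metric Lie algebra of Example \ref{ex:KO3} with $n=0$ carries an invariant scalar product of index exactly $2$, yet its derived algebra is the $3$-dimensional span of $Z_1,Z_2$ and the vector denoted $W$ there, which satisfies $\langle W,W\rangle=1$; so $[\frn,\frn]$ is neither totally isotropic nor of dimension $\le 2$. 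The tower \eqref{eq:jperptower} and Corollary \ref{cor:isotrop_ideals_l2} control $\frn^\perp\cap\frn$ and totally isotropic ideals, not $[\frn,\frn]$. Moreover, knowing $[\frf,\frn_{i+1}]=\zsp$ gives no handle on $\dim[\frn_i,\frn_i]$, and your fallback (``work modulo $[\frf,\frn_i]$'', pass to quotients) does not carry the bilinear form. So the induction stalls exactly at the top step, which you flag yourself but do not resolve.

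The missing ingredient is Proposition \ref{prop:index2algebras_q}. By Theorem \ref{mthm:invariance}, $\met_{\frr}$ is invariant and every $\ad(X)|_{\frr}$, $X\in\frk\times\frs$, is a skew derivation of $(\frr,\met_{\frr})$; since $\mu(\frr)\le\mu(\gs)\le 2$, that proposition (which rests on the Kath--Olbrich classification, Proposition \ref{prop:solv_ind2}) produces a \emph{characteristic} ideal $\frq$ of $\frr$, essentially $[\frr,\frr]+\frz(\frr)$, with $\dim[\frq,\frq]\le 2$ and $\codim_{\frr}\frq\le 2$. Being characteristic, $\frq$ is an $\frf$-submodule, so Lemma \ref{lem:combounded_f_submodules}(2) gives $[\frf,\frq]=\zsp$; an $\frf$-invariant complement of $\frq$ in $\frr$ has dimension $\le 2$, hence its derived space has dimension $\le 1$, and the same lemma kills the $\frf$-action there as well, giving $[\frk\times\frs,\frr]=\zsp$. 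The point is that the bound $\dim[\frq,\frq]\le 2$ together with $\codim_{\frr}\frq\le 2$ is a classification-based structural fact about solvable metric Lie algebras of index $\le 2$ (the three-step example above shows it cannot be extracted from isotropy of central-series terms alone); without it, your argument does not close.
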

\begin{proof}  We have  $\mu(\frr) \leq \mu(\gs) \leq 2$. Thus Proposition 
\ref{prop:index2algebras_q} implies that there exists an ideal $\frq$ of $\frg$ 
with $\dim [\frq,\frq] \leq 2$, and the codimension of $\frq$ in $\frr$ is at most two. 
Since $\mu(\gs) \leq 2$, $[\frk\times\frs,\frr]=\zsp$, by Lemma \ref{lem:combounded_f_submodules}.
This also implies $\frs \perp \frr$ (compare Lemma \ref{lem:fsubmodules_perp}). 
\end{proof}
 
As a consequence we further get: 

\begin{lem}\label{lem:sandr}
Suppose $\mu(\gs) \leq 2$. Then the following hold: 
\begin{enumerate}
\item
$\frs$ is non-degenerate.
%\item
%$[\frk\times\frs,\frr]=\zsp$.
% \item $[\frk\times\frs,\frr]=\zsp$
\item
$\frs\perp(\frk+\frr)$ and $\frk\perp[\frr,\frr]$.
\item
$\mu(\frr) + \mu(\frs) \leq \mu(\gs)$.
\end{enumerate}
\end{lem}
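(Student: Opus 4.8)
The plan is to deduce all three assertions from Proposition \ref{prop:commuting_ideals}, which under the hypothesis $\mu(\gs)\leq 2$ already gives $[\frk\times\frs,\frr]=\zsp$, together with the orthogonality relations established earlier in the section. First I would record the structural consequence of $[\frk\times\frs,\frr]=\zsp$: the Levi decomposition becomes a direct product $\frg=\frk\times\frs\times\frr$ of ideals, and in particular $\gs=\frs\times\frr$ is itself a direct product of ideals. This is the structural fact that makes the rest routine.

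For part (2), the relation $\frk\perp[\frr,\frr]$ is the last statement of Lemma \ref{lem:gk_orthog} (namely $[\frg^\frk,\frg]\perp\frk$) applied to the observation that $\frr\subseteq\frg^\frk$ once $[\frk,\frr]=\zsp$, so $[\frr,\frr]\subseteq[\frg^\frk,\frg]\perp\frk$; alternatively it already appears inside the proof of Proposition \ref{prop:commuting_ideals} via Lemma \ref{lem:fsubmodules_perp}. The relation $\frs\perp(\frk+\frr)$ splits into $\frs\perp\frr$, which is explicitly noted at the end of the proof of Proposition \ref{prop:commuting_ideals}, and $\frs\perp\frk$, which is part of Corollary \ref{cor:ksperp}(1) (indeed $\frk\perp[\frs,\frg]=\frs$ since $\frs=[\frs,\frs]$).

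For part (1), that $\frs$ is non-degenerate: since $\frg=\frk\times\frs\times\frr$ is an orthogonal direct product of ideals by part (2), the metric radical splits as $\frg^\perp=(\frg^\perp\cap\frk)\oplus(\frs^\perp\cap\frs)\oplus(\frg^\perp\cap\frr)$ componentwise, so $\frs^\perp\cap\frs=\frs\cap\frg^\perp$ is an ideal of $\frg$ contained in $\frg^\perp$; by the effectivity hypothesis it is trivial. (One must be slightly careful here: $\frg^\perp$ is assumed to contain no non-trivial ideal of $\frg$, which is a standing assumption in this subsection by the remark preceding Lemma \ref{lem:jperp_effective}.) Then $\met_\frs$ is a scalar product, and $\met_\frs$ is invariant by Theorem \ref{mthm:invariance}(1); a semisimple Lie algebra with non-degenerate invariant scalar product is non-degenerate in the required sense — in fact the scalar product is then a sum of multiples of the Killing forms of the simple factors.

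For part (3), $\mu(\frr)+\mu(\frs)\leq\mu(\gs)$: since $\gs=\frs\times\frr$ is an orthogonal direct sum, a maximal totally isotropic subspace of $\frs$ and one of $\frr$ together span a totally isotropic subspace of $\gs$ of dimension $\mu(\frs)+\mu(\frr)$, whence the inequality. The main obstacle, such as it is, is bookkeeping: making sure the effectivity hypothesis is correctly invoked for part (1) and that the orthogonal-direct-product structure is genuinely available (it follows from Proposition \ref{prop:commuting_ideals} plus Corollary \ref{cor:ksperp}), after which everything is immediate.

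\begin{proof}
By Proposition \ref{prop:commuting_ideals}, $[\frk\times\frs,\frr]=\zsp$, so the Levi decomposition of $\frg$ is a direct product of ideals $\frg=\frk\times\frs\times\frr$, and $\gs=\frs\times\frr$.

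(2) Since $[\frk,\frr]=\zsp$, we have $\frr\subseteq\frg^{\frk}$, hence $[\frr,\frr]\subseteq[\frg^{\frk},\frg]$, and Lemma \ref{lem:gk_orthog} gives $[\frg^{\frk},\frg]\perp\frk$; thus $\frk\perp[\frr,\frr]$. For the other relation, $\frs\perp\frr$ was shown in the proof of Proposition \ref{prop:commuting_ideals}, while $\frk\perp\frs=[\frs,\frs]=[\frs,\frg]$ by Corollary \ref{cor:ksperp}(1); hence $\frs\perp(\frk+\frr)$.

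(1) By (2), $\frg=\frk\times\frs\times\frr$ is an orthogonal direct sum of ideals, so $\frg^{\perp}=(\frg^{\perp}\cap\frk)\oplus(\frs^{\perp}\cap\frs)\oplus(\frg^{\perp}\cap\frr)$, and each summand is an ideal of $\frg$ contained in $\frg^{\perp}$. Since $\frg^{\perp}$ contains no non-trivial ideal of $\frg$, in particular $\frs^{\perp}\cap\frs=\zsp$, that is, $\met_{\frs}$ is a scalar product. By Theorem \ref{mthm:invariance}(1), $\met_{\frs}$ is invariant; a semisimple Lie algebra with a non-degenerate invariant symmetric bilinear form is non-degenerate (the form is a sum of non-zero multiples of the Killing forms of the simple ideals of $\frs$).

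(3) Again by (2), $\gs=\frs\times\frr$ is an orthogonal direct sum. If $U\subseteq\frs$ and $V\subseteq\frr$ are maximal totally isotropic subspaces, then $U\oplus V\subseteq\gs$ is totally isotropic of dimension $\mu(\frs)+\mu(\frr)$, so $\mu(\frr)+\mu(\frs)\leq\mu(\gs)$.
\end{proof}
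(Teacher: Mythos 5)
Your parts (2) and (3), and the overall strategy of deducing everything from Proposition \ref{prop:commuting_ideals}, are sound and close to the paper's own argument. The gap is in part (1): from (2) you have $\frs\perp(\frk+\frr)$ and $\frk\perp[\frr,\frr]$, but you do \emph{not} have $\frk\perp\frr$, so your claim that $\frg=\frk\times\frs\times\frr$ is an \emph{orthogonal} direct sum and that consequently $\frg^\perp=(\frg^\perp\cap\frk)\oplus(\frs^\perp\cap\frs)\oplus(\frg^\perp\cap\frr)$ with each summand an ideal of $\frg$ is unjustified — and it is false in general under the hypotheses of the lemma. For instance, on $\frg=\so_3\times\RR$ the algebraic hull of $\Inn(\frg)$ is compact, so it contains no non-trivial nilpotent elements and \emph{every} symmetric bilinear form is nil-invariant; choosing a definite, non-degenerate form with $\langle\frk,\frr\rangle\neq\zsp$ gives an effective example with $\mu(\gs)=0$ in which $\frk\not\perp\frr$. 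This is also why Theorem \ref{mthm:smallindex}(3) and the classifications in Theorems \ref{mthm:lorentzian} and \ref{mthm:index2} never assert orthogonality of $\frk$ and $\frr$. (It is likewise unclear that $\frg^\perp\cap\frk$ would be an ideal of $\frg$, though you never use that piece.)

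The repair is short, because you only need the orthogonality you actually proved. Since $\frs\perp(\frk+\frr)$, any element of $\frs$ orthogonal to $\frs$ is orthogonal to all of $\frg$, so $\frs\cap\frs^\perp=\frs\cap\frg^\perp$; since $\met_{\gs}$ is $\frg$-invariant (Theorem \ref{mthm:invariance}(1)) and $\frs$ is an ideal by Proposition \ref{prop:commuting_ideals}, this space is an ideal of $\frg$ contained in $\frg^\perp$, hence trivial by effectivity. The paper's own route for (1) is even more economical and does not use (2) or the effectivity hypothesis at all: $\frs\cap\frs^\perp$ is a totally isotropic subspace of $\gs$, so $\dim(\frs\cap\frs^\perp)\leq\mu(\gs)\leq 2$, and it is an ideal of $\frs$ because $\met_\frs$ is invariant; a semisimple Lie algebra has no non-zero ideal of dimension at most two, so $\frs\cap\frs^\perp=\zsp$. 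Finally, your parenthetical that an invariant scalar product on $\frs$ is a sum of multiples of the Killing forms of the simple ideals is not quite accurate over $\RR$ (simple factors with a complex structure carry a two-dimensional space of invariant forms), but nothing in the proof depends on it.
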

\begin{proof} 
%We remark first that $\gs \cap \frg^{\perp} = \zsp$, by Corollary \ref{cor:isotrop_ideals}.  
%This implies $\mu(\gs) \leq \ell \leq 2$.  Hence, (1). 
% By Corollary \ref{cor:GSGperp}, $\frs\cap\gs^\perp=\zsp$.
Note that  $\dim \frs\cap\frs^\perp \leq \mu(\gs) \leq 2$. 
Since $\met_\frs$ is invariant,  $\frs\cap\frs^\perp$
is an ideal in $\frs$.
We conclude that $\frs\cap\frs^\perp=\zsp$.  This shows (1).

Since $\met$ is invariant by $\frr$ and $\frs$, $[\frk\times\frs,\frr]=\zsp$ implies $\frk \perp [\frg,\gs] = \frs + [\frr, \frr]$ and
$\frs \perp \frr$. % We know that always $\frs\perp\frk$ from Corollary \ref{cor:ksperp}.
Hence,  (2) and (3) hold. 
%
%For part (5) note that  $\frs\perp\frr$,  by part (4).
\end{proof}

\section{Lie algebras with nil-invariant
scalar products of small index} \label{sec:classificationLie_sum} 

Partially summarizing the results from Proposition \ref{prop:commuting_ideals} 
and Corollary \ref{cor:gperp_l2} 
%Corollary \ref{cor:isotrop_} \ref{cor: Lemma \ref{lem:sandr},
we obtain a first structure theorem for metric Lie algebras of
relative index $\ell\leq2$.

{\renewcommand{\themthm}{\ref{mthm:smallindex}}
\begin{mthm}
Let $\frg$ be a real finite-dimensional Lie algebra with
nil-invariant symmetric bilinear form $\met$ of relative index
$\ell\leq 2$,
and assume that $\frg^\perp$ does not contain a non-trivial ideal of $\frg$.
Then:
\begin{enumerate}
\item
The Levi decomposition \eqref{eq:levi} of\, $\frg$ is a direct sum of ideals:   $\frg=\frk \times \frs \times \frr$.
\item 
$\frg^\perp$ is contained in $\frk \times \frz(\frr)$ 
and  $\frg^\perp \cap \frr  = \zsp$. 
\item
$\frs\perp(\frk\times\frr)$ and $\frk\perp[\frr,\frr]$.
% \oliver{where $\frg^\perp \cap \frz(\frr) = \zsp$ is better formulation}
\end{enumerate}
\end{mthm}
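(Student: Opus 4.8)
The plan is to assemble Theorem~\ref{mthm:smallindex} from the three main technical inputs already established: Proposition~\ref{prop:commuting_ideals} (giving $[\frk\times\frs,\frr]=\zsp$ when $\mu(\gs)\leq 2$), Corollary~\ref{cor:gperp_l2} (locating $\frg^\perp$ inside $\frz(\gs)\times\frk$ and giving $\frg^\perp\cap\gs=\zsp$), and Corollary~\ref{cor:ksperp} together with Lemma~\ref{lem:sandr} (the orthogonality relations). The one preliminary point that needs care is the passage from the hypothesis on the \emph{relative} index $\ell\leq 2$ to the hypothesis $\mu(\gs)\leq 2$ on the \emph{absolute} index of $\gs$ used in Proposition~\ref{prop:commuting_ideals} and Lemma~\ref{lem:sandr}. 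So the first step is to show $\mu(\gs)\leq\ell$ under the effectivity assumption.

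First I would prove $\mu(\gs)\leq\ell$. Let $\frm$ be a maximal totally isotropic subspace of $\gs$ for $\met_{\gs}$; then $\frm\cap\gs^{\perp_{\gs}}$ is the kernel of $\met_{\gs}$ restricted to $\frm$, and $\mu(\gs)=\dim\frm = \ell(\gs)+\dim(\gs^{\perp}\cap\gs)$, where $\ell(\gs)\leq\ell(\frg)=\ell$ by the monotonicity of relative index under restriction (Subsection~\ref{subsec:signature}). It therefore suffices to see that $\gs^{\perp}\cap\gs=\zsp$; but this is exactly part of Corollary~\ref{cor:gperp_l2}, whose hypotheses ($\frg^\perp$ contains no non-trivial ideal of $\frg$, and $\ell\leq 2$) are precisely ours. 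Hence $\mu(\gs)=\ell(\gs)\leq\ell\leq 2$, and we are free to invoke all the results of Subsection~\ref{sec:fsubmodules}.

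Now the three assertions follow quickly. For (1): by Proposition~\ref{prop:commuting_ideals}, $[\frk\times\frs,\frr]=\zsp$, so $\frr$ is an ideal centralized by the Levi factor $\frk\times\frs$; combined with the Levi decomposition \eqref{eq:levi}, $\frg=(\frk\times\frs)\ltimes\frr$ becomes a direct product of ideals $\frg=\frk\times\frs\times\frr$, and in particular $\gs=\frs\times\frr$ is a direct product as well. For (2): Corollary~\ref{cor:gperp_l2} gives $\frg^\perp\subseteq\frz(\gs)\times\frk$ and $\frg^\perp\cap\gs=\zsp$; since by (1) $\gs=\frs\times\frr$ and $\frz(\gs)=\frz(\frs)\times\frz(\frr)=\frz(\frr)$ (as $\frs$ is semisimple), we get $\frg^\perp\subseteq\frk\times\frz(\frr)$, and $\frg^\perp\cap\frr\subseteq\frg^\perp\cap\gs=\zsp$. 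For (3): the relation $\frk\perp[\frr,\frr]$ and $\frs\perp(\frk+\frr)$ are exactly Lemma~\ref{lem:sandr}(2), which applies since $\mu(\gs)\leq 2$; alternatively $\frs\perp\frk$ follows from Corollary~\ref{cor:ksperp}(1) (as $\frk=[\frk,\frk]\subseteq[\frk,\frg]$), $\frs\perp\frr$ from $[\frk\times\frs,\frr]=\zsp$ and invariance of $\met$ under $\frr$ and $\frs$, and $\frk\perp[\frr,\frr]$ similarly since $[\frr,\frr]\subseteq[\frg,\gs]$ and $\frk\perp[\frs,\frg]\supseteq$ the relevant bracket, using Corollary~\ref{cor:ksperp}(1) once more.

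I do not anticipate a genuine obstacle here; the theorem is essentially a repackaging of the harder results proved earlier, and the only thing demanding attention is the bookkeeping of indices in the first step — making sure that $\ell\leq 2$ really does feed into the absolute-index hypotheses of Subsection~\ref{sec:fsubmodules} via the vanishing $\gs^\perp\cap\gs=\zsp$. Once that is in place, (1)--(3) are immediate citations.
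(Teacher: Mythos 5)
Your overall route is the paper's own: Theorem \ref{mthm:smallindex} is assembled from Proposition \ref{prop:commuting_ideals}, Corollary \ref{cor:gperp_l2} and Lemma \ref{lem:sandr} (or Corollary \ref{cor:ksperp}), and you correctly single out the one point that is not a bare citation, namely passing from the hypothesis $\ell\leq 2$ on the relative index to the bound $\mu(\gs)\leq 2$ needed in Subsection \ref{sec:fsubmodules}. But your justification of that bridge fails. You claim $\gs^{\perp}\cap\gs=\zsp$ and attribute it to Corollary \ref{cor:gperp_l2}; that corollary only asserts $\frg^{\perp}\cap\gs=\zsp$, which is a strictly weaker statement since $\frg^{\perp}\subseteq\gs^{\perp}$. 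Moreover $\gs^{\perp}\cap\gs=\zsp$ is false in general under the hypotheses of the theorem: take $\frg=\so_3\times\RR Z$ with $Z$ central, and let $\met$ be definite on $\so_3$, $\langle Z,Z\rangle=0$, and $\langle Z,\cdot\rangle$ a non-zero linear form on $\so_3$. All operators in the algebraic hull of $\Inn(\frg)$ are semisimple, so nil-invariance is automatic; one checks $\frg^{\perp}=\zsp$ (so the effectivity hypothesis holds) and $\ell=1$, yet $\gs=\frr=\RR Z$ is totally isotropic, so $\gs^{\perp}\cap\gs=\RR Z\neq\zsp$. (This is exactly the semidefinite abelian case (I) of Theorem \ref{mthm:lorentzian}.) Hence your identity $\mu(\gs)=\ell(\gs)$ does not hold in general, and the step as written is wrong.

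The gap is repairable, after which your argument coincides with the paper's. Let $W\subseteq\gs$ be a maximal totally isotropic subspace, so $\dim W=\mu(\gs)$. By Corollary \ref{cor:gperp_l2} (equivalently Corollary \ref{cor:isotrop_ideals_l2}\,(2)) we have $W\cap\frg^{\perp}\subseteq\gs\cap\frg^{\perp}=\zsp$, so $W$ maps injectively onto a totally isotropic subspace of $\frg/\frg^{\perp}$, whence $\mu(\gs)\leq\ell\leq 2$; note that bounding $\dim(\gs\cap\gs^{\perp})\leq\ell$ via Corollary \ref{cor:isotrop_ideals_l2}\,(1) and adding it to $\ell(\gs)$ would only give $\mu(\gs)\leq 2\ell$, which is not enough. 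With $\mu(\gs)\leq 2$ in hand, Proposition \ref{prop:commuting_ideals} gives (1), Corollary \ref{cor:gperp_l2} together with $\frz(\gs)=\frz(\frr)$ gives (2), and Lemma \ref{lem:sandr}\,(2) gives (3), exactly as you say. One further small slip: your alternative derivation of $\frk\perp[\frr,\frr]$ from Corollary \ref{cor:ksperp}\,(1) does not work, since $[\frr,\frr]$ need not lie in $[\frs,\frg]$; either quote Lemma \ref{lem:sandr}\,(2) as your primary argument does, or compute directly $\langle K,[R_1,R_2]\rangle=-\langle[R_1,K],R_2\rangle=0$ using $\frr\subseteq\inv(\frg,\met)$ and $[\frk,\frr]=\zsp$.
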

}
%\begin{proof} 
%Recall from (1) of Propositon \ref{prop:smallsig} that
%$\gs  \cap \frg^\perp = \zsp$.
%Hence, $\ell(\gs)\leq 2$.
%In particular, every subspace $W\subset\gs $ has $\ell(W)\leq 2$.
%Therefore, 
%$[\frk \times \frs, \frr] = \zsp$, as a consequence of
%Lemmas \ref{lem:sandr} and \ref{lem:kandr}. 
%By (2) of Corollary \ref{cor:smallsig}, $\frg^\perp$ is contained in the centralizer of
%$\gs$ in $\frg$. Hence, $\frg^\perp \subseteq\frz(\frr) \times \frk$.  
%\end{proof} 

We will now study the cases $\ell=0$, $\ell=1$ and
$\ell=2$ individually.

\subsection{Semidefinite nil-invariant products}
 Let $\met$ be a nil-invariant symmetric bilinear form on $\frg$.

\begin{prop}\label{prop:riemann}
If $\met$ is semidefinite (the case $\ell = 0$), then 
\begin{enumerate}
\item
$[\frg, \frs + \frr  ]\subseteq \frg^\perp$.
\end{enumerate}
Moreover,  if $\frg^\perp$ does not contain any non-trivial
ideal of $\frg$, then:
\begin{enumerate}
\setcounter{enumi}{1}
\item
$\frg=\frk \times \frr$ and $\frr$ is abelian.
\item
The ideal $\frr$ is definite.
\end{enumerate}
%If $\frg^\perp\subset\inv(\frg,\met)$, then also:
%\begin{enumerate}
%\setcounter{enumi}{2}
%\item
%$[\frg,\frg^\perp]\subset\frr^\perp$.
%\end{enumerate}
\end{prop}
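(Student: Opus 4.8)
The plan is to exploit the strong invariance properties already established in Theorem \ref{mthm:invariance} together with the solvable case (Theorem \ref{thm:nilinvariant}) and the orthogonality relations in Corollary \ref{cor:ksperp}. For part (1), since $\met$ is semidefinite its kernel $\frg^\perp$ is precisely the set of vectors of zero length, so it suffices to show that every element of $[\frg,\frs+\frr]$ is isotropic; and because $\gs = \frs + \frr$ already consists of vectors $X$ with $\met$ invariant under $\ad(X)$ (Theorem \ref{mthm:invariance} part (2) gives $\gs \subseteq \inv(\frg,\met)$), the image $[\frg,\gs]$ lies in the span of commutators whose length can be controlled. More directly, I would argue: by part (1) of Theorem \ref{mthm:invariance}, $\met_{\gs}$ is $\frg$-invariant, so $\gs^\perp\cap\gs$ is an ideal; but in the semidefinite case $\met_{\gs}$ is itself semidefinite and invariant, hence $\gs$ with the induced non-degenerate quotient form carries a \emph{definite} invariant scalar product, forcing $\gs/(\gs^\perp\cap\gs)$ to be abelian (a semisimple Lie algebra with definite invariant form is trivial, since the Killing-type form would be definite and the bracket skew, making $[\gs,\gs]\subseteq\gs^\perp$). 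Combined with $[\frg,\gs]\subseteq\gs$ (as $\gs$ is an ideal) and invariance, one pushes $[\frg,\gs]$ into $\gs^\perp\cap\gs\subseteq\frg^\perp$. I should be a little careful: the cleanest route is to note $\met$ restricted to $\gs$ is semidefinite and invariant, so $\frs$, being semisimple, satisfies $\frs\subseteq\gs^\perp$ (invariant definite forms kill semisimple algebras), and $[\frr,\frr]\subseteq\gs^\perp$ similarly by the reduction theory of solvable metric Lie algebras (Proposition \ref{prop:completereduction}: a solvable metric Lie algebra with semidefinite invariant form is abelian modulo its metric radical); together with $\frg^\perp\cap\gs$ absorbing brackets this yields $[\frg,\frs+\frr]\subseteq\frg^\perp$.

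For parts (2) and (3), assume now $\frg^\perp$ contains no non-trivial ideal of $\frg$. By part (1), $[\frg,\gs]\subseteq\frg^\perp$, so $[\frg,\gs]$ is an ideal of $\frg$ contained in $\frg^\perp$ (it is an ideal since $\gs$ is an ideal and the bracket is a derivation), hence $[\frg,\gs]=\zsp$. In particular $[\frg,\frs]=\zsp$, which combined with $\frs=[\frs,\frs]$ forces $\frs=\zsp$; and $[\frg,\frr]=\zsp$ shows $\frr$ is central in $\frg$, so in particular $\frr$ is abelian and $\frg=\frk\times\frr$ as a direct product of ideals. This gives (2). For (3): $\frr$ is an ideal contained in $\inv(\frg,\met)$ (nilpotent adjoint actions, or directly by Theorem \ref{mthm:invariance}), so $\frr^\perp\cap\frr$ is an ideal of $\frg$; since $\met$ is semidefinite, $\frr^\perp\cap\frr=\{X\in\frr\mid\langle X,X\rangle=0\}$ equals $\frr\cap\frg^\perp$, and this is an ideal of $\frg$ inside $\frg^\perp$, hence trivial. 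Therefore $\met_{\frr}$ is non-degenerate and semidefinite, i.e.\ definite.

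The main obstacle I anticipate is part (1): making rigorous the claim that a semisimple Lie algebra (resp.\ the non-abelian part of a solvable Lie algebra) carrying a semidefinite \emph{invariant} bilinear form must have its derived algebra inside the metric radical. For the semisimple piece this is standard (an invariant form on a simple algebra is a multiple of the Killing form on each factor, and a definite multiple is impossible on a non-compact factor while on a compact factor it would make $\gs$ have a compact-type summand, excluded by the definition of $\frs$); for the solvable piece one invokes Proposition \ref{prop:completereduction} and the fact that the only totally isotropic central ideals one reduces by must already be zero when the form is semidefinite, so $\frr$ is abelian modulo $\frr^\perp$. Once this structural input is in place, everything else is a short formal argument about ideals inside $\frg^\perp$ being forced to vanish.
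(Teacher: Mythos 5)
Your proposal is correct, but for part (1) it takes a genuinely different route from the paper. The paper works directly on the definite quotient $\frg/\frg^\perp$: by nil-invariance, $\ad(X)$ for $X\in\frs+\frn$ and the nilpotent Jordan parts $\ad(X)_{\rm n}$ for $X\in\frr$ are nilpotent skew operators, hence vanish there; this gives $[\frs+\frn,\frg]\subseteq\frg^\perp$ and $[\frr,\frr]\subseteq\frg^\perp$, and $[\frr,\frk\times\frs]=[\frn,\frk\times\frs]\subseteq\frg^\perp$ via the reductive decomposition $\frr=\frr^{\frk\times\frs}+\frn$; (2) and (3) are then read off using Theorem \ref{mthm:smallindex}. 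You instead use only the invariance package of Theorem \ref{mthm:invariance} plus structure theory: $\gs/(\gs^\perp\cap\gs)$ carries a definite invariant scalar product, hence is the product of an abelian and a compact semisimple algebra, and since its Levi factor is a quotient of $\frs$ it is abelian, so $[\gs,\gs]\subseteq\frg^\perp$ (isotropic vectors lie in the kernel of a semidefinite form); your direct ideal arguments for (2) and (3) avoid Theorem \ref{mthm:smallindex} and are fine. Two repairs are needed: the parenthetical ``a semisimple Lie algebra with definite invariant form is trivial'' is false as stated (compact algebras carry such forms) --- what saves it, as you note later, is that $\frs$ has no compact factors; and $[\frk,\frr]$ is not handled by ``absorbing brackets'' alone, but by the explicit computation $\langle [K,X],Y\rangle=\langle K,[X,Y]\rangle=0$ for $K\in\frk$, $X\in\frr$, $Y\in\gs$, using skewness of $\ad(X)$ and $[X,Y]\in[\gs,\gs]\subseteq\frg^\perp$, whence $[\frk,\frr]$ is isotropic and lies in $\frg^\perp$. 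With these fixes your argument is complete; it buys independence from the skewness of nilpotent Jordan parts, while the paper's argument is shorter and treats all brackets uniformly by the single principle that a definite space admits no nonzero nilpotent skew operator.
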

\begin{proof}
 According to Theorem \ref{mthm:invariance}, 
nil-invariance implies that $\gs$ acts by skew derivations on
$\frg$ and on $\frg/\frg^{\perp}$. 
By assumption,  $\met$ induces a definite scalar product on the vector space 
$\frg/\frg^{\perp}$. Recall that a definite scalar product does not allow nil\-potent skew maps. 
Therefore, $[\frs + \frn, \frg] \subseteq \frg^\perp$. Similarly, for $X \in \frr$, $\ad(X)_{\rm n}(\frg) \subseteq \frg^{\perp}$ 
and thus also
$[\frr, \frr] \subseteq [\frr, \frn] +  \frg^\perp \subseteq  \frg^\perp$. 
Moreover, $[\frr,\frk \times \frs] = [\frn, \frk \times \frs] \subseteq \frg^\perp$. This shows (1), while (2) and (3) follow immediately, taking into account Theorem \ref{mthm:smallindex}.
\end{proof}

\subsection{Classification for relative index $\ell\leq 2$}

Now we specialize Theorem \ref{mthm:smallindex} to the two cases
$\ell=1$ and $\ell=2$ to obtain classifications of the Lie
algebras with nil-invariant symmetric bilinear forms in each
case.

{\renewcommand{\themthm}{\ref{mthm:lorentzian}}
\begin{mthm}
Let $\frg$ be a Lie algebra with nil-in\-variant symmetric bilinear
form $\met$ of relative index $\ell=1$,
and assume that $\frg^\perp$ does not contain a non-trivial ideal
of $\frg$.
Then one of the following cases occurs: 
\begin{enumerate}
\item[(I)]
$\frg= \frk \times \fra$, where $\fra$ is abelian and either
semidefinite or Lorentzian.
\item[(II)] $\frg=\frk \times\frr$, where $\frr$ is  Lorentzian of oscillator type.  
\item[(III)] $\frg=\frk \times\sl_2(\RR) \times \fra$, where 
$\fra$ is abelian and definite,  $\sl_2(\RR)$ is Lorentzian and 
$(\fra \times \frk) \perp \sl_2(\RR)$. 
\end{enumerate}
\end{mthm}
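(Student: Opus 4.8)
The strategy is to invoke Theorem \ref{mthm:smallindex} to reduce to the direct product structure, and then classify the solvable factor $\frr$ together with the non-compact semisimple factor $\frs$ using the low-index constraints. By Theorem \ref{mthm:smallindex}, since $\ell \leq 1 \leq 2$, we have $\frg = \frk \times \frs \times \frr$ as a direct sum of ideals, with $\frg^\perp \subseteq \frk \times \frz(\frr)$, $\frg^\perp \cap \frr = \zsp$, and the orthogonality relations $\frs \perp (\frk \times \frr)$ and $\frk \perp [\frr, \frr]$. In particular, because $\frg^\perp \cap \frr = \zsp$, the restriction $\met_\frr$ is non-degenerate, and because $\frs \cap \frs^\perp = \zsp$ by Lemma \ref{lem:sandr}(1), the restriction $\met_\frs$ is a non-degenerate invariant scalar product on $\frs$. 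Moreover, Lemma \ref{lem:sandr}(3) gives $\mu(\frr) + \mu(\frs) \leq \mu(\gs) \leq \ell = 1$ (using that $\frg^\perp \cap \gs = \zsp$, so $\mu(\gs) = \ell(\gs) \leq \ell$).

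First I would handle $\frs$. From $\mu(\frr) + \mu(\frs) \leq 1$ we get that $\frs$ is either definite (hence of compact type, contradicting that $\frs$ has no compact factors unless $\frs = \zsp$) or Lorentzian with $\mu(\frs) = 1$ and $\mu(\frr) = 0$. In the Lorentzian case, by Example \ref{ex:semisimple}, a semisimple Lie algebra with an invariant Lorentzian scalar product is a product of $\sl_2(\RR)$ with factors of compact type; since $\frs$ has no compact factors, this forces $\frs = \sl_2(\RR)$ with its (rescaled) Killing form. So either $\frs = \zsp$, or $\frs = \sl_2(\RR)$ is Lorentzian and then $\frr$ is definite. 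The second alternative is exactly case (III): $\frg = \frk \times \sl_2(\RR) \times \fra$ with $\fra = \frr$ abelian (it is definite, and a definite metric solvable Lie algebra is abelian by the remark following Example \ref{ex:oscillator1}, or by Proposition \ref{prop:completereduction}) and the orthogonality $(\fra \times \frk) \perp \sl_2(\RR)$ coming from Theorem \ref{mthm:smallindex}(3).

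Next, assuming $\frs = \zsp$, so $\frg = \frk \times \frr$ with $\met_\frr$ non-degenerate of index $\mu(\frr) \leq 1$. If $\mu(\frr) = 0$, then $\met_\frr$ is definite, hence $\frr$ is abelian; combined with the possibility that $\met$ may still be degenerate on all of $\frg$ (with $\frg^\perp \subseteq \frk$), this gives case (I) with $\fra = \frr$ semidefinite. If $\mu(\frr) = 1$, then $\frr$ carries a non-degenerate invariant Lorentzian scalar product (invariance of $\met_\frr$ follows from Theorem \ref{mthm:nilinvariant}, since $\frr$ is solvable). By Example \ref{ex:oscillator1} and the remark after it, a solvable Lie algebra with invariant Lorentzian scalar product is either abelian (case (I), $\fra = \frr$ Lorentzian) or of oscillator type $\osc(\psi)$ with $\psi \in \so_n$ (case (II)). Finally, I would check that $\frg^\perp$ is consistently placed: since $\frg^\perp \subseteq \frk \times \frz(\frr)$ and in cases (II), (III) the effectivity hypothesis forces $\frg^\perp \cap \frr = \zsp$, so $\met_\frr$ is genuinely non-degenerate there, matching the statements.

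The main obstacle I anticipate is bookkeeping the degenerate directions correctly — disentangling $\mu$ from $\ell$ and verifying that the "semidefinite or Lorentzian" dichotomy in case (I) accounts for both the possibility that $\met$ restricted to $\frr$ is already non-degenerate and the possibility that $\frg^\perp$ is a nonzero subspace of $\frk$. This requires care with Corollary \ref{cor:gperp_l2} and Theorem \ref{mthm:smallindex}(2), but is otherwise routine once the structure theorem and the classification of solvable Lorentzian metric Lie algebras are in hand.
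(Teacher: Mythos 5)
There is a genuine gap at the first step of your reduction: from $\frg^\perp\cap\frr=\zsp$ you conclude that the restriction $\met_\frr$ is non-degenerate. That inference is false. The kernel of $\met_\frr$ consists of the $X\in\frr$ with $X\perp\frr$, and such an $X$ may pair non-trivially with $\frk$, so it need not lie in $\frg^\perp$. A concrete instance: let $\frg=\so_3\times\RR A$ with $\met$ definite on $\so_3$, $\langle A,A\rangle=0$ and $\langle A,\cdot\rangle$ a non-zero linear form on $\so_3$, and $\so_3$-directions otherwise unconstrained. Since the algebraic hull of $\ad(\frg)$ contains no non-zero nilpotent operators, \emph{every} symmetric bilinear form on this $\frg$ is nil-invariant; one checks $\frg^\perp=\zsp$ and $\ell=1$, yet $\met_\frr=0$ is completely degenerate. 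This is precisely the ``semidefinite'' branch of case (I), and your argument never reaches it: in your subcase $\mu(\frr)=1$ you invoke the classification of \emph{invariant Lorentzian scalar products} on $\frr$ (Medina, Hilgert--Hofmann), but that classification presupposes non-degeneracy of $\met_\frr$, which can fail. The same conflation appears in your closing remark ``$\frg^\perp\subseteq\frk$'': Theorem \ref{mthm:smallindex}(2) only gives $\frg^\perp\subseteq\frk\times\frz(\frr)$ together with $\frg^\perp\cap\frr=\zsp$, which allows $\frg^\perp$ to sit diagonally.

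Consequently, in the case $\frs=\zsp$ the correct dichotomy is not $\mu(\frr)=0$ versus $\mu(\frr)=1$ with $\met_\frr$ assumed non-degenerate, but $\ell(\frr)=0$ (semidefinite, possibly degenerate) versus $\ell(\frr)=1$ (Lorentzian type, and then automatically non-degenerate because $\mu(\frr)\le 1$). In the semidefinite branch you must still rule out a non-abelian $\frr$, and this needs an argument that your non-degeneracy claim was silently doing the work of: apply Proposition \ref{prop:riemann} to the invariant form $\met_\frr$ to get $[\frr,\frr]\perp\frr$, then use $\frk\perp[\frr,\frr]$ and $\frs\perp\frr$ (Theorem \ref{mthm:smallindex}(3), Lemma \ref{lem:sandr}(2)) to conclude $[\frr,\frr]\subseteq\frg^\perp\cap\frr=\zsp$, so $\frr$ is abelian; this is exactly how the paper proceeds. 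With that repair, the remainder of your proposal (the treatment of $\frs$ via Lemma \ref{lem:sandr} and Example \ref{ex:semisimple}, giving case (III), and the non-degenerate Lorentzian case of $\frr$ via the remark after Example \ref{ex:oscillator1}, giving cases (I) and (II)) coincides with the paper's proof.
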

}
\begin{proof}
By Theorem \ref{mthm:smallindex}, $\frg^\perp \cap \frr = \zsp$.
Hence, $\frr$ is a subspace of index $\mu(\frr)\leq1$.

If $\frs \neq \zsp$, then by Lemma \ref{lem:sandr},
$\frs$ is non-degenerate, so that $\ell(\frs)=1$, as $\frs$ is
of non-compact type. Hence, $\frs = \sl_2(\RR)$.
Moreover, $\ell(\frr)=0$, that is, $\frr$ is definite and therefore abelian. The orthogonality is given by Lemma
\ref{lem:sandr}. This is case (III). 

Otherwise, $\frs = \zsp$. 
If $\frr$ is semi\-definite, then
$[\frr, \frr]\subseteq\frr^\perp\cap\frr$
by Proposition \ref{prop:riemann}.
By Lemma \ref{lem:sandr} (2), this implies 
$[\frr,\frr]\subseteq \frg^\perp\cap\frr=\zsp$.
Hence $\frr$ is abelian. This is the first part of case (I).
Assume $\frr$ is of Lorentzian type.
Then $\frr$ is non-degenerate since $\mu(\frr)\leq1$.
By the classification of invariant Lorentzian scalar products
(see remark following Example \ref{ex:oscillator1}),
$\frr$ is either abelian or contains a metric oscillator
algebra.
These are the second part of case (I) or case (II),
respectively.
\end{proof}

{\renewcommand{\themthm}{\ref{mthm:index2}}
\begin{mthm}
Let $\frg$ be a Lie algebra with nil-invariant symmetric bilinear 
form $\met$ of relative index $\ell=2$,
and assume that $\frg^\perp$ does not contain a non-trivial ideal
of $\frg$.
Then one of the following cases occurs:
\begin{enumerate}
\item[(I)]
$\frg=\frr\times\frk$, where $\frr$ is one of the following:
\begin{enumerate}
\item
$\frr$ is abelian.
\item
$\frr$ is Lorentzian of oscillator type.
\item
$\frr$ is solvable but non-abelian with invariant scalar product
of index $2$.
\end{enumerate}
\item[(II)]
$\frg=\fra\times\frk\times\frs$. Here, $\fra$ is abelian, 
$\frs = \sl_2(\RR) \times \sl_2(\RR)$ with a non-degenerate
invariant scalar product of index $2$.
Moreover, $\fra$ is definite and $(\fra \times \frk) \perp \frs$. 
\item[(III)]
$\frg =\frr\times\frk\times\sl_2(\RR)$,
where $\sl_2(\RR)$ is Lorentzian,
$(\frr\times\frk)\perp\sl_2(\RR)$,
and $\frr$ is one of the following:
\begin{enumerate}
\item
$\frr$ is abelian and either semidefinite or Lorentzian.
\item
$\frr$ is Lorentzian of oscillator type.
\end{enumerate}
\end{enumerate}
\end{mthm}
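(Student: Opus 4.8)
The plan is to read off the classification from the structure theorem \ref{mthm:smallindex} together with the known descriptions of solvable metric Lie algebras carrying invariant scalar products of low index. First I would invoke Theorem \ref{mthm:smallindex}: under the standing hypotheses $\frg=\frk\times\frs\times\frr$ is a direct product of ideals, $\frg^\perp\subseteq\frk\times\frz(\frr)$ with $\frg^\perp\cap\frr=\zsp$, and $\frs\perp(\frk\times\frr)$, $\frk\perp[\frr,\frr]$. Since $\frg^\perp\cap\gs\subseteq\frg^\perp\cap\frz(\frr)\subseteq\frg^\perp\cap\frr=\zsp$, the ideal $\gs$ embeds isometrically into $\frg/\frg^\perp$, so $\mu(\gs)\le\ell=2$ and hence also $\mu(\frr)\le2$, $\mu(\frs)\le2$. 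Now Lemma \ref{lem:sandr} applies: $\frs$ is non-degenerate, $\mu(\frr)+\mu(\frs)\le\mu(\gs)\le2$, and (by Theorem \ref{mthm:invariance}, as $\frs,\frr\subseteq\gs\subseteq\inv(\frg,\met)$) the restrictions $\met|_\frs$ and $\met|_\frr$ are invariant.

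Next I would pin down $\frs$. By Corollary \ref{cor:ksperp} its simple factors are pairwise orthogonal, each is non-compact and so carries an invariant non-degenerate form of index $\ge1$ (a multiple of its Killing form), and a real simple Lie algebra whose Killing form has index $\le2$ must be $\sl_2(\RR)$ of index $1$, as a glance at Cartan decompositions shows; this is also essentially the content of Example \ref{ex:semisimple}. Combined with $\mu(\frs)\le2$ this leaves only $\frs=\zsp$, $\frs=\sl_2(\RR)$, or $\frs=\sl_2(\RR)\times\sl_2(\RR)$, with $\met|_\frs$ of index $0$, $1$, $2$ respectively, and correspondingly $\mu(\frr)\le2$, $\le1$, $=0$. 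These three alternatives feed into cases (I), (III), (II).

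It then remains to classify $\frr$, a solvable Lie algebra with an invariant symmetric bilinear form with $\mu(\frr)\le2$. If $\met|_\frr$ is semidefinite, the reduction theory of Section \ref{sec:solvable} (equivalently, Proposition \ref{prop:riemann} applied to $\frr$) gives $[\frr,\frr]\subseteq\frr^\perp$; together with $\frk\perp[\frr,\frr]$ and $\frs\perp\frr$ this forces $[\frr,\frr]\subseteq\frg^\perp$, and effectivity makes $\frr$ abelian. If $\met|_\frr$ is non-degenerate, the classification of solvable Lie algebras with invariant scalar product of index one (Medina, Hilgert--Hofmann) identifies $\frr$ as abelian or of oscillator type (Example \ref{ex:oscillator1}), and in index two Proposition \ref{prop:solv_ind2} (the Kath--Olbrich families) does the same. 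The only residual situation is a degenerate $\met|_\frr$ with $\ell(\frr)=1$ and $\mu(\frr)=2$: here I would reduce by the one-dimensional totally isotropic central ideal $\frr^\perp\cap\frr$ (Proposition \ref{prop:completereduction}) and use effectivity of $\frg^\perp$ together with the fact that nil-invariance forces $\frk\perp[\frn,\frr]$ to exclude a non-abelian $\frr$, so that again $\frr$ is abelian. Finally, matching these possibilities for $(\frs,\frr)$ and recording the orthogonality relations from Lemma \ref{lem:sandr} and part (3) of Theorem \ref{mthm:smallindex} produces precisely the list (I)--(III).

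The main obstacle is the treatment of the solvable radical: one must bring in (or reprove) the classification of solvable metric Lie algebras with invariant scalar product of index two, and then carefully dispose of the degenerate restrictions $\met|_\frr$ that are not already covered by the oscillator and Kath--Olbrich families, showing in each such case that effectivity of $\frg^\perp$, combined with the reduction theory of Section \ref{sec:solvable} and the strong invariance results of Section \ref{sec:classificationLie}, forces $\frr$ to be abelian.
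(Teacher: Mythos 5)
Your overall route coincides with the paper's: Theorem \ref{mthm:smallindex} plus Lemma \ref{lem:sandr} give $\mu(\frs)+\mu(\frr)\leq\mu(\gs)\leq 2$ with $\frs$ non-degenerate, the split on $\mu(\frs)\in\{0,1,2\}$ identifies $\frs$ as $\zsp$, $\sl_2(\RR)$ or $\sl_2(\RR)\times\sl_2(\RR)$, and the radical is then handled through the invariant low-index classifications. (Minor point: an invariant form on a simple factor need not be a multiple of the Killing form when the factor is of complex type, but such factors have index at least $3$, so your conclusion about $\frs$ is unaffected; the paper does not spell this out either.)

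The genuine gap is your treatment of the ``residual situation'' $\frs=\zsp$, $\mu(\frr)=2$, $\met_{\frr}$ degenerate with $\ell(\frr)=1$. You claim that effectivity together with $\frk\perp[\frr,\frr]$ forces $\frr$ to be abelian; this is false, and it is precisely why the paper's conclusion in this subcase is ``$\frr$ abelian or of oscillator type'' (feeding case (I-b)) rather than ``abelian''. Concretely, let $\frr=\osc(\psi)$ with $\psi$ of corank one, so that $\frr\cong\osc\times\RR w$ with $w$ central, and let $\frg=\frk\times\frr$ with $\frk$ of compact type. Put the standard invariant Lorentzian form on the oscillator part, a definite form on $\frk$, set $\langle w,w\rangle=0$, $w\perp\osc$, $\osc\perp\frk$, and pair $w$ non-trivially with $\frk$ via $\langle w,K\rangle=\lambda(K)$, $\lambda\neq0$. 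Every nilpotent element of the algebraic hull of $\ad(\frg)$ annihilates $\frk\oplus\RR w$ (the $\frk$-component of the hull consists of semisimple operators) and acts skewly on the oscillator part, so $\met$ is nil-invariant; a direct computation gives $\frg^\perp=\zsp$, so the form is non-degenerate of relative index $2$ and effectivity holds, yet $\frr$ is non-abelian and $\met_{\frr}$ is degenerate with $\ell(\frr)=1$, $\mu(\frr)=2$. Your proposed mechanism cannot exclude this: the kernel direction $w$ lies in $\frz(\frr)$ but not in $[\frr,\frr]$, so $\frk\perp[\frr,\frr]$ says nothing about it, and the ideal $\RR w$ is not contained in $\frg^\perp$ because it pairs with $\frk$. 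So in this subcase you must argue instead, as the paper does, via the Lorentzian quotient $\frr/(\frr\cap\frr^\perp)$ and the Medina/Hilgert--Hofmann classification, that $\frr$ is abelian or an oscillator algebra, and place the latter under (I-b); as written, your proof asserts a false intermediate claim and your final list would misclassify examples of the above kind.
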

}
\begin{proof}
Write $s=\mu(\frs)$ and $r=\mu(\frr)$.
By Theorem \ref{mthm:smallindex}, $\frg^\perp \cap \gs = \zsp$. 
By Lemma \ref{lem:sandr}, this implies $s + r \leq 2$. 
Moreover, $\frs$ is non-degenerate and thus has index 
$s\leq\ell\leq2$.

First assume $s=0$, and therefore $\frs=\zsp$,
and  $r\leq 2$.
For $r=2$, the following possibilities arise:
$\frr$ is non-degenerate with relative index $\ell(\frr)=2$. 
This case falls into (I-a) or (I-c). 
%in which case $\frr$ is either abelian or non-abelian
%(compare Section \ref{subsec:metric_lie_algebras}),
%the latter of which is part (I-c).
Next, $\frr$ can be degenerate with $\ell(\frr)=1$,
in which case it is either abelian or of oscillator type,
the latter yielding part (I-b).
In the remaining case $\ell(\frr)=0$, $\frr$ is semidefinite. 
As in the proof of Theorem \ref{mthm:lorentzian}, this implies that  $\frr = \fra$ is abelian.
This completes case (I).

Assume $s=2$ and $r=0$. 
Then $\frs = \sl_2(\RR) \times \sl_2(\RR)$ and 
$\frr$ is definite and abelian.
The orthogonality is Lemma \ref{lem:sandr} (3).
This is case (II)

Now assume $s=1$ and $\frs = \sl_2(\RR)$,  $r \leq 1$. This yields the two possibilities for
$\frr$ in case (III).
\end{proof}

Note that the possible Lie algebras $\frr$ for case (I-c) of Theorem \ref{mthm:index2} above
are discussed in Section \ref{sec:solvable_invariant}.

\section{Further examples}
\label{sec:index3}
%%%%%%%%%%%%%%%%%%%%%%%

The examples in this section show that the properties of
nil-invariant symmetric bilinear forms with relative index
$\ell\leq 2$ given in Theorem \ref{mthm:smallindex} do not hold
for higher relative indices.
Let $\frk$, $\frs$, $\frr$ and $\gs$ be as in the previous
sections.\\

The following standard  construction for a Lie algebra with an
invariant scalar product  (\cf Medina \cite{medina}) shows that in general  $\frg$ does not have to be
a direct product of Lie algebras $\frk$, $\frs$ and $\frr$,
and that $\frs$ does not have to be orthogonal to $\frr$.

\begin{example}[Metric cotangent algebras]\label{ex:sl2sl2}
Let $\frg$ be a Lie algebra of dimension $n$, and
let $\ad^*$ denote the coadjoint representation of $\frg$ on
its dual vector space $\frg^*$, and consider the Lie
algebra $\hat \frg=\frg\ltimes_{\ad^*}\frg^*$.
The dual pairing defines an invariant scalar product on $\hat \frg$,
\[
\langle X_1+\xi_1, X_2+\xi_2 \rangle
=
\xi_1(X_2) + \xi_2(X_1),
\]
where $X_i\in\frg$ and $\xi_i\in\frg^*$.
The index of $\met$ is $n$.
We call such a $\frg$ a \emph{metric cotangent algebra}.
For example, if we choose $\frg=\sl_2(\RR)$,
then the index is $3$, 
and $\frk=\zsp$, $\frs=\sl_2(\RR)$ and $\hat \frg$ has abelian 
radical $\frr = \sl_2(\RR)^{*}  \cong\RR^3$.
In particular, $\frs$ is not orthogonal to $\frr$ and
$[\frs,\frr]\neq\zsp$.
\end{example}

The next example shows that for relative index $\ell=3$, the
transporter algebra $\frl$ of $\frb = \frr^\perp\cap\frr$ in $\frq = \frk$ 
(see Section \ref{sec:transporter_l2})  
can be trivial,
and as a consequence $\frg^\perp\cap\frr$ is not an ideal in $\frg$.
This contrasts the situation for $\ell \leq 2$, compare Corollary \ref{cor:isotrop_ideals_l2_0}.

\begin{example}\label{ex:so3so4}
Let $\frk=\so_3$, and
let $\frr=\so_3\oplus\so_3$, considered as a vector
space.
We write $\so_3^\scl$ and $\so_3^\scr$ to distinguish the two
summands of $\frr$, and for an element $X\in\so_3$, we write
$X^\scl=(X,0)\in\so_3^\scl$ and
$X^\scr=(0,X)\in\so_3^\scr$.
Let $\so_3^\triangle$ be the diagonal embedding of $\so_3$ in
$\frr$.

Let $T\in\frk$ act on $X=X_1^\scl+X_2^\scr\in\frr$ by
\[
\ad(T)X = [T,X_1]^\scl.
\tag{$*$}\label{*}
\]
This makes $\frr$ into a Lie algebra module for $\frk$, and we
can thus define a Lie algebra $\frg = \frk\ltimes\frr$ for
this action, taking $\frr$ as an abelian subalgebra. 
Observe also that $\so_3^\scr$ is the center of $\frg$.

Let $\kappa$ denote the Killing form on $\so_3$.
We define a symmetric bilinear form $\met$ on $\frg$ by
requiring
\[
\langle T,X_1^\scl+X_2^\scr\rangle = \kappa(T,X_1) - \kappa(T,X_2),
\quad
\frk\perp\frk,
\quad
\frr\perp\frr
\]
for all $T\in\frk$, $X_1^\scl+X_2^\scr\in\so_3$.
The adjoint operators of elements of $\frr$ are skew-symmetric
for $\met$. In fact, % since $\frr$ is abelian and $\frr\perp\frr$, 
we have,  for all $X,Y\in\frr$, $Z\in\frg$,
\[
\langle [X,Y],Z\rangle
=0
=-\langle Y,[X,Z]\rangle
\]
and for $T,T'\in\frk$, by \eqref{*},
\begin{align*}
\langle [T,X],T'\rangle
&=\langle [T,X_1^\scl+X_2^\scr],T'\rangle
=\langle[T,X_1]^\scl,T'\rangle \\
&=\kappa([T,X_1],T')
=-\kappa(T,[T',X_1]) \\
&=-\langle T,[T',X_1]^\scl\rangle
=-\langle T,[T',X]\rangle.
\end{align*}
So $\met$ is indeed a nil-invariant form on $\frg$,
and, since $\frr^{\perp} = \frr$, 
\[
\frg^\perp=\frk^\perp\cap\frr=\so_3^\triangle
\quad\text{ and }\quad
\gs^\perp\cap\gs=\frr^\perp\cap\frr=\frr=\so_3^\scl\oplus\so_3^\scr.
\]
In particular, the index of $\met$ is $\mu=6$ and the
relative index is $\ell=3$.
Note that $\met$ is not invariant,
as $\frg^\perp$ is not an
ideal in $\frg$.
\end{example}

\begin{remark}
The construction in Example \ref{ex:so3so4} works if we replace
$\so_3$ by any other semisimple Lie algebra $\frf = \frk$ of compact type. 
However, if $\frf$ is not of compact type, then the resulting
bilinear form $\met$ will not be nil-invariant.
Geometrically this means that $\met$ cannot come from a
pseudo-Riemannian metric on a homogeneous space $G/H$
of finite volume, where $G$ is a Lie group with Lie algebra $\frg$.
\end{remark}

% !TEX root = homogeneousSpacesLieAlgebras.tex

%%%%%%%%%%%%%%%%%%%%%%%
\section[Metric Lie algebras with abelian radical]{Metric Lie algebras with abelian radical}
\label{sec:abelianradical}
%%%%%%%%%%%%%%%%%%%%%%%

In this section we study finite-dimensional real Lie algebras
$\frg$ whose solvable radical $\frr$ is abelian and which are
equipped with a nil-invariant symmetric bilinear form $\met$.

%%%
\subsection{Abelian radical}
The Lie algebras with abelian radical and a nil-invariant
symmetric bilinear form decompose into three distinct types of
metric Lie algebras.

{
\renewcommand{\themthm}{\ref{mthm:abelian_rad1}}
\begin{mthm}
Let $\frg$ be a Lie algebra whose solvable radical $\frr$ is
abelian.
Suppose $\frg$ is equipped with a nil-invariant symmetric bilinear
form $\met$ such that the metric radical $\frg^\perp$ of $\met$
does not contain a non-trivial ideal of $\frg$.
Let $\frk\times\frs$ be a Levi subalgebra of $\frg$, where $\frk$
is of compact type and $\frs$ has no simple factors of compact
type.
Then $\frg$ is an orthogonal direct product of ideals
\[
\frg = \frg_1 \times \frg_2 \times \frg_3,
\]
with
\[
\frg_1=\frk\ltimes\fra,
\quad
\frg_2=\frs_0,
\quad
\frg_3=\frs_1\ltimes\frs_1^*,
\]
where $\frr=\fra\times\frs_1^*$ and $\frs=\frs_0\times\frs_1$ are
orthogonal direct products,
and $\frg_3$ is a metric cotangent algebra.
The restrictions of $\met$ to $\frg_2$ and $\frg_3$ are invariant
and non-degenerate.
In particular, $\frg^\perp\subseteq\frg_1$.
\end{mthm}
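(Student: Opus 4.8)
The strategy is to leverage the heavy machinery already developed, especially Theorem \ref{mthm:invariance} (strong invariance of $\met_{\gs}$ and $\gs$-invariance of $\met$) and Corollary \ref{mcor:invariance_radical}, together with the representation-theoretic fact that $\frs$ acts reductively on the abelian radical $\frr$. Since $\frr$ is abelian, the $\frg$-module structure on $\frr$ is just its structure as a module for the Levi factor $\frk\times\frs$. First I would decompose $\frr$, as a $\frs$-module, into the trivial isotypic part and its complement: $\frr=\frr^{\frs}\oplus\frs_1^*$, where $\frs_1^*=[\frs,\frr]$. The key claim is that $\frs_1^*$ is, as an $\frs$-module, a sum of copies of the adjoint representations of certain simple factors of $\frs$, with the invariant pairing $\met$ identifying it (fibrewise) with the dual of the corresponding ideal $\frs_1\subseteq\frs$; this is what makes $\frg_3=\frs_1\ltimes\frs_1^*$ a metric cotangent algebra. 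I expect this identification to come out of analyzing the skew pairing induced by $\met$ between $\frs$ and $[\frs,\frr]$, using that $\met$ is $\gs$-invariant.

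**Key steps, in order.** (1) Set $\frg_0 = \frs\ltimes\frr$ and note $\met_{\gs}$ is $\frg$-invariant; work inside $\gs$ first and bring $\frk$ back at the end. (2) Decompose $\frr=\fra\oplus\frb$ as an $\frs$-module where $\fra=\frr^{\frs}$ (the $\frs$-fixed vectors) and $\frb=[\frs,\frr]$; reductivity of the $\frs$-action gives this splitting. (3) Using Corollary \ref{cor:ksperp} (simple factors of $\frs$ pairwise orthogonal, $\frk\perp[\frs,\frg]$) and Lemma \ref{lem:gs_orthog}/\ref{lem:gk_orthog}, show $\frk\perp\frb$, $\fra\perp\frb$ is not automatic but $\frk\perp\frb$ and $\frb\perp\frb$ modulo $\frg^\perp$ — actually one needs that $\met$ restricted to $\frb$ is... here the subtlety is that $\frb$ could be totally isotropic, and indeed for a cotangent algebra $\frs_1^*$ is totally isotropic, so $\frb$ IS totally isotropic as an ideal contained in $\gs$; invoke Corollary \ref{cor:isotropideal_ab} to see $\frb$ is abelian (consistent with $\frr$ abelian) and then analyze the pairing $\met\colon\frs\times\frb\to\RR$. (4) Show this pairing is nondegenerate on the part of $\frs$, call it $\frs_1$, that acts nontrivially on $\frb$: if some simple factor of $\frs$ paired trivially with $\frb$ while acting nontrivially, one derives a proper ideal inside $\frg^\perp$ contradicting effectiveness; this pins down $\frb\cong\frs_1^*$ as $\frs_1$-modules and forces $\frg_3=\frs_1\ltimes\frs_1^*$ to be a metric cotangent algebra, with $\met|_{\frg_3}$ invariant and nondegenerate. (5) Let $\frs_0$ be the complementary ideal of $\frs_1$ in $\frs$; then $\frs_0$ acts trivially on $\frr$, so $\frg_2=\frs_0$ is an ideal, and by Corollary \ref{cor:ksperp} and nondegeneracy of the Killing-type form it is orthogonal to everything else and $\met|_{\frg_2}$ is invariant nondegenerate. (6) Finally $\frg_1=\frk\ltimes\fra$: the leftover. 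Check $[\frk,\fra]\subseteq\fra$ (it's the $\frk$-part of $\frr$), that $\frg_1$ is an ideal orthogonal to $\frg_2\times\frg_3$, and that $\frg^\perp\subseteq\frg_1$ using Corollary \ref{mcor:invariance_radical} (which says $\frg^\perp\subseteq\frk\ltimes\frz(\gs)$ and $[\frg^\perp,\gs]\subseteq\frz(\gs)\cap\frg^\perp$) combined with nondegeneracy of $\met$ on $\frg_2\times\frg_3$.

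**Main obstacle.** The crux is step (4): showing that the $\met$-pairing between $\frs_1$ and $\frb=[\frs,\frr]$ is nondegenerate and realizes $\frb$ as the coadjoint module $\frs_1^*$. The tool is effectiveness — the radical of this pairing on the $\frb$ side would be an $\frs$-submodule of $\frb$ lying in $\frg^\perp$, and since $\frb\subseteq\gs$ and $\met_{\gs}$ is $\frg$-invariant such a submodule is an ideal of $\frg$ inside $\frg^\perp$, hence zero. On the $\frs$ side one argues similarly or uses semisimplicity: the kernel is an ideal of $\frs$ acting trivially on $\frb$, so it gets absorbed into $\frs_0$. Matching the $\frs_1$-module $\frb$ with $\frs_1^*$ then follows because a nondegenerate $\frs_1$-invariant pairing $\frs_1\times\frb\to\RR$ is exactly an $\frs_1$-module isomorphism $\frb\cong\frs_1^*$. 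Once this is in hand, the direct-product decomposition and the orthogonality relations are bookkeeping built on the lemmas of Section \ref{sec:classificationLie}. I would also need to double-check that $\fra$ (the abelian $\frs$-trivial part) can be absorbed cleanly into $\frg_1$ and does not interact with $\frs_1^*$ via the bracket — but since $\frr$ is abelian, $[\fra,\frs_1^*]=\zsp$ automatically, so the only interaction is through $\met$, which is handled by the orthogonality consequences of Theorem \ref{mthm:invariance}.
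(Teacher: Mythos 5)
Your overall route is the same as the paper's (split $\frr=\fra\oplus\frb$ with $\fra=\frr^{\frs}$, $\frb=[\frs,\frr]$, note $\frb$ is totally isotropic, and use effectiveness to make the $\frs$--$\frb$ pairing nondegenerate, so that $\frb\cong\frs_1^*$ as a coadjoint module), and the effectiveness argument you give for the $\frb$-side radical is exactly right. But there is one concrete gap: you never prove that $\frk$ commutes with $\frb$, i.e.\ $[\frk,[\frs,\frr]]=\zsp$. This is precisely what is needed for $\frg_1=\frk\ltimes\fra$ to be an ideal and for the asserted direct product $\frg=\frg_1\times\frg_2\times\frg_3$ to exist at all; in your step (6) you dismiss it as ``check that $\frg_1$ is an ideal'', and your closing ``double-check'' only worries about the bracket between $\fra$ and $\frs_1^*$ (which is trivially zero since $\frr$ is abelian), not between $\frk$ and $\frs_1^*$. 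This is not bookkeeping: it is the content of the paper's first lemma in the proof, where one sets $\frc=[\frs,\frr]\cap[\frk,\frr]$ and shows $\frc=\zsp$. The argument is of the same type you use elsewhere, but it must be made: since $\frr$ is abelian and $\met$ is $\frr$-invariant, $[\frk,\frr]\perp\frr$ and $[\frs,\frr]\perp\frr$; by Corollary \ref{cor:ksperp}, $\frc\perp\frk$ and $\frc\perp\frs$; hence $\frc$ is an ideal of $\frg$ contained in $\frg^\perp$, so $\frc=\zsp$, which gives $[\frk,\frr]\subseteq\fra$ and then, via the Jacobi identity, $[\frk,\frb]\subseteq\frb\cap\fra=\zsp$.

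A smaller soft spot is your step (4) on the $\frs$-side kernel: from $S\perp\frb$ it does not follow by ``semisimplicity'' that $S$ acts trivially on $\frb$. The correct argument (the paper's Lemmas \ref{lem:trivial} and \ref{lem:coadjoint}) uses skewness of $\ad(b)$ for $b\in\frr$: if $S\perp\frb$ then $\langle T,[S,b]\rangle=-\langle S,[T,b]\rangle=0$ for all $T\in\frs$, so $[S,b]\in\frb\cap\frs^\perp\subseteq\frg^\perp\cap\frb$, which is an ideal of $\frg$ (by $\frg$-invariance of $\met_{\gs}$ and $\frk\perp\frb$) and hence zero; thus $S\in\frs_0$. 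This is fillable within your stated strategy (``argue similarly''), but as written the assertion is unsupported. With these two points supplied, your outline matches the paper's proof.
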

\addtocounter{mthm}{-1}
}

We split the proof into several lemmas.
Consider the submodules of invariants
$\frr^\frs, \frr^\frk\subseteq\frr$.
Since $\frs$, $\frk$ act reductively, we have
\[
[\frs,\frr]\oplus\frr^\frs=\frr=[\frk,\frr]\oplus\frr^\frk.
\]
Then $\fra=\frr^\frs$, $\frb=[\frs,\frr^\frk]$ and
$\frc=[\frs,\frr]\cap[\frk,\frr]$ are ideals in $\frg$ and
$\frr = \fra\oplus\frb\oplus\frc$.
Recall from Theorem \ref{mthm:invariance} that $\met$ is
in particular $\frs$- and $\frr$-invariant.

\begin{lem}\label{lem:R=AxB}
$\frc=\zsp$ and $\frr$ is an orthogonal direct sum of ideals in $\frg$
\[
\frr = \fra\times\frb
\]
where $[\frk,\frr]\subseteq\fra$ and $[\frs,\frr]=\frb$.
\end{lem}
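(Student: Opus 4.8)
The plan is to prove the lemma by showing that the ideal $\frc=[\frs,\frr]\cap[\frk,\frr]$ is contained in the metric radical $\frg^\perp$; since $\frg^\perp$ contains no non-trivial ideal of $\frg$ by hypothesis, this forces $\frc=\zsp$. Once $\frc=\zsp$ is established, the decomposition $\frr=\fra\oplus\frb$ (coming from $\frr=\fra\oplus\frb\oplus\frc$) is automatically a direct sum of ideals, and the asserted orthogonality $\fra\perp\frb$ and the bracket relations $[\frk,\frr]\subseteq\fra$, $[\frs,\frr]=\frb$ follow by short module-theoretic arguments.

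For the inclusion $\frc\subseteq\frg^\perp$ I would record three orthogonality relations and intersect them. First, since $\frr$ is abelian and $\met$ is $\frr$-invariant by Theorem~\ref{mthm:invariance}(2), for any $X,Z\in\frr$ and $Y\in\frg$ we get $\langle[X,Y],Z\rangle=-\langle Y,[X,Z]\rangle=0$ because $[X,Z]\in[\frr,\frr]=\zsp$; hence $[\frr,\frg]\perp\frr$, and in particular $[\frk,\frr]\perp\frr$ and $[\frs,\frr]\perp\frr$. Second, Corollary~\ref{cor:ksperp}(1) gives $\frk\perp[\frs,\frg]$ and $\frs\perp[\frk,\frg]$, so $[\frs,\frr]\perp\frk$ and $[\frk,\frr]\perp\frs$. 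Since $\frc$ is contained in both $[\frs,\frr]$ and $[\frk,\frr]$, it is orthogonal to $\frk$, to $\frs$, and to $\frr$, hence to all of $\frg=\frk+\frs+\frr$; that is, $\frc\subseteq\frg^\perp$. As $\frc$ is an ideal of $\frg$, effectivity yields $\frc=\zsp$.

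It then remains to upgrade $\frr=\fra\oplus\frb$ to an orthogonal direct sum of ideals with the stated bracket behaviour. Orthogonality $\fra\perp\frb$: here $\fra=\frr^{\frs}$ and $\frb\subseteq[\frs,\frr]$, and for $A\in\frr^{\frs}$, $X\in\frs$, $Z\in\frr$ one has $\langle A,[X,Z]\rangle=-\langle[X,A],Z\rangle=0$, so $\fra\perp[\frs,\frr]\supseteq\frb$. Bracket relations: $[\frk,\frb]=[\frk,[\frs,\frr^{\frk}]]=\zsp$ by the Jacobi identity, using $[\frk,\frs]=\zsp$ and $[\frk,\frr^{\frk}]=\zsp$, so $\frb\subseteq\frr^{\frk}$ and therefore $[\frk,\frr]=[\frk,\fra]\subseteq\fra$; likewise $[\frs,\fra]=\zsp$ and $[\frs,\frb]=\frb$ (as $\frb=[\frs,\frr^{\frk}]$ contains no non-zero $\frs$-invariant vector), whence $[\frs,\frr]=[\frs,\frb]=\frb$.

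The only real content lies in the first orthogonality relation $[\frr,\frg]\perp\frr$, which is precisely where abelianness of $\frr$ is used; everything else is bookkeeping with the $\gs$-invariance of $\met$ supplied by Theorem~\ref{mthm:invariance} and the orthogonality of the Levi factors from Corollary~\ref{cor:ksperp}. I do not anticipate a serious obstacle, but one should keep in mind that the effectivity hypothesis on $\frg^\perp$ is genuinely needed at the step $\frc=\zsp$ and cannot be dropped.
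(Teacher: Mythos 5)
Your argument is correct and is essentially the paper's own proof: $\frc\perp\frr$ from abelianness of $\frr$ together with $\frr$-invariance, $\frc\perp(\frk\times\frs)$ from Corollary \ref{cor:ksperp}, then effectivity kills the ideal $\frc$, and $\fra\perp\frb$ follows from $\frs$-invariance. The only difference is that you spell out the bracket relations $[\frk,\frr]\subseteq\fra$ and $[\frs,\frr]=\frb$, which the paper dismisses as immediate from the definitions; your verification is fine.
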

\begin{proof}
The $\frs$-invariance of $\met$ immediately implies $\fra\perp\frb$.
Since $\frr$ is abelian, $\frr$-invariance implies $\frc\perp\frr$.
Since $\frc\perp(\frs\times\frk)$ by Corollary \ref{cor:ksperp},
this shows $\frc$ is an ideal contained in
$\frg^\perp$, hence $\frc=\zsp$.
Now $[\frk,\frr]\subseteq\fra$ and $[\frs,\frr]=\frb$ by
definition of $\fra$ and $\frb$.
\end{proof}

\begin{lem}\label{lem:G=KAxSB}
$\frg$ is a direct product of ideals
\[
\frg = (\frk\ltimes\fra)\times(\frs\ltimes\frb),
\]
where $(\frk\ltimes\fra)\perp(\frs\ltimes\frb)$.
\end{lem}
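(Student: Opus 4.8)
The plan is to exhibit $\frg_1=\frk\ltimes\fra$ and $\frg_2=\frs\ltimes\frb$ as commuting ideals of $\frg$ whose vector-space sum exhausts $\frg$, and then to verify the orthogonality relation. Lemma \ref{lem:R=AxB} already supplies the structural input: $\frr=\fra\times\frb$ is an orthogonal direct sum of ideals of $\frg$, with $[\frk,\frr]\subseteq\fra$ and $[\frs,\frr]=\frb$; moreover $\fra=\frr^\frs$ by construction, so $[\frs,\fra]=\zsp$, and $\frr$ is abelian. First I would note that $\frg_1$ and $\frg_2$ are indeed subalgebras: $\fra$ and $\frb$ are ideals of $\frg$, $[\frk,\fra]\subseteq[\frk,\frr]\subseteq\fra$, and $[\frs,\frb]\subseteq[\frs,\frr]=\frb$.

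Next I would show that $\frg_1$ and $\frg_2$ commute. Three of the relevant brackets are immediate: $[\frk,\frs]=\zsp$ because $\frk$ and $\frs$ are distinct factors of the Levi subalgebra, $[\fra,\frb]\subseteq[\frr,\frr]=\zsp$ since $\frr$ is abelian, and $[\frs,\fra]=\zsp$ since $\fra=\frr^\frs$. The one point that uses the Jacobi identity is $[\frk,\frb]=\zsp$: from $\frb=[\frs,\frr^\frk]$, $[\frk,\frs]=\zsp$ and $[\frk,\frr^\frk]=\zsp$ one gets $[\frk,[\frs,\frr^\frk]]\subseteq[[\frk,\frs],\frr^\frk]+[\frs,[\frk,\frr^\frk]]=\zsp$. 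Together these give $[\frg_1,\frg_2]=[\frk+\fra,\,\frs+\frb]=\zsp$. Since the Levi subalgebra meets $\frr$ trivially, $\frg=\frk\oplus\frs\oplus\fra\oplus\frb$ as a vector space, so $\frg_1\cap\frg_2=\zsp$ and $\frg_1+\frg_2=\frg$; as two mutually centralizing subspaces spanning $\frg$, both $\frg_1$ and $\frg_2$ are ideals and $\frg=\frg_1\times\frg_2$.

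Finally, $\frg_1\perp\frg_2$ reduces to the four relations $\frk\perp\frs$, $\frk\perp\frb$, $\fra\perp\frs$ and $\fra\perp\frb$. The last is already part of Lemma \ref{lem:R=AxB}. For the first two I would use Corollary \ref{cor:ksperp}, which gives $\frk\perp[\frs,\frg]$; since $[\frs,\frg]=[\frs,\frs]+[\frs,\frr]=\frs+\frb$, this yields $\frk\perp\frs$ and $\frk\perp\frb$ simultaneously. For $\fra\perp\frs$ I would instead invoke $\frs$-invariance of $\met$ (Theorem \ref{mthm:invariance}) together with $\frs=[\frs,\frs]$: for $V\in\fra$ and $Y_1,Y_2\in\frs$, $\langle V,[Y_1,Y_2]\rangle=-\langle[Y_1,V],Y_2\rangle=0$ because $[Y_1,V]\in[\frs,\frr^\frs]=\zsp$.

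There is no serious obstacle here; the argument is essentially bookkeeping with the bracket and orthogonality relations already established in Theorem \ref{mthm:invariance}, Corollary \ref{cor:ksperp} and Lemma \ref{lem:R=AxB}. The only places to be a little careful are not to assume $\frb\subseteq\frr^\frk$ but to derive $[\frk,\frb]=\zsp$ from the Jacobi identity, and to remember that $\fra\perp\frs$ comes from $\frs$-invariance of $\met$ rather than directly from Corollary \ref{cor:ksperp}.
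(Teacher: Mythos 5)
Your proof is correct and follows essentially the same route as the paper: the product splitting comes from Lemma \ref{lem:R=AxB} (your Jacobi computation for $[\frk,\frb]=\zsp$ is just an explicit version of what the paper leaves implicit, since $\frb$ is an ideal and $[\frk,\frr]\subseteq\fra$), and the orthogonality is assembled from Lemma \ref{lem:R=AxB}, Corollary \ref{cor:ksperp}, and the $\frs$-invariance of $\met$ from Theorem \ref{mthm:invariance}, exactly as in the paper. The only cosmetic difference is that you obtain $\frk\perp\frb$ from $\frk\perp[\frs,\frg]$ rather than directly from $\frs$-invariance, which is an equivalent use of the same ingredients.
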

\begin{proof}
The splitting as a direct product of ideals follows from Lemma \ref{lem:R=AxB}.
The orthogonality follows together with Corollary \ref{cor:ksperp} and
the fact that the $\frs$-invariance of $\met$ implies $\frs\perp\fra$
and $\frk\perp\frb$.
\end{proof}

\begin{lem}\label{lem:SB}
%$\frb$ is totally isotropic,
$\frg^\perp\subseteq \frk\ltimes\fra$
and
$\frs\ltimes\frb$ is a non-degenerate ideal of $\frg$.
\end{lem}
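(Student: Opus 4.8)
The plan is to deduce both assertions from the orthogonal splitting $\frg = (\frk\ltimes\fra)\times(\frs\ltimes\frb)$ established in Lemma \ref{lem:G=KAxSB}, together with a single non-degeneracy statement. First I would record the elementary observation that for an orthogonal direct sum of subspaces $\frg = \frg_1\oplus\frg_2$ the metric radical splits as $\frg^\perp = (\frg_1\cap\frg^\perp)\oplus(\frg_2\cap\frg^\perp)$: writing $X = X_1+X_2$ with $X_i\in\frg_i$, the cross terms in $\langle X,Y\rangle$ vanish for every $Y\in\frg$, so $X\in\frg^\perp$ forces $X_1\perp\frg_1$ and $X_2\perp\frg_2$. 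Applying this with $\frg_1 = \frk\ltimes\fra$ and $\frg_2 = \frs\ltimes\frb$, the desired inclusion $\frg^\perp\subseteq\frk\ltimes\fra$ becomes equivalent to $(\frs\ltimes\frb)\cap\frg^\perp = \zsp$, which in turn says precisely that the restriction of $\met$ to $\frs\ltimes\frb$ is non-degenerate. So the whole lemma reduces to showing $\frm := (\frs\ltimes\frb)\cap\frg^\perp = \zsp$.

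To prove $\frm = \zsp$ I would show $\frm$ is an ideal of $\frg$ and then invoke the effectivity hypothesis. Since $\frs\ltimes\frb\subseteq\gs = \frs\ltimes\frr$, Theorem \ref{mthm:invariance}(2) gives $\gs\subseteq\inv(\frg,\met)$, hence $\met$ is invariant by $\frs\ltimes\frb$; thus $\met$ restricted to the ideal $\frs\ltimes\frb$ is an invariant symmetric bilinear form on that Lie algebra, and, because $\frs\ltimes\frb$ is orthogonal to $\frk\ltimes\fra$, its metric radical is exactly $\frm$. The radical of an invariant form is an ideal, so $\frm$ is an ideal of $\frs\ltimes\frb$. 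As $\frk\ltimes\fra$ and $\frs\ltimes\frb$ are complementary ideals of the direct product $\frg$, they commute, whence $[\frk\ltimes\fra,\frm]=\zsp$ and $\frm$ is an ideal of all of $\frg$. Since $\frm\subseteq\frg^\perp$ and $\frg^\perp$ contains no non-trivial ideal of $\frg$ by hypothesis, we get $\frm=\zsp$. This gives non-degeneracy of $\met$ on $\frs\ltimes\frb$, and by the first paragraph $\frg^\perp = (\frk\ltimes\fra)\cap\frg^\perp\subseteq\frk\ltimes\fra$.

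I expect no genuine obstacle; the only points needing a little care are the elementary splitting of the metric radical along an orthogonal direct sum — which is what makes the two assertions of the lemma equivalent — and the bookkeeping that $\met$ restricted to $\frs\ltimes\frb$ really is an invariant form (so that its radical is an ideal of $\frg$), both of which follow immediately from Lemma \ref{lem:G=KAxSB} and Theorem \ref{mthm:invariance}. One could alternatively argue via Lemma \ref{lem:R=AxB} and the definitions $\frb=[\frs,\frr]$, $\fra=\frr^\frs$, but the direct-product structure already in hand makes the route above the shortest.
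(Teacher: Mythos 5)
Your argument is correct, but it is organized differently from the paper's. The paper proves the inclusion first: by Corollary \ref{mcor:invariance_radical} one has $\frg^\perp\subseteq\frk\ltimes\zen(\gs)$, and since the radical is abelian with $\frr=\fra\oplus\frb$, $\fra=\frr^\frs$, $\frb=[\frs,\frr]$, one computes $\zen(\gs)=\fra$, giving $\frg^\perp\subseteq\frk\ltimes\fra$ directly; non-degeneracy of $\frs\ltimes\frb$ then falls out of the orthogonality $(\frs\ltimes\frb)\perp(\frk\ltimes\fra)$, since $(\frs\ltimes\frb)\cap(\frs\ltimes\frb)^\perp\subseteq\frg^\perp\subseteq\frk\ltimes\fra$ forces it to be $\zsp$. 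You reverse the order: you first show $\frm=(\frs\ltimes\frb)\cap\frg^\perp$ vanishes, by noting it is the radical of the $\gs$-invariant restriction of $\met$ to $\frs\ltimes\frb$ (Theorem \ref{mthm:invariance}(2)), hence an ideal of $\frs\ltimes\frb$, hence of $\frg$ by the direct-product structure of Lemma \ref{lem:G=KAxSB}, and then you invoke effectivity; the inclusion $\frg^\perp\subseteq\frk\ltimes\fra$ is then read off the splitting of the metric radical along the orthogonal direct sum. What your route buys is independence from Corollary \ref{mcor:invariance_radical} (whose proof rests on the heavier machinery of Section \ref{sec:classificationLie}); everything you use is Lemma \ref{lem:G=KAxSB}, the $\gs$-invariance statement, and the effectivity hypothesis, at the cost of the (easy but necessary) observation that $\frg^\perp$ splits along an orthogonal direct sum of ideals. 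The paper's version is shorter given that Corollary \ref{mcor:invariance_radical} is already a standing tool in this part of the text, and it identifies $\zen(\gs)=\fra$ explicitly, which is mildly informative in its own right. Both proofs use the effectivity hypothesis, yours directly and the paper's through Corollary \ref{mcor:invariance_radical}.
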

\begin{proof}
$\zen(\gs)=\fra$, therefore $\frg^\perp\subseteq\frk\ltimes\fra$
by Corollary \ref{mcor:invariance_radical}.
Since also $(\frs\ltimes\frb)\perp(\frk\ltimes\fra)$, we have
$(\frs\ltimes\frb)\cap(\frs\ltimes\frb)^\perp\subseteq\frg^\perp\subseteq\frk\ltimes\fra$. It follows that
$(\frs\ltimes\frb)\cap(\frs\ltimes\frb)^\perp=\zsp$.
%By $\frs$-invariance of $\met$,
%the subspace $\frb\cap\frs^\perp$ is $\frs$-invariant
%and is thus an ideal in $\frg$, so that $\frb\cap\frs^\perp=\zsp$.
%In particular, $\frb\cap\frg^\perp=\zsp$.
\end{proof}

%\begin{lem}\label{lem:kernel}
%$\frg^\perp\subseteq \frk\ltimes\fra$.
%\end{lem}
%\begin{proof}
%$\frg^\perp\subseteq\frk+\fra+\frb$ by Theorem \ref{thm:BGZ_kernel}.
%Let $x_0\in\frg^\perp$ such that $x_0=y_0+b_0$ with
%$y_0\in\frk+\fra$, $b_0\in\frb$.
%For any $s\in\frs$ we have
%$0=\langle x_0,s\rangle=\langle b_0,s\rangle$
%since $\frs\perp(\frk+\fra)$.
%But by Lemma \ref{lem:SB}, $b_0\perp\frb$, but $\frs\ltimes\frb$ is
%also non-degenerate, which implies $b_0=0$.
%It follows that $\frg^\perp\subseteq \frk\ltimes\fra$.
%\end{proof}

To complete the proof of Theorem \ref{mthm:abelian_rad1}, it remains
to understand the structure of the ideal $\frs\ltimes\frb$, which by
Theorem \ref{mthm:invariance} and the preceding lemmas is a
Lie algebra with an invariant non-degenerate scalar product given by
the restriction of $\met$.

\begin{lem}\label{lem:trivial}
$\frb$ is totally isotropic.
Let $\frs_0$ be the kernel of the $\frs$-action on $\frb$.
Then $\frs_0=\frb^\perp\cap\frs$.
\end{lem}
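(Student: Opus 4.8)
The plan is to work entirely inside the ideal $\frg' = \frs\ltimes\frb$, which by Lemma \ref{lem:G=KAxSB} and Theorem \ref{mthm:invariance} carries an invariant non-degenerate scalar product (the restriction of $\met$), with $\frb = [\frs,\frr]$ an abelian ideal on which $\frs$ acts reductively. First I would recall that $\met$ is $\frs$-invariant, so for $X\in\frs$ and $u,v\in\frb$ we have $\langle[X,u],v\rangle = -\langle u,[X,v]\rangle$; combined with $[\frb,\frb]=\zsp$ (since $\frb\subseteq\frr$ and $\frr$ is abelian), this says that the bracket map $\frs\times\frb\to\frb$ together with $\met|_\frb$ behaves like a representation preserving $\met|_\frb$. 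To show $\frb$ is totally isotropic, I would show $\frb\perp\frb$, i.e. $\met|_\frb = 0$. Suppose not. Consider the restriction of $\met$ to $\frb$; its radical $\frb\cap\frb^\perp$ is an $\frs$-submodule of $\frb$ (by $\frs$-invariance), and since $\frs$ acts reductively, $\frb$ splits as an orthogonal direct sum of the radical and a complementary submodule $\frb_1$ on which $\met$ is non-degenerate. The key point is then that $\frb_1$, being a non-degenerate $\frs$-submodule with $[\frb_1,\frb_1]=\zsp$, would force the corresponding part of $\frg'$ to contain $\frb_1$ as an \emph{abelian ideal with non-degenerate induced form}, which I claim contradicts non-degeneracy or semisimplicity considerations: an abelian ideal $\frb_1$ in a Lie algebra with invariant scalar product satisfies $\frb_1^\perp \supseteq [\frg',\frg']^{\perp}$-type relations, and more directly $[\frg',\frb_1]\perp\frb_1$ is forced by invariance together with $[\frb_1,\frb_1]=\zsp$, leading to $\frb_1\subseteq\frb_1^\perp$ unless $\frb_1$ centralizes $\frs$ — but $\frb_1\subseteq\frb=[\frs,\frr]$ has trivial $\frs$-invariants, so $\frb_1=\zsp$.

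More carefully, for the totally isotropic claim: for $u,v\in\frb$ and any $X\in\frs$, invariance gives $\langle u, [X,v]\rangle = -\langle [X,u], v\rangle$, and also $\langle [X,u],v\rangle = \langle X, [u,v]\rangle = 0$ because $[u,v]=0$. Hence $[\frs,\frb]\perp\frb$. But $\frb = [\frs,\frb]$, since $\frs$ acts reductively and $\frb$ has no nonzero $\frs$-invariants by construction ($\frb\subseteq[\frs,\frr]$ and $\frr^\frs$ was split off). Therefore $\frb\perp\frb$, i.e. $\frb$ is totally isotropic. This is the cleanest route and I expect it to be short.

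For the second statement, $\frs_0 = \frb^\perp\cap\frs$: let $\frs_0$ be the kernel of the $\frs$-action on $\frb$, a semisimple ideal of $\frs$. If $X\in\frs_0$ then for all $v\in\frb$, $\langle X,v\rangle$: write $v = [Y,w]$ with $Y\in\frs$, $w\in\frb$ (using $\frb=[\frs,\frb]$); then $\langle X, [Y,w]\rangle = \langle [X,Y], w\rangle$. Now $[X,Y]$ need not vanish, so this requires a little more: instead observe that $\frs_0$ is an ideal of $\frs$, hence $\frs = \frs_0\times\frs_1$ with $[\frs_0,\frs_1]=\zsp$, and $\frb = [\frs,\frb] = [\frs_1,\frb]$ since $\frs_0$ acts trivially. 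Then for $X\in\frs_0$, $v=[Y,w]$ with $Y\in\frs_1$: $\langle X,[Y,w]\rangle = \langle[X,Y],w\rangle = 0$ as $[X,Y]=0$. So $\frs_0\subseteq\frb^\perp\cap\frs$. Conversely, if $X\in\frs\cap\frb^\perp$, decompose $X = X_0 + X_1$ along $\frs_0\times\frs_1$; then $X_1 = X - X_0 \in\frb^\perp$ as well (since $X_0\in\frb^\perp$ already). For $v,w\in\frb$, $\langle X_1, [\,\cdot\,,\,\cdot\,]\rangle$ — use that $\met|_{\frs_1}$ is non-degenerate (it is invariant and $\frs_1\cap\frs_1^\perp$ is an ideal of $\frs_1$ of dimension $\le\mu\le$ something, or more simply $\frs$ is non-degenerate here by Lemma \ref{lem:SB}-type reasoning restricted to $\frg'$) and that the pairing $\langle[X_1,u],v\rangle = \langle X_1,[u,v]\rangle$ relates $\met|_{\frs_1}$ to $\met|_\frb$: since $\frb\perp\frb$, for $X_1\in\frb^\perp$ we get $\langle[X_1,u],v\rangle = -\langle u,[X_1,v]\rangle$, and pairing $[X_1,\frb]$ against $\frb$ is zero, so $[X_1,\frb]\subseteq\frb^\perp\cap(\frs\ltimes\frb)$; iterating and using non-degeneracy of $\met$ on $\frg'$ together with $\frb^\perp\cap\frb=\frb$, one concludes $[X_1,\frb]=\zsp$, i.e. $X_1\in\frs_0$, forcing $X\in\frs_0$. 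The main obstacle I anticipate is precisely this converse inclusion: ruling out a nonzero element of $\frs_1\cap\frb^\perp$ requires carefully exploiting that $\met$ restricted to $\frg' = \frs\ltimes\frb$ is non-degenerate (Lemma \ref{lem:SB}) — the natural pairing puts $\frs_1$ in duality with $\frb$ modulo the radical, so an element of $\frs_1$ orthogonal to all of $\frb$ must act trivially on $\frb$ and also be orthogonal to $\frs_0$ and to $\frb$, hence lie in the metric radical of $\frg'$, which is trivial.
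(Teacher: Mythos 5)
Your argument for the total isotropy of $\frb$ and for the inclusion $\frs_0\subseteq\frb^\perp\cap\frs$ is correct, and it is essentially the paper's route (the paper gets isotropy even faster: for $X\in\frs$ and $w,v\in\frr$, skewness of $\ad(w)$ gives $\langle[X,w],v\rangle=\langle X,[w,v]\rangle=0$, so $\frb=[\frs,\frr]\perp\frr$). The gap is in the converse inclusion $\frb^\perp\cap\frs\subseteq\frs_0$, which you yourself flag as the main obstacle. Your sketch there is circular and contains a false step: the ``duality between $\frs_1$ and $\frb$'' is exactly what Lemma \ref{lem:coadjoint} later deduces \emph{from} this lemma (via $\frs_1\cap\frb^\perp=\zsp$), so it cannot be invoked here; and the claim that an element $X_1\in\frs_1$ with $X_1\perp\frb$ and $X_1\perp\frs_0$ ``lies in the metric radical of $\frs\ltimes\frb$'' would also require $X_1\perp\frs_1$, which is neither established nor automatic at this stage --- the invariant non-degenerate form on $\frs\ltimes\frb$ may restrict nontrivially to $\frs_1$ (for instance, the dual pairing plus a multiple of the Killing form of $\frs_1$ is invariant and non-degenerate on $\frs_1\ltimes\frs_1^*$). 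Moreover, pairing $[X_1,\frb]$ against $\frb$ gives nothing beyond total isotropy, so your ``iteration'' does not yield $[X_1,\frb]=\zsp$.

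What is missing is a single invariance computation that pairs $[X,u]$ against $\frs$ instead of against $\frb$. Let $X\in\frs\cap\frb^\perp$ and $u\in\frb$. For every $Y\in\frs$, skewness of $\ad(u)$ (Theorem \ref{mthm:invariance}, part (2)) gives $\langle [X,u],Y\rangle=\langle X,[u,Y]\rangle=0$, since $[u,Y]\in\frb$ and $X\perp\frb$. Hence $[X,u]\in\frb\cap\frs^\perp$. But $\frb\cap\frs^\perp=\zsp$: an element of $\frb$ orthogonal to $\frs$ is also orthogonal to $\frb$ (total isotropy) and to $\frk\ltimes\fra$ (Lemma \ref{lem:G=KAxSB}), hence lies in $\frg^\perp\cap(\frs\ltimes\frb)$, which is $\zsp$ by Lemma \ref{lem:SB}. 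Therefore $[X,\frb]=\zsp$, i.e.\ $X\in\frs_0$, with no appeal to the splitting $\frs=\frs_0\times\frs_1$ or to any duality. This is precisely the paper's intended argument (``use $\frb\cap\frs^\perp=\zsp$ and the invariance of $\met$'').
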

\begin{proof}
Since $\met$ is $\frr$-invariant and $\frr$ is abelian,
$\frb$ is totally isotropic.
For the second claim, use $\frb\cap\frs^\perp=\zsp$ and the invariance
of $\met$.
\end{proof}

\begin{lem}\label{lem:coadjoint}
$\frs$ is an orthogonal direct product of ideals
$\frs=\frs_0\times\frs_1$
with the following properties:
\begin{enumerate}
\item
$\frs_1\ltimes\frb$ is a metric cotangent algebra.
\item
$[\frs_0,\frb]=\zsp$ and $\frs_0=\frb^\perp\cap\frs$.
\end{enumerate}
\end{lem}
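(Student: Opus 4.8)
The plan is to analyse the ideal $\frs\ltimes\frb$ of $\frg$. Recall that by Lemmas \ref{lem:R=AxB} and \ref{lem:trivial} the ideal $\frb$ of $\frg$ is abelian and totally isotropic, and by Lemmas \ref{lem:G=KAxSB} and \ref{lem:SB} we already have an orthogonal splitting $\frg=(\frk\ltimes\fra)\times(\frs\ltimes\frb)$ with $\frs\ltimes\frb$ non-degenerate. First I would observe that $\frs_0:=\frb^\perp\cap\frs$ --- which by Lemma \ref{lem:trivial} is exactly the kernel of the $\frs$-action on $\frb$ --- is an ideal of $\frs$, being the kernel of a representation; since $\frs$ is semisimple it has a complementary ideal $\frs_1$ with $\frs=\frs_0\times\frs_1$, and $\frs_1$ then acts on $\frb$ with kernel $\frs_1\cap\frs_0=\zsp$, i.e.\ faithfully. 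This yields part (2) of the lemma at once ($[\frs_0,\frb]=\zsp$ by construction, $\frs_0=\frb^\perp\cap\frs$ by Lemma \ref{lem:trivial}), while $\frs_0\perp\frs_1$ follows from Corollary \ref{cor:ksperp}(2), as $\frs_0$ and $\frs_1$ are products of disjoint collections of simple factors of $\frs$.

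The heart of the argument is to identify $\frb$ with $\frs_1^*$ through $\met$. I would use the map $\phi\colon\frb\to\frs_1^*$ given by $\phi(B)(X)=\langle B,X\rangle$ for $X\in\frs_1$. Since $\met$ is $\gs$-invariant (Theorem \ref{mthm:invariance}), for $X\in\frs_1$, $B\in\frb$ and all $Y\in\frs_1$ one has $\langle[X,B],Y\rangle=-\langle B,[X,Y]\rangle$, which is precisely the identity $\phi([X,B])=\ad^*(X)\phi(B)$; hence $\phi$ is $\frs_1$-equivariant for the coadjoint action on $\frs_1^*$. Its kernel $\ker\phi=\frb\cap\frs_1^\perp$ is an $\frs_1$-submodule of $\frb$ which is orthogonal to $\frk$ and $\fra$ (Lemma \ref{lem:G=KAxSB}), to $\frs_0$ (as $\frs_0\subseteq\frb^\perp$), to $\frs_1$ (by definition), and to $\frb$ (as $\frb$ is totally isotropic), hence orthogonal to all of $\frg=\frk\oplus\fra\oplus\frs_0\oplus\frs_1\oplus\frb$; since moreover $[\frk,\frb]=[\frr,\frb]=[\frs_0,\frb]=\zsp$ it is an ideal of $\frg$ contained in $\frg^\perp$, so by effectivity $\ker\phi=\zsp$. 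Dually, the transpose $\phi^*\colon\frs_1\to\frb^*$, $X\mapsto\langle X,\cdot\rangle|_\frb$, has kernel $\frs_1\cap\frb^\perp\subseteq\frs_1\cap\frs_0=\zsp$, so $\phi$ is surjective as well. Thus $\phi$ is an isomorphism of $\frs_1$-modules carrying the $\frs_1$-action on $\frb$ to the coadjoint action on $\frs_1^*$.

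Since $\frb$ is abelian and $[\frs_0,\frb]=\zsp$, the map $\id_{\frs_1}\oplus\phi\colon\frs_1\ltimes\frb\to\frs_1\ltimes_{\ad^*}\frs_1^*$ is then a Lie algebra isomorphism. The ideal $\frs_1\ltimes\frb$ of $\frg$ carries the restriction of $\met$, which is invariant by Theorem \ref{mthm:invariance}; it is also non-degenerate, since $\frs\ltimes\frb$ is non-degenerate by Lemma \ref{lem:SB} and $\frs\ltimes\frb=\frs_0\oplus(\frs_1\ltimes\frb)$ is an orthogonal decomposition. Together with the module isomorphism above this exhibits $\frs_1\ltimes\frb$ as a metric cotangent algebra in the sense of Example \ref{ex:sl2sl2}, proving (1).

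The step I expect to need the most care is the final identification as a \emph{metric} cotangent algebra. The argument above shows that under $\phi$ the form $\met$ restricts to the dual pairing between $\frs_1$ and $\frb$ with $\frb$ totally isotropic, but a priori it could still carry an invariant symmetric term on $\frs_1$ (for instance $\frs_1\ltimes\frs_1^*\cong\sl_2(\RR)\ltimes_{\ad}\sl_2(\RR)$ equipped with the dual-pairing metric twisted by a multiple of the Killing form is still a non-degenerate invariant metric Lie algebra of this shape). So when writing this out I would first fix the paper's precise convention for ``metric cotangent algebra''; if it requires the canonical dual-pairing metric, the extra fact needed is $\met|_{\frs_1}=0$, which I would try to obtain by splitting off, for each simple factor $\frt\subseteq\frs_1$ on which $\met$ does not vanish, the non-degenerate ideal $\frt\oplus\phi^{-1}(\frt^*)$ of $\frg$ and ruling it out using nil-invariance and effectivity.
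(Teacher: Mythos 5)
Your argument is correct and is essentially the paper's own proof: both take $\frs_0$ to be the kernel of the $\frs$-action on $\frb$ (which equals $\frb^\perp\cap\frs$ by Lemma \ref{lem:trivial}), take the complementary ideal $\frs_1$, and use invariance of $\met$ on $\gs$ to show that $\frs_1$ and $\frb$ are dually paired and that the action of $\frs_1$ on $\frb\cong\frs_1^*$ is the coadjoint one. The only cosmetic difference is how the duality is obtained: you get injectivity of $\phi\colon\frb\to\frs_1^*$ from an ideal-plus-effectivity argument, while the paper reads everything off from the non-degeneracy of $\frs\ltimes\frb$ (Lemma \ref{lem:SB}) together with $\frs_1\cap\frb^\perp=\zsp$ and the dimension count forced by $\frb$ being totally isotropic; both are fine.

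Concerning your final worry: drop the planned extra step. The paper's proof establishes exactly what your second and third paragraphs establish -- $\frb$ totally isotropic, dually paired with $\frs_1$ by $\met$, carrying the coadjoint action -- and it neither claims nor proves $\met|_{\frs_1\times\frs_1}=0$. Indeed this cannot be forced: the dual pairing on $\frs_1\ltimes\frs_1^*$ augmented by a multiple of the Killing form on $\frs_1$ is an invariant, non-degenerate scalar product with abelian radical and trivial metric radical, so it satisfies every hypothesis of the lemma, and (as no automorphism of the cotangent algebra removes the extra term on the Levi factor) it is not isometric to the pure dual pairing. So ``metric cotangent algebra'' in Lemma \ref{lem:coadjoint} is to be read as identifying the Lie algebra structure of $\frs_1\ltimes\frb$ with that of $\frs_1\ltimes_{\ad^*}\frs_1^*$, with $\frb$ isotropic and in duality with $\frs_1$ as in Example \ref{ex:sl2sl2}; an attempt to prove $\met|_{\frs_1}=0$ from nil-invariance and effectivity would fail.
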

\begin{proof}
The kernel $\frs_0$ of the $\frs$-action on $\frb$ is an ideal in
$\frs$, and by Lemma \ref{lem:trivial}
orthogonal to $\frb$.
Let $\frs_1$ be the ideal in $\frs$ such that $\frs=\frs_0\times\frs_1$.
Then $\frs_0\perp\frs_1$ by invariance of $\met$.

$\frs_1$ acts faithfully on $\frb$ and so $\frs_1\cap\frb^\perp=\zsp$
by Lemma \ref{lem:trivial}.
Moreover, $\frs_1\ltimes\frb$ is non-degenerate since $\frs\ltimes\frb$
is. But $\frb$ is totally isotropic by Lemma \ref{lem:trivial}, so
non-degeneracy implies $\dim\frs_1=\dim\frb$.
Therefore $\frb$ and $\frs_1$ are dually paired by $\met$.

Now identify $\frb$ with $\frs_1^*$ and write $\xi(s)=\langle \xi,s\rangle$
for $\xi\in\frs_1^*$, $S\in\frs_1$.
Then, once more by invariance of $\met$,
\[
[S,\xi](S') = \langle [S,\xi],S'\rangle = \langle \xi,-[S,S']\rangle
= \xi(-\ad(S)S') = (\ad^*(S)\xi)(S')
\]
for all $S,S'\in\frs_1$. So the action of $\frs_1$ on $\frs_1^*$
is the coadjoint action.
This means $\frs\ltimes\frb$ is a metric cotangent algebra
(cf.~Example \ref{ex:sl2sl2}).
\end{proof}

\begin{proof}[Proof of Theorem \ref{mthm:abelian_rad1}]
The decomposition into the desired orthogonal ideals follows from
Lemmas \ref{lem:G=KAxSB} to \ref{lem:coadjoint}.
The structure of the ideals $\frg_2$ and $\frg_3$ is Lemma
\ref{lem:coadjoint}.
\end{proof}

The algebra $\frg_1$ in Theorem \ref{mthm:abelian_rad1} is of
Euclidean type.
Let $\frg=\frk\ltimes V$, with $V\cong\RR^n$, be an algebra of
Euclidean type and let $\frk_0$ be the kernel of the $\frk$-action
on $V$.
Proposition \ref{prop:gperp_inv} and the fact that the solvable
radical $V$ is abelian immediately give the following:

\begin{prop}\label{prop:Hcompact}
Let $\frg=\frk\ltimes V$ be a Lie algebra of Euclidean type,
and suppose $\frg$ is equipped with a symmetric 
bilinear form that is nil-invariant and $\frg^\perp$-invariant,
such that $\frg^\perp$ does not contain a non-trivial ideal of
$\frg$.
Then
\begin{equation}
\frg^\perp \subseteq \frk_0\times V.
\label{eq:GperpK0V0V1}
\end{equation}
\end{prop}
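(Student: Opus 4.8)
The plan is to derive this directly from Proposition~\ref{prop:gperp_inv}, using only that the solvable radical $V$ is abelian and that $\frk$ is of compact type.

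First I would record the shape of the Levi decomposition in this situation. Since $\frk$ is semisimple of compact type, it has no simple factor of non-compact type, so the non-compact semisimple part of $\frg = \frk \ltimes V$ is trivial, i.e.\ $\frs = \zsp$, and the characteristic ideal $\gs = \frs \ltimes \frr$ of Section~\ref{sec:biinvariant} coincides with the solvable radical, $\gs = V$.

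Next I would check that the hypotheses of Proposition~\ref{prop:gperp_inv} are satisfied here: $\met$ is nil-invariant and $\frg^\perp$ contains no non-trivial ideal of $\frg$, so $\frg$ and $\met$ are as in Corollary~\ref{mcor:invariance_radical}, and in addition $\met$ is $\frg^\perp$-invariant by hypothesis. The proposition then yields $[\frg^\perp, \gs] = \zsp$, that is, $[\frg^\perp, V] = \zsp$.

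Finally I would read off \eqref{eq:GperpK0V0V1}. Writing an arbitrary $X \in \frg^\perp$ as $X = T + v$ with $T \in \frk$ and $v \in V$, abelianness of $V$ gives $[X, V] = [T, V]$; together with $[\frg^\perp, V] = \zsp$ this forces $[T, V] = \zsp$, i.e.\ $T$ lies in the kernel $\frk_0$ of the $\frk$-action on $V$. Hence $X \in \frk_0 + V = \frk_0 \times V$, the sum being a direct product of Lie algebras since $\frk_0$ acts trivially on $V$. This is exactly \eqref{eq:GperpK0V0V1}. There is no genuine obstacle in this argument: the substantive content is already packaged in Proposition~\ref{prop:gperp_inv}, and what remains is only the one-line projection just described.
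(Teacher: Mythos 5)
Your proposal is correct and follows exactly the paper's own route: the paper derives Proposition \ref{prop:Hcompact} immediately from Proposition \ref{prop:gperp_inv} together with the abelianness of the radical $V$, which is precisely your argument (with the identification $\gs=V$ and the projection of $\frg^\perp$ to $\frk$ spelled out). Nothing is missing.
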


%$\frg$ can be written more explicitely as
%\begin{equation}
%\frg = \frk_0\times V_0\times(\frk_1\ltimes V_1),
%\quad
%V=V_0\oplus V_1,
%\label{eq:GKRn}
%\end{equation}
%where $V_0=V^\frk$, $V_1=[\frk,V]$, and $\frk_0$ is the kernel of
%the $\frk$-action on $V$.
The following Examples \ref{ex:not_easy} and \ref{ex:not_easy2}
show that in general a metric Lie algebra of Euclidean type cannot
be further decomposed into orthogonal direct sums of metric Lie
algebras.
Both examples will play a role in Section \ref{sec:simplyconn}.

\begin{example}\label{ex:not_easy}
Let $\frk_1=\so_3$, $V_1=\RR^3$, $V_0=\RR^3$
and
$\frg=(\so_3\ltimes V_1)\times V_0$ with the natural action of
$\so_3$ on $V_1$.
Let $\varphi:V_1\to V_0$ be an isomorphism and
put
\[
\frh = \{(0,v,\varphi(v))\mid v\in V_0\}\subset(\frk_0\ltimes V_1) \times V_0.
\]
We can define a nil-invariant symmetric bilinear form on $\frg$
by identifying $V_1\cong\so_3^*$ and requiring
for $K\in\frk_1$, $v_0\in V_0$, $v_1\in V_1$,
\[
\langle K,v_0+v_1\rangle
=
v_1(K)-\varphi^{-1}(v_0)(K),
\]
and further $\frk_1\perp\frk_1$,
$(V_0\oplus V_1)\perp(V_0\oplus V_1)$.
Then $\met$ has signature $(3,3,3)$
and metric radical $\frh=\frg^\perp$, which is not an ideal in $\frg$.
Note that $\met$ is not invariant.
Moreover, $\frk_1\ltimes V_1$ is not orthogonal to $V_0$.
A direct factor $\frk_0$ can be added to this example at liberty.
\end{example}

\begin{example}\label{ex:not_easy2}
We can modify the Lie algebra $\frg$ from Example \ref{ex:not_easy} 
by embedding
the direct summand $V_0\cong\RR^3$ in a torus subalgebra in a semisimple Lie
algebra $\frk_0$ of compact type, say $\frk_0=\so_6$, to obtain a
Lie algebra $\frf=(\frk_1\ltimes V_1)\times\frk_0$.
We obtain a nil-invariant symmetric bilinear form of signature
$(15,3,3)$ on $\frf$ by extending $\met$ by a definite form on a
vector space complement of $V_0$ in $\frk_0$. The metric radical of the
new form is still $\frg^\perp=\frh$.
\end{example}

%%%
\subsection{Nil-invariant bilinear forms on Euclidean algebras}

A \emph{Euclidean algebra} is a Lie algebra
$\euc_n=\so_n\ltimes\RR^n$,
where $\so_n$ acts on $\RR^n$ by the natural action.

%By a \emph{skew pairing} of a Lie algebra $\frl$ and an $\frl$-module
%$V$ we mean a bilinear map $\met:\frl\times V \to \RR$ such that
%$\langle X, Y v \rangle = -\langle Y, X v\rangle$
%for all $X,Y\in\frl$, $v\in V$.
%Note that any nil-invariant symmetric bilinear form on
%$\frg=\frk\ltimes\RR^n$ yields a skew pairing of $\frk$ and $\RR^n$.
%
%\begin{prop}[\mbox{\cite[Proposition A.5]{BGZ}}]\label{prop:so3_skew_module}
%Let $\met: \so_3 \times V \to \RR$ be a skew pairing for the (non-trivial) irreducible module $V$.
%If the skew pairing is non-zero, then $V$ is isomorphic to the adjoint representation
%of  $\so_3$ and $\met$ is proportional to the Killing form. 
%\end{prop}

\begin{example}\label{ex:so3R3}
Consider $\frg=\so_3\ltimes \RR^n$ with a nil-invariant symmetric bilinear
form $\met$, and assume that the action of $\so_3$ is irreducible.
By Proposition \ref{prop:so3_skew_module}, either
$\so_3\perp\RR^n$, 
or $n=3$ and $\so_3$ acts by its coadjoint representation on
$\RR^3\cong\so_3^*$, and $\met$ is the dual pairing.
In the first case, $\RR^n$ is an ideal in $\frg^\perp$, and in the
second case, $\met$ is invariant and non-degenerate.
\end{example}

\begin{example}\label{ex:so4R4}
Let $\frg$ be the Euclidean Lie algebra $\so_4\ltimes \RR^4$ with a
nil-invariant symmetric bilinear form $\met$.
Since $\so_4\cong\so_3\times\so_3$, and here both factors
$\so_3$ act irreducibly on $\RR^4$, it follows from
Example \ref{ex:so3R3} that in $\frg$, $\RR^4$ is orthogonal to both
factors $\so_3$, hence to all of $\so_4$.
In particular, $\RR^4$ is an ideal contained in $\frg^\perp$.
\end{example}

\begin{thm}\label{thm:noSOnRn}
Let $\met$ be a nil-invariant symmetric bilinear form on the
Euclidean Lie algebra $\so_n\ltimes\RR^n$ for $n\geq 4$.
Then the ideal $\RR^n$ is contained in $\frg^\perp$.
\end{thm}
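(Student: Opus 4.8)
The plan is to derive everything from one relation: by nil-invariance the operator $\ad(Y)$ is skew for $\met$ for every $Y$ in the nilradical of $\frg=\so_n\ltimes\RR^n$, which here is $\RR^n$ itself (this is the remark following Definition \ref{def:nilinvariance}, or, equivalently, Theorem \ref{mthm:invariance}(2) applied with $\gs=\RR^n$). Thus I may start from
\[
\langle [Y,Z_1],Z_2\rangle=-\langle Z_1,[Y,Z_2]\rangle\qquad(Y\in\RR^n,\ Z_1,Z_2\in\frg)
\]
and obtain all the information I need by plugging in basis elements. Fix the standard basis $e_1,\dots,e_n$ of $\RR^n$ and the usual elements $E_{cd}$ ($c\neq d$, $E_{dc}=-E_{cd}$) spanning $\so_n$, normalized so that $[E_{cd},e_p]=E_{cd}e_p=\delta_{dp}e_c-\delta_{cp}e_d$. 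The goal is to prove $q_{ij}:=\langle e_i,e_j\rangle=0$ and $b_{a,cd}:=\langle e_a,E_{cd}\rangle=0$ for all indices, since these two families of identities together say precisely $\RR^n\perp\frg$, i.e.\ $\RR^n\subseteq\frg^\perp$.

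First I would take $Z_1=E_{cd}$, $Z_2=e_q$, $Y=e_p$: since $[e_p,e_q]=0$, skewness gives $\langle[e_p,E_{cd}],e_q\rangle=0$, i.e.\ $\delta_{dp}q_{cq}-\delta_{cp}q_{dq}=0$; setting $p=d\ (\neq c)$ yields $q_{cq}=0$, so $\met_{\RR^n}=0$ and $\RR^n$ is totally isotropic. Next I would take $Z_1=E_{ab}$, $Z_2=E_{cd}$, $Y=e_p$; skewness gives $\langle E_{ab}e_p,E_{cd}\rangle+\langle E_{ab},E_{cd}e_p\rangle=0$, which expands to
\[
\delta_{bp}\,b_{a,cd}-\delta_{ap}\,b_{b,cd}+\delta_{dp}\,b_{c,ab}-\delta_{cp}\,b_{d,ab}=0 .
\]
This is where $n\geq4$ enters, and I expect it to be the only genuine point of the proof: given an arbitrary index $a$ and arbitrary $c\neq d$, the set $\{a,c,d\}$ has at most three elements, so for $n\geq4$ one can pick an index $b\notin\{a,c,d\}$ and take $p=b$; then all terms but the first drop out and $b_{a,cd}=0$. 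Since the $e_a$ span $\RR^n$ and the $E_{cd}$ span $\so_n$, this proves $\RR^n\perp\so_n$; together with $\RR^n$ being totally isotropic we get $\RR^n\perp\frg$, hence $\RR^n\subseteq\frg^\perp$.

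The delicate point is thus purely the index bookkeeping in the last display, and it is sharp: for $n=3$ one cannot keep $b$ out of $\{a,c,d\}$ when $a,c,d$ are distinct, and the conclusion indeed fails there, the metric cotangent algebra $\so_3\ltimes\so_3^*$ of Example \ref{ex:sl2sl2} realizing $\RR^3$ non-degenerately. As a cross-check, and as an alternative to the first step, one can instead use Theorem \ref{mthm:invariance} (or Corollary \ref{mcor:invariance_radical}) to see that $\met_{\RR^n}$ is $\so_n$-invariant, hence a scalar multiple of the Euclidean form by Schur's lemma, and then kill that scalar with $q_{cq}=0$; but the computation above treats $n=4$, where $\so_4$ is not simple, and $n\geq5$ uniformly.
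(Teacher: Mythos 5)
Your proof is correct, and it takes a genuinely different and more elementary route than the paper. You work directly from the single fact that $\ad(Y)$ is skew for every $Y\in\RR^n$ (valid as you say, since $\RR^n$ is the nilradical and $\ad(Y)$ is itself nilpotent, or via Theorem \ref{mthm:invariance}(2) with $\gs=\RR^n$), and then extract both $\RR^n\perp\RR^n$ and $\RR^n\perp\so_n$ by explicit basis bookkeeping; I checked the sign conventions and the key identity $\delta_{bp}b_{a,cd}-\delta_{ap}b_{b,cd}+\delta_{dp}b_{c,ab}-\delta_{cp}b_{d,ab}=0$, and the choice $p=b\notin\{a,c,d\}$ (possible exactly when $n\geq4$) does isolate $b_{a,cd}$, so the argument is complete and uniform in $n\geq4$. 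The paper instead argues representation-theoretically: it first classifies skew pairings for $\so_3$ (Proposition \ref{prop:so3_skew_module}), deduces the cases $n=3$ and $n=4$ in Examples \ref{ex:so3R3} and \ref{ex:so4R4} (the $n=4$ case via $\so_4\cong\so_3\times\so_3$), and for $n>4$ embeds $\so_4$ acting on $\RR^4\oplus\RR^{n-4}$, uses nil-invariance to get $\Ad(g)\so_4\perp\RR^n$ for all $g\in\SO_n$, and concludes by simplicity of $\so_n$. What the paper's route buys is conceptual reuse of the skew-pairing machinery from the appendix and a structural explanation of why $n=3$ is the exception (the coadjoint/adjoint module appearing); what your route buys is a short, self-contained computation that needs neither the appendix nor the case split at $n=4$ versus $n>4$, and it makes the sharpness at $n=3$ visible exactly at the index-choice step, consistent with the metric cotangent algebra $\euc_3$ noted in the proof of Theorem \ref{thm:noSOnRn2}.
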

\begin{proof}
For $n=4$, this is Example \ref{ex:so4R4}. So assume $n>4$.
Consider an embedding of $\so_4$ in $\so_n$ such that
$\RR^n=\RR^4\oplus\RR^{n-4}$, where $\so_4$ acts on $\RR^4$ in the
standard way and trivially on $\RR^{n-4}$.
By Example \ref{ex:so4R4}, $\so_4\perp\RR^4$.
Since $\RR^{n-4}\subseteq[\so_n,\RR^n]$, the nil-invariance of
$\met$ implies $\so_4\perp\RR^{n-4}$.
Hence $\RR^n\perp\so_4$.

The same reasoning shows that $\Ad(g)\so_4\perp\RR^n$, where
$g\in\SO_n$.
Then $\frb = \sum_{g\in\SO_n}\Ad(g)\so_4$ is orthogonal to $\RR^n$.
But $\frb$ is clearly an ideal in
$\so_n$, so $\frb=\so_n$ by simplicity of $\so_n$ for $n>4$.
\end{proof}

%\wolf{Strategy: Use these results on $\so_3$ and $\so_4$ to
%show that in many cases $\frk\ltimes\RR^n$ does not admit
%a nil-invariant $\met$. ``$\so_4$-reduction'', but seems
%difficult to understand the appearance of standard reps. of
%subalgebras $\so_4$ in general irreducible reps.}

%\begin{example}\label{ex:affineframeplane}
%The space of framed affine $k$-planes in $\RR^n$ is
%\[
%F_k = (\SO_n\ltimes\RR^n)/\RR^k.
%\]
%Assume $k<n$.
%The space $F_k$ does not admit a compact homogeneous quotient with
%a pseudo-Rie\-mannian metric.
%If it did, then the Lie algebra $\so_n\ltimes\RR^n$ would have
%a nil-invariant symmetric bilinear form $\met$ with kernel $\RR^k$.\wolf{no... not so simple}
%However, we saw in Theorem \ref{thm:noSOnRn} that a non-trivial
%$\met$ exists only for $n=3$, and in this case it is
%non-degenerate.
%\end{example}

%\begin{example}[Affine Grassmannian]\label{ex:affineplane}
%The space of affine $k$-planes in $\RR^n$ is
%\[
%P_k=(\SO_n\ltimes\RR^n)/((\SO_{n-k}\times\SO_k)\ltimes\RR^k).
%\]
%The space $P_k$ does not admit an invariant
%pseudo-Riemannian metric.
%If it did, then the Lie algebra $\so_n\ltimes\RR^n$ would have a
%nil-invariant symmetric bilinear form $\met$ with kernel
%$(\so_{n-k}\times\so_k)\ltimes\RR^k$.
%\wolf{actually, this needs a compact quotient of $P_k$}
%But $\so_n$ acts faithfully on $\RR^n$, so by Proposition
%\ref{prop:Hcompact}, the kernel must
%be contained $\RR^n$, a contradiction.
%\end{example}

{
\renewcommand{\themthm}{\ref{thm:noSOnRn2}}
\begin{mthm}
The Euclidean group $\Euc_n=\OO_n\ltimes\RR^n$, $n\neq 1, 3$,
does not have
compact quotients with a pseudo-Riemannian metric such that
$\Euc_n$ acts isometrically and almost effectively.
\end{mthm}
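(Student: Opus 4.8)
The plan is to argue by contradiction. Suppose $M=\Euc_n/H$ is a compact homogeneous space carrying a pseudo-Riemannian metric $\g$ for which $\Euc_n$ acts isometrically and almost effectively (replacing $\Euc_n$ by its identity component if necessary, which still acts transitively and almost effectively, so that we may argue at the level of the Lie algebra $\frg=\euc_n=\so_n\ltimes\RR^n$). Let $\frh$ be the Lie algebra of $H$. Since $M$ is compact it has finite volume, so Corollary \ref{cor:isom_inilinvariant} applies: the bilinear form $\met=\met_p$ induced on $\frg$ at a point $p$ is nil-invariant, and, the action being transitive, its kernel equals $\frg_p=\frh$. Almost effectivity means the largest ideal of $\frg$ contained in $\frh$ is $\zsp$; thus $(\frg,\met)$ is a metric Lie algebra whose metric radical $\frg^\perp=\frh$ contains no non-trivial ideal of $\frg$ — precisely the standing hypothesis of the structure results above. (If instead ``compact quotient'' is read as $M=\Euc_n/\Gamma$ for a cocompact lattice $\Gamma$, then $\frh=\zsp$ and the conclusions below hold a fortiori.)

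For $n\geq 4$ the argument is then immediate. Here $\frg=\so_n\ltimes\RR^n$ is exactly the Euclidean Lie algebra, and $\so_n$ is semisimple of compact type, so Theorem \ref{thm:noSOnRn} applies verbatim to the nil-invariant form $\met$ and yields $\RR^n\subseteq\frg^\perp=\frh$. But $\RR^n$ is a non-trivial ideal of $\frg$, contradicting the effectivity of the action. This disposes of every $n\geq 4$, in particular $n=4$.

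The remaining case is $n=2$, where $\frg=\euc_2$ is solvable. Then Theorem \ref{thm:nilinvariant} upgrades the nil-invariant $\met$ to an invariant form, so $\frg^\perp=\frh$ is now an ideal of $\frg$; being an ideal contained in $\frh$ it must be $\zsp$, and hence $\met$ is a \emph{non-degenerate invariant} symmetric bilinear form on $\euc_2$. The last step is to observe that $\euc_2$ admits no such form: a non-degenerate invariant form would identify the adjoint and coadjoint $\euc_2$-modules, yet the coadjoint module $\euc_2^{*}$ contains, as a trivial submodule, the line annihilating the derived ideal $[\euc_2,\euc_2]=\RR^2$, whereas the adjoint module $\euc_2$ has no trivial submodule because $\zen(\euc_2)=\zsp$. (Equivalently, writing $\euc_2=\RR J\ltimes\RR^2$ with $[J,X_1]=X_2$, $[J,X_2]=-X_1$, $[X_1,X_2]=\zsp$, invariance forces $\met$ to vanish on $\RR^2\times\euc_2$, so $\RR^2\subseteq\frg^\perp$ and $\met$ is degenerate.) This contradiction completes the proof.

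The substantive input is Theorem \ref{thm:noSOnRn}, which is already available, so I anticipate no real obstacle: the only mildly delicate points are (i) checking that a compact homogeneous pseudo-Riemannian space supplies exactly the data — finite volume, transitivity, almost effectivity — that translate into ``$\met_p$ nil-invariant with ideal-free metric radical $\frg^\perp=\frh$'', and (ii) the separate treatment of $n=2$, which falls outside the range $n\geq 4$ of Theorem \ref{thm:noSOnRn} and is instead handled through the solvable theory plus the elementary non-existence of an invariant scalar product on $\euc_2$.
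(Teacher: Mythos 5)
Your proposal is correct and follows essentially the same route as the paper: for $n\geq 4$ you invoke Theorem \ref{thm:noSOnRn} to force $\RR^n\subseteq\frg^\perp$, contradicting almost effectivity, and for $n=2$ you use solvability (Theorem \ref{thm:nilinvariant}) to upgrade nil-invariance to invariance and conclude that $\RR^2$ lies in the metric radical. The only cosmetic difference is in the $n=2$ endgame, where you first deduce $\frh=\zsp$ and argue via the adjoint/coadjoint modules, while the paper simply notes that invariance forces $\RR^2\subseteq\euc_2^\perp$ against effectivity --- a computation you also include parenthetically.
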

\addtocounter{mthm}{-1}
}
\begin{proof}
For $n>3$, such an action of $\Euc_n$ would induce
a nil-invariant symmetric bilinear form on the Lie algebra
$\so_n\ltimes\RR^n$ without non-trivial ideals in its metric radical,
contradicting Theorem \ref{thm:noSOnRn}.

For $n=2$, the Lie algebra $\euc_2$ is solvable, and hence by
Baues and Globke \cite{BG}, any nil-invariant symmetric bilinear
form must be invariant. For such a form, the ideal $\RR^2$ of $\euc_2$
must be contained in $\euc_2^\perp$, and therefore the action cannot be
effective.

Note that $\euc_3$ is an exception, as it is the metric cotangent
algebra of $\so_3$.
\end{proof}

\begin{remark}
Clearly the Lie group $\Euc_n$ admits compact quotient manifolds on
which $\Euc_n$ acts almost effectively.
For example take the quotient by a subgroup $F\ltimes\ZZ^n$, where
$F\subset\OO_n$ is a finite subgroup preserving $\ZZ^n$.
Compact quotients with finite fundamental group also exist.
For example, 
for any non-trivial homomorphism $\varphi:\RR^n\to\OO_n$, the graph
$H$ of $\varphi$ is a closed subgroup of $\Euc_n$ isomorphic
to $\RR^n$, and the quotient $M=\Euc_n/H$ is compact (and diffeomorphic
to $\OO_n$).
Since $H$ contains no non-trivial normal subgroup of $\Euc_n$,
the $\Euc_n$-action on $M$ is effective.
Theorem \ref{thm:noSOnRn2} tells us that none of these quotients
admit $\Euc_n$-invariant pseudo-Riemannian metrics.
\end{remark}

% !TEX root = homogeneousSpacesLieAlgebras.tex

%%%%%%%%%%%%%%%%%%%%%%%
\section[Simply connected spaces]{Simply connected compact homogeneous spaces with indefinite metric}
\label{sec:simplyconn}
%%%%%%%%%%%%%%%%%%%%%%%

Let $M$ be a connected and simply connected pseudo-Riemannian
homogeneous space of finite volume.
Then we can write
\begin{equation}
M=G/H
\label{eq:MGH}
\end{equation}
for a connected Lie group $G$ acting almost
effectively and by isometries on $M$, and $H$ is a closed subgroup
of $G$ that contains no non-trivial connected normal subgroup of $G$
(for example, $G=\Iso(M)^\circ$).
Note that $H$ is connected since $M$ is simply connected.

%Let $\frg$, $\frh$ denote the Lie algebras of $G$, $H$, respectively.
%Recall that the pseudo-Riemannian metric on $M$ induces an $\frh$-invariant
%and nil-invariant symmetric bilinear form $\met$ on $\frg$, and the
%metric radical of $\met$ is precisely $\frg^\perp=\frh$ and contains no
%non-trivial ideal of $\frg$.

We decompose $G=KSR$, where $K$ is a compact
semi\-simple subgroup, $S$ is a semisimple subgroup without compact factors, $R$ the solvable radical of $G$

\begin{prop}\label{prop:S_trivial_M_compact}
The subgroup $S$ is trivial and $M$ is compact. 
\end{prop}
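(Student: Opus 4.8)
The plan is to transfer the problem to the Lie algebra $\frg=\mathrm{Lie}(G)$ via the nil-invariant form carried by the metric, apply the structural results of Sections \ref{sec:biinvariant}--\ref{sec:classificationLie}, and then combine these with a volume/homotopy argument on a natural fibration of $M$. For the algebraic set-up, fix $p\in M$ and let $\met=\met_{p}$ be the bilinear form on $\frg$ obtained from $\g_{p}$ along the orbit map. By Corollary \ref{cor:isom_inilinvariant}, $\met$ is nil-invariant and, since $G$ is transitive, $\frg^{\perp}=\frg_{p}=\frh$. As $H$ contains no non-trivial connected normal subgroup of $G$, the metric radical $\frg^{\perp}$ contains no non-trivial ideal of $\frg$; and since $H$ fixes $p$, the form $\met$ is $\frg^{\perp}$-invariant. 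Hence Theorem \ref{mthm:invariance}, Corollary \ref{mcor:invariance_radical} and Proposition \ref{prop:gperp_inv} apply and give $[\frg^{\perp},\gs]=\zsp$ and $\frg^{\perp}\subseteq\frk\ltimes\centre{\gs}$; in particular $\mathrm{Lie}(G_{s}\cap H)=\gs\cap\frg^{\perp}\subseteq\centre{\gs}\subseteq\frr$.

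Next I would reduce to the orbit of $G_{s}$. Let $G_{s}$ be the connected normal subgroup of $G$ with Lie algebra $\gs$. Then $G/G_{s}$ has Lie algebra $\frk$ of compact type, hence is compact, so $B:=G/(G_{s}H)$ is compact and $M=G/H\to B$ is a fibre bundle with compact connected base and connected fibre $F=G_{s}H/H\cong G_{s}/(G_{s}\cap H)$. A Fubini argument, using the finite invariant volume on $M$ and the compactness of $B$, shows that $F$ carries a finite $G_{s}$-invariant volume. Moreover $Z_{0}:=(G_{s}\cap H)^{\circ}=\exp(\gs\cap\frg^{\perp})$ is a closed connected central subgroup of $G_{s}$ contained in $R$; writing $\bar L=G_{s}/Z_{0}$ and $\Lambda=(G_{s}\cap H)/Z_{0}$, the group $\Lambda$ is discrete of finite covolume, i.e.\ a lattice in $\bar L$, and $F\cong\bar L/\Lambda$. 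The solvable radical of $\bar L$ is $R/Z_{0}$ and its Levi factor is $\bar S\cong S/(S\cap R)$, a semisimple group without compact factors that is trivial precisely when $S$ is.

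Now I would bring in simple connectivity. From the homotopy sequence of $F\to M\to B$, with $\pi_{1}(M)=1$ and $F$ connected, one first obtains $\pi_{1}(B)=1$, and then that $\pi_{1}(F)$ is a quotient of $\pi_{2}(B)$. Since $B=\bar K/\bar K_{0}$ with $\bar K=G/G_{s}$ compact and $\bar K_{0}=H/(H\cap G_{s})$ a closed connected subgroup, $\pi_{2}(B)$ embeds into the finitely generated abelian group $\pi_{1}(\bar K_{0})$; hence $\pi_{1}(F)$, and therefore its quotient $\Lambda$ (the deck group of the covering $\bar L\to\bar L/\Lambda$), is finitely generated abelian. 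By Mostow's structure theorem for lattices, the image $\Gamma$ of $\Lambda$ in $\bar S$ is again a lattice, and it is abelian; by the Borel density theorem a lattice in a semisimple group without compact factors is Zariski dense, which forces $\bar S$, and hence $S$, to be trivial. With $S$ trivial one has $G=KR$ and $F\cong\bar L/\Lambda$ with $\bar L=R/Z_{0}$ solvable; since lattices in solvable Lie groups are cocompact, $F$ is compact, and then $M$, being a bundle over the compact base $B$ with compact fibre $F$, is compact.

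I expect the main obstacle to be the homotopy bookkeeping in the third step: checking carefully that $B$ is simply connected and that the resulting constraint makes $\pi_{1}(F)$—and hence $\Lambda$ and its image $\Gamma$ in the Levi factor $\bar S$—finitely generated abelian, so that the Borel density theorem can be invoked, together with the correct identification of $\Lambda$ as a lattice (via the passage through the central subgroup $Z_{0}$) to which Mostow's cocompactness theorem applies. By contrast, the algebraic reduction to the situation $[\frg^{\perp},\gs]=\zsp$ and $\frg^{\perp}\subseteq\frk\ltimes\centre{\gs}$ is immediate from the results already established in the preceding sections.
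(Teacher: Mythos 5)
Your algebraic reduction is correct and matches the paper: nil-invariance of $\met_p$ via Corollary \ref{cor:isom_inilinvariant}, $\frg^{\perp}=\frh$ by transitivity, absence of ideals in $\frg^{\perp}$ by effectivity, and then $\frg^{\perp}\subseteq\frk\ltimes\frz(\gs)$ and $[\frg^{\perp},\gs]=\zsp$. The genuine gap is in your second step. You treat $M=G/H\to B=G/(G_{s}H)$ as a fibre bundle with compact manifold base $\bar K/\bar K_{0}$ and fibre $G_{s}/(G_{s}\cap H)$; this requires $G_{s}H$ to be a \emph{closed} subgroup of $G$, equivalently that the image of the connected group $H$ in the compact group $G/G_{s}$ is closed. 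Nothing proved up to this point gives that: all you know is $\frh\subseteq\frz_{\frk}(\gs)\oplus\frz(\gs)$, and the projection of $H$ to $G/G_{s}$ may perfectly well be a dense winding of a torus. Indeed, in the eventual structure (Theorem \ref{mthm:geometric}) the non-compact part of $H$ is the graph of a homomorphism $\varphi\colon R\to K$, and nothing forces $\varphi(R)$ to be closed in $K$; variants of Examples \ref{ex:not_easy2} and \ref{ex:not_easy_group} in which $\varphi$ winds $\RR^{3}$ densely into a higher-dimensional torus of $K_{0}$ yield legitimate simply connected finite-volume spaces with $G_{s}H$ not closed, so $B$ is not even Hausdorff. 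Since the Fubini step (finite $G_{s}$-invariant volume on the fibre, hence $\Lambda$ a lattice), the homotopy exact sequence (which needs $B=\bar K/\bar K_{0}$ to be a manifold with $\bar K_{0}$ closed), and your final compactness of $M$ as a bundle with compact base and fibre all rest on this bundle structure, the argument breaks here as written; it would have to be repaired, for instance by passing to the closure $\overline{G_{s}H}$ and redoing the fibre and lattice analysis.

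This is exactly where the paper takes a different, much shorter route that avoids any orbit fibration: $H=H^{\circ}\subseteq KR$ by Corollary \ref{mcor:invariance_radical}, then Mostow's measure-theoretic lemma \cite[Lemma 3.1]{mostow3} says that for a finite-volume $G/H$ the Zariski closure of $\Ad_{\frg}(H)$ contains $\Ad_{\frg}(S)$, which is incompatible with $\frh\subseteq\frk+\frr$ unless $S$ is trivial; compactness of $M=(KR)/H$ then follows directly from Mostow's theorem \cite[Theorem 6.2]{mostow4}. That argument needs no closedness of $G_{s}H$, no control of $\pi_{1}$ of a fibre, and no lattice theory. A smaller point in your write-up: the assertion that the image of $\Lambda$ in $\bar S$ is again a lattice is a nontrivial theorem valid because the Levi factor $\bar S$ has no compact factors; if you keep that route after fixing the closedness issue, cite it in that precise form.
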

\begin{proof}
As $M$ is simply connected, $H=H^\circ$.
Now $H\subseteq K R$ by Corollary \ref{mcor:invariance_radical}.
On the other hand, since $M$ has finite invariant volume, the Zariski
closure of $\Ad_\frg(H)$ contains $\Ad_\frg(S)$, see
Mostow \cite[Lemma 3.1]{mostow3}.
Therefore $S$ must be trivial.
It follows from Mostow's result \cite[Theorem 6.2]{mostow4} on quotients of
solvable Lie groups that $M=(KR)/H$ is compact.
\end{proof}

We can therefore restrict ourselves in \eqref{eq:MGH} to groups
$G = K R$
and connected uniform subgroups $H$ of $G$.

The structure of a general compact homogeneous manifold with finite
fundamental group is surveyed in Onishchik and Vinberg \cite[II.5.\S 2]{OV1}.
In our context it follows that
\begin{equation}
G = L\ltimes V
\label{eq:OV}
\end{equation}
where $V$ is a normal subgroup isomorphic to $\RR^n$ and
$L=KA$ is a maximal compact subgroup of $G$.
The solvable radical is $R=A\ltimes V$. Moreover, $V^L=\zsp$.
By a theorem of Montgomery \cite{montgomery} (also \cite[p.~137]{OV1}), $K$ acts transitively on $M$.
\\

The existence of a $G$-invariant metric on $M$ further restricts the
structure of $G$.

\begin{prop}\label{prop:rad_abelian}
The solvable radical $R$ of $G$ is abelian.
In particular, $R=A\times V$, $V^K=\zsp$
and $A=\Zen(G)^\circ$.
\end{prop}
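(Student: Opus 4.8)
The plan is to pass to the metric Lie algebra $(\frg,\met)$, where $\met$ is the nil-invariant symmetric bilinear form on $\frg$ induced by the $G$-invariant metric on $M$ (Corollary \ref{cor:isom_inilinvariant}). As $G$ acts transitively, the metric radical is $\frg^\perp=\frh$, and since $H$ contains no non-trivial connected normal subgroup of $G$, $\frg^\perp$ contains no non-trivial ideal of $\frg$. By Proposition \ref{prop:S_trivial_M_compact} the non-compact semisimple factor $\frs$ is trivial, so $\gs=\frr$, and Theorem \ref{mthm:invariance} gives that $\met$ is invariant by $\frr$; in particular $\frr\subseteq\inv(\frg,\met)$ and $\met_\frr$ is an invariant bilinear form on the solvable Lie algebra $\frr$. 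From the normal form \eqref{eq:OV} I would extract that $\frr=\fra\ltimes\frv$, where $\fra$ is the Lie algebra of the torus $A$, $\frv$ is the (abelian) Lie algebra of $V\cong\RR^n$, $\frk\oplus\fra$ is the Lie algebra of $L$ (so $[\frk,\fra]=\zsp$), and $\fra$ acts on $\frv$ by a commuting family of semisimple operators; moreover $\frv^\frk\cap\frv^\fra=\zsp$ because $V^L=\{e\}$. It then suffices to prove $[\fra,\frv]=\zsp$, i.e. that $\frr$ is abelian.

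First I would observe that the nilradical $\frn$ of $\frr$ is $\frv\oplus\frn_0$ with $\frn_0=\{X\in\fra\mid[X,\frv]=\zsp\}$, since any element of $\fra$ acting non-trivially on $\frv$ acts semisimply and hence non-nilpotently; thus $\frn$ is abelian and $[\frr,\frn]=[\fra,\frv]=[\frr,\frr]=:\frj$. Because $\met_\frr$ is invariant, the discussion following \eqref{eq:j0} applies to $(\frr,\met_\frr)$ and shows that the characteristic ideal $\zen(\frn)\cap[\frr,\frn]=\frn\cap\frj=\frj$ is totally isotropic, that is $\frj\perp\frj$. Also $\frj=[\frr,\frr]$ is an ideal of $\frg$ because $\frr$ is. The heart of the argument is to upgrade $\frj\perp\frj$ to $\frj\perp\frg$: then $\frj\subseteq\frg^\perp$, hence $\frj=\zsp$ since $\frg^\perp$ contains no non-trivial ideal of $\frg$, so $\frr$ and therefore $R$ are abelian.

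For the crucial step I would use that for each $X\in\fra$ the operator $\ad(X)$ is skew for $\met$ (as $\fra\subseteq\inv(\frg,\met)$) and semisimple on $\frg$: it annihilates $\frk\oplus\fra$ and restricts to the semisimple operator $\ad(X)|_\frv$ on $\frv$. For any skew operator $T$ one has $\ker T\perp\im T$, and here $\ker\ad(X)\supseteq\frk\oplus\fra\oplus\frv^\fra$ while $\im\ad(X)=\ad(X)\frv\subseteq\frv$. Summing these relations over all $X\in\fra$ and using $\frj=[\fra,\frv]=\sum_{X\in\fra}\ad(X)\frv$, I obtain $\frk\oplus\fra\oplus\frv^\fra\perp\frj$. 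The semisimplicity of the $\fra$-action gives $\frv=\frv^\fra\oplus[\fra,\frv]=\frv^\fra\oplus\frj$, so combining with $\frj\perp\frj$ yields $\frv\perp\frj$, and therefore $\frj\perp\frk\oplus\fra\oplus\frv=\frg$, as required. This interplay between the skew/semisimple dichotomy and the total isotropy of $\frj$ is the step I expect to be the main obstacle; everything else is routine.

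It remains to deduce the supplementary assertions. Since $\frr$ is abelian, $A$ acts trivially on $V$, so $R=A\ltimes V=A\times V$. The subspace $\frv^\frk$ is then annihilated by $\fra$ as well, hence $\frv^\frk\subseteq\frv^\frk\cap\frv^\fra=\zsp$, i.e. $V^K=\zsp$. Finally, with $\frr$ abelian one computes $\zen(\frg)=\frr^\frk=\fra\oplus\frv^\frk=\fra$ (using $[\frk,\fra]=\zsp$), so $\Zen(G)^\circ=A$.
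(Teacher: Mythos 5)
Your proposal is correct, but it takes a genuinely different route from the paper's proof. The paper argues at the group level: by Corollary \ref{mcor:invariance_radical} (with $S$ trivial) the connected stabilizer satisfies $H\subseteq K\Zen(R)^\circ$, so the compact manifold $M=G/H$ surjects onto $R/\Zen(R)^\circ$, making $\Zen(R)^\circ$ uniform in $R$; the nilradical is then $N=T\Zen(R)^\circ$ for a compact torus $T$, compact subgroups of $N$ are central in $R$, hence $N\subseteq\Zen(R)$ and $R=N$ is abelian. You instead argue infinitesimally: with $\frs=\zsp$ you only need the weaker statement that $\met$ is $\frr$-invariant (Theorem \ref{mthm:invariance}(2)), and you lean on the structure \eqref{eq:OV} ($R=A\ltimes V$ with the torus $A$ commuting with $K$ and acting semisimply on $V$, $V^L=\zsp$) to identify $[\frr,\frr]=[\fra,\frv]$ as a totally isotropic ideal of $\frg$, and then $\ker\ad(X)\perp\im\ad(X)$ for the skew operators $\ad(X)$, $X\in\fra$, together with $\frv=\frv^\fra\oplus[\fra,\frv]$, forces $[\fra,\frv]\subseteq\frg^\perp$, which vanishes by effectivity. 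What each approach buys: the paper's argument is independent of the Onishchik--Vinberg splitting and of the semisimplicity of the torus action, needing only compactness of $M$ plus the stronger Corollary \ref{mcor:invariance_radical}; yours uses less of the metric machinery but more of \eqref{eq:OV}, exhibits $[\frr,\frr]$ directly inside the metric radical, and has the merit of proving the supplementary claims $R=A\times V$, $V^K=\zsp$, $A=\Zen(G)^\circ$ explicitly, which the paper leaves implicit. Two steps you quote from the setup are standard but deserve a word: $[\frk,\fra]=\zsp$ holds because $A$ is a connected normal solvable subgroup of the compact group $L$, hence central in $L$; and the total isotropy of $[\fra,\frv]$ can even be obtained in one line without invoking \eqref{eq:j0}, since for $X,Y\in\fra$ and $v,w\in\frv$ skewness of $\ad(v)$ gives $\langle[X,v],[Y,w]\rangle=\langle X,[v,[Y,w]]\rangle=0$ because $[\frv,\frv]=\zsp$.
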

\begin{proof}
Let $\Zen(R)$ denote the center of $R$ and $N$ its nilradical.
Since $H$ is connected, $H\subseteq K\Zen(R)^\circ$ by Corollary
\ref{mcor:invariance_radical}.
Hence there is a surjection
$G/H\to G/(K\Zen(R)^\circ)=R/\Zen(R)^\circ$.
It follows that $\Zen(R)^\circ$ is a 
connected uniform subgroup. Therefore the nilradical $N$ of $R$ 
is $N=T\Zen(R)^\circ$ for some compact torus $T$.
But a compact subgroup of $N$ must be central in $R$,
so $T\subseteq\Zen(R)$.
Hence $N\subseteq\Zen(R)$, which means $R=N$ is abelian.
\end{proof}

Combined with \eqref{eq:OV}, we obtain
\begin{equation}
G = K R = (K_0 A)\times(K_1\ltimes V),
\label{eq:GKR}
\end{equation}
with $K=K_0\times K_1$, $R=A\times V$, where
$K_0$ is the kernel of the $K$-action on $V$.\\

%\begin{lem}\label{lem:HRzen}
%$[H,R] \subseteq H\cap R \subseteq \Zen(R)$
%and $[N,R] \subseteq \Zen(R)$.
%In particular, $R$ is two-step nilpotent.
%\end{lem}
%\begin{proof}
%As $H$ is connected, we can apply Theorem \ref{thm:BGZ_kernel}
%to obtain $H\cap R\subseteq\Zen(R)$.
%Now, $\Ad(R)$ preserves the symmetric bilinear form $\met$ on
%$\frg$ as a consequence of Theorem \ref{thm:BGZ_invariance}, and hence $\Ad(R)$
%normalizes the kernel of $\met$, which is $\frh$.
%This means $[H,R]\subseteq H\cap R$.
%
%As a consequence of the finite volume of $M$, the Zariski closure of
%$\Ad(H)$ in $\Aut(\frg)$ contains $\Ad(N)$ (cf.~Mostow \cite[Lemma 3.1]{mostow3}).
%Therefore
%$[N,R]\subseteq\Zen(R)$,
%which means that $R=N$ is two-step nilpotent.
%\end{proof}

%If the projection $\p_K(H)$ is trivial, then $H$ is a connected,
%uniform subgroup of $R$, hence $R=H$ in this situation.
%In particular, $H$ is then a connected normal subgroup,
%so that $H=\one$ and $G=K$.
%So assume henceforth that $\p_K(H)$ is not trivial.

For any subgroup $Q$ of $G$ we write $H_{Q}=H\cap Q$.

\begin{lem}\label{lem:HKHLnormal}
$[H,H]\subseteq H_K$.
In particular, $H_K$ is a normal subgroup of $H$.
\end{lem}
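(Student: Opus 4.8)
The goal is to show $[H,H]\subseteq H_K = H\cap K$, where $G = KR = (K_0 A)\times(K_1\ltimes V)$ as in \eqref{eq:GKR} and $R = A\times V$ is abelian. The natural approach is to pass to the quotient $G/K$ and use that $R$ is abelian. Concretely, let $\pi\colon G\to G/K_1\cong K_0\times R$ be... actually, the cleanest route is to consider the projection $p\colon G\to G/K$. Since $K$ is a normal subgroup of $G$? It is not normal in general — but $K_0$ is central and $K_1$ is normal only if $[K_1,V]$... in fact $K_1\ltimes V$ is a normal subgroup, and $K = K_0\times K_1$ need not be normal. So instead I would work with the normal subgroup $V$ and the quotient $G/V = K\times A = K_0\times K_1\times A$, which has the property that its derived subgroup is contained in $K_1$ (since $K_0\times A$ is abelian, being a product of a torus-like factor and $A\cong$ vector/torus; more precisely $K_0$ is compact semisimple, so $[K_0,K_0]=K_0$). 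Hmm, that does not immediately land in $K$ either.

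Let me restart with the argument I actually expect to work. The key point is Corollary \ref{mcor:invariance_radical} (equivalently, the conclusion $H\subseteq K\,\Zen(R)^\circ = K\times A$ used in Proposition \ref{prop:rad_abelian}): every element of $H$ lies in $K\times A$. Now $K\times A$ is a direct product of groups, and $A$ is abelian and central in $G$, so $[K\times A, K\times A] = [K,K]\subseteq K$. Therefore $[H,H]\subseteq [K\times A,\, K\times A]\subseteq K$, and since $H$ is a subgroup, $[H,H]\subseteq H\cap K = H_K$. This gives the first assertion directly. For the second assertion: $H_K = H\cap K$ is normalized by $H$ because for $h\in H$ and $x\in H_K$, the commutator $[h,x]=hxh^{-1}x^{-1}$ lies in $[H,H]\subseteq H_K$ by what was just shown, hence $hxh^{-1} = [h,x]\,x\in H_K$; thus $H_K\trianglelefteq H$.

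The only real content to verify is that $A$ is central in $G$, which is exactly the statement $A=\Zen(G)^\circ$ recorded in Proposition \ref{prop:rad_abelian}, together with the decomposition $G = K\times R = K\times A\times V$ modulo the semidirect factor $V$; one must be slightly careful that $H\subseteq K\times A$ (not just $K\times R$), which is precisely the improvement $\Zen(R)^\circ = A$ established in the proof of Proposition \ref{prop:rad_abelian} via $H\subseteq K\,\Zen(R)^\circ$. Given that, the rest is a one-line commutator computation. I do not anticipate a genuine obstacle here; the lemma is essentially a bookkeeping consequence of the structure theory already assembled, and the main point to get right is citing the correct earlier statement ($H\subseteq K\times A$) rather than the weaker $H\subseteq K\times R$.
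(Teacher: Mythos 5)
There is a genuine gap: the containment $H\subseteq K\times A$ on which your whole argument rests is false in general, and it is not what Proposition \ref{prop:rad_abelian} provides. That proposition shows $R$ is abelian and $A=\Zen(G)^\circ$; but once $R$ is abelian we have $\Zen(R)^\circ=R$, not $A$, so the inclusion $H\subseteq K\,\Zen(R)^\circ$ coming from Corollary \ref{mcor:invariance_radical} (here $\frz(\gs)=\frz(\frr)=\frr$) only yields the vacuous $H\subseteq KR=G$. In fact $H\subseteq K\times A$ fails whenever $\dim V>0$: the stabilizer projects onto all of $R$ (Lemma \ref{lem:surjective_R}, equivalently Theorem \ref{mthm:geometric}\,(4)), and concretely in Example \ref{ex:not_easy_group} the stabilizer $H=\{(\I_3,v,\varphi(v))\mid v\in\RR^3\}$ contains elements with nontrivial $V$-component. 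The weaker true statement $H\subseteq KR$ does not suffice either, since $G=K\ltimes R$ is not a direct product ($K_1$ acts nontrivially on $V$), so $[KR,KR]\not\subseteq K$.

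The missing ingredient is the sharper containment $H\subseteq K_0R$, where $K_0$ is the kernel of the $K$-action on $V$; this is Proposition \ref{prop:Hcompact} together with the connectedness of $H$, and it genuinely uses the $\frg^\perp$-invariance of $\met$ via Proposition \ref{prop:gperp_inv} --- information not contained in Corollary \ref{mcor:invariance_radical} alone. With that inclusion your commutator computation goes through verbatim: $K_0$ centralizes $R$ and $R$ is abelian, so $K_0R=K_0\times R$ with abelian second factor, hence $[H,H]\subseteq[K_0R,K_0R]\subseteq K_0\subseteq K$, giving $[H,H]\subseteq H\cap K=H_K$ (indeed $[H,H]\subseteq H_{K_0}$); your one-line deduction that $H_K$ is then normal in $H$ is fine. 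This corrected route is exactly the paper's proof, so the issue is solely the citation of the wrong containment, not the overall strategy.
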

\begin{proof}
By Proposition \ref{prop:Hcompact} and the connectedness of $H$,
we have $H\subseteq K_0 R$.
Since $R$ is abelian, $[H,H]\subseteq H_{K_0}$.
\end{proof}

If $G$ is simply connected, we have $K\cap R=\one$. Then let
$\p_K$, $\p_R$ denote the projection maps from $G$ to $K$, $R$.

\begin{lem}\label{lem:surjective_R}
Suppose $G$ is simply connected.
Then $\p_R(H)=R$.
\end{lem}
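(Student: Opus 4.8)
The plan is to reduce to the case $R=V\cong\RR^n$, place $H$ inside a well-chosen closed normal subgroup, and then pit the compactness of $M$ against the connectedness of $H$. Since $G$ is connected and simply connected, a maximal compact subgroup $L$ is a deformation retract of $G$, hence simply connected, hence a \emph{semisimple} compact Lie group; as $L=KA$ (see \eqref{eq:OV}) with $A=\Zen(G)^\circ$ central in $L$, the connected group $A$ lies in the finite center of $L$ and is therefore trivial. Thus $R=A\times V=V\cong\RR^n$, $G=K\ltimes V$, and $\p_R=\p_V$, so it suffices to prove $\p_V(H)=V$. Exactly as in the proof of Lemma \ref{lem:HKHLnormal} (using Proposition \ref{prop:Hcompact} together with the connectedness of $H$), we get $H\subseteq K_0R=K_0\times V=:P$; this is an internal direct product because $K_0$ centralizes $V$ and $K\cap R=\one$, and moreover $P$ is precisely the kernel of the conjugation action $G\to\GL(V)$ (the centralizer of $V$ in $G$), hence a closed normal subgroup of $G$ with $G/P\cong K/K_0$ compact.

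Next I would extract the compactness input. By Proposition \ref{prop:S_trivial_M_compact}, $M=G/H$ is compact. Since $H\subseteq P$ and $P$ is closed, the fibre of $G/H\to G/P$ over the base point is $P/H$, a closed subset of the compact space $M$, so $P/H$ is compact; that is, $H$ is uniform in $P$. Because $P=K_0\times V$ is a direct product, the projection $\pi\colon P\to V$ is a group homomorphism, and it descends to a continuous surjection $P/H\to V/\pi(H)$. Hence $V/\pi(H)$ is compact.

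Finally I would identify $\pi(H)$. As $H$ is connected, $\pi(H)$ is a connected (immersed) Lie subgroup of the vector group $V\cong\RR^n$, hence the exponential image of a subalgebra of the abelian Lie algebra of $V$, i.e.\ a linear subspace; concretely $\pi(H)$ is the image of $\frg^\perp$ under the projection $\frk_0\oplus\frv\to\frv$. A linear subspace $W\subseteq V$ with $V/W$ compact must equal $V$. Therefore $\pi(H)=\p_R(H)=V=R$.

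The step I expect to be the main obstacle is the last one: $\p_R$ is not a homomorphism on all of $G$, and the image of a connected subgroup of $\RR^n$ need not be closed in general. The resolution is that the projection restricted to the \emph{direct} factor $P=K_0\times V$ \emph{is} a homomorphism, so $\pi(H)$ is a genuine connected Lie subgroup and therefore a closed linear subspace, after which cocompactness immediately forces $\pi(H)=V$. One should also take care to justify the structural inputs invoked above — in particular the triviality of $A$ for simply connected $G$, the inclusion $H\subseteq K_0R$, and the identification of $P$ as a closed normal subgroup with compact quotient.
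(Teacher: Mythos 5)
Your argument is correct, but it takes a genuinely different route from the paper's, whose proof is essentially one line: by Montgomery's theorem (already invoked together with \eqref{eq:OV}), $K$ acts transitively on $M$, hence $G=KH$, and since $\p_R(kh)=\p_R(h)$ for $k\in K$ this gives $R=\p_R(G)=\p_R(H)$. You avoid the transitivity of $K$ entirely and instead combine three ingredients: the containment $H\subseteq K_0\times R$ (Proposition \ref{prop:Hcompact} plus connectedness of $H$, as in Lemma \ref{lem:HKHLnormal}); the compactness of $M$ (Proposition \ref{prop:S_trivial_M_compact}), which makes $H$ uniform in the closed normal subgroup $P=K_0\times R$ (the centralizer of $R$ in $G$); and the observation that on the direct product $P$ the projection to $R$ is a homomorphism, so $\p_R(H)$ is a connected subgroup, hence a linear subspace, of the vector group $R$, which cocompactness then forces to be all of $R$. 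The paper's proof buys brevity and uses nothing beyond Montgomery's transitivity; yours trades Montgomery for the metric input behind Proposition \ref{prop:Hcompact} (nil-invariance and $\frg^\perp$-invariance) together with compactness of $M$ --- all available in this section, so the argument is legitimate, and your closing discussion correctly identifies closedness of $\p_R(H)$ as the crux. One streamlining remark: the only fact you need from your opening step is that $R$ is a vector group, and this follows directly from simple connectedness of $G$ (the radical of a simply connected Lie group is simply connected, and it is abelian by Proposition \ref{prop:rad_abelian}); concluding that $\Zen(G)^\circ$ itself is trivial is more than you need and is best avoided, since the lemma is later applied to the universal cover $\tilde G$ in the proof of Theorem \ref{mthm:geometric}, whose connected center is in general a nontrivial (noncompact) vector group.
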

\begin{proof}
Since $K$ acts transitively on $M$, we have $G=KH$.
Then $R=\p_R(G)=\p_R(H)$.
\end{proof}

\begin{prop}\label{prop:HKE}
Suppose $G$ is simply connected.
Then the stabilizer $H$ is a semidirect product $H=H_K\times E$, where $E$ is the graph of a homomorphism $\varphi:R\to K$ that is non-trivial if $\dim R>0$.
Moreover, $\varphi(R\cap H)=\one$.
\end{prop}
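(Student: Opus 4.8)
The plan is to split $H$ as a direct product $H_{K}\times E$ via the projection onto the solvable radical and to recognise the complement $E$ as the graph of a homomorphism $R\to K$. First I would record the structure already at hand. Since $G$ is simply connected, so is its radical $R$, and by Proposition~\ref{prop:rad_abelian} $R$ is abelian; writing $R=A\times V$ as in \eqref{eq:GKR} with $A$ the (toral) maximal compact subgroup and $V\cong\RR^{n}$, simple connectedness forces $A=\one$, so $R=V\cong\RR^{n}$. By Proposition~\ref{prop:Hcompact} and the connectedness of $H$ we have $H\subseteq K_{0}R$, and since $K_{0}$ centralizes $R$ and $K_{0}\cap R=\one$, the product $K_{0}R=K_{0}\times R$ is direct; thus $\p_{R}|_{H}\colon H\to R$ is a homomorphism, surjective by Lemma~\ref{lem:surjective_R}, with kernel $H\cap K_{0}=H\cap K=H_{K}$ (modular law, using $K\cap R=\one$). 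This gives a short exact sequence $\one\to H_{K}\to H\xrightarrow{\,\p_{R}\,}R\to\one$ with $H_{K}$ compact and $R\cong\RR^{n}$ simply connected; from the homotopy sequence of $H\to R$ it follows that $H_{K}$ is connected.

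Next I would split this sequence. Let $\frh,\frh_{K},\frr,\frk_{0}$ be the Lie algebras of $H,H_{K},R,K_{0}$, so $\frh\subseteq\frk_{0}\oplus\frr$ (a direct sum of Lie algebras). The derived algebra $\frh_{K}^{\mathrm{ss}}=[\frh_{K},\frh_{K}]$ is a semisimple ideal of $\frh$, hence $\frh=\frh_{K}^{\mathrm{ss}}\oplus\frm$ with $\frm=\zen_{\frh}(\frh_{K}^{\mathrm{ss}})$; since $\frm$ is a solvable subalgebra of the compact-type reductive algebra $\frk_{0}\oplus\frr$ it is abelian, and $\frm\cap\frk_{0}=\zen(\frh_{K})$. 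Choosing a vector space complement $\frc$ of $\zen(\frh_{K})$ in $\frm$, one obtains a central ideal $\frc$ of $\frh$ with $\frh=\frh_{K}\oplus\frc$, $\frc\cong\frr$, and $\frc\cap\frk_{0}=\zsp$. Let $E$ be the connected subgroup of $G$ with Lie algebra $\frc$. Then $\p_{R}|_{E}\colon E\to R$ is a homomorphism onto $R$ (as $\p_{\frr}$ maps $\frc$ isomorphically onto $\frr$) whose kernel has trivial Lie algebra, hence is discrete; a covering of the simply connected group $R$ is an isomorphism, so $\p_{R}|_{E}$ is an isomorphism and $\sigma:=(\p_{R}|_{E})^{-1}\colon R\to E\subseteq H$ is a homomorphic section of $\p_{R}|_{H}$. (Simple connectedness of $R$ is exactly what makes this integration succeed.)

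Now I would assemble the statement. Since $\frh=\frh_{K}\oplus\frc$ with $[\frh_{K},\frc]=\zsp$ and $H,H_{K},E$ are connected, $H=H_{K}E$ with $H_{K}$ and $E$ commuting normal subgroups of $H$ and $H_{K}\cap E=\ker(\p_{R}|_{E})=\one$; hence $H=H_{K}\times E$. As $E\subseteq K_{0}\times R$ and $\p_{R}|_{E}$ is an isomorphism, $E=\{(\varphi(r),r)\mid r\in R\}$ is the graph of $\varphi:=\p_{K_{0}}\circ\sigma\colon R\to K_{0}\subseteq K$, and $\varphi$ is a homomorphism because $K_{0}$ centralizes $R$. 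If $\dim R>0$ and $\varphi$ were trivial, then $E=\{e\}\times R=R$ would be a non-trivial connected normal subgroup of $G$ contained in $H$, contrary to the standing hypothesis on $H$; so $\varphi$ is non-trivial. Finally, for $\rho\in R\cap H$ the decomposition $H=H_{K}\times E$ forces $\varphi(\rho)\in H_{K}$, so $\varphi(R\cap H)\subseteq H_{K}\cap\varphi(R)$; since $\varphi(R)$ is a connected abelian subgroup of $K_{0}$ centralizing the compact group $H_{K}$, one deduces, using once more that $\frg^{\perp}=\frh$ contains no non-trivial ideal of $\frg$, that $\varphi(R\cap H)=\one$.

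I expect the splitting step — obtaining that $E$ maps \emph{isomorphically} onto $R$ rather than merely as a covering, and extracting the abelian ideal $\frc$ complementary to $\frh_{K}$ despite the possibly reductive (not semisimple) structure of $\frh_{K}$ — to be the main obstacle; it is precisely here that simple connectedness of $G$ is essential. The only other point needing care is the concluding identity $\varphi(R\cap H)=\one$.
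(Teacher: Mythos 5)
Your construction of the splitting is essentially the paper's argument carried out by hand: where the paper quotes Lemma \ref{lem:HKHLnormal} ($H_K$ normal, maximal compact in $H$) and the structure theory of connected Lie groups to write $H=H_K\times E$ with $E$ a vector group, you split $\frh=\frh_K\oplus\frc$ with $\frc$ central and integrate, using simple connectedness of $R$ to promote the covering $\p_R|_E$ to an isomorphism; the identification of $E$ as a graph of $\varphi=\p_K\circ(\p_R|_E)^{-1}$, the use of Lemma \ref{lem:surjective_R}, and the non-triviality of $\varphi$ via almost effectivity coincide with the paper. (A minor slip: simple connectedness of $G$ does not give $R=V$ with $V^K=\zsp$; the radical of the simply connected group is a vector group which may still contain a central factor on which $K$ acts trivially, cf.\ the $\tilde A\cong\RR^k$ in the proof of Theorem \ref{mthm:geometric}. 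This is harmless, since all you actually use is that $R\cong\RR^m$ is simply connected abelian and that $K_0$ centralizes $R$.)

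The genuine gap is the final claim $\varphi(R\cap H)=\one$. You correctly reduce it to $\varphi(R\cap H)\subseteq H_K\cap\varphi(R)$, but then merely assert that "one deduces, using that $\frg^\perp$ contains no non-trivial ideal of $\frg$, that $\varphi(R\cap H)=\one$". That hypothesis only excludes non-trivial \emph{connected normal} subgroups of $G$ inside $H$; it does not exclude, say, a finite or dense-in-a-torus subgroup $\varphi(R\cap H)$ of $H_K$, so no deduction is actually given. Worse, the claim is sensitive to the choice of complement, and with your \emph{arbitrary} choice of $\frc$ it can fail outright: if $\frh\cap\frr\neq\zsp$ and $\frc$ does not contain it, take $X\in\frh\cap\frr\setminus\frc$ and write $X=Z+C$ with $0\neq Z\in\frz(\frh_K)$, $C\in\frc$; then the element of $E$ over $\exp(tX)$ is $\exp(tC)$, so $\varphi(\exp tX)=\exp(-tZ)\neq e$, i.e.\ $\varphi$ is already non-trivial on $(R\cap H)^\circ$. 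The paper's proof hinges precisely on securing $R\cap H\subseteq E$, after which $\varphi(R\cap H)\subseteq H_K\cap E=\one$ is immediate; your $E$ does not come with this containment. To repair the argument you must adapt the complement: choose $\frc\supseteq\frh\cap\frr$ (possible, since $(\frh\cap\frr)\cap\frz(\frh_K)\subseteq\frr\cap\frk=\zsp$), which places $(R\cap H)^\circ$ inside $E$, and then still treat the disconnected part of $R\cap H$ (typically an infinite discrete subgroup, cf.\ $\ker\varphi$ in Example \ref{ex:not_easy_group}), e.g.\ by a further correction of $E$ by a homomorphism $R\to\Zen(H_K)$ killing $\varphi$ on these elements. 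As written, this step is missing, and it is exactly the point you yourself flagged as "needing care".
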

\begin{proof}
The subgroup $H_{K}$ is a maximal compact subgroup of the stabilizer
$H$. By Lemma \ref{lem:HKHLnormal},
$H=H_K\times E$ for some normal subgroup $E$
diffeomorphic to a vector space.
By Lemma \ref{lem:surjective_R}, $H$ projects onto $R$ with kernel
$H_K$, so that $E\cong R$.
Then $E$ is necessarily the graph of a homomorphism $\varphi:R\to K$.
If $\dim R>0$, then $\varphi$ is non-trivial, for otherwise
$R\subseteq H$, in contradiction to the almost effectivity of the
action.
%Moreover, $E$ is an abelian subgroup of $G$,
%and since $H_K$ acts trivially on $R$,
%$[H_K,E]=[H_K,\p_K(E)]=[H_K,\varphi(R)]$.
%Since $\bar{\varphi(R)}$ is a torus in $K$, it follows that
%$\varphi(R)$ acts semisimply on $H_K$, and hence so does $E$.
Observe that $R\cap H\subseteq E$.
Therefore $\varphi(R\cap H)\subseteq H_K\cap E=\one$.
\end{proof}

Now we can state our main result:

{\renewcommand{\themthm}{\ref{mthm:geometric}}
\begin{mthm}
Let $M$ be a connected and simply connected pseudo-Riemannian
homogeneous space of finite volume, $G=\Iso(M)^\circ$,
and let $H$ be the stabilizer subgroup in $G$ of a point in $M$.
Let $G=KR$ be a Levi decomposition, where $R$ is the solvable radical
of $G$.
Then:
\begin{enumerate}
\item
$M$ is compact.
\item
$K$ is compact and acts transitively on $M$.
\item
$R$ is abelian.
Let $A$ be the maximal compact subgroup of $R$. Then $A=\Zen(G)^\circ$.
More explicitely, $R=A\times V$ where $V\cong\RR^n$ and $V^{K}=\zsp$.
\item
$H$ is connected.
If $\dim R>0$, then $H=(H\cap K) E$, where $E$ and $H\cap K$ are
normal subgroups in $H$, $(H\cap K)\cap E$ is finite,
and $E$ is the graph of a non-trivial homomorphism
$\varphi:R\to K$, where the restriction $\varphi|_A$ is injective.
\end{enumerate}
\end{mthm}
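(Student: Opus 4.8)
The plan is to assemble the statement from the results already established in the section, distinguishing carefully between the case $G=\Iso(M)^\circ$ (which need not be simply connected, so $K\cap R$ may be non-trivial) and the simply connected case treated in Propositions \ref{prop:HKE} and \ref{prop:rad_abelian}. Parts (1)--(3) are essentially already proven: (1) and the transitivity half of (2) are Proposition \ref{prop:S_trivial_M_compact} together with Montgomery's theorem cited after \eqref{eq:OV}; the compactness of $K$ is clear since $K$ is a compact semisimple subgroup by construction (once $S$ is killed, the Levi factor of $G=\Iso(M)^\circ$ is of compact type). Part (3) is exactly Proposition \ref{prop:rad_abelian} combined with \eqref{eq:OV}: $R$ is abelian, $R=A\times V$ with $V\cong\RR^n$, $V^{K}=\zsp$, and $A=\Zen(G)^\circ$ because $A$ is the unique maximal compact (hence central, by \eqref{eq:OV}) subgroup of the abelian radical.

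For part (4), first note $H$ is connected because $M$ is simply connected and $G$ is connected (standard long exact sequence of the fibration $H\to G\to M$). The decomposition $H=(H\cap K)E$ requires adapting Proposition \ref{prop:HKE} to the possibly non-simply-connected $G=\Iso(M)^\circ$. I would pass to the universal cover $\tilde G\to G$ with kernel the discrete central subgroup $\pi_1(G)$; since $M=\tilde G/\tilde H$ for the preimage $\tilde H$ of $H$, and $\tilde G$ is simply connected, Proposition \ref{prop:HKE} gives $\tilde H=\tilde H_{\tilde K}\times \tilde E$ with $\tilde E$ the graph of a homomorphism $\tilde\varphi:\tilde R\to\tilde K$. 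Pushing down to $G$: set $H\cap K$ for the (compact) image of $\tilde H_{\tilde K}$ and $E$ for the image of $\tilde E$. Then $E$ is the image of a connected subgroup mapping isomorphically (by Lemma \ref{lem:surjective_R}, adapted) onto $R$, hence $E$ is a closed connected subgroup which is the graph of a homomorphism $\varphi:R\to K$ up to the ambiguity introduced by $\pi_1(G)\cap(\text{the two factors})$; this is precisely why $(H\cap K)\cap E$ is only \emph{finite} rather than trivial here, in contrast to the simply connected case where it is $\one$. That $E$ and $H\cap K$ are normal in $H$ follows from Lemma \ref{lem:HKHLnormal} together with the fact that $E$, projecting isomorphically onto the abelian group $R$, is itself abelian and normalized by $H_K$.

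The one genuinely new point is injectivity of $\varphi|_A$. Here I would argue by effectivity: the kernel $\ker\varphi$ is a connected subgroup of $R\cong A\times V$, and its intersection with $H$ — indeed $\ker\varphi$ itself, viewed via the graph inside $H$ — would be a connected normal subgroup of $G$ contained in $H$ if $\ker\varphi\cap A$ were non-trivial, because $A=\Zen(G)^\circ$ is central in $G$ and $V^{K}=\zsp$ forces any central-in-$R$ subgroup meeting $A$ non-trivially to be central in all of $G$. Almost effectivity of the $G$-action then forces $\ker\varphi\cap A=\one$. I expect this last step — correctly tracking which subgroup of $R$ sits inside $H$ as a normal subgroup of $G$, and invoking centrality of $A$ to upgrade "normal in $R$" to "normal in $G$" — to be the main obstacle, since it is the only place where one must combine the structural facts $A=\Zen(G)^\circ$, $V^{K}=\zsp$, and the effectivity hypothesis simultaneously rather than quoting a prior proposition verbatim. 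The nontriviality of $\varphi$ when $\dim R>0$ is then immediate: $\varphi$ trivial would put $R\subseteq H$, contradicting almost effectivity exactly as in Proposition \ref{prop:HKE}.
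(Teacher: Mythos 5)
Your parts (1)--(3), and your overall plan for (4) --- pass to the universal cover $\tilde{G}$, apply Proposition \ref{prop:HKE} there, and push the data down to $G=\Iso(M)^\circ$ --- are exactly the paper's route. The genuine gap is in the push-down. Write $\pi:\tilde{G}\to G$ for the covering projection and $R=\tilde{R}/Z$ with $Z\subseteq\ker\pi$ discrete and central. The image $E=\pi(\tilde{E})$ of the graph $\tilde{E}$ of $\tilde{\varphi}:\tilde{R}\to\tilde{K}$ is a subgroup of $H$, but it is the graph of a homomorphism defined on $R$ only if $\tilde{\varphi}$ descends, i.e.\ only if $Z\subseteq\ker\tilde{\varphi}$; otherwise, over a point $r\in R$ the set $E$ contains the whole coset $\pi(\tilde{\varphi}(\tilde{r}))\,\pi(\tilde{\varphi}(Z))\,r$, and no homomorphism $R\to K$ has $E$ as its graph. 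You cannot absorb this failure into the clause ``$(H\cap K)\cap E$ is finite'': that finiteness has a different source (an element $\varphi(r)r\in K$ forces $r\in K\cap R$, which is finite because $K\cap R$ is a normal solvable subgroup of the compact semisimple group $K$), and the theorem still asserts that $E$ is an honest graph over $R$. The paper closes precisely this point: $Z\subset\tilde{A}\cap\tilde{H}\subseteq\tilde{R}\cap\tilde{H}$, and the final clause of Proposition \ref{prop:HKE} gives $\tilde{\varphi}(\tilde{R}\cap\tilde{H})=\one$, hence $Z\subseteq\ker\tilde{\varphi}$, so $\tilde{\varphi}$ descends to $R\to\tilde{K}$ and one composes with $\tilde{K}\to K$. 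Your proposal never invokes $\tilde{\varphi}(\tilde{R}\cap\tilde{H})=\one$, and without it the descent, and with it the very formulation of part (4), is not established.

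A second, smaller flaw concerns the injectivity of $\varphi|_A$. You treat $\ker\varphi$ as connected and appeal to almost effectivity, but $\ker\varphi|_A$ may well be a finite subgroup of the torus $A$, and almost effectivity only excludes positive-dimensional kernels of the action. (Moreover $\ker\varphi$ itself need not be normal in $G$, since $K$ acts non-trivially on $V$; only $\ker\varphi\cap A$ is automatically normal, being contained in $\Zen(G)^\circ$.) The correct argument, as in the paper, is that $\ker\varphi|_A\subseteq A\cap H$ is central in $G$, hence a normal subgroup of $G$ contained in $H$, and $G=\Iso(M)^\circ$ acts \emph{effectively}, so this subgroup --- discrete or not --- must be trivial.
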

\addtocounter{mthm}{-1}
}
\begin{proof}
Claims (1), (2) and (3) follow from
Proposition \ref{prop:S_trivial_M_compact},
Proposition \ref{prop:rad_abelian}
and \eqref{eq:OV},
applied to $G=\Iso(M)^\circ$.

For claim (4), let $\tilde{G}$ be the universal cover of $G$.
Since $G$ acts effectively on $M$, $\tilde{G}$ acts almost effectively
on $M$ with stabilizer $\tilde{H}$, the preimage of $H$ in $\tilde{G}$.
Let $\tilde{\varphi}:\tilde{R}\to\tilde{K}$ be the homomorphism given by
Proposition \ref{prop:HKE} for $\tilde{G}$.
Then $\tilde{R}=\tilde{A}\oplus V$,
with $\tilde{A}\cong\RR^k$ for some $k$, and $R=\tilde{R}/Z$ for some
central discrete subgroup $Z\subset\tilde{A}\cap\tilde{H}$.
Since $Z\subset\tilde{R}\cap\tilde{H}$ we have
$Z\subseteq\ker\tilde{\varphi}$.
Therefore $\tilde{\varphi}$ descends to a homomorphism
$R\to\tilde{K}$, and by composing with the canonical projection
$\tilde{K}\to K$,
we obtain a homomorphism $\varphi:R\to K$ with the desired properties.
Observe that $\ker\varphi|_A\subset A\cap H$ is a normal subgroup in $G$.
Hence it is trivial, as $G$ acts effectively.
\end{proof}

Now assume further
that the index of the metric on $M$ is $\ell\leq 2$.
Theorem \ref{mthm:smallindex} has strong consequences in the simply
connected case.

{\renewcommand{\themthm}{\ref{mthm:sc_index2}}
\begin{mthm}
The isometry group of any simply connected pseudo-Riemannian
homogeneous manifold of finite volume and metric index $\ell\leq2$
is compact.
\end{mthm}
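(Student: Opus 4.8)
The plan is to obtain Theorem~\ref{mthm:sc_index2} as a short corollary of Theorem~\ref{mthm:geometric} and the structure theorem~\ref{mthm:smallindex}; there is no genuine obstacle here, and the only point requiring care is the verification of hypotheses. So I will just indicate how the pieces fit. Let $M$ be as in the statement, put $G=\Iso(M)^\circ$ (it suffices to show $G$ is compact), and let $H$ be the stabilizer of a point $p\in M$. Since the $G$-orbits are open and $M$ is connected, $G$ acts transitively, by isometries, preserving the finite volume, so Theorem~\ref{mthm:geometric} applies: $M$ is compact, $G=KR$ with $R$ the solvable radical, $R=A\times V$ where $A=\Zen(G)^\circ$ is a torus and $V\cong\RR^n$ with $V^{K}=\zsp$, and $K$ is compact. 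As $KA$ is then compact, $G$ is compact if and only if $V=\{e\}$; and since $K$ is connected it is enough to show that $\frk$ acts trivially on $\frv=\mathrm{Lie}(V)$, for then $\frv=\frv^{\frk}=\zsp$ because $V^{K}=\zsp$.

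To bring in the index hypothesis I would pass to the metric Lie algebra. Let $\met_p$ be the symmetric bilinear form on $\frg=\mathrm{Lie}(G)$ obtained by pulling back $\g_p$ along the orbit map, as in Corollary~\ref{cor:isom_inilinvariant}: it is nil-invariant, and by transitivity its kernel is exactly $\frg^{\perp}=\frh$, the Lie algebra of $H$. Since $\Iso(M)^\circ$ acts effectively, $\frh$ contains no non-trivial ideal of $\frg$: such an ideal would integrate to a connected normal subgroup of $G$ fixing $p$, hence --- by normality and transitivity --- fixing all of $M$. Finally, the differential of the orbit map identifies $(\frg/\frg^{\perp},\met_p)$ isometrically with $(\mathrm{T}_pM,\g_p)$, so the relative index of $\met_p$ equals the metric index of $M$, which is $\le 2$ by hypothesis. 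Thus $(\frg,\met_p)$ satisfies all the hypotheses of Theorem~\ref{mthm:smallindex}. This verification is the one step meriting attention; everything else is formal.

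Now Theorem~\ref{mthm:smallindex}(1) gives that the Levi decomposition of $\frg$ is a \emph{direct product of ideals}, $\frg=\frk\times\frs\times\frr$. On the other hand, in the simply connected case Theorem~\ref{mthm:geometric} (via Proposition~\ref{prop:S_trivial_M_compact}) forces $\frs=\zsp$. Hence $\frg=\frk\times\frr$ with $[\frk,\frr]=\zsp$, so $\frk$ acts trivially on $\frv\subseteq\frr$, and therefore $\frv=\frv^{\frk}=\zsp$, i.e.\ $V=\{e\}$. Consequently $R=A$ is a torus, $G=KA$ is compact, and since $M$ is a compact homogeneous space whose identity-component isometry group is compact, $\Iso(M)$ is compact as well. (As a byproduct of $\frg=\frk\times\frr$ with $\frr$ abelian and central and $\frr\cap\frh=\zsp$ one in fact sees that $\frh$ projects injectively into $\frk$ and is semisimple, so $M=K/(H\cap K)$ up to the central torus $A$; I would not need this, but it confirms the qualitative picture.)
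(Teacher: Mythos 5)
Your main argument is correct and is in substance the paper's own proof: you verify that $(\frg,\met_p)$ satisfies the hypotheses of Theorem~\ref{mthm:smallindex}, combine $\frs=\zsp$ (Proposition~\ref{prop:S_trivial_M_compact} via Theorem~\ref{mthm:geometric}) with the splitting $\frg=\frk\times\frr$ to get $[\frk,\frr]=\zsp$, and then use $V^{K}=\zsp$ to force $V=\{e\}$, so that $G=\Iso(M)^\circ=KA$ is compact. Up to that point there is nothing to object to.

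The gap is your last sentence: ``since $M$ is a compact homogeneous space whose identity-component isometry group is compact, $\Iso(M)$ is compact as well.'' The theorem is about the \emph{full} isometry group, and this implication is false as a general principle: a flat Lorentzian torus is a compact homogeneous pseudo-Riemannian manifold whose identity component of the isometry group is a compact torus, while the full isometry group contains an infinite discrete group of hyperbolic automorphisms, hence is non-compact. So simple connectedness (or something replacing it) must be used again at this stage, and that is exactly where the paper does extra work: since $K$ is a characteristic subgroup of $G$ and acts transitively, one may identify $\Tan_pM$ with $\frk/(\frh\cap\frk)$, so the isotropy representation of $\Iso(M)_p$ factors through a closed subgroup of $\Aut(\frk)$, which is compact; hence the isotropy representation has compact closure in $\GL(\Tan_pM)$, which yields an $\Iso(M)$-invariant Riemannian metric on the compact manifold $M$, and therefore $\Iso(M)$ is compact. (Alternatively one could invoke Gromov's theorem, cited in the paper's remark, that the isometry group of a compact simply connected analytic pseudo-Riemannian manifold has finitely many components, and combine it with your compactness of $\Iso(M)^\circ$; but some such argument is required --- the step is not formal.)
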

}
\begin{proof}
We know from Theorem \ref{mthm:geometric} that $M$ is compact.
Let $G=\Iso(M)^\circ$, with $G=KR$ as before.
%If $M=G/H$ is simply connected, then $H$ is connected,
%$S$ is trivial, $M$ is compact (Proposition \ref{prop:S_trivial_M_compact}),
%and $R$ is abelian (Proposition \ref{prop:rad_abelian})
By Theorem \ref{mthm:smallindex}, $R$ commutes with $K$
and thus $R=A$ by part 3 of Theorem \ref{mthm:geometric}.
%and thus $\Ad_\frg(H)\subseteq\Ad_\frg(K)$.
%In particular, the isotropy representation of $H$ is relatively compact
%in $\GL(\mathrm{T}_o M)$ (where $o=eH\in M$) and thus there exists a
%Riemannian metric on $M$ that is preserved by $G$.
It follows that $G=K A$ is compact.

Then $K$ is a characteristic subgroup of $G$ which also acts
transitively on $M$.
Therefore we may identify $\mathrm{T}_x M$ at $x\in M$ with
$\frk/(\frh\cap\frk)$, where $\frk$ is the Lie algebra of $K$.
Hence the isotropy representation of the stabilizer $\Iso(M)_x$
factorizes over a closed subgroup of the automorphism group of $\frk$.
As this latter group is compact, the isotropy representation
has compact closure in $\GL(\mathrm{T}_x M)$.
If follows that there exists a
Riemannian metric on $M$ that is preserved by $\Iso(M)$.
Hence $\Iso(M)$ is compact.
\end{proof}

\begin{remark}
Note that in fact the isometry group of every compact analytic simply
connected pseudo-Riemannian manifold has finitely
many connected components (Gromov \cite[Theorem 3.5.C]{gromov}).
\end{remark}

%%%%%%%%%% !!!! Make something of this:
%\begin{cor}
%Let $\met_\frk$ be the restriction of $\met$ to $\frk$.
%Then the kernel of $\met_\frk$ is $\frh\cap\frk$.
%\end{cor}

For indices higher than two,
the identity component of the iso\-metry group of a
simply connected $M$ can be non-compact.
This is demonstrated by the examples below in which we construct some
$M$ on which a non-compact group acts isometrically.
The following Lemma \ref{lem:no_compact} then ensures that these
groups cannot be contained in any compact Lie group.

\begin{lem}\label{lem:no_compact}
Assume that the action of $K$ on $V$ in the semidirect product
$G=K\ltimes V$ is non-trivial.
Then $G$ cannot be immersed in a compact Lie group.
\end{lem}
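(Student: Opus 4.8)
The plan is to argue by contradiction using the Lie algebra. Suppose $G=K\ltimes V$ is immersed in a compact Lie group $C$. Then the Lie algebra $\frg=\frk\ltimes V$ embeds in the compact Lie algebra $\frc$ of $C$. A compact Lie algebra is reductive, so $\frc=\frz(\frc)\oplus[\frc,\frc]$ with $[\frc,\frc]$ semisimple of compact type and carrying an $\Ad(C)$-invariant negative definite form (the negative of the Killing form on the semisimple part, extended by any definite form on the center). In particular, $\frc$ admits an invariant \emph{definite} scalar product $\met$, and the adjoint action of every element of $\frc$ — hence of $\frg\subseteq\frc$ — is skew-symmetric with respect to $\met$.

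Now restrict $\met$ to $\frg$. Since $\ad_{\frc}(X)$ is skew for all $X\in\frg$, in particular $\ad_{\frg}(X)$ preserves $\frg$ and acts skew-symmetrically on $(\frg,\met|_{\frg})$; that is, $\met|_{\frg}$ is an \emph{invariant} (not merely nil-invariant) symmetric bilinear form on $\frg$, and it is definite because $\met$ is. But an invariant symmetric bilinear form $\met|_\frg$ must have $[\frg,\frg]\perp\frg^{\perp}$ in the obvious way; more to the point, for $X\in V$ and $Y,Z\in\frg$ invariance gives $\langle[X,Y],Z\rangle=-\langle Y,[X,Z]\rangle$. Take $Y\in\frk$ and $Z\in V$: then $[X,Y]\in V$ (since $V$ is an abelian ideal) while $[X,Z]=0$, so $\langle[X,Y],Z\rangle=0$ for all $X,Z\in V$ and all $Y\in\frk$. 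Since $\met|_V$ is definite, hence non-degenerate, this forces $[X,Y]=[Y,X]=0$ for all $X\in V$, $Y\in\frk$, i.e.\ $[\frk,V]=\zsp$. This contradicts the hypothesis that the $K$-action on $V$ is non-trivial (equivalently $[\frk,V]\neq\zsp$).

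Alternatively, one can phrase the last step group-theoretically: a compact subgroup of $\GL(V)$ fixing a definite form is conjugate into $\OO(V)$, and the image of $K$ in $\GL(V)$, being compact, preserves some inner product on $V$; but then $V$ together with this inner product, made into an abelian metric ideal with $K$ acting by isometries, produces an invariant definite form on $\frg$ on which $\frk$ acts skew, and the same computation yields $[\frk,V]=\zsp$. The main point to get right is simply that embedding in a compact group yields a \emph{genuinely invariant} definite form on $\frg$, after which the abelianness of $V$ and non-degeneracy of the restricted form kill the action; there is no serious obstacle here, only the need to be careful that "immersed in a compact Lie group" does give an embedding of Lie algebras and hence the invariant definite form.
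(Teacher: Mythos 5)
Your main argument is correct, but it takes a genuinely different route from the paper. The paper works with a group element: since the $K$-action on $V$ is non-trivial, there is $v\in V$ with $\ad_\frg(v)\neq 0$; as $V$ is an abelian ideal, $\ad_\frg(v)$ is nilpotent, so $\Ad_\frc(v)$ has a non-trivial unipotent Jordan part, which is impossible in a compact group $C$ because every $\Ad_\frc(g)$, $g\in C$, is semisimple. You instead pass entirely to the Lie algebra level: the compact Lie algebra $\frc$ carries an $\ad$-invariant definite inner product, its restriction to $\frg$ is a definite \emph{invariant} form, and then invariance under $X\in V$ together with $[V,V]=\zsp$ gives $[\frk,V]\perp V$ inside $V$, hence $[\frk,V]=\zsp$, a contradiction. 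Both proofs are short; yours trades the Jordan-decomposition/semisimplicity fact for the existence of a bi-invariant metric, and has the mild virtue of fitting the metric-Lie-algebra language of the paper (it is essentially the observation, already used in Proposition \ref{prop:riemann}, that a definite invariant form forbids non-trivial nilpotent skew maps). One caution about your closing ``alternative'' phrasing: compactness of the image of $K$ in $\GL(V)$ only yields a $K$-invariant inner product on $V$, and a form built from that is $\frk$-invariant but not necessarily $V$-invariant; your computation uses skewness of $\ad(X)$ for $X\in V$, which you get from the ambient compact group, not from compactness of $K$ alone (indeed $\euc_n$ admits $\frk$-invariant definite forms). So keep the first version of the argument and drop, or repair, the aside.
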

\begin{proof}
Suppose there is a compact Lie group $C$ that contains $G$ as a
subgroup.
As the action of $K$ on $V$ is non-trivial, there exists an
element $v\in V\subseteq C$ such that $\Ad_{\frc}(v)$ has non-trivial
unipotent Jordan part. But by compactness of $C$, every $\Ad_{\frc}(g)$,
$g\in C$, is semisimple, a contradiction.
\end{proof}

\begin{example}\label{ex:not_easy_group}
Start with
$G_1=(\tilde{\SO}_3\ltimes\RR^3)\times\TT^3$,
where $\tilde{\SO}_3$ acts on $\RR^3$ by the coadjoint action,
and let $\varphi:\RR^3\to\TT^3$
be a homomorphism with discrete kernel.
Put
\[
H = \{(\I_3,v,\varphi(v))\mid v\in\RR^3\}.
\]
The Lie algebras $\frg_1$ of $G_1$ and $\frh$ of $H$ are the
corresponding Lie algebras from Example \ref{ex:not_easy}.
We can extend the nil-invariant scalar product $\met$ on $\frg_1$
from Example \ref{ex:not_easy} to a left-invariant tensor on $G_1$,
and thus obtain a $G_1$-invariant pseudo-Riemannian metric of signature
$(3,3)$ on the quotient $M_1=G_1/H=\tilde{\SO}_3\times\TT^3$.
Here, $M_1$ is a non-simply connected manifold with a non-compact
connected stabilizer.

In order to obtain a simply connected space, embed $\TT^3$ in a simply
connected compact semisimple group $K_0$, for example
$K_0=\tilde{\SO}_6$, so that $G_1$ is embedded in
$G=(\tilde{\SO}_3 \ltimes \RR^3) \times K_0$.
As in Example \ref{ex:not_easy2}, we can extend $\met$ from $\frg_1$
to $\frg$, and thus obtain a compact simply connected pseudo-Riemannian
homogeneous manifold $M=G/H=\tilde{\SO}_3\times K_0$.
%The tangent space of $M_1=G_1/H$ at the origin 
%is $\frg/\frh\cong\so_3\oplus\RR^3$.
%We can define a $G$-invariant pseudo-Riemannian metric of signature
%$(3,3)$ on $M_1$ through the dual pairing of $\so_3$ and
%$\RR^3\cong\so_3^*$.
%Here, $M_1$ is a non-simply connected manifold with a non-compact
%connected stabilizer.
%
%Embed $\TT^3$ in a simply connected compact semisimple group $C$,
%so that $G_1$ is embedded in
%$G=(\tilde{\SO}_3 \ltimes \RR^3) \times C$.
%Then we can take $H$ above as a closed connected subgroup of $G$.
%The quotient $M=G/H$ is now simply connected.
%Again, we have a $G$-invariant pseudo-Riemannian metric on $M$,
%given by extending the one on $M_1$ above.
%
%The action of $H$ on the tangent space of $M$ at the origin $o$ is
%identified with the action of $\RR^3$ on $(\so_3 \oplus \RR^3) \oplus Q$,
%where $Q$ is the orthogonal space of the tangent space of
%$\TT^3$ in $\mathrm{T}_e C$.
%The $H$-action is identified with the $\RR^3$-action on
%$\so_3\oplus \RR^3$
%and that of $\TT^3$ on $Q$.
%We can arrange for the latter to be isometric by a suitable extension
%of the metric on $\mathrm{T}_o M_1$ to $Q$.
\end{example}

\begin{example}
Example \ref{ex:not_easy_group} can be generalized by replacing
$\tilde{\SO}_3$ by any simply connected compact semi\-simple group
$K$, acting by the coadjoint representation on $\RR^d$, where
$d=\dim K$, and trivially on $\TT^d$.
Define $H$ similarly as a graph in $\RR^d\times\TT^d$,
and embed $\TT^d$ in a simply connected compact semisimple Lie group
$K_0$.
\end{example}

\appendix

% !TEX root = homogeneousSpacesLieAlgebras.tex

%%%%%%%%%%%%%%%%%%%%%%%
\section{Modules with skew pairings}
\label{sec:skewmodule}
%%%%%%%%%%%%%%%%%%%%%%%

Let $\frg$ be a  Lie algebra and let $V$ be a
finite-dimensional $\frg$-module. Here we work over a fixed ground field 
$\kk$ of characteristic $0$.

\begin{definition}
A bilinear map $\met: \frg\times V \to \kk$ such that
for all $X,Y\in\frg$, $v\in V$,
\begin{equation}
\langle X,  Y v \rangle = -\langle Y, X v\rangle
\label{eq:skewpairing}
\end{equation}
is called a \emph{skew pairing} for $V$,
and $V$ is called a
\emph{skew module} for $\frg$.
\end{definition}
%Note that this property implies
%\begin{equation}
%\langle\rho(X)v,X\rangle = 0
%\label{eq:XX0}
%\end{equation}
%for all $X\in\frg$, $v\in V$.

%The existence of a non-trivial skew-pairing gives a strong restriction on
%the possible modules $(V,\rho)$.

We make the following elementary observations:

\begin{lem}\label{lem:skew_basic}
Assume that there exists $X \in \frg$ such that the map $v \mapsto X v$, $v \in V$,  is an
invertible linear operator of\/ $V$.
Then every skew pairing for $V$ is zero. 
More generally, let $X, Y \in \frg$ and $W \subseteq V$ such that $Y  W  
\subseteq X V$.
Then $Y \perp X W$. 
\end{lem}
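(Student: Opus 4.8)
The plan is to derive both assertions from one elementary remark about the defining relation \eqref{eq:skewpairing}. Applying \eqref{eq:skewpairing} with $Y=X$ and an arbitrary $v\in V$ gives $\langle X, Xv\rangle = -\langle X, Xv\rangle$, hence $2\langle X, Xv\rangle = 0$, and therefore $\langle X, Xv\rangle = 0$ since $\kk$ has characteristic $0$. In other words, for every $X\in\frg$ one has $X \perp XV$. I would record this as the first step, since everything else is a formal consequence of it.

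For the general claim, I would fix $X,Y\in\frg$ and a subspace $W\subseteq V$ with $Y W \subseteq XV$, and take an arbitrary $w\in W$. By hypothesis $Yw = Xv'$ for some $v'\in V$, so using \eqref{eq:skewpairing} (with the pair $(Y,X)$ and then the remark above for the pair $(X,X)$) I would compute
\[
\langle Y, Xw\rangle \;=\; -\langle X, Yw\rangle \;=\; -\langle X, Xv'\rangle \;=\; 0 .
\]
As $w\in W$ was arbitrary, this gives $Y\perp XW$, which is the second assertion.

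The first assertion is then the special case $W=V$. If $v\mapsto Xv$ is an invertible operator of $V$, then $XV = V$, so for every $Y\in\frg$ the inclusion $YV\subseteq V = XV$ holds trivially, and the general claim yields $Y\perp XV = V$. Since this holds for all $Y\in\frg$, the skew pairing $\met$ is identically zero.

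I do not expect any genuine obstacle: the argument is a two-line computation. The only point to flag is that the cancellation $2\langle X,Xv'\rangle = 0 \Rightarrow \langle X,Xv'\rangle = 0$ requires $\operatorname{char}\kk\neq 2$, which is covered by the standing hypothesis $\operatorname{char}\kk = 0$. Organizationally, I would prove the general statement first and obtain the invertible case as its immediate corollary, reversing the order in which the two parts are stated in the lemma.
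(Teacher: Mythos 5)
Your argument is correct and is essentially the paper's own proof: the same computation $\langle Y, Xw\rangle = -\langle X, Yw\rangle = -\langle X, Xv'\rangle = 0$, with the auxiliary observation $\langle X, Xv\rangle = 0$ coming from the skew relation applied to the pair $(X,X)$ in characteristic zero. Deducing the invertible case by taking $W=V$ and $XV=V$ is exactly how the first assertion follows in the paper as well, so there is nothing to change.
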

\begin{proof}
Let $w \in W$ and $v \in V$ with  $Y w =  X v$. 
Then 
\[  \langle Y, X w \rangle
= -\langle X, Y w \rangle
= -\langle X, X  v  \rangle=0.
\qedhere
\]
\end{proof} 

\begin{lem}\label{lem:skew_basic2}
If $X \perp V$ then $\frg \perp X V$.
\end{lem}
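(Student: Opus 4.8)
The plan is to unwind the definition of the skew pairing directly. The hypothesis $X\perp V$ means $\langle X,v\rangle=0$ for every $v\in V$, and the goal $\frg\perp XV$ means $\langle Y,Xv\rangle=0$ for every $Y\in\frg$ and $v\in V$. So it suffices to fix arbitrary $Y\in\frg$ and $v\in V$ and verify this single scalar vanishes.

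The key step is the defining identity \eqref{eq:skewpairing}. Applied with the roles of $X$ and $Y$ interchanged, it gives
\[
\langle Y, Xv\rangle = -\langle X, Yv\rangle .
\]
Since $V$ is a $\frg$-module, $Yv$ again lies in $V$; hence by the hypothesis $X\perp V$ the right-hand side is $0$, and therefore $\langle Y,Xv\rangle=0$. As $Y$ and $v$ were arbitrary, $\frg\perp XV$.

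There is essentially no obstacle: the statement is an immediate consequence of the skew-symmetry of the pairing together with the fact that the $\frg$-action preserves $V$. (It is in the same spirit as the second assertion of Lemma \ref{lem:skew_basic}, although that lemma requires the inclusion $YW\subseteq XV$, which is not available here; the short computation above works regardless.)
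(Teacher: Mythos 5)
Your proof is correct and coincides with the paper's own argument: both simply apply the skew identity $\langle Y,Xv\rangle=-\langle X,Yv\rangle$ and use $Yv\in V$ together with $X\perp V$. Nothing further is needed.
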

\begin{proof}  Let $Y \in \frg$ and $v \in V$.  
Then 
$  \langle Y, X v \rangle
= -\langle X, Y v \rangle =0$.
\end{proof}

%%%
\subsection{Skew pairings for $\boldsymbol{\sl_2(\kk)}$}

The following determines all skew pairings for the Lie algebra $\frg=\sl_2(\kk)$.

%\wolf{hier und fuer so3 den trivialen Modul explizit ausschliessen?}
%\oliver{Genau}

\begin{prop}\label{prop:sl2_skew_module}
Let $\met: \sl_2(\kk) \times V \to \kk$ be a skew pairing for the (non-trivial) irreducible module $V$.
If the skew pairing is non-zero, then $V$ is isomorphic to the adjoint representation
of  $\sl_2(\kk)$ and $\met$ is proportional to the Killing form. 
\end{prop}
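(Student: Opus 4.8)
The plan is to exploit the full strength of the skew pairing condition together with the representation theory of $\sl_2(\kk)$. Write $\frg = \sl_2(\kk)$ with standard basis $e, h, f$ satisfying $[h,e]=2e$, $[h,f]=-2f$, $[e,f]=h$, and let $V = V_m$ be the irreducible module of highest weight $m \geq 1$, with a weight basis $v_0, v_1, \ldots, v_m$ where $h v_j = (m-2j) v_j$. The skew pairing gives a linear functional $\langle X, \cdot\rangle$ on $V$ for each $X \in \frg$; equivalently, a linear map $\Psi : \frg \to V^*$, and condition \eqref{eq:skewpairing} says precisely that $\langle X, Yv\rangle$ is antisymmetric in $X,Y$, i.e. the induced bilinear form $\beta(X,Y) := \langle X, Y v\rangle$ depends on $v$ but is always alternating. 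First I would apply Lemma \ref{lem:skew_basic}: since $e$ acts nilpotently, $e$ maps each weight space into the next, and $e^m$ acts as a nonzero multiple of projection onto the top weight line; more usefully, for the semisimple element $h$, the operator $v \mapsto hv$ is invertible on $V_m$ precisely when $m$ is even (no zero weight), and in that case Lemma \ref{lem:skew_basic} forces the entire pairing to vanish. So I may assume $m$ is odd.

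Next I would use the $h$-weight decomposition to pin down $\Psi$. For $X$ in the weight-$\lambda$ space of the adjoint action (so $\lambda \in \{-2,0,2\}$), and $v$ of weight $\mu$, the element $Xv$ has weight $\lambda+\mu$. Pairing with an eigenvector decomposition of $\Psi$, the antisymmetry relation $\langle X, Yv\rangle = -\langle Y, Xv\rangle$ becomes a system of linear constraints relating the components of $\Psi(e), \Psi(h), \Psi(f)$ across weight spaces of $V^*$. The key point I expect to extract: taking $X = Y = h$ gives $\langle h, hv\rangle = 0$ for all $v$, so $\Psi(h)$ annihilates every weight vector, hence $\Psi(h) = 0$ — wait, more carefully, $\langle h, hv \rangle = -\langle h, hv\rangle$ forces $\langle h, hv\rangle = 0$, and since the $hv$ span the nonzero-weight part, $\Psi(h)$ is supported on the zero weight space of $V$ only. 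Then taking $X = e$, $Y = h$: $\langle e, hv\rangle = -\langle h, ev\rangle$. Running these relations systematically, combined with the irreducibility (so $V$ is generated from a highest weight vector under $f$), should force $m = 2$, i.e. $V \cong \sl_2(\kk)$ as the adjoint representation, and then identify $\Psi$ up to scalar with the map $X \mapsto \kappa(X, \cdot)$ given by the Killing form, since the Killing form is the unique (up to scalar) invariant bilinear form on $\sl_2(\kk)$ and any nonzero skew pairing with $V = \frg$ must be $\ad$-invariant in the appropriate sense.

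The main obstacle I anticipate is the bookkeeping in the middle step: showing that the only odd $m$ for which a nonzero skew pairing exists is $m = 2$ (the adjoint case). One clean way to organize this: the skew pairing is equivalent to a $\frg$-module map from $\frg$ (adjoint) to $V^*$ twisted appropriately — more precisely, \eqref{eq:skewpairing} says the bilinear form on $\frg \times V$ is a cocycle-type object, and I would reinterpret it as an element of $\Hom_{\frg}(\Ext^2(\text{something}), \ldots)$ or, most directly, observe that defining $T : V \to \frg^*$ by $T(v)(X) = \langle X, v\rangle$, the condition becomes that $X \cdot T(v) - T(Xv)$ is symmetric in a suitable sense, forcing $T$ to intertwine the $\frg$-actions up to the symmetric part. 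Since $\frg^* \cong \frg$ as modules (via Killing form), $T$ becomes (a multiple of) a module homomorphism $V \to \frg$, which by Schur's lemma is zero unless $V \cong \frg$, in which case it is a scalar multiple of the identity, giving exactly the Killing form pairing. This representation-theoretic reformulation is the step I would take care to get exactly right, as it is what makes the argument short rather than a brute-force weight computation; once it is in place, the conclusion is immediate from Schur's lemma and uniqueness of the invariant form on $\sl_2(\kk)$.
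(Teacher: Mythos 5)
There is a genuine gap, and also a sign/parity error that makes the reduction in your first step internally inconsistent. The weights of the irreducible module $V_m$ are $m, m-2, \ldots, -m$, so $0$ is a weight exactly when $m$ is \emph{even}; hence $v \mapsto hv$ is invertible precisely when $m$ is \emph{odd}, not even as you claim. Applied correctly, Lemma \ref{lem:skew_basic} kills all odd $m$ (including the standard representation $m=1$) and leaves the \emph{even} case, which contains the adjoint representation $m=2$; as written, you discard the even case and "may assume $m$ odd", after which you could never recover the adjoint representation as the surviving case, so your outline contradicts its own intended conclusion. This is repairable, but the real work then lies in excluding the even modules $m \geq 4$, and that is exactly the part you do not carry out: "running these relations systematically \ldots should force $m=2$" is a statement of intent, not an argument.

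Your proposed shortcut for this step is also not sound as stated. The skew condition $\langle X, Yv\rangle = -\langle Y, Xv\rangle$ is strictly weaker than saying $T\colon V \to \frg^*$, $T(v)(X) = \langle X, v\rangle$, intertwines the $\frg$-actions: equivariance of $T$ means $T(Yv) = Y\cdot T(v)$, i.e.\ $\langle X, Yv\rangle = -\langle [Y,X], v\rangle$, and deducing this from skewness (for non-trivial irreducible $V$) is essentially the content of the proposition itself, so invoking Schur's lemma at that point is circular. (For the trivial module the skew condition is vacuous while invariance is not, which shows skewness does not imply invariance without an argument.) By contrast, the paper's proof stays elementary: it realizes $V_k = \SS^k\kk^2$, uses the invertibility of $H$ only for the standard representation, and for $k\geq 3$ uses the refined second statement of Lemma \ref{lem:skew_basic} (if $Y W \subseteq XV$ then $Y \perp XW$) applied to explicit monomial subspaces of $\im X$ and $\im Y$, together with Lemma \ref{lem:skew_basic2}, to show $\sl_2(\kk) \perp V_k$; the case $k=2$ is then settled by a direct computation showing a skew form on the adjoint module is determined by $\langle H,H\rangle$. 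If you want to keep your weight-theoretic framing, you would need to actually perform the analogue of that $k\geq 3$ elimination (e.g.\ your relations $\langle e, hv\rangle = -\langle h, ev\rangle$ etc.\ pushed through all weight spaces), rather than appeal to a module-map reformulation that has not been established.
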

\begin{proof}  We choose a standard basis $X,Y,H$ for $\sl_2(\kk)$
such that 
$[X,Y] = H$, $[H,X] = 2X$,  $[H,Y] =  -2Y$. Let $V = \kk^2$ denote the standard 
representation. Let $e_1, e_2$ be a basis such that $X e_1 =  0$, $X e_2 = e_1$.
Since $H e_1 = e_1$ and $H e_2 =  - e_2$,  the operator defined by $H$ is invertible.
Hence, it follows that every skew pairing for $V$ is zero by Lemma \ref{lem:skew_basic}.

The irreducible modules for $\sl_2(\kk)$ are precisely the symmetric powers $V_k = \SS^{k} V$, $k \geq 1$. 
Note that,  in $V_k$, $\im X$ is spanned by the product vectors $e_1^\ell e_2^{k -\ell}$, $k \geq \ell \geq 1$. Similarly, $\im Y$ is 
spanned by  $e_1^{k -\ell} e_2^\ell$,  $k \geq \ell \geq 1$. 
%Since $H \im X \subseteq \im X$,
%Lemma \ref{lem:skew_basic} shows that $H \perp \im X^2$. Analogously, $H \perp \im Y^2$.
%This shows that $H \perp V_k$, $k \geq 3$.  

Consider $W$,  the subspace of
$\im Y$ spanned by $e_1^{k -\ell} e_2^\ell$,  $\ell \geq 2$. Now $X W \subseteq \im Y$ and
from Lemma \ref{lem:skew_basic} we can conclude that $X \perp Y W $. Observe that $Y W$ is
spanned by $e_1^{k -\ell-1}  e_2^{\ell +1}$,  $\ell \geq 2$, $k \geq \ell +1$.  
In particular, $X \perp e_2^k$ if $k \geq 3$. Since also 
$X \perp \im X$, this shows $X \perp V_k$, $k \geq 3$.
By symmetry, we also see that $Y \perp V_k$, $k \geq 3$. 
Since $\im X$, $\im Y$ together span $V_k$, we conclude (using Lemma \ref{lem:skew_basic2}) 
that $\sl_2(\kk) \perp V_k$, $k \geq 3$.

Finally, the module $V_2$ is isomorphic to the adjoint representation. Consider the 
Killing form $\kappa \!: \sl_2(\kk) \times \sl_2(\kk) \to \kk$. Recall that  $\kappa$ is symmetric and skew 
with respect to the adjoint representation of $\sl_2(\kk)$ on itself. Therefore, it also defines a 
skew pairing for $V_2$.  An evident computation using the skew-condition on commutators in $\sl_2(\kk)$ shows
that every skew form $\met$ for the adjoint representation is determined by its value $\langle H , H \rangle$. 
Hence, it must be proportional to the Killing form. 
\end{proof}

%\begin{remark}
%We cannot drop the assumption that $\alpha\neq0$ in Proposition
%\ref{prop:sl2_skew_module}.
%For example, the coadjoint action of $\sl_2(\RR)$ on its dual
%space $\sl_2(\RR)^*$
%gives rise to an invariant scalar product with signature $(3,3)$ on
%$\sl_2(\RR)\ltimes_{\ad^*}\sl_2(\RR)^*$, which upon restriction to
%$\sl_2(\RR)^*\times\sl_2(\RR)$ yields a non-zero skew-pairing.
%\end{remark}

\subsection{Application to $\boldsymbol{\so_3}$ over the reals}
Here we consider the simple Lie algebra $\so_3$ over the real numbers. 
Since  $\so_3$ has  complexification $\sl_2(\CC)$, we can apply 
Proposition \ref{prop:sl2_skew_module} to show:

\begin{prop}\label{prop:so3_skew_module}
Let $\met: \so_3 \times V \to \RR$ be a skew pairing for the (non-trivial) irreducible module $V$.
If the skew pairing is non-zero, then $V$ is isomorphic to the adjoint representation
of  $\so_3$ and $\met$ is proportional to the Killing form. 
\end{prop}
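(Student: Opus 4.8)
The plan is to complexify and reduce everything to Proposition \ref{prop:sl2_skew_module}. Write $\sl_2(\CC)=\so_3\otimes_\RR\CC$ and $V_\CC=V\otimes_\RR\CC$. Extending $\met$ by $\CC$-bilinearity gives a map $\met_\CC\colon\sl_2(\CC)\times V_\CC\to\CC$, and since the skew identity \eqref{eq:skewpairing} is preserved under $\CC$-bilinear extension, $\met_\CC$ is a skew pairing for the $\sl_2(\CC)$-module $V_\CC$. Moreover $\met_\CC=0$ precisely when $\met=0$, so we may assume $\met_\CC\neq0$.

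Next I would decompose $V_\CC=\bigoplus_i M_i$ into irreducible $\sl_2(\CC)$-submodules (possible as $\sl_2(\CC)$ is semisimple). For each $i$, the restriction of $\met_\CC$ to $\sl_2(\CC)\times M_i$ is again a skew pairing, since the identity \eqref{eq:skewpairing} only refers to elements of the submodule $M_i$; and as $\met_\CC$ is linear in its second argument, it vanishes identically once all these restrictions do. By Proposition \ref{prop:sl2_skew_module}, the restriction to $M_i$ is zero unless $M_i\cong V_2$, the $3$-dimensional adjoint module of $\sl_2(\CC)$. Since $\met_\CC\neq0$, at least one summand of $V_\CC$ must therefore be isomorphic to $V_2$.

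Now I would bring in the structure of the real-irreducible $\so_3$-modules. Every $V_k$ is self-conjugate and carries a unique (up to scalar) invariant bilinear form, which is symmetric for $k$ even and alternating for $k$ odd; accordingly a non-trivial real-irreducible $\so_3$-module $V$ is either of real type, with $V_\CC\cong V_k$ for some even $k$, or of quaternionic type, with $V_\CC\cong V_k\oplus V_k$ for some odd $k$. As $V_2$ occurs in $V_\CC$ and $k=2$ is even, $V$ is of real type with $V_\CC\cong V_2$; equivalently $V$ is isomorphic to the $3$-dimensional adjoint representation of $\so_3$. Under such an isomorphism $\met$ becomes a bilinear form on $\so_3$ that is skew for the adjoint action; complexifying, Proposition \ref{prop:sl2_skew_module} identifies it with a $\CC$-multiple of the Killing form $\kappa_\CC$ of $\sl_2(\CC)$. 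Since $\kappa_\CC$ restricts to the Killing form $\kappa$ of $\so_3$ on $\so_3\times\so_3$, which is real-valued and nonzero, while $\met$ is real-valued, the scalar is real and $\met$ is a real multiple of $\kappa$.

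The only step that needs genuine care is that Proposition \ref{prop:sl2_skew_module} applies only to \emph{irreducible} modules, so after complexifying one must argue one irreducible summand at a time; in the quaternionic case, where $V_\CC\cong V_k\oplus V_k$, it is essential that the common highest weight $k$ is then odd, hence $\neq2$, which is exactly what excludes this case. Equivalently, the subtlety is that $\so_3$ is the \emph{compact} real form of $\sl_2(\CC)$ and not the split form $\sl_2(\RR)$, so one cannot simply repeat the proof of Proposition \ref{prop:sl2_skew_module} verbatim — the passage through the complexification is unavoidable.
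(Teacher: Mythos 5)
Your proposal is correct and follows essentially the same route as the paper: complexify the pairing to $\sl_2(\CC)\times V_\CC\to\CC$, apply Proposition \ref{prop:sl2_skew_module} to the irreducible constituents, and use the real/quaternionic dichotomy for irreducible real $\so_3$-modules (even versus odd highest weight, as in Br\"ocker--tom Dieck) to exclude the case where $V_\CC$ is reducible, forcing $V_\CC\cong V_2$ and hence $V$ to be the adjoint module. Your closing observation that the proportionality constant is real, because the complex Killing form restricts to the real (nonzero) Killing form on $\so_3$, makes explicit a point the paper leaves implicit, but it is not a different method.
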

\begin{proof}
Using the isomorphism of  $\so_3$ with the Lie algebra $\su_2$, we view $\so_3$ as
a subalgebra of $\sl_2(\CC)$. We thus see that the irreducible complex representations of $\so_3$ 
are precisely the $\su_2$-modules $\SS^{k} \CC^2$. 

Now let $V$ be a real module for $ \so_3$, which is irreducible and non-trivial, and assume
that $\met$ is a non-trivial skew pairing for $V$. We may extend 
$V$ to a complex linear skew pairing
$\met_\CC  \!:\sl_2(\CC) \times V_\CC \to \CC$, where
$V_\CC$ denotes complexification of the $\su_2$-module $V$. 

In case $V_\CC$ is an irreducible module for  $\sl_2(\CC)$, Proposition \ref{prop:sl2_skew_module} shows that 
$V_\CC = V_2$ is the adjoint representation of  $\sl_2(\CC)$.
Hence, $V$ must  have been the adjoint representation of
$\so_3$. 
 
Otherwise, if $V_\CC$ is reducible, $V$ is one of the modules $V_k = \SS^{2 \ell -1}\CC^2$ with scalars restricted to the reals
(cf.~Br\"ocker and tom Dieck \cite[Proposition 6.6]{BT}).  It also follows that  $V_\CC$ is isomorphic to a direct sum of
$\SS^{2 \ell -1} \CC^2 $ with itself. 
Since we assume that the skew pairing $\met_\CC$ for $V_\CC$ is non-trivial, Proposition \ref{prop:sl2_skew_module}
implies that one of the irreducible summands of $V_\CC$ is isomorphic to $\SS^{2} \CC^2$. This is impossible, since $k = 2 \ell -1 $ is odd.
\end{proof}

The Killing form is always a non-degenerate pairing. In the light of the previous two propositions, this give us: 

\begin{cor}\label{cor:skewkernel} 
Let $\met: \frg  \times V \to \kk$  be a skew pairing,
where either $\frg = \sl_2(\kk)$ or $\frg = \so_3$ and 
$\kk = \RR$. Assume further that 
$V^{\frg} = \{ v \in V \mid \frg v = \zsp \}   = \zsp$.
Define
\[
V^{\perp} = \{ X \in \frg \mid \langle X, V \rangle = \zsp \} \; .
\]
Then either $V^{\perp} = \zsp$ or\/  $V^{\perp} = \frg$. %  \zsp$\met = 0$.
\end{cor}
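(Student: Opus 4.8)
The plan is to reduce the corollary to Propositions \ref{prop:sl2_skew_module} and \ref{prop:so3_skew_module} by decomposing $V$ into irreducible pieces. Since $\frg$ is semisimple in both cases, Weyl's complete reducibility theorem provides a decomposition into irreducible $\frg$-submodules $V = W_1 \oplus \cdots \oplus W_r$. The hypothesis $V^\frg = \zsp$ forces each $W_i$ to be non-trivial, because a trivial irreducible submodule would be a line contained in $V^\frg$.

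First I would note that the restriction of $\met$ to $\frg \times W_i$ is again a skew pairing for $W_i$: the identity $\langle X, Yv\rangle = -\langle Y, Xv\rangle$ holds for every $v \in V$, and since $W_i$ is a submodule we have $Xv, Yv \in W_i$ whenever $v \in W_i$. Hence Proposition \ref{prop:sl2_skew_module} (when $\frg = \sl_2(\kk)$) or Proposition \ref{prop:so3_skew_module} (when $\frg = \so_3$, $\kk = \RR$) applies to each $W_i$: either the restricted pairing vanishes identically, or $W_i$ is isomorphic to the adjoint representation and the restricted pairing is a non-zero multiple of the Killing form.

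Next, writing $W_i^\perp = \{X \in \frg \mid \langle X, W_i \rangle = \zsp\}$, I would record that $W_i^\perp = \frg$ in the first case, while $W_i^\perp = \zsp$ in the second case, because the Killing form is non-degenerate. Finally, since $\langle X, - \rangle$ is linear and $V = \bigoplus_i W_i$, one has $V^\perp = \bigcap_{i=1}^{r} W_i^\perp$. An intersection of subspaces of $\frg$, each of which equals $\zsp$ or $\frg$, is $\frg$ if all of them are $\frg$ and is $\zsp$ otherwise; this is exactly the asserted dichotomy.

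I do not expect a genuine obstacle here. The only point deserving a sentence of care is that the decomposition into irreducibles need not be unique when isomorphic summands occur, but the conclusion $W_i^\perp \in \{\zsp, \frg\}$ is dictated by the two propositions independently of the chosen decomposition, and $V^\perp$ itself is intrinsic; so the argument is unambiguous.
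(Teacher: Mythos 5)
Your argument is correct and is essentially the paper's own proof, only written out in more detail: the paper likewise decomposes $V$ into irreducible summands, applies Propositions \ref{prop:sl2_skew_module} and \ref{prop:so3_skew_module} to each, and uses the non-degeneracy of the Killing form to conclude that $V^{\perp}=\zsp$ exactly when some summand pairs non-trivially. Your extra remarks (non-triviality of each summand from $V^{\frg}=\zsp$, and independence of the chosen decomposition) are sound but add nothing beyond the paper's one-line justification.
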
 
\begin{proof}
The first case occurs precisely if there exists an irreducible summand $W$ of $V$ on which 
the restricted skew pairing $\frg \times W \to \kk$ induces the Killing form. 
\end{proof}

%\begin{cor}  \label{cor:skewtwodim} 
%Let $\met: \frs  \times V \to \kk$  be a skew pairing,
%where $\frs$ is semisimple.
%If $\dim V =2$, then $V = V^{\perp}$, that is $\met = 0$. 
%\end{cor} 
%\begin{proof}
%It is enough to prove the corollary over an algebraically closed
%ground field $\kk$.
%In case, $\frs = \sl_2(\kk)$, we see that either the module $V$
%is trivial or \wolf{or...?}
%
%Indeed, the first case occurs precisely, if there exists an irreducible summand $W$ of $V$, on which 
%the restricted skew pairing $\frg \times W \to \kk$ induces the Killing form. 
%\end{proof}

% !TEX root = homogeneousSpacesLieAlgebras.tex

%%%%%%%%%%%%%%%%%%%%%%%
\section{Nil-invariant scalar products on solvable Lie algebras}
\label{sec:newproof}
%%%%%%%%%%%%%%%%%%%%%%%

We present a new proof for a key result
of Baues and Globke \cite[Theorem 1.2]{BG}.
The importance of this result lies in it being the crucial
ingredient in the proof of our Theorem \ref{mthm:invariance},
which supersedes it and is itself the fundamental tool in the study
of Lie algebras with nil-invariant bilinear forms.

\begin{thm}[Baues \& Globke]\label{thm:BG}
Let $\frg$ be a finite-dimensional solvable real Lie algebra, and $\met$ a nil-invariant symmetric bilinear form on
$\frg$. Then $\met$ is invariant.
\end{thm}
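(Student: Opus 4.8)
The plan is to argue by induction on $\dim\frg$, reducing $(\frg,\met)$ by a suitable central, totally isotropic ideal. If $\frg$ is abelian there is nothing to prove, so assume $\frg$ is non-abelian and let $\frn$ be its nilradical. Then $\frn\neq\zsp$, and in fact $[\frg,\frn]\neq\zsp$ (otherwise $\frn$ would be central, making $\frg$ two-step nilpotent and hence abelian), so
\[
\frj \;=\; \zen(\frn)\cap[\frg,\frn]
\]
is a nonzero ideal of $\frg$, contained in $\zen(\frn)$ and invariant under every derivation of $\frg$. It is totally isotropic: for $Z_1,Z_2\in\frj$ write $Z_2=\sum_i[Y_i,W_i]$ with $W_i\in\frn$; since each $\ad(W_i)$ is nilpotent, hence skew by nil-invariance, and $[\frn,\frj]=\zsp$, we obtain $\langle Z_1,[Y_i,W_i]\rangle=-\langle Z_1,\ad(W_i)Y_i\rangle=\langle\ad(W_i)Z_1,Y_i\rangle=0$.

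The core of the argument is the reduction step, and for this I first want to show that $\frj^\perp$ is a subalgebra of $\frg$. Given $X,Y\in\frj^\perp$ and $Z\in\frj$, I would split $\ad(X)=\ad(X)_{\rm ss}+\ad(X)_{\rm n}$ into its Jordan parts. Since $\frj$ is invariant under all derivations of $\frg$, it is in particular $\ad(X)_{\rm n}$-invariant; as $\ad(X)_{\rm n}$ is skew and $Y\perp\frj$, this gives $\langle\ad(X)_{\rm n}Y,Z\rangle=-\langle Y,\ad(X)_{\rm n}Z\rangle=0$, so that $\langle[X,Y],Z\rangle=\langle\ad(X)_{\rm ss}Y,Z\rangle$. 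To see that this remaining term also vanishes I would use that the semisimple derivation $\ad(X)_{\rm ss}$ maps $\frg$ into $\frn$ (Jacobson), together with the properties $\frj\subseteq\zen(\frn)$ and $\frj\subseteq[\frg,\frn]$ and a further appeal to nil-invariance along $\frn$. This verification — in effect, controlling the semisimple parts of the adjoint operators, which is exactly the place where invariance could fail — is the step I expect to be the main obstacle.

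Granting that $\frj^\perp$ is a subalgebra, $\frj$ is a totally isotropic ideal of it with $\frj\perp\frj^\perp$, so $\met$ descends to a symmetric bilinear form on the metric Lie algebra $\bar\frg=\frj^\perp/\frj$; this form is again nil-invariant (by the reduction mechanism reviewed in Section \ref{sec:solvable}) and $\dim\bar\frg<\dim\frg$, so by the induction hypothesis it is invariant. It then remains to lift invariance back to $\frg$: invariance on $\bar\frg$ says that $\ad(W)$ acts $\met$-skew on $\frj^\perp$ for every $W\in\frj^\perp$, and this, together with the facts that $\frn$ and all nilpotent parts $\ad(X)_{\rm n}$ already act skew-symmetrically on all of $\frg$ and that $\dim(\frg/\frj^\perp)$ equals the rank of the pairing $\frg\times\frj\to\RR$ (hence is at most $\dim\frj$), should give by a direct bookkeeping argument that every $\ad(X)$ is skew, i.e.\ that $\met$ is invariant. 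All of this may be carried out after complexifying, since nil-invariance and invariance are both unaffected by extension of scalars — the nilpotent elements of the Lie algebra of the Zariski closure of $\Inn(\frg)$ span its unipotent radical, which is compatible with $\otimes\,\CC$ — and over $\CC$ one can streamline the bookkeeping using the weight-space decomposition of $\frg$ under a maximal torus of that Zariski closure, whose nonzero weight spaces automatically act by nilpotent, hence skew, operators.
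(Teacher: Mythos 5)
Your strategy---induction on dimension by reducing along the characteristic totally isotropic ideal $\frj=\zen(\frn)\cap[\frg,\frn]$ of \eqref{eq:j0}---is genuinely different from the proof in the paper, but it has a real gap, and you have flagged it yourself: you do not prove that $\frj^\perp$ is a subalgebra, and the ingredients you list do not suffice. After disposing of the nilpotent part you are left with $\langle\ad(X)_{\rm ss}Y,Z\rangle$ where $N=\ad(X)_{\rm ss}Y\in\frn$ and $Z\in\frj$; writing $Z=\sum_i[Y_i,W_i]$ with $W_i\in\frn$ and using skewness of $\ad(W_i)$ only yields $\langle N,Z\rangle=\sum_i\langle[W_i,N],Y_i\rangle$, and these terms do not vanish from $\frj\subseteq\zen(\frn)$, because $N$ need not commute with the $W_i$. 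In effect you would need something like $\frn\perp\frj$, which at this stage is essentially as hard as the invariance you are trying to prove. Worse, the reduction machinery you invoke from Section \ref{sec:solvable} (that $\frj^\perp$ is a subalgebra and that the induced form on $\frj^\perp/\frj$ is again nil-invariant) is derived there from Theorem \ref{thm:nilinvariant}, i.e.\ from the very statement under proof, so citing it is circular; and since $\frj^\perp$ is a subalgebra rather than a quotient by an ideal, inheritance of nil-invariance by the subquotient would itself require an argument about how the algebraic hull of $\Inn(\frj^\perp/\frj)$ relates to that of $\Inn(\frg)$.

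The final lifting step is also not mere bookkeeping: invariance of the induced form on $\frj^\perp/\frj$ only gives $\langle[W,U],V\rangle=-\langle U,[W,V]\rangle$ for $W,U,V\in\frj^\perp$; it gives no control over $\ad(X)$ for $X\notin\frj^\perp$, nor over products in which one entry lies outside $\frj^\perp$, and the bound $\codim_\frg\frj^\perp\leq\dim\frj$ does not by itself close these cases. The paper's proof (Appendix \ref{sec:newproof}) avoids reduction entirely: it fixes a regular element $H_0$, takes the Cartan subalgebra $\frg_0=\frg(H_0,0)$ and the Fitting decomposition $\frg=\frg_0\oplus\frg_1$ with $\frg_1\subseteq\frn$, shows $X\perp[X,\frn]$ for every $X$, deduces $\frg_0\perp\frg_1$ by varying the regular element inside $\frg_0$, and concludes that every $X\in\frg_0$ acts skew-symmetrically, which suffices since $\frg=\frg_0+\frn$. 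To rescue your inductive scheme you would first need an independent, nil-invariance-only proof that $\frj^\perp$ is a subalgebra (equivalently, control of the semisimple parts $\ad(X)_{\rm ss}$ against $\frj$), and then a genuine argument for the lift; neither is supplied.
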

%We recall some well-known facts (see Jacobson \cite[Chapter III]{jacobson}).
%Let $\frg$ be an arbitrary finite-dimensional real Lie algebra.
%For $X\in\frg$ and $\lambda\in\CC$ let $\frg(X,\lambda)$ denote the
%generalized eigenspace of $\ad(X)$ for $\lambda$, that is, the maximal
%subspace of $\frg$ on which $\ad(X)-\lambda\id_\frg$ is nilpotent.
%We write $\frg(X,0)'=\bigoplus_{\lambda\neq0}\frg(X,\lambda)$.
%
%Let $H_0$ be a regular element of $\frg$, that is,
%$\dim\frg(H_0,0) = \min\{\dim\frg(X,0)\mid X\in\frg\}$.
%For short, write $\frg_\lambda=\frg(H_0,\lambda)$
%and $\frg_1 = \frg(H_0,0)'$.
%We have a decomposition $\frg=\frg_0\oplus\frg_1$.
%Note that $[\frg_\lambda,\frg_\mu]\subseteq\frg_{\lambda+\mu}$, and in
%particular, $[\frg_0,\frg_0]\subseteq\frg_0$ and
%$[\frg_0,\frg_1]\subseteq\frg_1$.
%That is, $\frg_0$ acts on $\frg_0$ and $\frg_1$.
%It is known that the subalgebra $\frg_0$ is nilpotent. In fact, $\frg_0$ is
%a Cartan subalgebra in the sense of \cite[p.~57]{jacobson}.

We recall some well-known facts (see Jacobson \cite[Chapter III]{jacobson}).
Let $\frg$ be an arbitrary finite-dimensional real Lie algebra.
For $X\in\frg$ let $\frg(X,0)$ denote the maxi\-mal
subspace of $\frg$ on which $\ad(X)$ is nilpotent.
Let $H_0$ be a regular element of $\frg$, that is,
$\dim\frg(H_0,0) = \min\{\dim\frg(X,0)\mid X\in\frg\}$.
We write $\frg_0=\frg(H_0,0)$ for short.
Then, by \cite[Chapter III, Theorem 1.1, Proposition 1.1]{jacobson},
$\frg_0$ is a Cartan subalgebra of $\frg$, and there is a Fitting
decomposition
\[
\frg=\frg_0\oplus\frg_1
\]
into $\frg_0$-submodules.
In particular, as a Cartan subalgebra, $\frg_0$ is nilpotent, and the
restriction of $\ad(H_0)$ to $\frg_1$ is an isomorphism.

\begin{lem}\label{lem:regular}
Any $X\in\frg_0$ sufficiently close to $H_0$ in $\frg_0$ is also regular,
and then
\[
\frg(X,0)=\frg(H_0,0) =  \frg_0\; .
%\quad\text{ and }\quad
%\frg(X,0)'=\frg(H_0,0)'.
\]
\end{lem}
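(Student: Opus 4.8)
The plan is to realise the set of regular elements of $\frg$ as the non-vanishing locus of a single polynomial on $\frg$ whose restriction to $\frg_0$ does not vanish identically, and then to use the nilpotency of the Cartan subalgebra $\frg_0$ to promote an equality of dimensions into an equality of subspaces.

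First I would record that $\dim\frg_0=\dim\frg(H_0,0)=r$, where $r=\rk(\frg)=\min_{X\in\frg}\dim\frg(X,0)$, since $H_0$ is regular by hypothesis. Next I would expand the characteristic polynomial of $\ad(X)$ as $\det(t\,\id-\ad(X))=\sum_{i=0}^{n}c_i(X)\,t^{\,i}$, where $n=\dim\frg$, each coefficient $c_i\colon\frg\to\RR$ is a polynomial function, and $c_n\equiv 1$. Since $\frg(X,0)$ is the generalised $0$-eigenspace (Fitting null component) of $\ad(X)$, the number $\dim\frg(X,0)$ equals the vanishing order at $t=0$ of this polynomial, i.e.\ the least index $i$ with $c_i(X)\neq 0$. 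Hence $\dim\frg(X,0)\geq r$ for all $X$ forces $c_0\equiv\dots\equiv c_{r-1}\equiv 0$, and for each $X$ one then has $\dim\frg(X,0)=r$ (that is, $X$ is regular) exactly when $c_r(X)\neq 0$.

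The key observation is that $H_0\in\frg_0$ is regular, so $c_r(H_0)\neq 0$; therefore the restriction of $c_r$ to $\frg_0$ is a non-zero polynomial function on $\frg_0$, and $U=\{X\in\frg_0\mid c_r(X)\neq 0\}$ is a Zariski-open, hence Euclidean-open, neighbourhood of $H_0$ in $\frg_0$ whose points are regular in $\frg$ and satisfy $\dim\frg(X,0)=r=\dim\frg_0$. To conclude I would use that $\frg_0$, being a Cartan subalgebra, is a nilpotent Lie algebra, so its lower central series terminates and hence $\ad(X)$ is nilpotent on the $\ad(X)$-invariant subalgebra $\frg_0$ for every $X\in\frg_0$; this gives $\frg_0\subseteq\frg(X,0)$. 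Combining this inclusion with the dimension count yields $\frg(X,0)=\frg_0=\frg(H_0,0)$ for all $X\in U$, which is exactly the assertion.

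I do not expect a genuine obstacle here; the only point requiring care is the bookkeeping around the regular locus, namely checking that $\dim\frg(X,0)$ is the order of vanishing of the characteristic polynomial of $\ad(X)$ at $0$, that the lower coefficients $c_i$ vanish identically so that $\{c_r\neq 0\}$ is precisely the set of regular elements, and that this polynomial does not die on $\frg_0$ --- which is exactly where regularity of $H_0$ is used. Everything after that is immediate from the nilpotency of $\frg_0$ and the identity $\dim\frg_0=r$.
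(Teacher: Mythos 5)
Your proof is correct. The first half is a spelled-out version of what the paper simply cites: that the regular set is Zariski-open. You realise it as the non-vanishing locus of the coefficient $c_r$ of the characteristic polynomial of $\ad(X)$, after observing that $c_0,\dots,c_{r-1}$ vanish identically because $\dim\frg(X,0)$, being the multiplicity of $0$ as a root, is at least $r$ for every $X$; this is exactly the standard argument behind the Zariski-openness the paper invokes, so here the two routes coincide in substance. Where you genuinely diverge is in the identification $\frg(X,0)=\frg_0$: the paper argues that $\frg(X,0)$ is a Cartan subalgebra (since $X$ is regular) and then quotes Jacobson's theorem that two Cartan subalgebras sharing a common regular element coincide, whereas you use only the inclusion $\frg_0\subseteq\frg(X,0)$ --- valid because $\frg_0$ is a nilpotent subalgebra invariant under $\ad(X)$, so $\ad(X)$ is nilpotent on it --- together with the dimension count $\dim\frg(X,0)=r=\dim\frg_0$. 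Your version is more elementary and self-contained, trading the external reference for a one-line linear-algebra argument; the paper's version is shorter on the page but leans on more Cartan-subalgebra theory. Both are complete proofs of the lemma.
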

\begin{proof}
The set of regular elements in $\frg$ is Zariski-open, and thus intersects
$\frg_0$ in a non-empty Zariski-open set (it contains $H_0$).
So any $X\in\frg_0$ sufficiently close to $H_0$ is also a regular element.
Two Cartan subalgebras with a common regular element coincide
\cite[p.~60]{jacobson}, so that $\frg_0=\frg(X,0)$.
%Then $\frg_1=\frg(X,0)'$ since the decompositions
%$\frg=\frg_0\oplus\frg_1$ and $\frg=\frg_0\oplus\frg(X,0)'$ are both
%decomposition into $\ad(\frg_0)$-submodules.
\end{proof}

\begin{lem}\label{lem:adh_invariant}
Let $\frh$ be any nilpotent subalgebra of $\frg$.
Then the restriction of $\met$ to $\frh$ is an invariant bilinear form
on $\frh$.
\end{lem}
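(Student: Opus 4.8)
The plan is to deduce the invariance of $\met_{\frh}$ directly from the nil-invariance hypothesis, the point being that even though $\ad_{\frg}(X)$ need not be a nilpotent operator for $X\in\frh$, its restriction to $\frh$ always is. Concretely, I want to establish $\langle[X,Y_1],Y_2\rangle=-\langle Y_1,[X,Y_2]\rangle$ for all $X,Y_1,Y_2\in\frh$.

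First I would fix $X\in\frh$ and consider the Jordan decomposition $\ad_{\frg}(X)=\ad_{\frg}(X)_{\rm ss}+\ad_{\frg}(X)_{\rm n}$ in $\gl(\frg)$. Since $\frh$ is a subalgebra, $\frh$ is invariant under $\ad_{\frg}(X)$, hence under every polynomial in $\ad_{\frg}(X)$, in particular under both $\ad_{\frg}(X)_{\rm ss}$ and $\ad_{\frg}(X)_{\rm n}$; their restrictions to $\frh$ then constitute the Jordan decomposition of $\ad_{\frg}(X)\vert_{\frh}=\ad_{\frh}(X)$. But $\frh$ is nilpotent, so $\ad_{\frh}(X)$ is a nilpotent operator on $\frh$, and uniqueness of the Jordan decomposition forces $\ad_{\frg}(X)_{\rm ss}\vert_{\frh}=0$, that is, $\ad_{\frg}(X)\vert_{\frh}=\ad_{\frg}(X)_{\rm n}\vert_{\frh}$.

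To conclude, note that $\ad_{\frg}(X)_{\rm n}$ is a nilpotent element of the Lie algebra of $\ac{\Inn(\frg)}$, so nil-invariance of $\met$ (Definition \ref{def:nilinvariance}) gives $\langle\ad_{\frg}(X)_{\rm n}Y_1,Y_2\rangle=-\langle Y_1,\ad_{\frg}(X)_{\rm n}Y_2\rangle$ for all $Y_1,Y_2\in\frg$. Specializing to $Y_1,Y_2\in\frh$ and substituting $\ad_{\frg}(X)_{\rm n}\vert_{\frh}=\ad_{\frh}(X)$ yields exactly the asserted identity.

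I expect no genuine obstacle here; the argument is short. The only points requiring (routine) care are that the Jordan--Chevalley decomposition of an operator restricts to any invariant subspace, and that $\ad_{\frg}(X)_{\rm n}$ indeed lies in the Lie algebra of the Zariski closure $\ac{\Inn(\frg)}$, so that Definition \ref{def:nilinvariance} is applicable.
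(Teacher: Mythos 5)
Your proposal is correct and follows essentially the same route as the paper: the paper's proof likewise applies nil-invariance to $\ad_{\frg}(X)_{\rm n}$ and observes that $\ad_{\frg}(X)_{\rm n}\vert_{\frh}=\ad_{\frh}(X)$ because $\frh$ is nilpotent. Your only addition is to spell out, via restriction of the Jordan decomposition to the $\ad_{\frg}(X)$-invariant subspace $\frh$, the justification that the paper leaves implicit.
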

\begin{proof}
Let $H\in\frh$.
By nil-invariance of $\met$, the nilpotent part $\ad_{\frg}(H)_{\rm n}$ of
the Jordan decomposition of $\ad_{\frg}(H)$ is skew-symmetric with respect
to $\met$.
Since $\frh$ is a nilpotent subalgebra,
$\ad_{\frh}(H)$ is a nilpotent
operator, and hence $\ad_{\frg}(H)_{\rm n}|_{\frh}=\ad_{\frh}(H)$.
%Let $\met_{\frh}$ denote the restriction of $\met$ to $\frh$.
%Then, for all $H_1,H_2\in\frh$,
%\begin{align*}
%\langle \ad_{\frh}(H)H_1,H_2\rangle_{\frh} &= \langle \ad_{\frg}(H)_{\rm n}|_{\frh}H_1,H_2\rangle_{\frh}
%=\langle\ad_{\frg}(H)_{\rm n}H_1,H_2\rangle \\
%&=-\langle H_1,\ad_{\frg}(H)_{\rm n}H_2\rangle
%=-\langle H_1,\ad_{\frg}(H)_{\rm n}|_{\frh}H_2\rangle_{\frh} \\
%&=-\langle H_1,\ad_{\frh}(H)H_2\rangle_{\frh}.
%\end{align*}
This means the restriction of $\met$ to $\frh$ is an invariant bilinear
form.
\end{proof}

\begin{proof}[Proof of Theorem \ref{thm:BG}]
Suppose that $\frg$ is solvable. Let $H_0$ be a regular element in $\frg$.
Then $\frg_1$ is contained in the nilradical $\frn$ of $\frg$.
Indeed, $\frn\supseteq[\frg,\frg]$ and
$\frg_1=\ad(H_0)\frg_1\subseteq[\frg,\frg]$.

Suppose now that $\frg$ has a nil-invariant symmetric bilinear form $\met$.
In par\-ticular, $\ad(N)$ is skew-symmetric for all $N\in\frn$ and the
restriction of $\met$ to any nilpotent subalgebra is invariant
by Lemma \ref{lem:adh_invariant}.
In particular, the restriction of $\met$ to the Cartan subalgebra
$\frg_0$ is invariant.

Now let $X\in\frg$. Then, for any $N,N'\in\frn$,
\[
\langle\ad(X)X,N\rangle=0,\quad
\langle X,\ad(N)X\rangle=-\langle X,\ad(N)X\rangle=0,
\]
and also
\begin{align*}
\langle \ad(X)N,N'\rangle &= -\langle\ad(N)X,N'\rangle = \langle X,\ad(N)N'\rangle \\
&=
-\langle X,\ad(N')N\rangle = \langle \ad(N')X,N\rangle \\
&=-\langle N,\ad(X)N'\rangle.
\end{align*}
Thus $\ad(X)$ is skew-symmetric for the restriction of
$\met$ to $\RR X+\frn$, and moreover $X\perp[X,\frn]$.

Observe that $\frg_1\subseteq[H_0,\frg_1]\subseteq[H_0,\frn]$, and hence $H_0\perp\frg_1$.
The same holds for all elements $X$ in a non-empty open subset of $\frg_0$
(compare Lemma \ref{lem:regular}), and hence
\[
\frg_0\perp\frg_1.
\]
Altogether, any $X\in\frg_0$ preserves $\met$ on $\frg_1$, and, as stated
before, preserves $\met$ on $\frg_0$, since $\frg_0$ is nilpotent.
Hence $\ad(X)$ is skew-symmetric on $\frg$.
Since $\frg=\frg_0+\frn$, this means $\met$ is an invariant bilinear form
on $\frg$.
\end{proof}

%%%%% Bibliography %%%%%%

% !TEX root = homogeneousSpacesLieAlgebras.tex

%%%%% Bibliography %%%%%%

\end{document}